\numberwithin{equation}{section}
\pgfplotsset{compat = newest}
\DeclarePairedDelimiter\ceil{\lceil}{\rceil} 
\DeclarePairedDelimiter\floor{\lfloor}{\rfloor} 
\newcommand{\eps}{\epsilon}
\newcommand{\inner}[1]{\langle #1 \rangle}  
\newcommand{\nnorm}[1]{{\left\vert\kern-0.25ex\left\vert\kern-0.25ex\left\vert #1
		\right\vert\kern-0.25ex\right\vert\kern-0.25ex\right\vert}} 
\renewcommand{\leq}{\leqslant}
\renewcommand{\geq}{\geqslant}
\newcommand{\rmd}{\mathrm{d}}   
\DeclarePairedDelimiterX{\Iintv}[1]{\llbracket}{\rrbracket}{\iintvargs{#1}}
\NewDocumentCommand{\iintvargs}{>{\SplitArgument{1}{,}}m}
{\iintvargsaux#1} %
\NewDocumentCommand{\iintvargsaux}{mm} {#1\mkern1.5mu..\mkern1.5mu#2}
\theoremstyle{plain}
\newtheorem{Th}{Theorem}[section]
\newtheorem{Lemma}[Th]{Lemma}
\newtheorem{Cor}[Th]{Corollary}
\newtheorem{Prop}[Th]{Proposition}
\theoremstyle{definition}
\newtheorem{Def}[Th]{Definition}
\newtheorem{Rem}[Th]{Remark}
\newtheorem{?}[Th]{Problem}
\newtheorem{Ex}[Th]{Example}
\newtheorem{Not}[Th]{Notation}
\newcommand{\R}{\mathbb{R}}
\newcommand{\Z}{\mathbb{Z}}
\newcommand{\T}{\mathbb{T}}
\newcommand{\br}{\mathbf{c}}
\title[]{From Exclusion to Slow and Fast Diffusion}
\author{P.  Gon\c calves}
\address{Patr\'icia Gon\c calves, Center for Mathematical Analysis,  Geometry and Dynamical Systems,
	Instituto Superior T\'ecnico, Universidade de Lisboa,
	Av. Rovisco Pais, 1049-001 Lisboa, Portugal.}
\email{{\tt pgoncalves@tecnico.ulisboa.pt}}
\author{G. S. Nahum}
\address{Gabriel Nahum, Center for Mathematical Analysis,  Geometry and Dynamical Systems, Instituto Superior T\'ecnico, Universidade de Lisboa,
	Av. Rovisco Pais, 1049-001 Lisboa, Portugal.}
\email{{\tt gabriel.nahum@tecnico.ulisboa.pt}}
\author{M. Simon}
\address{Marielle Simon, Univ Lyon, CNRS, Université Claude Bernard Lyon 1, UMR 5208, Institut Camille Jordan, F-69622 Villeurbanne, France}
\email{{\tt msimon@math.univ-lyon1.fr}}
\date{\today}
\begin{document}
	\begin{abstract}
		We construct a nearest-neighbour interacting particle system of exclusion type, which illustrates a transition from slow to fast diffusion. More precisely, the hydrodynamic limit of this microscopic system  in the diffusive space-time scaling is the parabolic equation $\partial_t\rho=\nabla (D(\rho)\nabla \rho)$, with diffusion coefficient $ D(\rho)=m\rho^{m-1} $ where $ m\in(0,2] $, including therefore the fast diffusion regime in the range $ m\in(0,1) $, and the porous {medium} equation for $ m\in(1,2) $. The construction of the model  is based on the generalized binomial theorem, and  interpolates continuously in $ m $ the already known microscopic \textit{porous {medium} model} with parameter $ m=2 $, the \textit{symmetric simple exclusion process} {with} $ m=1 $, going down to a \textit{fast diffusion model} up to  {any} $ m>0$. The derivation of the hydrodynamic limit for the local density of particles on the one-dimensional torus is achieved \textit{via} the entropy method -- with additional technical difficulties depending on the regime (slow or fast diffusion) {and where new properties of the \textit{porous medium model} {need} to be derived.}
	\end{abstract}
	
	\maketitle
	
	\section{Introduction}
	
	\subsection{Scientific context}	A typical question in the field of statistical mechanics is related to the  derivation of the macroscopic evolution equations  from the stochastic dynamical interaction of microscopic particles.  Over the last four decades, there has been a remarkable progress in the derivation of these equations, which are  partial differential equations (PDEs), governing the space-time evolution of the conserved quantities of the microscopic system, \textit{i.e.~}the well-known \emph{hydrodynamic limit}, see for instance \cite[Chapter 3]{SPOHN:book} {for an introduction on the subject}. {In particular, \emph{stochastic lattice gases}, a specific type of models where particles interact on a lattice and evolve according to a Markovian dynamics, have been the subject of intense scrutiny \cite{KL:book} and
		many results have been obtained by both physicists and mathematicians on their microscopic and macroscopic
		behavior.}  
	The nature {of the hydrodynamic equations} depends on the dynamics at the particle level and it can be for instance:  parabolic, hyperbolic, or even of  a fractional form.
	
	An equation {which} has received a lot of attention in the last years in the PDE's community is the following equation, posed for every $(t,u) \in \R_+\times \T$ where $\T$ is the one-dimensional torus $[0,1)$ with $0\equiv 1$, and given for  $ m\in\mathbb{R}$, $m\neq 0$, by
	\begin{align}\label{PDE:formal}
		\partial_t\rho=\partial_{{uu}}(\rho^m), \qquad (t,{u})\in \R_+\times\mathbb T.
	\end{align}
	This is a parabolic equation, with diffusion coefficient  given by
	\begin{align}\label{diffusion}
		D(\rho)=m\rho^{m-1}.
	\end{align}
	For $ m>1 $, \eqref{PDE:formal} is the \emph{porous {medium} equation}, referred to as PME; for $ m=1 $ it is the standard heat equation (HE), while for $ m<1 $ it belongs to the class of fast diffusion equations, and in this case we will refer to it as  FDE. The rigorous analysis of \eqref{PDE:formal} has attracted a lot of interest in the past decades, we refer the reader to \cite{vazquez} for a review on this subject.

	From the particle systems' point of view, the rigorous derivation of \eqref{PDE:formal} has been successfully achieved for particular values of $m$, in several different ways.
	The HE has been obtained  as the hydrodynamic limit of  the \emph{symmetric simple exclusion process}  (SSEP) (see, for example,  \cite[Chapter 4]{KL:book}). In this process, particles evolve on the discrete torus $ \mathbb{T}_N=\R/N\Z $ and after an exponential clock of rate one, a particle  jumps to one of its {two} nearest-neighbours chosen with equal probability, but the jump only occurs if the destination site is empty ({this is the \emph{exclusion rule})}, otherwise it is suppressed and all the clocks, which are independent of each other, restart.  The configuration of particles in the system at time $t>0$ is denoted by $\eta_t=(\eta_t(x))_{x\in\T_N}$ and it is an element of $\Omega_N:=\{0,1\}^{\T_N}$, where  $\eta(x)\in\{0,1\}$ denotes the number of particles at position $x$. {Moreover, the process $\{\eta_t\}_{t\geqslant 0}$ is a Markov process on $\Omega_N$, and the \emph{jump rate} from a site $x$ to site $x+1$ is given by $\eta(x)(1-\eta(x+1))$ while the jump rate from site $x+1$ to site $x$ is given by $(1-\eta(x))\eta(x+1)$.}
	
	In \cite{GLT}, the authors derived the PME for any integer value of $m\geq 2$ by considering an exclusion process  with \emph{degenerate} rates. More precisely, as above, particles evolve on the discrete torus $ \mathbb{T}_N$ according to the exclusion rule, but the jump rate depends on the number of particles in the vicinity of the edge where the jump occurs. To be concrete, if, for example, $m=2$, then the jump rate from a site $x$ to the site $x+1$ is given by $\eta(x)(1-\eta(x+1))(\eta(x-1)+\eta(x+2))$ and the rate from $x+1$ to $x$ is given by $\eta(x+1)(1-\eta(x))(\eta(x-1)+\eta(x+2))$. This means that {for a jump from $x$ to $x+1$ to happen, one imposes to have \textit{at least one} particle in the vicinity $\{x-1,x+2\}$} ({see Figure \ref{fig:PMM2} for an example of transition rates for this model}). {Besides, one can easily compute} the {microscopic} instantaneous current of the system, \emph{i.e.}~the difference between the jump rate from $x$  to $x+1$ and the jump rate from $x+1$ to $x$, {which}  is then equal to $(\eta(x)-\eta(x+1))(\eta(x-1)+\eta(x+2))$. {Remarkably, this microscopic current} can be rewritten as a {discrete} gradient of some function $h({\eta}),$ {see Lemma \ref{lem:grad} below}. In {fact}, the choice for those specific rates {is made in order} to have the aforementioned gradient property of the instantenous current, which turns the system into a \emph{gradient} one, and classical methods can be explored without too many complications, {see \cite[Chapters 5 and 6]{KL:book}}. Since particles only swap positions on the torus, the number of particles is conserved by the dynamics. The PME {\eqref{PDE:formal} with $m=2$} has {then} been obtained as the hydrodynamic limit {of} the {empirical} density of particles. This rationale was extended to  any integer $ m\geq2 $,  and the resulting microscopic system is now called the \textit{porous medium model}, denoted by PMM($ m-1 $) {with} hydrodynamic equation {\eqref{PDE:formal}}. {Later} in \cite{BDGN}, {the same} PME for any integer $ m\geq2 $ {has been obtained} on the interval $[0,1]$, with different types of boundary conditions (Dirichlet, Robin and Neumann), again as the hydrodynamic limit of the same constrained exclusion process, but in contact with stochastic reservoirs, {which inject and destroy particles at the two extremities with some rate} which is regulated by a parameter, {giving} rise to the aforementioned boundary conditions.
	
	{Another approach had previously been developed in} \cite{SU93,ES96,  FIS97}. {First},  the porous medium equation when $ m=2$ was derived  in \cite{SU93,ES96} from a model in which the occupation number is a continuous variable {(therefore belonging to another class of models)}. More precisely, the model consists of configurations of sticks or energies; the configurations evolve randomly through exchanges of stick portions between nearest-neighbours through a \emph{zero-range} pressure mechanism, and the conservation {law} is the total stick-length. Later in \cite{FIS97} the authors  extended  the derivation of the hydrodynamic limit from the previous model, {and obtained the PME for all range $m>1$}.
	
	{Finally,} {concerning} the fast diffusion case, {few results are available in the literature.} In \cite{HJV20} the FDE with $ m=-1 $ {has been} derived {as the hydrodynamic limit of a} \textit{zero-range process} ({the number of particles per site can be any non-negative integer}) evolving on {the discrete torus}, with {a jump} rate function adjusted to observe frequently a large number of particles, with a specific "weight" associated to each particle. The formalization of the hydrodynamic limit was achieved by using Yau's relative entropy method \cite{yau} with some  adaptations  including spectral gap estimates. The derivation of the FDE for general $ m<1 $ was left  there as an open problem.
	
	\subsection{Construction of  new models} {In this paper we address two questions}: first, how can we generalize the family of  PMMs, {namely exclusion processes},  to $ m $ not being an integer? Second, due to the different nature of the interacting particle systems constructed to derive \eqref{PDE:formal} under the slow-diffusion regime and the fast-diffusion regime, is there a single family parametrized by $ m $ that interpolates between the slow and the  fast diffusion?

	{Here we} give some answers in the direction of the first question, and a positive answer regarding the second. We  {construct} a family of {exclusion processes} parametrized by $ m\in{[0,2]} $ and evolving on the one-dimensional (discrete) torus $ \mathbb{T}_N $ {and we prove that their} hydrodynamic limit is given by  \eqref{PDE:formal}. The motivation for the definition of our models comes from the analysis of the diffusion coefficient $D(\rho)=m\rho^{m-1} $ and the generalized binomial theorem (Proposition \ref{th:gen_bin} below). As a consequence, the resulting family of models interpolates continuously in $ m $ between the SSEP and the PMM(1), in a sense that we shall explain more precisely later on ({see \eqref{intro:interpol}} below). The point is that the generalized binomial theorem allows representing  the diffusion coefficient $D(\rho)=m(1-(1-\rho))^{m-1}$ in terms of a series, \emph{i.e.}
	\begin{align}\label{into:series}
		D(\rho)=\sum_{k\geq 1} \binom{m}{k} (-1)^{k-1} k (1-\rho)^{k-1},
	\end{align}
	{which} can be properly truncated into a polynomial. Above  $ \binom{m}{k} $ is the generalized binomial coefficient, see \eqref{eq:binom} {for the definition}. In the construction {of} the new interpolating model based {on} \eqref{into:series}, the family $\{\text{PMM}(k)\}_{k\geqslant 0}$ can be seen as a ``polynomial basis''.  {Remember that} the porous medium models PMM($k$) considered in \cite{GLT} are of gradient type, and moreover {it can be easily seen that} the Bernoulli product measures with constant parameter are invariant for {each PMM($k$)}.  {Remarkably}, the {interpolating} model {keeps both properties}, and {moreover it becomes} irreducible, {in the sense that every particle configuration can be changed into any other configuration with the same number of particles through successive jumps that happen with positive probability. {We note that} this \emph{irreducibility property} was not verified for the original PMM($k$), and in fact one of the technical difficulties of \cite{GLT} was to work with the so-called \emph{mobile clusters}, \textit{i.e.}~couple of particles at distance at most two, that allow the transport of blocked particles in the system, {but they are not needed here}.}

	{Let us now} be more precise. {As before,  $\{\eta_t\}_{t\geqslant 0}$ is a Markov process on $\Omega_N$, and it can be entirely defined through its \emph{infinitesimal generator}, denoted below by $\mathcal{L}_N^{m-1}$, which is an operator acting on functions defined on $\Omega_N$. In order to give a precise definition, we first need to introduce the infinitesimal generators related to the basis mentioned above: let $ \mathcal{L}_N^{\overline{\text{PMM}}({k})} $ } be the generator of a process  {defined like} {the}  PMM($ k $), but with the constraints acting on \textit{empty} sites, {instead of} particles (in other words, {for $ k=1 $}, the jump from $x$ to $x+1$ happens if there is at least one empty site in $\{x-1,x+2\}$, {see Figure \ref{fig:PMMholes}}).   {We are now ready to introduce the infinitesimal generator of the interpolating model, which is a linear combination of the latter}, and is defined for any $m\in(0,2]$ by 
	\begin{align}\label{intro:gen}
		\mathcal{L}_N^{(m-1)} =  \sum_{k=1}^{\ell_N} \binom{m}{k}(-1)^{k-1} \mathcal{L}_N^{\overline{\text{PMM}}(k-1)}
		,
		\quad
		\text{where} \quad		2\leq\ell_N \xrightarrow[N\to+\infty]{}+\infty.
	\end{align}
	The treatment of a linear combination of models with $ \ell_N\to+\infty $ as $N\to+\infty$ is one of the novelties of this work. It is also worth pointing out that although \eqref{PDE:formal} only has local interactions, we do \textit{not} require that $ \ell_N=o(N) $, and it can be of any order as long as $ {N}\geqslant \ell_N\to+\infty $. {In fact several} difficulties in this paper arise from maintaining $ \ell_N $ with no order restrictions. To achieve this, some new ideas and properties of the family $ \{\text{PMM}(k)\}_{k\geq 0} $ are explored. The interpolating property {invoked above} is a consequence of the definition of the generalized binomial coefficients. Concretely, denoting by $ r_N^{(m-1)}(\eta) $ the jump rate {appearing in $\mathcal{L}_N^{(m-1)}$ at the edge} $ \{0,1\} $ ({for a jump happening from $0$ to $1$ or $1$ to $0$}), for some fixed configuration $ \eta $ and fixed $ N $ it holds that
	\begin{align}\label{intro:interpol}
		\lim_{m \nearrow 1}r_N^{(m-1)}(\eta)
		=\mathbf{r}_{0,1}^{\text{SSEP}}(\eta)
		=\lim_{m\searrow 1}r_N^{(m-1)}(\eta)
		\qquad \text{and}\qquad
		\lim_{m\nearrow 2}r_N^{(m-1)}(\eta)
		=\mathbf{r}_{0,1}^{\text{PMM}(1)}(\eta),
	\end{align}
	where $ \mathbf{r}_{0,1}^{\text{SSEP}}(\eta) $ and $ \mathbf{r}_{0,1}^{\text{PMM}(1)}(\eta) $ are the {jump} rates at the edge $ \{0,1\} $, for the SSEP and PMM($ 1 $), respectively. To better visualize how these rates can deform the SSEP into a slow or fast diffusion model we refer the reader to Figure \ref{fig:1} and to the discussion just before it.
	
	We remark that the sign of the generalized binomial coefficients $ \binom{m}{k} $ changes {according to} the values of $ m $ and $ k $.  This oscillating nature is {the reason why one may} find rates for which \eqref{intro:gen} is not well-defined for $ m>2 $ and {why} an extension of our models to $m>2$ is still out of reach. For $ m\in(0,2) $, the sign of these coefficients lead to an interpretation of the resulting models as the SSEP with either a \textit{penalization} or \textit{reinforcement} given by porous medium models (with constraints on the empty sites), as explained in \eqref{PMM_rewrite}, {and this also explains why the interpolating model becomes irreducible}. 
	This is presented in more details in Proposition \ref{prop:low_bound_r}.
	
	%
	
	\subsection{Main result and strategy}
	Proving a \textit{hydrodynamic limit} is, in plain terms, a law of large numbers for the conserved quantity of the system, in our case the  density of particles. Concretely, the empirical measure associated to the {particle} density {at time $t>0$} is defined {for any
		$\eta \in \Omega_N$}, as follows
	\begin{align*}
		\pi_t^N(\eta,\mathrm{d}u)=\frac1N \sum_{x\in\mathbb{T}_N}\eta_t(x)\delta_{x/N}(\mathrm{d}u).
	\end{align*}
	In other words $\pi_t^N(\eta,\mathrm{d}u)$ is a {random} measure on the continuous {torus} $\T$ and performs the link between the microscopic and macroscopic space scales, {\emph{via}} $ x\mapsto N^{-1}x $. The main result of this paper  states that starting from a \emph{local equilibrium} distribution ({see Definition \ref{def:ass}}), this {random empirical measure, {taken under the diffusive time-rescaling $ t\mapsto N^2t $}, converges in probability as $N\to+\infty$,  to a deterministic measure $\rho_t(u)du$, where $\rho_t(u)$ is  {the unique} weak solution of the \textit{hydrodynamic equation} \eqref{PDE:formal} for $ m\in(0,2) $.}

	Our proof follows the entropy method introduced by \cite{GPV}, which highly relies on the fact that the microscopic model of particles is gradient and has the irreducibility property. 
	The overall strategy can be split into three steps: (i) {we prove} tightness of the sequence of measures induced by the density empirical measure; (ii) {we obtain an} \emph{energy estimate} {which gives information on the regularity of the density profile, and this information is crucial for the proof of uniqueness of weak solutions}; (iii) {we characterize} uniquely the  limiting points.		Different {technical} problems arise for both  slow ($m>1$) and  fast ($m<1$) regimes.  Since we deal with systems whose jump rates are of polynomial form, we need to show that these polynomials are such that the equations for the empirical measures can be recovered. This is known in the literature as the \emph{replacement lemmas} which are one of the most  difficult challenges in the derivation of hydrodynamic limits from microscopic systems.  In particular,  the replacement lemmas are specific to each regime (see Lemmas \ref{lem:rep_shift}, \ref{lem:rep_boxes} for the slow regime and Lemmas \ref{lem:rep_FDM-tight}, \ref{lem:rep_FDM} for the fast regime). Fundamental to the proof of those lemmas is the energy lower bound (Proposition \ref{prop:energy}) which compares the Dirichlet form of our process with the ``Carr\'e-du-Champ" operator, and the results of Subsection \ref{sec:main_model}, {where} we derive some new properties of the family $\{\text{PMM}(k)\}_{k\geq 0} $, in particular we prove {several bounds on their rates} {{which also show} that our models are well-defined.}
	In the fast regime, {it is surprising that} the tightness step {requires}  the replacement Lemma \ref{lem:rep_FDM-tight}, due to the supremum of the rates being unbounded as $ N\to+\infty $.  {Finally} the characterization of the limit points is the most technical part, and also uses  several replacement lemmas. We note that the scheme which is implemented for the slow regime is a simplification of the scheme of \cite{BDGN}.
	
	The application of those replacement lemmas involves some novelties due to the summation with binomial coefficients  in the definition of $ \mathcal{L}^{(m-1)} $. {Roughly speaking}, the replacement lemmas link the microscopic and macroscopic scales by approximating the product of $ k $ occupation variables by $ k $ empirical averages over independent boxes -- first by \textit{microscopic} boxes (``one-block estimate''), then by approximating the microscopic boxes by \textit{mesoscopic} boxes (``two-blocks estimate''). {Here}, very importantly, the size of these boxes needs to be adjusted dynamically with $ k $ for the series of errors to vanish in the limit $ N\to+\infty $. However, {this dynamical argument alone would require to impose} stronger assumptions on the explosion of $ \ell_N $. To avoid this, it is fundamental to first slow down the explosion by replacing $ \ell_N $ by $ (\ell_N)^n $ with $ 0<n<1 $. This argument depends on the order of the tail of the series $ \sum_{k\geq1}\abs{\binom{m}{k}} $. Naturally, the treatment of this series also requires a sharp non-asymptotic estimate on the binomial coefficients, {see} Lemma \ref{lem:bin_bound}.
	
	{Finally,} there were  some technical issues regarding the \textit{energy estimate},  {precisely when} showing that the (weak) solution of \eqref{PDE:formal} (Definition \ref{def:weak}) belongs to the target Sobolev space. This is {crucial} because it allows us to argue that the solution to the PDE is H\"{o}lder continuous, which in turn is essential to show that it is well approximated locally by the empirical measure. {The weak differentiability of specific functions of $ \rho $ is also needed to prove uniqueness, giving us that the whole sequence of measures converges thanks to tightness. Specifically, if $ \rho^m $ belongs to the target Sobolev space (which is the case for $ m\in(1,2) $), uniqueness follows by simple energy arguments (see Lemma \ref{lem:uniq_PME}), while if $ \rho $ only belongs to the target Sobolev space (when $ m\in(0,1)$), then the proof is more involved (see Lemma \ref{lem:uniq_FDE}), and it is an adaptation of the argument for \textit{very weak} solutions in \cite{vazquez}.
	}

	\subsection{Extensions and future work}\label{sec:ext}
	
	Now we comment a bit on possible extensions of our results. First we note that for $ m>2 $ there are configurations where the rates $ r_N^{(m-1)}(\eta) $ {are negative} and {therefore} the model is not well-defined. An example is $ m\in(2,3) $ with $ \eta(0)+\eta(1)=1 $ and $ \eta(-1)=0,\;\eta(x)=1\ $ for $ x\neq-1,0,1 $. The extension to $ m>2 $ requires a different approach and will be the subject of study on a forthcoming work.

	We also highlight that the  derivation of fractional equations from microscopic systems has attracted a lot of attention recently. In another forthcoming work we will use the mechanism based on the generalized binomial theorem to construct a well-defined Markov generator interpolating the long-range SSEP (introduced in \cite{LR:JARA08}) and the long-range PMM($ 1 $) (introduced in \cite{LR:CDG2022}), whose hydrodynamic limit follows $ \partial_t\rho=-(-\Delta)^{\frac{\gamma}{2}}\rho^m $ with $ m\in(0,2] $ and $ \gamma\in(1,2) $. This is work in progress. 
	
	As a final note, our main goal was to introduce a toy model in the simplest context. From the stochastic process point of view, it would be interesting to extend our results to higher (finite) dimensions. Moreover,  the lower bound in Proposition \ref{prop:energy} could be used to extend our results to the open boundary setting, following similar arguments as  in \cite{BDGN} and {using} our approach {for} the treatment of the sum up to $ \ell_N $, with some adaptations. Fixing the rate of creation/annihilation of particles to be proportional to $ N^{-\theta} $ for $ \theta\geq0 $, one could obtain {different} boundary regimes: Dirichlet ($\theta\in[0,1)$), non-linear Robin ($ \theta=1 $) and Neumann ($ \theta>1 $); with the specific expressions as in \cite{BDGN} but with $ m=2 $ there extended to $ m\in(0,2)\backslash\{1\} $.
	All this is left for future work.

	\subsection{Outline of the paper} 	The present work is organized as follows: Section \ref{sec:models} is devoted to introducing the family of porous medium models which will be the building blocks  to construct our new models {and used to prove some of the important properties of the latter}; particularly, in Subsection \ref{sec:main_model} we construct the interpolating models, prove that they are well-defined, and in Subsection \ref{subsec:interp_prop} we study some of their monotonicity properties {and present our main result.}
	Then we  prove the {convergence towards the} hydrodynamic limit in Section \ref{sec:HL}.  Section \ref{sec:replace} is devoted to the statement and proof of the so-called \textit{replacement Lemmas}, which are in the heart of the proof of the hydrodynamic limit. Finally, in Section \ref{sec:energy} we obtain the \textit{energy estimates}. In Appendix \ref{app:aux_res} we prove an auxiliary result regarding the generalized binomial coefficients and in Appendix \ref{app:PDE} uniqueness and regularity results regarding the weak solution of the hydrodynamic equations are derived.

	\section{Microscopic models and Main Result}\label{sec:models}

	Let $ \mathbb{N}_+ $ be the set of positive natural numbers and denote by $ N \in\mathbb{N}_+ $ a scaling parameter. Denote by $ \mathbb{T}_N $ the one dimensional discrete torus, that is, $ \mathbb{T}_N=\{1,\dots, N\} $ with the identification $ 0\equiv N $.  For any $ x < y\in\mathbb{Z} $, that can be viewed as elements in $\T_N$ by considering their standard projections, we define $ \llbracket x,y\rrbracket $ as the discrete interval composed by all the discrete points between $ x,y $ (including $ x,y $) in $ \mathbb{T}_N $, where the order has been inherited from the one in $\Z$.
	
	The microscopic dynamics at the core of this paper is a system of particles which {evolves according to a Markov process}, satisfying the exclusion rule and situated on the discrete torus $\T_N$. A particle configuration $\eta$ is an element of $\Omega_N=\{0,1\}^{\T_N}$, namely $ \eta(x)\in\{0,1\} $ for any $ x\in\mathbb{T}_N $.
	Particles can jump to nearest-neighbour sites only, providing the latter are not already occupied. Before  defining the generator of the dynamics, let us introduce the following operators:
	
	\begin{Def}[Exchange of occupation variables]
		For any $ x,y,z\in\mathbb{T}_N $ let us consider the exchange of occupation variables $ \eta\mapsto \eta^{x,y} $ given by
		\begin{align*}
			\eta^{x,y}(z)
			=\mathbf{1}_{z\neq x,y}\;\eta(z)
			+\mathbf{1}_{z=x}\;\eta(y)
			+\mathbf{1}_{z=y}\;\eta(x).
		\end{align*}
		We  define the operator $ \nabla_{x,y} $ associated to the occupation exchange, given on any $ f:\Omega_N\to\mathbb{R} $ by
		\begin{align*}
			\nabla_{x,y} f(\eta)
			=f(\eta^{x,y})-f(\eta).
		\end{align*}
		Finally, for any $ x\in\mathbb{T}_N $, define the translation $ \tau_{x}\eta(y)=\eta(x+y) $ for $y\in\mathbb T_N$, and extend it to functions $ f:\Omega_N\to\mathbb{R} $ by $
		\tau_xf(\eta)=f(\tau_x\eta).$
	\end{Def}

	The rest of the section is organized as follows: first of all, we recall  the definition of the \emph{porous medium  models} from \cite{GLT}, which correspond to a microscopic description of the PME for any integer $m\geq 2$. Then, we define its \emph{flipped} version, in the sense that the kinetic constraint is imposed on empty sites  instead of particles. Finally, we  define a new microscopic family of models parametrized by $ m\in(0,2)\backslash\{1\}, $ which we call \textit{non-integer fast diffusion model} when $ m\in(0,1) $, and \textit{non-integer porous medium model} when $ m\in(1,2) $.

	\subsection{Porous media model with dynamical constraints on vacant sites}
	Roughly speaking, our models can be seen as the SSEP either reinforced or penalized by a linear combination of \emph{kinetically constrained  exclusion processes} (KCEP), which the family PMM($k$) belongs to. Let us first recall the definition of the known models which will come into play.

	\begin{Def}[Symmetric Simple Exclusion Process]
		We denote by SSEP on $ \mathbb{T}_N $ the Markov process with state space $\Omega_N$ generated by the following operator $ \mathcal{L}_N^{\text{SSEP}} $, which acts on $ f:\Omega_N\to\mathbb{R} $ as:
		\begin{align*}
			(\mathcal{L}_N^{\text{SSEP}}f)(\eta)=\sum_{x\in\mathbb{T}_N}
			\mathbf{a}_{x,x+1}(\eta)
			(\nabla_{x,x+1}f)(\eta)
		\end{align*} for any $\eta\in\Omega_N$,
		where
		\begin{equation}\label{eq:s}
			\mathbf{a}_{0,1}(\eta) = \eta(0)(1-\eta(1))+\eta(1)(1-\eta(0)), \qquad \mathbf{a}_{x,x+1}(\eta) =\mathbf{a}_{x+1,x}(\eta)=\tau_x \mathbf{a}_{0,1}(\eta).
		\end{equation}
		Note that the latter equals $ 1 $ if exactly one site among $\{x,x+1\}$ is occupied by a particle, and $ 0 $ otherwise. Due to the symmetry of the rates we will short-write $
		\mathbf{a}:= \mathbf{a}_{0,1}=\mathbf{a}_{1,0}.$
	\end{Def}
	
	\begin{Def}[Porous Medium  Model for any integer $ k\geq 1$, \cite{GLT}]
		For any $ k\in\mathbb{N}_+ $ let us denote by PMM($ k $) the  \emph{porous medium model} on $ \mathbb{T}_N $ with parameter $ k $, as the Markov process with state space $\Omega_N$ generated by the following operator $ \mathcal{L}_N^{\text{PMM}(k)} $, which acts on $ f:\Omega_N\to\mathbb{R} $ as:
		\begin{align*}
			(\mathcal{L}_N^{\text{PMM}(k)}f)(\eta)=\sum_{x\in\mathbb{T}_N}
			\mathbf{c}^{(k)}_{x,x+1}(\eta)
			\mathbf{a}_{x,x+1}(\eta)
			(\nabla_{x,x+1}f)(\eta)
		\end{align*} for any $\eta\in\Omega_N$,
		where $\; \mathbf{c}^{(k)}_{x,x+1}(\eta) = \tau_x \mathbf{c}_{0,1}^{(k)}(\eta) $ with
		\begin{align}\label{rate:pmm_int}
			\mathbf{c}_{0,1}^{(k)}(\eta) =\sum_{j=1}^{k+1} 
			\mathbf{s}_j^{(k)}(\eta)
			\quad\text{and}\quad
			\mathbf{s}_j^{(k)}(\eta)=\prod_{\substack{i=-(k+1)+j\\i\neq0,1}}^j\eta(i).
		\end{align}
	\end{Def}
	\begin{Not}
		We write
		\begin{equation}\label{eq:ck}
			\mathbf{r}^{(k)}_{x,x+1}(\eta)
			=\mathbf{c}^{(k)}_{x,x+1}(\eta)\;\mathbf{a}_{x,x+1}(\eta) = \mathbf{r}^{(k)}_{x+1,x}(\eta)
		\end{equation}
		for the rate at which the occupation variables $\eta(x)$ and $\eta(x+1)$ are exchanged in  PMM($k$). The quantity $\mathbf{c}_{x,x+1}^{(k)}(\eta)$ is the constraint to be satisfied for the jump to happen.
		Again due to the symmetry of the rate and constraint, we short-write
		\begin{equation}\label{eq:defs}
			\mathbf{c}^{(k)}(\eta)\equiv \mathbf{c}_{0,1}^{(k)}(\eta)
			\quad\text{ and }\quad
			\mathbf{r}^{(k)}(\eta)\equiv \mathbf{r}_{0,1}^{(k)}(\eta).
		\end{equation}
	\end{Not}
	As it can be seen from \eqref{rate:pmm_int} and Figure \ref{fig:constraint}, a jump crossing the bond $\{x,x+1\}$ is allowed only if at least $k$ consecutive particles out of the edge $\{x,x+1\}$ are situated in the box $\llbracket x-k, x+(k+1) \rrbracket $.
	
	\begin{figure}[H]
		\tikzset{every picture/.style={line width=0.75pt}} 
		
		\begin{tikzpicture}[x=0.70pt,y=0.70pt,yscale=-1,xscale=1]
			
			\draw    (240.5,135.25) -- (541.5,135.25) (274.5,131.25) -- (274.5,139.25)(308.5,131.25) -- (308.5,139.25)(342.5,131.25) -- (342.5,139.25)(376.5,131.25) -- (376.5,139.25)(410.5,131.25) -- (410.5,139.25)(444.5,131.25) -- (444.5,139.25)(478.5,131.25) -- (478.5,139.25)(512.5,131.25) -- (512.5,139.25) ;
			\draw  [fill={rgb, 255:red, 155; green, 155; blue, 155 }  ,fill opacity=0.5 ] (461.77,118.85) .. controls (461.77,109.74) and (469.36,102.35) .. (478.73,102.35) .. controls (488.09,102.35) and (495.68,109.74) .. (495.68,118.85) .. controls (495.68,127.96) and (488.09,135.35) .. (478.73,135.35) .. controls (469.36,135.35) and (461.77,127.96) .. (461.77,118.85) -- cycle ;
			\draw  [fill={rgb, 255:red, 155; green, 155; blue, 155 }  ,fill opacity=0.5 ] (427.86,118.85) .. controls (427.86,109.74) and (435.45,102.35) .. (444.82,102.35) .. controls (454.18,102.35) and (461.77,109.74) .. (461.77,118.85) .. controls (461.77,127.96) and (454.18,135.35) .. (444.82,135.35) .. controls (435.45,135.35) and (427.86,127.96) .. (427.86,118.85) -- cycle ;
			\draw  [fill={rgb, 255:red, 155; green, 155; blue, 155 }  ,fill opacity=0.5 ] (427.86,85.85) .. controls (427.86,76.74) and (435.45,69.35) .. (444.82,69.35) .. controls (454.18,69.35) and (461.77,76.74) .. (461.77,85.85) .. controls (461.77,94.96) and (454.18,102.35) .. (444.82,102.35) .. controls (435.45,102.35) and (427.86,94.96) .. (427.86,85.85) -- cycle ;
			\draw  [color={rgb, 255:red, 0; green, 0; blue, 0 }  ,draw opacity=1 ] (360.7,31.75) -- (427.2,31.75) -- (427.2,159.25) -- (360.7,159.25) -- cycle ;
			\draw  [fill={rgb, 255:red, 155; green, 155; blue, 155 }  ,fill opacity=0.5 ] (326.14,52.85) .. controls (326.14,43.74) and (333.73,36.35) .. (343.09,36.35) .. controls (352.45,36.35) and (360.05,43.74) .. (360.05,52.85) .. controls (360.05,61.96) and (352.45,69.35) .. (343.09,69.35) .. controls (333.73,69.35) and (326.14,61.96) .. (326.14,52.85) -- cycle ;
			\draw  [fill={rgb, 255:red, 155; green, 155; blue, 155 }  ,fill opacity=0.5 ] (326.14,85.85) .. controls (326.14,76.74) and (333.73,69.35) .. (343.09,69.35) .. controls (352.45,69.35) and (360.05,76.74) .. (360.05,85.85) .. controls (360.05,94.96) and (352.45,102.35) .. (343.09,102.35) .. controls (333.73,102.35) and (326.14,94.96) .. (326.14,85.85) -- cycle ;
			\draw  [fill={rgb, 255:red, 155; green, 155; blue, 155 }  ,fill opacity=0.5 ] (292.23,52.85) .. controls (292.23,43.74) and (299.82,36.35) .. (309.18,36.35) .. controls (318.55,36.35) and (326.14,43.74) .. (326.14,52.85) .. controls (326.14,61.96) and (318.55,69.35) .. (309.18,69.35) .. controls (299.82,69.35) and (292.23,61.96) .. (292.23,52.85) -- cycle ;
			\draw    (240.5,102.25) -- (541.5,102.25) (274.5,98.25) -- (274.5,106.25)(308.5,98.25) -- (308.5,106.25)(342.5,98.25) -- (342.5,106.25)(376.5,98.25) -- (376.5,106.25)(410.5,98.25) -- (410.5,106.25)(444.5,98.25) -- (444.5,106.25)(478.5,98.25) -- (478.5,106.25)(512.5,98.25) -- (512.5,106.25) ;
			\draw    (240.5,69.25) -- (541.5,69.25) (274.5,65.25) -- (274.5,73.25)(308.5,65.25) -- (308.5,73.25)(342.5,65.25) -- (342.5,73.25)(376.5,65.25) -- (376.5,73.25)(410.5,65.25) -- (410.5,73.25)(444.5,65.25) -- (444.5,73.25)(478.5,65.25) -- (478.5,73.25)(512.5,65.25) -- (512.5,73.25) ;
			
			\draw (267,144.5) node [anchor=north west][inner sep=0.75pt]  [font=\scriptsize] [align=left] {\mbox{-}3};
			\draw (507.67,144.5) node [anchor=north west][inner sep=0.75pt]  [font=\scriptsize] [align=left] {4};
			\draw (474.17,144.5) node [anchor=north west][inner sep=0.75pt]  [font=\scriptsize] [align=left] {3};
			\draw (440,145) node [anchor=north west][inner sep=0.75pt]  [font=\scriptsize] [align=left] {2};
			\draw (405.67,145.5) node [anchor=north west][inner sep=0.75pt]  [font=\scriptsize] [align=left] {1};
			\draw (371.67,144.5) node [anchor=north west][inner sep=0.75pt]  [font=\scriptsize] [align=left] {0};
			\draw (335.17,144.5) node [anchor=north west][inner sep=0.75pt]  [font=\scriptsize] [align=left] {\mbox{-}1};
			\draw (301,144.5) node [anchor=north west][inner sep=0.75pt]  [font=\scriptsize] [align=left] {\mbox{-}2};

		\end{tikzpicture}
		\caption{PMM($ 2 $) valid constraints for which a particle swaps positions in the edge $ \{0,1\} $.}\label{fig:constraint}
	\end{figure}
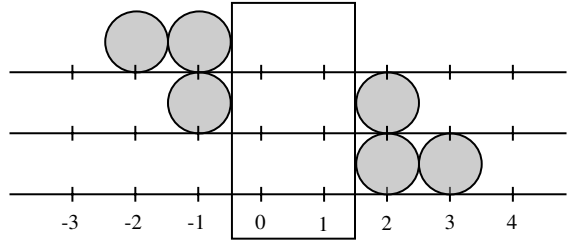
	
	An illustration of the dynamics for $k=1$ is also provided in Figure \ref{fig:PMM2}.
	\begin{Rem}[$k=0$]\label{rem:SSEP=PMM(0)} Note that for $k=0$, $\mathbf{c}^{(0)}(\eta)\equiv 1$ and therefore $ \mathbf{r}^{(0)}(\eta)=\mathbf{a}(\eta)$, which corresponds to the exchange rate in SSEP. It will be useful to interpret $\text{PMM}(0)=\text{SSEP}$.
		
	\end{Rem}

	\begin{Def}[Flipped configuration]
		For any $ \eta\in\Omega_N $, let $ \eta\mapsto\overline{\eta} $ be the map that flips holes with particles, namely: for any $x\in\T_N$, $			\overline{\eta}(x)=1-\eta(x).$
	\end{Def}
	We are now ready to introduce the flipped porous medium model.
	\begin{Def}
		For any $ k\in\mathbb{N}_+ $, let us denote by $\overline{\text{PMM}}(k)$ the \emph{flipped porous medium model} with parameter $k$ with dynamical constraints on the vacant sites, as the Markov process on $\Omega_N$ generated by the following operator $ \mathcal{L}_N^{\overline{\text{PMM}}(k)} $, which acts on functions $ f:\Omega_N\to\mathbb{R} $ as
		\begin{equation}\label{eq:dualPM}
			\big(\mathcal{L}_N^{\overline{\text{PMM}}(k)}f\big)(\eta)
			=\sum_{x\in\mathbb{T}_N}\mathbf{c}^{(k)}_{x,x+1}(\overline{\eta})\mathbf{a}_{x,x+1}(\eta)
			(\nabla_{x,x+1}f)(\eta)
		\end{equation} for any $\eta\in\Omega_N$.
	\end{Def}
	Note that  the process above can be interpreted as the \textit{empty sites} following the same constraint as in PMM($ k $): a jump crossing the bond $\{x,x+1\}$ is allowed only if  at least $k$ ``consecutive" \emph{empty sites} out of the edge $\{x,x+1\}$ are situated in the box $\llbracket x-k, x+(k+1) \rrbracket
	$. An illustration of the dynamics is provided in Figure \ref{fig:PMMholes}. We also highlight that the parameter $ k $ in the PMM($ k $) corresponds to the exponent of the diffusion coefficient, $ D(\rho)=(k+1)\rho^k $, hence to the equation \eqref{PDE:formal} with $ m=k+1 $.
	
	\begin{figure}[H]
		\centering
		\begin{minipage}{0.45\textwidth}
			\centering

			\tikzset{every picture/.style={line width=0.75pt}} 
			
			\begin{tikzpicture}[x=0.65pt,y=0.65pt,yscale=-1,xscale=1]
				
				\draw    (182.97,121.5) -- (517.77,121.5) (216.97,117.5) -- (216.97,125.5)(250.97,117.5) -- (250.97,125.5)(284.97,117.5) -- (284.97,125.5)(318.97,117.5) -- (318.97,125.5)(352.97,117.5) -- (352.97,125.5)(386.97,117.5) -- (386.97,125.5)(420.97,117.5) -- (420.97,125.5)(454.97,117.5) -- (454.97,125.5)(488.97,117.5) -- (488.97,125.5) ;
				\draw  [fill={rgb, 255:red, 155; green, 155; blue, 155 }  ,fill opacity=0.5 ] (335.71,100.35) .. controls (335.71,91.24) and (343.3,83.85) .. (352.67,83.85) .. controls (362.03,83.85) and (369.62,91.24) .. (369.62,100.35) .. controls (369.62,109.46) and (362.03,116.85) .. (352.67,116.85) .. controls (343.3,116.85) and (335.71,109.46) .. (335.71,100.35) -- cycle ;
				\draw  [fill={rgb, 255:red, 155; green, 155; blue, 155 }  ,fill opacity=0.5 ] (437.44,100.35) .. controls (437.44,91.24) and (445.03,83.85) .. (454.39,83.85) .. controls (463.76,83.85) and (471.35,91.24) .. (471.35,100.35) .. controls (471.35,109.46) and (463.76,116.85) .. (454.39,116.85) .. controls (445.03,116.85) and (437.44,109.46) .. (437.44,100.35) -- cycle ;
				\draw  [fill={rgb, 255:red, 155; green, 155; blue, 155 }  ,fill opacity=0.5 ] (267.89,100.35) .. controls (267.89,91.24) and (275.48,83.85) .. (284.85,83.85) .. controls (294.21,83.85) and (301.8,91.24) .. (301.8,100.35) .. controls (301.8,109.46) and (294.21,116.85) .. (284.85,116.85) .. controls (275.48,116.85) and (267.89,109.46) .. (267.89,100.35) -- cycle ;
				\draw  [fill={rgb, 255:red, 155; green, 155; blue, 155 }  ,fill opacity=0.5 ] (233.98,100.35) .. controls (233.98,91.24) and (241.58,83.85) .. (250.94,83.85) .. controls (260.3,83.85) and (267.89,91.24) .. (267.89,100.35) .. controls (267.89,109.46) and (260.3,116.85) .. (250.94,116.85) .. controls (241.58,116.85) and (233.98,109.46) .. (233.98,100.35) -- cycle ;
				\draw    (319.01,79.73) .. controls (322.12,61) and (352.8,62.05) .. (352.67,82.85) ;
				\draw [shift={(352.67,82.85)}, rotate = 270.36] [color={rgb, 255:red, 0; green, 0; blue, 0 }  ][line width=0.75]    (0,3.35) -- (0,-3.35)   ;
				\draw [shift={(318.76,82.85)}, rotate = 270.12] [fill={rgb, 255:red, 0; green, 0; blue, 0 }  ][line width=0.08]  [draw opacity=0] (7.14,-3.43) -- (0,0) -- (7.14,3.43) -- (4.74,0) -- cycle    ;
				\draw    (284.85,82.85) .. controls (284.89,61.94) and (315.9,60.99) .. (318.57,80.03) ;
				\draw [shift={(318.76,82.85)}, rotate = 270.36] [fill={rgb, 255:red, 0; green, 0; blue, 0 }  ][line width=0.08]  [draw opacity=0] (7.14,-3.43) -- (0,0) -- (7.14,3.43) -- (4.74,0) -- cycle    ;
				\draw [shift={(284.85,82.85)}, rotate = 90.12] [color={rgb, 255:red, 0; green, 0; blue, 0 }  ][line width=0.75]    (0,3.35) -- (0,-3.35)   ;
				\draw    (217.28,79.73) .. controls (220.39,61) and (251.07,62.05) .. (250.94,82.85) ;
				\draw [shift={(250.94,82.85)}, rotate = 270.36] [color={rgb, 255:red, 0; green, 0; blue, 0 }  ][line width=0.75]    (0,3.35) -- (0,-3.35)   ;
				\draw [shift={(217.03,82.85)}, rotate = 270.12] [fill={rgb, 255:red, 0; green, 0; blue, 0 }  ][line width=0.08]  [draw opacity=0] (7.14,-3.43) -- (0,0) -- (7.14,3.43) -- (4.74,0) -- cycle    ;
				\draw    (352.67,82.85) .. controls (352.71,61.94) and (383.72,60.99) .. (386.39,80.03) ;
				\draw [shift={(386.58,82.85)}, rotate = 270.36] [fill={rgb, 255:red, 0; green, 0; blue, 0 }  ][line width=0.08]  [draw opacity=0] (7.14,-3.43) -- (0,0) -- (7.14,3.43) -- (4.74,0) -- cycle    ;
				\draw [shift={(352.67,82.85)}, rotate = 90.12] [color={rgb, 255:red, 0; green, 0; blue, 0 }  ][line width=0.75]    (0,3.35) -- (0,-3.35)   ;
				\draw    (420.74,79.73) .. controls (423.84,61) and (454.52,62.05) .. (454.39,82.85) ;
				\draw [shift={(454.39,82.85)}, rotate = 270.36] [color={rgb, 255:red, 0; green, 0; blue, 0 }  ][line width=0.75]    (0,3.35) -- (0,-3.35)   ;
				\draw [shift={(420.48,82.85)}, rotate = 270.12] [fill={rgb, 255:red, 0; green, 0; blue, 0 }  ][line width=0.08]  [draw opacity=0] (7.14,-3.43) -- (0,0) -- (7.14,3.43) -- (4.74,0) -- cycle    ;
				\draw    (454.39,82.85) .. controls (454.44,61.94) and (485.45,60.99) .. (488.12,80.03) ;
				\draw [shift={(488.3,82.85)}, rotate = 270.36] [fill={rgb, 255:red, 0; green, 0; blue, 0 }  ][line width=0.08]  [draw opacity=0] (7.14,-3.43) -- (0,0) -- (7.14,3.43) -- (4.74,0) -- cycle    ;
				\draw [shift={(454.39,82.85)}, rotate = 90.12] [color={rgb, 255:red, 0; green, 0; blue, 0 }  ][line width=0.75]    (0,3.35) -- (0,-3.35)   ;
				
				\draw (246.67,129) node [anchor=north west][inner sep=0.75pt]  [font=\scriptsize] [align=left] {1};
				\draw (485.33,129) node [anchor=north west][inner sep=0.75pt]  [font=\scriptsize] [align=left] {8};
				\draw (450.83,129) node [anchor=north west][inner sep=0.75pt]  [font=\scriptsize] [align=left] {7};
				\draw (416.67,129) node [anchor=north west][inner sep=0.75pt]  [font=\scriptsize] [align=left] {6};
				\draw (382.83,129) node [anchor=north west][inner sep=0.75pt]  [font=\scriptsize] [align=left] {5};
				\draw (348.33,129) node [anchor=north west][inner sep=0.75pt]  [font=\scriptsize] [align=left] {4};
				\draw (314.83,129) node [anchor=north west][inner sep=0.75pt]  [font=\scriptsize] [align=left] {3};
				\draw (280.67,129) node [anchor=north west][inner sep=0.75pt]  [font=\scriptsize] [align=left] {2};
				\draw (333.23,52.9) node [anchor=north west][inner sep=0.75pt]  [font=\scriptsize]  {$1$};
				\draw (300.23,52.9) node [anchor=north west][inner sep=0.75pt]  [font=\scriptsize,color={rgb, 255:red, 0; green, 0; blue, 0 }  ,opacity=1 ]  {$2$};
				\draw (230.83,52.9) node [anchor=north west][inner sep=0.75pt]  [font=\scriptsize]  {$1$};
				\draw (366.23,52.9) node [anchor=north west][inner sep=0.75pt]  [font=\scriptsize,color={rgb, 255:red, 208; green, 2; blue, 27 }  ,opacity=1 ]  {$0$};
				\draw (434.56,52.9) node [anchor=north west][inner sep=0.75pt]  [font=\scriptsize,color={rgb, 255:red, 208; green, 2; blue, 27 }  ,opacity=1 ]  {$0$};
				\draw (468.56,52.9) node [anchor=north west][inner sep=0.75pt]  [font=\scriptsize,color={rgb, 255:red, 208; green, 2; blue, 27 }  ,opacity=1 ]  {$0$};
				\draw (212.67,129) node [anchor=north west][inner sep=0.75pt]  [font=\scriptsize] [align=left] {0};

			\end{tikzpicture}
			\caption{PMM($ 1 $) transition rates.} \label{fig:PMM2}
		\end{minipage}\qquad
		\begin{minipage}{0.45\textwidth}
			\centering

			\tikzset{every picture/.style={line width=0.75pt}} 
			
			\begin{tikzpicture}[x=0.65pt,y=0.65pt,yscale=-1,xscale=1]
				
				\draw    (155.3,112.83) -- (490.1,112.83) (189.3,108.83) -- (189.3,116.83)(223.3,108.83) -- (223.3,116.83)(257.3,108.83) -- (257.3,116.83)(291.3,108.83) -- (291.3,116.83)(325.3,108.83) -- (325.3,116.83)(359.3,108.83) -- (359.3,116.83)(393.3,108.83) -- (393.3,116.83)(427.3,108.83) -- (427.3,116.83)(461.3,108.83) -- (461.3,116.83) ;
				\draw  [fill={rgb, 255:red, 155; green, 155; blue, 155 }  ,fill opacity=0.5 ] (308.05,91.68) .. controls (308.05,82.57) and (315.64,75.18) .. (325,75.18) .. controls (334.36,75.18) and (341.95,82.57) .. (341.95,91.68) .. controls (341.95,100.8) and (334.36,108.18) .. (325,108.18) .. controls (315.64,108.18) and (308.05,100.8) .. (308.05,91.68) -- cycle ;
				\draw  [fill={rgb, 255:red, 155; green, 155; blue, 155 }  ,fill opacity=0.5 ] (409.77,91.68) .. controls (409.77,82.57) and (417.36,75.18) .. (426.73,75.18) .. controls (436.09,75.18) and (443.68,82.57) .. (443.68,91.68) .. controls (443.68,100.8) and (436.09,108.18) .. (426.73,108.18) .. controls (417.36,108.18) and (409.77,100.8) .. (409.77,91.68) -- cycle ;
				\draw  [fill={rgb, 255:red, 155; green, 155; blue, 155 }  ,fill opacity=0.5 ] (240.23,91.68) .. controls (240.23,82.57) and (247.82,75.18) .. (257.18,75.18) .. controls (266.55,75.18) and (274.14,82.57) .. (274.14,91.68) .. controls (274.14,100.8) and (266.55,108.18) .. (257.18,108.18) .. controls (247.82,108.18) and (240.23,100.8) .. (240.23,91.68) -- cycle ;
				\draw  [fill={rgb, 255:red, 155; green, 155; blue, 155 }  ,fill opacity=0.5 ] (206.32,91.68) .. controls (206.32,82.57) and (213.91,75.18) .. (223.27,75.18) .. controls (232.64,75.18) and (240.23,82.57) .. (240.23,91.68) .. controls (240.23,100.8) and (232.64,108.18) .. (223.27,108.18) .. controls (213.91,108.18) and (206.32,100.8) .. (206.32,91.68) -- cycle ;
				\draw    (291.34,71.06) .. controls (294.45,52.34) and (325.13,53.38) .. (325,74.18) ;
				\draw [shift={(325,74.18)}, rotate = 270.36] [color={rgb, 255:red, 0; green, 0; blue, 0 }  ][line width=0.75]    (0,3.35) -- (0,-3.35)   ;
				\draw [shift={(291.09,74.18)}, rotate = 270.12] [fill={rgb, 255:red, 0; green, 0; blue, 0 }  ][line width=0.08]  [draw opacity=0] (7.14,-3.43) -- (0,0) -- (7.14,3.43) -- (4.74,0) -- cycle    ;
				\draw    (257.18,74.18) .. controls (257.23,53.27) and (288.24,52.33) .. (290.91,71.36) ;
				\draw [shift={(291.09,74.18)}, rotate = 270.36] [fill={rgb, 255:red, 0; green, 0; blue, 0 }  ][line width=0.08]  [draw opacity=0] (7.14,-3.43) -- (0,0) -- (7.14,3.43) -- (4.74,0) -- cycle    ;
				\draw [shift={(257.18,74.18)}, rotate = 90.12] [color={rgb, 255:red, 0; green, 0; blue, 0 }  ][line width=0.75]    (0,3.35) -- (0,-3.35)   ;
				\draw    (189.62,71.06) .. controls (192.72,52.34) and (223.4,53.38) .. (223.27,74.18) ;
				\draw [shift={(223.27,74.18)}, rotate = 270.36] [color={rgb, 255:red, 0; green, 0; blue, 0 }  ][line width=0.75]    (0,3.35) -- (0,-3.35)   ;
				\draw [shift={(189.36,74.18)}, rotate = 270.12] [fill={rgb, 255:red, 0; green, 0; blue, 0 }  ][line width=0.08]  [draw opacity=0] (7.14,-3.43) -- (0,0) -- (7.14,3.43) -- (4.74,0) -- cycle    ;
				\draw    (325,74.18) .. controls (325.04,53.27) and (356.05,52.33) .. (358.73,71.36) ;
				\draw [shift={(358.91,74.18)}, rotate = 270.36] [fill={rgb, 255:red, 0; green, 0; blue, 0 }  ][line width=0.08]  [draw opacity=0] (7.14,-3.43) -- (0,0) -- (7.14,3.43) -- (4.74,0) -- cycle    ;
				\draw [shift={(325,74.18)}, rotate = 90.12] [color={rgb, 255:red, 0; green, 0; blue, 0 }  ][line width=0.75]    (0,3.35) -- (0,-3.35)   ;
				\draw    (393.07,71.06) .. controls (396.18,52.34) and (426.86,53.38) .. (426.73,74.18) ;
				\draw [shift={(426.73,74.18)}, rotate = 270.36] [color={rgb, 255:red, 0; green, 0; blue, 0 }  ][line width=0.75]    (0,3.35) -- (0,-3.35)   ;
				\draw [shift={(392.82,74.18)}, rotate = 270.12] [fill={rgb, 255:red, 0; green, 0; blue, 0 }  ][line width=0.08]  [draw opacity=0] (7.14,-3.43) -- (0,0) -- (7.14,3.43) -- (4.74,0) -- cycle    ;
				\draw    (426.73,74.18) .. controls (426.77,53.27) and (457.78,52.33) .. (460.45,71.36) ;
				\draw [shift={(460.64,74.18)}, rotate = 270.36] [fill={rgb, 255:red, 0; green, 0; blue, 0 }  ][line width=0.08]  [draw opacity=0] (7.14,-3.43) -- (0,0) -- (7.14,3.43) -- (4.74,0) -- cycle    ;
				\draw [shift={(426.73,74.18)}, rotate = 90.12] [color={rgb, 255:red, 0; green, 0; blue, 0 }  ][line width=0.75]    (0,3.35) -- (0,-3.35)   ;
				
				\draw (219,120.33) node [anchor=north west][inner sep=0.75pt]  [font=\scriptsize] [align=left] {1};
				\draw (457.67,120.33) node [anchor=north west][inner sep=0.75pt]  [font=\scriptsize] [align=left] {8};
				\draw (423.17,120.33) node [anchor=north west][inner sep=0.75pt]  [font=\scriptsize] [align=left] {7};
				\draw (389,120.33) node [anchor=north west][inner sep=0.75pt]  [font=\scriptsize] [align=left] {6};
				\draw (355.17,120.33) node [anchor=north west][inner sep=0.75pt]  [font=\scriptsize] [align=left] {5};
				\draw (320.67,120.33) node [anchor=north west][inner sep=0.75pt]  [font=\scriptsize] [align=left] {4};
				\draw (287.17,120.33) node [anchor=north west][inner sep=0.75pt]  [font=\scriptsize] [align=left] {3};
				\draw (253,120.33) node [anchor=north west][inner sep=0.75pt]  [font=\scriptsize] [align=left] {2};
				\draw (305.56,44.23) node [anchor=north west][inner sep=0.75pt]  [font=\scriptsize]  {$1$};
				\draw (272.56,44.23) node [anchor=north west][inner sep=0.75pt]  [font=\scriptsize,color={rgb, 255:red, 0; green, 0; blue, 0 }  ,opacity=1 ]  {$0$};
				\draw (203.16,44.23) node [anchor=north west][inner sep=0.75pt]  [font=\scriptsize]  {$1$};
				\draw (338.56,44.23) node [anchor=north west][inner sep=0.75pt]  [font=\scriptsize,color={rgb, 255:red, 208; green, 2; blue, 27 }  ,opacity=1 ]  {$2$};
				\draw (406.89,44.23) node [anchor=north west][inner sep=0.75pt]  [font=\scriptsize,color={rgb, 255:red, 208; green, 2; blue, 27 }  ,opacity=1 ]  {$2$};
				\draw (440.89,44.23) node [anchor=north west][inner sep=0.75pt]  [font=\scriptsize,color={rgb, 255:red, 208; green, 2; blue, 27 }  ,opacity=1 ]  {$2$};
				\draw (185,120.33) node [anchor=north west][inner sep=0.75pt]  [font=\scriptsize] [align=left] {0};

			\end{tikzpicture}
			\caption{$\overline{\text{PMM}}(1) $ transition rates.}  \label{fig:PMMholes}
		\end{minipage}
	\end{figure}

	%
	%
		%
		%
		%
		%
		%
		%
		%
		%
		%

		\subsection{The interpolating model}\label{sec:main_model}
		
		Recall Remark \ref{rem:SSEP=PMM(0)}, where we made the observation that SSEP$=$PMM($ 0 $). The construction of the interpolating model will be based on two main ingredients: the generalized binomial theorem and the fact that the family $\{\text{PMM}(k)\}_{k\geq0} $ can be seen as a "polynomial basis" for the diffusion coefficient $ D(\rho)=m\rho^{m-1} $.

		\subsubsection{Construction}
		We base our analysis in the next identity: for any $\rho \in (0,1)$
		\begin{equation}\label{eq:expand} m\rho^{m-1} = m(1-(1-\rho))^{m-1} = m\sum_{k\geq 0} \binom{m-1}{k}(-1)^k (1-\rho)^k= \sum_{k\geq 1} \binom{m}{k} (-1)^{k-1} k (1-\rho)^{k-1} \end{equation} where the generalized binomial coefficient is given by the formula
		\begin{equation}\binom{c}{k} = \frac{(c)_k}{k!} = \frac{c(c-1)\cdots(c-(k-1))}{k!},\qquad c\in\R\label{eq:binom}\end{equation} and therefore we have the identity $m\binom{m-1}{k} = (k+1) \binom{m}{k+1} $. This is a particular case of the generalized binomial expansion for real coefficients:
		\begin{Prop}[Generalized Binomial Theorem]\label{th:gen_bin}
			For any $ x,y,c\in\mathbb{R} $ such that $ \abs{x}>\abs{y} $ we have that
			\begin{align*}
				(x+y)^c&=\sum_{k=0}^\infty\binom{c}{k}x^{c-k}y^k, 
			\end{align*}
			where $\binom{c}{k}$ has been defined in \eqref{eq:binom}.
		\end{Prop}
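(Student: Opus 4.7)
The plan is to reduce the two-variable identity to the classical one-variable series $(1+z)^c = \sum_{k=0}^\infty \binom{c}{k} z^k$ on $(-1,1)$. Factoring $x^c$ out of $(x+y)^c$ and setting $z = y/x$, the hypothesis $|x|>|y|$ translates into $|z|<1$, and the elementary identity $\binom{c}{k} x^{c-k} y^k = x^c \binom{c}{k} z^k$ completes the reduction, provided one works in the regime $x>0$ where $x^c$ is well-defined for non-integer $c$.

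Next I would verify, via the ratio test, that the series $\sum_k \binom{c}{k} z^k$ has radius of convergence exactly $1$, since $|\binom{c}{k+1}/\binom{c}{k}| = |c-k|/(k+1) \to 1$. Consequently, $g(z) := \sum_{k\geq 0} \binom{c}{k} z^k$ defines a real-analytic function on $(-1,1)$. The key step is to identify $g$ with $f(z) := (1+z)^c$; I would do this by showing that both functions solve the initial value problem $(1+z)\,u'(z) = c\,u(z)$ with $u(0)=1$. For $f$ this is immediate. For $g$, termwise differentiation combined with the Pascal-type identity $(k+1)\binom{c}{k+1} = (c-k)\binom{c}{k}$, which is direct from \eqref{eq:binom}, yields
$$
(1+z)\, g'(z) = \sum_{k\geq 0} \Big[(k+1)\binom{c}{k+1} + k\binom{c}{k}\Big] z^k = c \sum_{k\geq 0} \binom{c}{k} z^k = c\,g(z).
$$
Since the separable equation $u'/u = c/(1+z)$ admits a unique solution on $(-1,1)$ with $u(0)=1$, one concludes $g \equiv f$ on $(-1,1)$, and the two-variable claim follows by multiplying both sides by $x^c$.

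The main potential subtlety is the interpretation of $(x+y)^c$ for arbitrary real $x,y,c$: for non-integer $c$, well-definedness requires $x+y>0$, which, together with $|x|>|y|$, forces $x>0$ and puts us in the regime handled above. No genuine analytical obstacle arises beyond this choice of branch, and in the applications to \eqref{into:series} in this paper we always have $x=1$ and $y=-(1-\rho)\in(-1,0]$, so the above reduction applies verbatim.
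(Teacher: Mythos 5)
Your proof is correct, and it takes a genuinely different route from the paper's. The paper reduces to $(1+z)^c$ exactly as you do, but then proceeds via Taylor's theorem: it computes $\frac{d^k f}{dz^k}(z) = (c)_k (1+z)^{c-k}$ by induction, writes the Taylor series, and invokes Lemma~\ref{lem:bin_bound} (the sharp bound $|\binom{m-1}{k}| \asymp k^{-m}$) to control convergence of the expansion. You instead establish convergence on $(-1,1)$ by the ratio test and identify the sum $g(z)=\sum_k \binom{c}{k} z^k$ with $(1+z)^c$ by showing both satisfy the first-order ODE $(1+z)u' = cu$, $u(0)=1$, using the Pascal-type identity $(k+1)\binom{c}{k+1} = (c-k)\binom{c}{k}$. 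Your route is self-contained and avoids any need for a quantitative remainder estimate; its only cost is that it does not exercise Lemma~\ref{lem:bin_bound}, which the authors will need anyway for the hydrodynamic estimates, so invoking it here is essentially free for them. One small technical remark: when concluding from the ODE, rather than appealing to separability (which tacitly needs $u\neq 0$), it is cleaner to note that $h(z) := g(z)(1+z)^{-c}$ has $h'\equiv 0$ on $(-1,1)$ and $h(0)=1$, whence $g=(1+z)^c$; this is what a careful version of your ``uniqueness'' step amounts to. Your final paragraph correctly flags the branch issue for non-integer $c$, which the paper's statement glosses over, and correctly observes that the application in \eqref{into:series} sits squarely in the safe regime $x=1$, $|y|<1$.
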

		\begin{proof}
			The proof is standard and as such we only outline the main steps. Without loss of generalization let $ x\neq0 $. Writing $ z=y/x $ we have $ (x+y)^c=x^c(1+z)^c $. Let $f(z)=(1+z)^c$ be defined for $|z|<1$. Then, by induction we see that $ \frac{d^kf}{dz^k}(z)=(c)_k(1+z)^{c-k} $ for any $ k\geq1 $ integer. To conclude we recall the Taylor expansion of $ f $ and apply Lemma \ref{lem:bin_bound} stated below,  which guarantees the convergence.
		\end{proof}
		
		Proposition \ref{th:gen_bin} implies the convergence of the series appearing in \eqref{eq:expand} for any $ \rho\in(0,1) $.
		For $ \rho\in\{0,1\} $ and $ m\in(1,2) $ or $ \rho=1 $ and $ m\in(0,1) $ one can also easily guarantee the convergence by replacing $ \rho $ by $ 1 $ or $ 0 $ in  each term of the series as written in \eqref{eq:expand}. For $ m\in(0,1) $ and $ \rho=0 $ the series is divergent. This will not be a problem, since due to the gradient property of the model {we shall see that the main object of study will be $ \rho^m $ and not $ \rho^{m-1} $.}
		
		\begin{Def}[Interpolating model] Let $m\in{[0,2]}$, $ N\in\mathbb{N}_+ $ and $ \ell_N\in\mathbb{N},$ with $\ell_N\geq 2 $. We define the generator
			\begin{align}
				\mathcal{L}_N^{(m-1)}: =  \sum_{k=1}^{\ell_N} \binom{m}{k}(-1)^{k-1} \mathcal{L}_N^{\overline{\text{PMM}}(k-1)}   \label{PMM:m1}
			\end{align} where  $\mathcal{L}_N^{\overline{\text{PMM}}(k)}$ has been defined in \eqref{eq:dualPM}.
			More precisely, this generator acts on functions $ f:\Omega_N\to\mathbb{R} $ as
			\begin{align*}
				(\mathcal{L}_N^{(m-1)}f)(\eta)
				=\sum_{x\in\mathbb{T}_N}c_N^{(m-1)}(\tau_x\eta)\mathbf{a}_{x,x+1}(\eta)(\nabla_{x,x+1}f)(\eta),
			\end{align*}
			where \begin{equation}\label{eq:transitionrates}
				c_N^{(m-1)}(\eta)=\sum_{k=1}^{\ell_N} \binom{m}{k} (-1)^{k-1} \mathbf{c}^{(k-1)}(\overline\eta)
			\end{equation}
			and we shorten the rate $ r_N^{(m-1)}(\eta)=c_N^{(m-1)}(\eta)\;\mathbf{a}(\eta) $. We call \emph{non integer porous medium model} (resp.~\emph{fast diffusion model}), and we denote it by PMM($ m-1 $) (resp.~by FDM($ m-1 $)), the Markov process whose infinitesimal generator is given by \eqref{PMM:m1} with $ m\in(1,2) $ (resp.~$ m\in(0,1) $).
		\end{Def}

		\begin{Rem}[About the restrictions on $ \ell_N $]
			Although there is no particular assumption on the order at which $ \ell_N\to+\infty $, note that if $ \ell_N>N $ then for $ N\leq k \leq\ell_N $ we have that $ \mathbf{r}^{(k)}(\eta)\neq0 $ if, and only if, every site is occupied except one at the node $ \{0,1\} $. Due to the mass conservation, this would be achievable only by starting from a configuration with one empty site only, hence no macroscopic evolution of the local density. This is a particular technical consequence of working on the torus, therefore we assume throughout the paper that $ \ell_N\leq N $.
		\end{Rem}
		The goal now is to show that the model is well-defined. In other words, {we are going to prove} that the map $ \eta\mapsto c_N^{(m-1)}(\eta) $ is non-negative. The key argument is the following remark about the sign of $(-1)^{k-1} \binom{m}{k}$. By definition, \begin{itemize} \item if $m\in (0,1)$, then $
			(-1)^{k-1} \binom{m}{k} >0$ for any $ k \geq 1$,
			\item  if $m\in(1,2)$, then  \[
			(-1)^{k-1} \binom{m}{k}   >0 \quad  \text{ if } k =1, \quad \text{ and }  \quad (-1)^{k-1} \binom{m}{k} <0 \quad \text{ if } k \geq 2.\] \end{itemize} Therefore   we can rewrite
		\begin{align}\label{PMM_rewrite}
			\mathcal{L}_N^{(m-1)}
			=
			m\mathcal{L}_N^{\text{SSEP}}
			-\text{sign}(m-1)
			\sum_{k=2}^{\ell_N}\abs{\binom{m}{k}}\mathcal{L}_N^{\overline{\text{PMM}}(k-1)}
			,
			\qquad m\in(0,2)\backslash\{1\}.
		\end{align}
		%
		We also need non-asymptotic bounds for the generalized binomial coefficients: from Lemma \ref{lem:bin_bound} one can extract that for $ m\in\mathbb{R} $ and $ k\geq2 $
		\begin{align}\label{eq:usefulbound}
			\frac{1}{(k+1)^m}\lesssim \abs{\binom{m-1}{k}}\lesssim \frac{1}{k^m}. 
		\end{align}
		{The notation $ f(k)\lesssim g(k) $ shortens that {there exists $ C>0 $}, {such that} for all $ k\in\mathbb{N}$, $ \abs{f(k)}\leq C\abs{g(k)} $.}

		Now we state and prove the main technical result of this section, which contains two estimates: {the lower bounds show that the generators are well-defined and permit to prove an energy bound (given in Proposition \ref{prop:energy}), which is essential to the proof of the forthcoming \emph{replacement lemmas}; the upper bounds reflect the boundedness of the rates as $ N\to+\infty $.}
		
		\begin{Prop}\label{prop:low_bound_r} If $ \ell_N\gg1 $, then for any $\eta\in\Omega_N$,
			\begin{align*}
				r_N^{(m-1)}(\eta)\geq
				\begin{cases}
					m \; \mathbf{r}^{(0)}(\eta), &m\in(0,1), \vphantom{\Big)}\\
					m\delta_N \; \mathbf{r}^{(0)}(\eta)+\binom{m}{2}\;\mathbf{r}^{(1)}(\eta), &m\in(1,2),
				\end{cases}
				\quad\text{and}\quad
				r_N^{(m-1)}(\eta)
				\leq\begin{cases} \displaystyle
					\sum_{k=1}^{\ell_N}\abs{\binom{m}{k}}k, &m\in(0,1),\\
					\displaystyle	m \mathbf{r}^{(0)}(\eta), &m\in(1,2),\vphantom{\bigg(}
				\end{cases}
			\end{align*}
			where $ (\ell_N+1)^{-(m-1)}\lesssim\delta_N=\sum_{k\geq \ell_N}\abs{\binom{m-1}{k}}\lesssim (\ell_N)^{-(m-1)} $. Moreover, when $m\in(0,1)$,
			\begin{align*}
				\sum_{k=1}^{\ell_N}\abs{\binom{m}{k}}k
				=
				\max_{\eta\in\Omega_N}r_N^{(m-1)}(\eta)\xrightarrow[N\to+\infty]{}+\infty.
			\end{align*}
		\end{Prop}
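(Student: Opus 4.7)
The plan is to start from the decomposition \eqref{PMM_rewrite}, which already writes
\[ r_N^{(m-1)}(\eta) \;=\; m\,\mathbf{r}^{(0)}(\eta) \;-\; \text{sign}(m-1)\sum_{k=2}^{\ell_N}\left|\binom{m}{k}\right|\overline{\mathbf{r}}^{(k-1)}(\eta), \]
where $\overline{\mathbf{r}}^{(k-1)}(\eta) := \mathbf{a}(\eta)\mathbf{c}^{(k-1)}(\overline\eta)$ is non-negative. Two of the four pointwise bounds then follow by sign alone: the lower bound for $m \in (0,1)$ (all corrections contribute positively, giving $r_N^{(m-1)}(\eta) \geq m\mathbf{r}^{(0)}(\eta)$) and the upper bound for $m \in (1,2)$ (all corrections contribute negatively, giving $r_N^{(m-1)}(\eta) \leq m\mathbf{r}^{(0)}(\eta)$). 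The two-sided control of $\delta_N$ is immediate from \eqref{eq:usefulbound} by comparison with the tail $\sum_{k\geq\ell_N}k^{-m}$.

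For the upper bound in the regime $m \in (0,1)$, I would use the elementary estimate $\overline{\mathbf{r}}^{(k-1)}(\eta) \leq k\,\mathbf{a}(\eta) \leq k$, which follows from the fact that $\mathbf{c}^{(k-1)}(\overline\eta)$ is a sum of $k$ indicators $\mathbf{s}_j^{(k-1)}$ with values in $\{0,1\}$; this yields $r_N^{(m-1)}(\eta) \leq \sum_{k=1}^{\ell_N}|\binom{m}{k}|k$. To see the maximum is attained, I would exhibit the configuration $\eta^* \in \Omega_N$ with $\eta^*(0)+\eta^*(1) = 1$ and $\eta^*(x) = 0$ for $x \in \llbracket -\ell_N+1,-1\rrbracket\cup\llbracket 2,\ell_N\rrbracket$, which fits inside $\T_N$ by the assumption $\ell_N \leq N$; for such an $\eta^*$ every $\mathbf{s}_j^{(k-1)}(\overline{\eta^*}) = 1$, so all bounds are saturated simultaneously. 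The divergence of the maximum then follows from \eqref{eq:usefulbound}, which gives $|\binom{m}{k}|k \gtrsim k^{-m}$ (a non-summable tail for $m<1$).

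The real obstacle is the lower bound when $m \in (1,2)$. My strategy is to peel off the $k=2$ contribution and exploit the balance identity $\mathbf{c}^{(1)}(\eta) + \mathbf{c}^{(1)}(\overline\eta) = (\eta(-1)+\eta(2)) + (\overline\eta(-1)+\overline\eta(2)) = 2$, which gives $\overline{\mathbf{r}}^{(1)}(\eta) = 2\mathbf{a}(\eta) - \mathbf{r}^{(1)}(\eta)$. Substituting into the decomposition produces exactly the desired $\binom{m}{2}\mathbf{r}^{(1)}(\eta)$ term on the right-hand side, together with a leftover $m(2-m)\mathbf{a}(\eta) - \sum_{k=3}^{\ell_N}|\binom{m}{k}|\overline{\mathbf{r}}^{(k-1)}(\eta)$ that I need to bound below by $m\delta_N\mathbf{a}(\eta)$. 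I would bound the residual sum from above using the loose inequality $\overline{\mathbf{r}}^{(k-1)}(\eta) \leq k\mathbf{a}(\eta)$ combined with the shift identity $k|\binom{m}{k}| = m|\binom{m-1}{k-1}|$, reducing the problem to controlling $\sum_{j=2}^{\ell_N-1}|\binom{m-1}{j}|$.

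The key algebraic input, which ties everything together, is the identity $\sum_{j=1}^{\infty}|\binom{m-1}{j}| = 1$ for $m \in (1,2)$, obtained from the Chu--Vandermonde-type partial-sum formula $\sum_{k=0}^{n}\binom{c}{k}(-1)^k = (-1)^n\binom{c-1}{n}$ applied with $c = m-1$ and passing to the limit (using $|\binom{m-2}{n}|\to 0$). Since $|\binom{m-1}{1}| = m-1$, it gives $\sum_{j \geq 2}|\binom{m-1}{j}| = 2-m$ and hence $\sum_{j=2}^{\ell_N-1}|\binom{m-1}{j}| = (2-m) - \delta_N$. Plugging this back produces an \emph{exact} cancellation of the $m(2-m)\mathbf{a}(\eta)$ term, leaving precisely $m\delta_N\mathbf{r}^{(0)}(\eta) + \binom{m}{2}\mathbf{r}^{(1)}(\eta)$ on the right-hand side. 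The crux of the proof, and its main conceptual difficulty, is spotting that this cancellation is exact, so that the "defect" coming from the crude bound $\overline{\mathbf{r}}^{(k-1)}(\eta) \leq k\mathbf{a}(\eta)$ is absorbed precisely into the truncation tail $\delta_N$ of the generating series.
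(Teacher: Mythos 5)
Your proposal is correct and follows essentially the same route as the paper's own proof: both start from \eqref{PMM_rewrite}, exploit the identity $\mathbf{c}^{(1)}(\eta)+\mathbf{c}^{(1)}(\overline{\eta})=2$, bound $\mathbf{c}^{(k-1)}(\overline{\eta})\leq k$ for the remaining terms, and close the estimate with the alternating-sum identity $\sum_{k\geq 1}|\binom{m-1}{k}|=1$ (the paper's \eqref{ineq:sum_coeff}). The only difference is cosmetic bookkeeping — the paper rewrites the rate as $m\delta_N\mathbf{a}(\eta)+\sum_{k\geq 2}|\binom{m}{k}|\bigl(k\mathbf{a}(\eta)-\overline{\mathbf{r}}^{(k-1)}(\eta)\bigr)$ and discards all terms with $k\geq 3$, whereas you peel off the $k=2$ term first and then absorb the $k\geq 3$ remainder into $\delta_N$; the two computations are algebraically identical.
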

		
		\begin{proof}
			We start with the case $ m\in (1,2) $. From \eqref{PMM_rewrite}, we rewrite
			\begin{align*}
				r_N^{(m-1)}(\eta)
				&=m-\sum_{k=2}^{\ell_N}\abs{\binom{m}{k}}k
				+\sum_{k=2}^{\ell_N}\abs{\binom{m}{k}}\big(k-\mathbf{r}^{(k-1)}(\overline{\eta})\big)
				\geq
				m-\sum_{k=2}^{\ell_N}\abs{\binom{m}{k}}k
				+\binom{m}{2}\big(2-\mathbf{r}^{(1)}(\overline{\eta})\big),
			\end{align*}
			where for the last inequality we used the fact that, by definition, $ \mathbf{c}^{(k-1)}(\overline{\eta})\leq k $, and we bounded from below all but the first term of the second summation in $ k $ by zero. Then, since the alternating sum of the binomial coefficients vanishes, we obtain, for any $ \ell_N\in\mathbb{N}_+ $, that
			\begin{align}\label{ineq:sum_coeff}
				m-\sum_{k=2}^{\ell_N}\abs{\binom{m}{k}}k
				= m\bigg(1-\sum_{k=1}^{\ell_N-1}\abs{\binom{m-1}{k}}\bigg) > m \bigg(1-\sum_{k=1}^{+\infty}\abs{\binom{m-1}{k}}\bigg)=0
			\end{align}
			and	therefore we get that $ r_N^{(m-1)}>0 $. To conclude, we note that  $2-\mathbf{c}^{(1)}(\overline{\eta}) = \mathbf{c}^{(1)}(\eta)$ and we set
			\begin{align*}
				\delta_N:
				=1-\sum_{k=1}^{\ell_N-1}\abs{\binom{m-1}{k}}
				=\sum_{k\geq\ell_N}\abs{\binom{m-1}{k}} >0.
			\end{align*}
			Recalling \eqref{eq:usefulbound}, we are reduced to estimate the tail of the $ m-$series:
			\begin{align}\label{eq:m-series}
				\frac{c}{(m-1)(\ell_N+1)^{m-1}}
				\leq
				\sum_{k\geq \ell_N+1}\frac{1}{k^m}\leq \frac{C}{(m-1)(\ell_N)^{m-1}}
			\end{align}
			with $ c,C>0 $ being constants independent of $N $. Putting the inequalities together, the proof of the lower bound follows. To prove the upper bound, we only  keep the first term  in the definition \eqref{eq:transitionrates} of $ r_N^{(m-1)} $, since the other ones are negative.
			
			The case $ m\in (0,1) $ is straightforward from \eqref{PMM_rewrite}.
			To conclude, we see that the maximum is obtained when $ \mathbf{r}^{(k-1)}(\overline{\eta})=k $, that is, when the window $ \llbracket -\ell_N+1,\ell_N\rrbracket\backslash \{0,1\} $ is completely empty and $ \eta(0)+\eta(1)=1 $. The lower bound for the binomial coefficients in \eqref{eq:usefulbound} then shows that this maximum tends to infinity as $ N\to+\infty $.
		\end{proof}
		\begin{Rem}[On the sharpness of the bounds in Proposition \ref{prop:low_bound_r}]\label{rem:bound} The estimates of Proposition \ref{prop:low_bound_r}  are not sharp. Instead, the goal of the lower bound for $ m\in(1,2) $ is to relate our process with the simpler process induced by the generator
			\begin{align*}
				m\delta_N\mathcal{L}_N^{\text{PMM}(0)}+\tfrac{m(m-1)}{2}\mathcal{L}_N^{\text{PMM}(1)},
			\end{align*}
			which is very close to the one studied in \cite{GLT}, where the porous medium model is perturbed by a "small" SSEP dynamics.
			
			The lower bound for $ m\in (0,1) $ is here to emphasize that the transition rates will always be \textit{greater} than those of the SSEP (modulo a multiplicative constant), as expected, since under this regime the macroscopic diffusion is faster than the one of the heat equation ($ m=1 $). This will be useful, in particular, for the proof of the  replacement  Lemma \ref{lem:rep_FDM}.
			
			Finally, let us highlight that the divergence $ \max_{\eta\in\Omega_N}r_N^{(m-1)}(\eta)\to +\infty $ as $N \to+\infty$ gives us an extra difficulty in the proof of tightness (see in particular \eqref{h:treat_FDM}) {and makes it impossible to argue, as for $ m\in(1,2) $, that $ \rho^m $ is weak differentiable (see the last step in the proof of Proposition \ref{prop:energy_est_PME})}.
		\end{Rem}

		\subsection{Characterization of the interpolating family}\label{subsec:interp_prop}
		
		In this subsection we present further properties of the interpolating model. We start by explaining how this model interpolates between the SSEP and the PMM($ 1 $).
		\begin{Prop}[Interpolation property] \label{prop:interp}
			For $ m\in(1,2) $, $ N\in\mathbb{N} $ and $ \ell_N\geq2 $ fixed, the process $\mathcal{L}_N^{(m-1)}$ interpolates between $\mathcal{L}_N^{\mathrm{PMM}(0)}$ and $\mathcal{L}_N^{\mathrm{PMM}(1)}$ in the following sense: for all $ \eta\in\Omega_N $,
			\begin{align}\label{interp_1}
				\lim_{m \nearrow 1}r_N^{(m-1)}(\eta)
				=\mathbf{r}^{\mathrm{PMM}(0)}(\eta)
				=\lim_{m\searrow 1}r_N^{(m-1)}(\eta)
				\qquad \text{and}\qquad
				\lim_{m\nearrow 2}r_N^{(m-1)}(\eta)
				=\mathbf{r}^{\mathrm{PMM}(1)}(\eta).
			\end{align}
		\end{Prop}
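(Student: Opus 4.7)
My plan is to observe that the entire statement reduces to a direct computation of the finite sum defining $r_N^{(m-1)}(\eta)$, using continuity of the generalized binomial coefficients in $m$ together with the fact that $\binom{n}{k}=0$ whenever $n$ is a non-negative integer and $k>n$. Since $N$ and $\ell_N$ are fixed and $\mathbf{c}^{(k-1)}(\overline{\eta})$ does not depend on $m$, the map
\begin{equation*}
m \longmapsto r_N^{(m-1)}(\eta) \;=\; \mathbf{a}(\eta)\,\sum_{k=1}^{\ell_N} \binom{m}{k}(-1)^{k-1} \mathbf{c}^{(k-1)}(\overline{\eta})
\end{equation*}
is a finite linear combination of polynomials in $m$, hence continuous. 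Therefore the one-sided limits coincide with the values at $m=1$ and $m=2$, and the task becomes to identify these values with $\mathbf{r}^{\mathrm{PMM}(0)}(\eta)$ and $\mathbf{r}^{\mathrm{PMM}(1)}(\eta)$, respectively.

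For the limit $m \to 1$, I would use the explicit formula \eqref{eq:binom}: the factor $(m-1)$ appearing in $(m)_k$ for every $k \geq 2$ forces $\binom{1}{k} = 0$ for $k \geq 2$, while $\binom{1}{1}=1$. Consequently only the $k=1$ term survives and the sum collapses to $\mathbf{c}^{(0)}(\overline{\eta}) = 1$, giving $r_N^{(0)}(\eta) = \mathbf{a}(\eta)$, which by Remark \ref{rem:SSEP=PMM(0)} is exactly $\mathbf{r}^{\mathrm{PMM}(0)}(\eta)$. Note that this does not require $\ell_N \geq 2$, only $\ell_N \geq 1$.

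For the limit $m \to 2$, the factor $(m-2)$ in $(m)_k$ for $k \geq 3$ forces $\binom{2}{k} = 0$ for $k\geq 3$, and $\binom{2}{1}=2$, $\binom{2}{2}=1$. The assumption $\ell_N \geq 2$ ensures both non-vanishing terms are included, so the sum reduces to
\begin{equation*}
2\,\mathbf{c}^{(0)}(\overline{\eta}) - \mathbf{c}^{(1)}(\overline{\eta}) \;=\; 2 - \overline{\eta}(-1) - \overline{\eta}(2) \;=\; \eta(-1) + \eta(2) \;=\; \mathbf{c}^{(1)}(\eta),
\end{equation*}
where I used the explicit expression $\mathbf{c}^{(1)}(\overline{\eta}) = \overline{\eta}(-1)+\overline{\eta}(2)$ read off from \eqref{rate:pmm_int} together with $\overline{\eta}(x) = 1-\eta(x)$. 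Multiplying by $\mathbf{a}(\eta)$ then yields $\mathbf{r}^{\mathrm{PMM}(1)}(\eta)$, as required.

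Since the computation is a direct verification, there is no substantial obstacle — the only subtlety is the algebraic bookkeeping between $\mathbf{c}^{(k-1)}(\overline{\eta})$ and $\mathbf{c}^{(k-1)}(\eta)$ at $m=2$, which is handled precisely by the cancellation $2-\mathbf{c}^{(1)}(\overline{\eta})=\mathbf{c}^{(1)}(\eta)$ already exploited in the proof of Proposition \ref{prop:low_bound_r}. Conceptually, the result is the natural manifestation of the fact that when $m$ is a non-negative integer the generalized binomial expansion truncates to the ordinary one, so that our interpolating family restricted to integer $m$ recovers exactly the known porous-medium generators.
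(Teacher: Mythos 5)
Your proof is correct and follows essentially the same route as the paper's: continuity of the (finite) polynomial sum in $m$, vanishing of $\binom{m}{k}$ when $m$ is a non-negative integer and $k>m$, and the identity $2-\mathbf{c}^{(1)}(\overline{\eta})=\mathbf{c}^{(1)}(\eta)$ for the $m\to 2$ endpoint. You merely spell out the explicit computations (including the correct reading of $\mathbf{c}^{(1)}$ from \eqref{rate:pmm_int}) that the paper's one-sentence proof leaves implicit.
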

		\begin{proof}
			The limit to SSEP as $ m\to1 $ from either above or below is a direct consequence of the interpolation property of the binomial coefficients, while the limit to PMM($ 1 $) is both consequence of this, but also of {some rearrangement in the summation which defines the rates, and which implies $2-\mathbf{c}^{(1)}(\overline{\eta}) = \mathbf{c}^{(1)}(\eta)$,} see also \eqref{eq:summation} below.
		\end{proof}
		
		{From \cite{GLT},} the \emph{grand-canonical invariant measures} for the PMM($k$) (and therefore for the $\overline{\text{PMM}}(k)$) are the Bernoulli product measures $\nu_\rho^N$   of parameter $\rho\in[0,1]$, namely, their marginal  is given on $x\in\T_N$ by
		\begin{equation} \label{eq:bernoulli} \nu_\rho^N\big(\eta\in\Omega_N:~\eta(x)=1\big)=\rho.\end{equation}
		The next lemma gives information on the invariant measures of our models.
		\begin{Lemma}[Invariant measures and irreducibility]\label{lem:first}
			{Let $m\in(0,2)$.}	For any $\rho\in[0,1]$, the Bernoulli product measure $\nu_\rho^N$ defined in \eqref{eq:bernoulli} is invariant for the Markov process generated by $\mathcal{L}_N^{(m-1)}$. Moreover, for any  $k\in\{0,\dots,N\}$, the hyperplane
			\[ \mathscr{H}_{N,k}=\Big\{\eta\in\Omega_N \; : \; \sum_{x\in\T_N}\eta(x)=k\Big\}
			\] is irreducible under the Markov process generated by $\mathcal{L}_N^{({m-1})}$.
		\end{Lemma}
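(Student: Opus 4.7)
The plan is to handle invariance and irreducibility separately; both will follow quickly from structural facts already available in the excerpt.

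For invariance I would first observe that each exchange rate of $\mathcal{L}_N^{(m-1)}$ across the bond $\{x,x+1\}$, namely $c_N^{(m-1)}(\tau_x\eta)\,\mathbf{a}_{x,x+1}(\eta)$, is symmetric under the swap $\eta\mapsto\eta^{x,x+1}$: by inspection of \eqref{rate:pmm_int}, $\mathbf{c}^{(k-1)}$ depends only on occupation variables strictly outside $\{0,1\}$, so $\mathbf{c}^{(k-1)}(\tau_x\overline\eta)$ is untouched by the swap at $\{x,x+1\}$, and $\mathbf{a}_{x,x+1}(\eta)$ is manifestly symmetric in $\eta(x),\eta(x+1)$. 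Combining this with the fact that $\nu_\rho^N$ is invariant under any transposition of occupation variables yields the reversibility identity
\begin{equation*}
\int f\,\mathcal L_N^{(m-1)}g\,d\nu_\rho^N=\int g\,\mathcal L_N^{(m-1)}f\,d\nu_\rho^N
\end{equation*}
by the standard change-of-variables argument for nearest-neighbour exclusion dynamics; in particular $\nu_\rho^N$ is invariant.

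For irreducibility I would simply compare with the SSEP. Proposition \ref{prop:low_bound_r} gives the pointwise bound
\begin{equation*}
r_N^{(m-1)}(\eta)\geq \kappa(m,N)\,\mathbf{a}(\eta),\qquad \kappa(m,N)=\begin{cases} m,&m\in(0,1),\\ m\,\delta_N,&m\in(1,2),\end{cases}
\end{equation*}
with $\kappa(m,N)>0$ in both regimes since $\delta_N>0$. Consequently every SSEP-allowed transition has strictly positive rate under $\mathcal{L}_N^{(m-1)}$. Since the SSEP on $\T_N$ is classically irreducible on each hyperplane $\mathscr H_{N,k}$ -- any two configurations with the same total particle number can be joined by a finite sequence of nearest-neighbour exchanges across edges with exactly one occupied endpoint -- this irreducibility is inherited by $\mathcal{L}_N^{(m-1)}$.

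No substantive obstacle is anticipated: Proposition \ref{prop:low_bound_r} does the heavy lifting. It is however worth underlining that the SSEP-type lower bound that produces the irreducibility originates from the $k=1$ term $m\,\mathcal L_N^{\overline{\mathrm{PMM}}(0)}=m\,\mathcal L_N^{\mathrm{SSEP}}$ in the decomposition \eqref{PMM_rewrite}: this persistent SSEP backbone prevents blocked configurations and is precisely what allows our model to bypass the ``mobile cluster'' machinery used in \cite{GLT} to establish reachability within $\mathscr H_{N,k}$ for the integer-$m$ PMM($k$).
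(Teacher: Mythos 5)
Your proposal is correct and its core is the same as the paper's: irreducibility is deduced from the strict positivity of $c_N^{(m-1)}$ guaranteed by Proposition~\ref{prop:low_bound_r}, which leaves the exclusion rule as the only constraint, so the SSEP skeleton furnishes a positive-rate path between any two configurations in $\mathscr H_{N,k}$. The only cosmetic difference is in the invariance part: the paper invokes that $\nu_\rho^N$ is invariant for each $\overline{\mathrm{PMM}}(k)$ (citing \cite{GLT}) and then uses linearity of the invariance identity across the sum defining $\mathcal{L}_N^{(m-1)}$, while you instead verify reversibility directly from the symmetry of the combined rate $c_N^{(m-1)}(\tau_x\eta)\,\mathbf{a}_{x,x+1}(\eta)$ under $\eta\mapsto\eta^{x,x+1}$ (the constraint involves only sites outside $\{x,x+1\}$, $\mathbf{a}_{x,x+1}$ is symmetric, and $\nu_\rho^N$ is exchangeable). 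Your version is a bit more self-contained and actually establishes reversibility, which is slightly stronger than the stated invariance; otherwise the two arguments are interchangeable.
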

		\begin{proof}
			The irreducibility of the process on the above hyperplanes is consequence of the fact that  $ c_N^{({m-1})}(\eta)>0 $ for any $ \eta\in\Omega_N $, as shown in Proposition \ref{prop:low_bound_r}, and so the exclusion rule is the only constraint. We already know from \cite{GLT} that  the product measure $ \nu_\rho^N $ is invariant for $\overline{\text{PMM}}$($ k $), for any $ k\in\mathbb{N}_+ $. In particular, it is also invariant for linear combinations of such models.
		\end{proof}
		
		For a good understanding of the interpolating model it is important to describe some properties of the integer family $ \{\text{PMM}(k)\}_{k\in\mathbb{N}} $. {In the following,} Lemma \ref{lem:up_speed} and Proposition \ref{prop:m_mono} describe new properties of the aforementioned family which will be important later on. Moreover, {thanks to some preliminary computations}, it can be seen that the macroscopic density of particles evolves \emph{diffusively}, with a diffusion coefficient that can be computed explicitly. More precisely, let us introduce the following operator:
		
		\begin{Def}[Translation operators] \label{def:translation}
			Let $ \mathbf{1} $ be the identity function on $ \Omega_N $, and consider the operators $ \nabla^\pm $ associated to the translation operator given by $ \nabla^+=\tau_1-\mathbf{1} $ and $ \nabla^-=\mathbf{1}-\tau_{-1} $, that is, for any function $ f:\Omega_N\to\mathbb{R} $, we define $ (\nabla^+f)(\eta)=f(\tau_{1}\eta)-f(\eta) , (\nabla^-f)(\eta)=f(\eta)-f(\tau_{-1}\eta) $, and for any $ x\in\mathbb{T}_N $ consider
			$
			(\nabla_x^\pm f)(\eta)=(\nabla^\pm f)(\tau_x\eta).
			$

		\end{Def}
		
		{As noted in \cite{GLT}}, it is straightforward to check
		that, for any $x\in\T_N$,
		\begin{equation}\label{eq:gen-current}		\mathcal{L}_N^{{\text{PMM}}(k)}\big(\eta(x)\big)
			=\nabla^- \left(
			\mathbf{c}^{(k)}(\tau_x\eta)\nabla^+\eta(x)
			\right).
		\end{equation}
		Therefore, 		  the \textit{microscopic density current} for PMM($k$) between  sites $x$ and $x+1$, is equal to
		\begin{align*}
			{-\mathbf{c}^{(k)}(\tau_x\eta)\nabla^+\eta(x)
			}=:\mathbf{j}_{\{x,x+1\}}^{(k)}(\eta).
		\end{align*}
		It turns out, see \cite{GLT},  that this quantity is itself a discrete gradient, namely
		\begin{align*}
			\mathbf{j}_{\{x,x+1\}}^{(k)}(\eta)=\nabla^+\mathbf{h}^{(k)}(\eta),
		\end{align*} where $\mathbf{h}^{(k)}$ is given in Lemma \ref{lem:grad}. 
		We highlight that although this gradient property {was already} known (see \cite{GNP21} for instance), the expression \eqref{expr:h2} for $\mathbf{h}^{(k)}$ is new ({we give the original expression of $\mathbf{h}^{(k)}$ in the appendix, see \eqref{expr:h1}}). {Then, note that} the expectation of $\mathbf{c}^{(k)}(\tau_x\eta)$ under the invariant measure $\nu_\rho^N$ is 
		\begin{align}\label{diff:nu}
			\int \mathbf{c}^{(k)}(\tau_x\eta) \mathrm{d}\nu_\rho^N(\eta)=  (k+1)\rho^{k}=D(\rho)
		\end{align}
		which is the diffusion coefficient of the PME($ k $)  \eqref{PDE:formal}, \textit{i.e.}, for $m=k+1 \in \mathbb{N}_+$.  Similarly, since $ \eta(1)-\eta(0)=-(\overline{\eta}(1)-\overline{\eta}(0)) $, the gradient property is also true for $\overline{\text{PMM}}$($k$). One can readily check that the expected diffusion equation associated to the microscopic dynamics of  $\overline{\text{PMM}}(k)$ has diffusion coefficient $ \overline{D}(\rho)=(k+1)(1-\rho)^{k}. $
		{Let us now state more precisely the aforementioned gradient property, which} will be proved in Appendix \ref{app:aux_res}. We recall the definition of $ \mathbf{s}_j^{(k)} $ in \eqref{rate:pmm_int}.
		\begin{Lemma}[Gradient property]\label{lem:grad}
			For any $ k\in\mathbb{N} $, \emph{PMM}($ k $) is a gradient model. Precisely, for any $\eta\in\Omega_N$ we have that  
			$
			\mathbf{c}^{(k)}(\eta)\nabla^+\eta(0)
			=
			\nabla^+\mathbf{h}^{(k)}(\eta),
			$ 
			where
			\begin{align}
				\mathbf{h}^{(k)}(\eta)
				&=\prod_{i=0}^{k}\eta(i)
				+\sum_{i=0}^{k-1}
				(\eta(i)-\eta(i+1))\sum_{j=1}^{k-i}\mathbf{s}_j^{(k)}(\tau_i\eta).
				\label{expr:h2}
			\end{align}
		\end{Lemma}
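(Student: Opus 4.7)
The strategy is to compute $\nabla^+\mathbf{h}^{(k)}(\eta)$ directly and compare it term by term with $\mathbf{c}^{(k)}(\eta)(\eta(1)-\eta(0))=\sum_{j=1}^{k+1}\mathbf{s}_j^{(k)}(\eta)(\eta(1)-\eta(0))$. Split $\mathbf{h}^{(k)}=A+B$, where $A(\eta):=\prod_{i=0}^{k}\eta(i)$ and $B(\eta):=\sum_{i=0}^{k-1}(\eta(i)-\eta(i+1))\Sigma_i(\eta)$ with the shorthand $\Sigma_i(\eta):=\sum_{j=1}^{k-i}\mathbf{s}_j^{(k)}(\tau_i\eta)$. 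The computation of $\nabla^+A(\eta)=\prod_{i=1}^{k+1}\eta(i)-\prod_{i=0}^{k}\eta(i)$ is immediate.

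For $\nabla^+ B(\eta)$, first substitute $i\mapsto i-1$ in the sum defining $B(\tau_1\eta)$ so that both resulting sums range over comparable indices. Terms with $i\in\{1,\dots,k-1\}$ are common to both sums, while two boundary terms survive: one at $i=0$ coming from $-B(\eta)$, and one at $i=k$ coming from $B(\tau_1\eta)$. The crucial algebraic identity is
\[
\Sigma_{i-1}(\tau_1\eta)-\Sigma_i(\eta)=\mathbf{s}_{k-i+1}^{(k)}(\tau_i\eta),\qquad i\in\{1,\dots,k-1\},
\]
and is immediate from the definitions: using $\tau_{i-1}\tau_1=\tau_i$ one obtains $\Sigma_{i-1}(\tau_1\eta)=\sum_{j=1}^{k-i+1}\mathbf{s}_j^{(k)}(\tau_i\eta)$, which differs from $\Sigma_i(\eta)$ only by the single term $j=k-i+1$. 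A direct inspection of the window of indices shows that
\[
\mathbf{s}_{k-i+1}^{(k)}(\tau_i\eta)=W_i(\eta),\qquad\text{where}\qquad W_i(\eta):=\prod_{l\in\llbracket 0,k+1\rrbracket\setminus\{i,i+1\}}\eta(l).
\]

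Putting the pieces together, using the elementary identities $\prod_{i=1}^{k+1}\eta(i)=\eta(1)\,W_0$, $\prod_{i=0}^{k}\eta(i)=\eta(k)\,W_k$, and $\mathbf{s}_{k+1}^{(k)}(\eta)=W_0$, the target identity reduces after obvious cancellations to the purely combinatorial claim
\[
\sum_{i=1}^{k-1}\bigl(\eta(i)-\eta(i+1)\bigr)W_i(\eta)=\eta(k+1)W_k(\eta)-\eta(0)W_0(\eta).
\]
Setting $Q_m(\eta):=\prod_{l\in\llbracket 0,k+1\rrbracket\setminus\{m\}}\eta(l)$, one immediately checks that $\eta(i)\,W_i=Q_{i+1}$ and $\eta(i+1)\,W_i=Q_i$; thus the left-hand side telescopes to $Q_k-Q_1$, which equals $\eta(k+1)W_k-\eta(0)W_0$.

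The main obstacle lies in the bookkeeping: identifying precisely which term in $\Sigma_{i-1}(\tau_1\eta)$ survives the subtraction by $\Sigma_i(\eta)$, and correctly matching the two boundary contributions at $i=0$ and $i=k$ against $\nabla^+A$ and the ``missing'' summand $\mathbf{s}_{k+1}^{(k)}(\eta)$ from $\mathbf{c}^{(k)}$. Once this bookkeeping is carried out, the whole proof collapses to the short telescoping argument above.
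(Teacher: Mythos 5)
Your proof is correct, and it takes a genuinely different route from the paper's. The paper does not verify the gradient identity directly: it starts from the previously known expression \eqref{expr:h1} of $\mathbf{h}^{(k)}$ from \cite{GNP21} (already established there to satisfy $\mathbf{c}^{(k)}\nabla^+\eta(0)=\nabla^+\mathbf{h}^{(k)}$), and then shows by purely algebraic rearrangement that \eqref{expr:h1} equals \eqref{expr:h2}; the gradient property is thus inherited, and the real content of the paper's argument is the rewriting, which is what is needed downstream (e.g.\ in \eqref{h:treat_FDM}). You, by contrast, take \eqref{expr:h2} as the definition and verify the identity $\nabla^+\mathbf{h}^{(k)}=\mathbf{c}^{(k)}\nabla^+\eta(0)$ from scratch: the decomposition into $A$ and $B$, the shift $i\mapsto i-1$ in $B(\tau_1\eta)$, the observation $\Sigma_{i-1}(\tau_1\eta)-\Sigma_i(\eta)=\mathbf{s}_{k-i+1}^{(k)}(\tau_i\eta)=W_i(\eta)$, and the telescoping $(\eta(i)-\eta(i+1))W_i=Q_{i+1}-Q_i$ are all correct (I checked the boundary terms at $i=0,k$, the identification $\mathbf{s}_{k+1}^{(k)}=W_0$, and the small cases $k=0,1$). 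Your argument is self-contained and does not rely on \cite{GNP21}, which is a pedagogical advantage; its one drawback relative to the paper's is that it does not record the equivalence of \eqref{expr:h1} and \eqref{expr:h2}, which the authors seem to want on record for the reader comparing with the earlier literature.
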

		
		Now, for the interpolating model {generated by $\mathcal{L}_N^{(m-1)}$}, similarly to \eqref{eq:gen-current}, a straightforward computation gives for all $ x\in\mathbb{T}_N $
		\begin{align}\label{curr}
			\mathcal{L}_N^{(m-1)}\big(\eta(x)\big)=N^2\nabla^{-}\left( c_N^{(m-1)}(\tau_x\eta)\nabla^{+}\eta(x)\right), 
		\end{align}
		and we can {easily deduce from the previous lemma that}
		\begin{align}\label{grad:non_int}
			c_N^{(m-1)}(\eta)\nabla^{+}\eta(0)
			=
			\nabla^+h_N^{(m-1)}(\eta), \quad\text{where}\quad
			h_{N}^{(m-1)}(\eta)
			=\sum_{k=1}^{\ell_N}\binom{m}{k}(-1)^k
			\mathbf{h}^{(k-1)}(\overline{\eta}).
		\end{align}

		\subsection{{Properties on the rates}}
		
		We {start} by stating and proving two important properties {of the basis family $\{\text{PMM}(k)\}_{k\in\mathbb N}$}. The first one {(Lemma \ref{lem:up_speed})} {will be used later} in Propositions \ref{prop:tight}, \ref{prop:energy} and Lemma \ref{lem:rep_FDM-tight}, while the second {one (Proposition \ref{prop:r_seq}) will provide some interesting monotonicity property of the rates for both} the integer and non-integer families, see Propositions \ref{prop:m_mono} and \ref{prop:m_mono2} {at the end of this section}. Recall the definition of $ \mathbf{r}^{(k)} $ from \eqref{eq:ck}.
		
		\begin{Lemma}[Bound on the rates]\label{lem:up_speed}
			For all $ \ell,k\in\mathbb{N}_+ $ such that $ \ell\geq k $ and any $ \eta\in\Omega_N $ we have that
			\begin{align*}
				\sum_{n=1}^\ell\mathbf{r}^{(k)}(\tau_n\eta)
				\leq 2(\ell+k).
			\end{align*}
		\end{Lemma}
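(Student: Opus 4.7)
The plan is to reparametrize the double sum via the window index and then conclude by a simple counting argument. Starting from
\begin{align*}
\sum_{n=1}^{\ell} \mathbf{r}^{(k)}(\tau_n\eta) = \sum_{n=1}^{\ell} \sum_{j=1}^{k+1} \mathbf{s}_j^{(k)}(\tau_n\eta)\, \mathbf{a}(\tau_n\eta),
\end{align*}
I would first observe that the indicator $\mathbf{s}_j^{(k)}(\tau_n\eta)\,\mathbf{a}(\tau_n\eta)$ equals $1$ precisely when the configuration $\eta$ restricted to the \emph{window} $W(n,j) := \llbracket n-(k+1)+j,\, n+j\rrbracket$ of $k+2$ consecutive sites contains exactly $k+1$ particles, with its unique hole located at $n$ or $n+1$. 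This is a direct unpacking of the product structure of $\mathbf{s}_j^{(k)}$, which forces the $k$ sites of $W(n,j)\setminus\{n,n+1\}$ to be occupied, combined with $\mathbf{a}(\tau_n\eta)=1$, which forces exactly one particle and one hole on the edge $\{n,n+1\}$.

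Next, I would reparametrize by the window's starting position $m := n-(k+1)+j$. As $(n,j)$ ranges over $\llbracket 1,\ell\rrbracket \times \llbracket 1,k+1\rrbracket$, the value $m$ takes values in $\llbracket 1-k,\ell\rrbracket$, which contains exactly $\ell + k$ integers. For a fixed such $m$, i.e.\ a fixed window $W_m = \llbracket m,m+k+1\rrbracket$, I would count how many pairs $(n,j)$ with $W(n,j)=W_m$ contribute a nonzero term. If $W_m$ does not carry exactly $k+1$ particles, the contribution is $0$. Otherwise its unique hole sits at some $h \in W_m$, and the two admissibility conditions ``hole at $n$'' (forcing $n=h$ and $j=m+k+1-h$) and ``hole at $n+1$'' (forcing $n=h-1$ and $j=m+k+2-h$) together yield at most two admissible pairs $(n,j)$. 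Summing over $m$, this gives
\begin{align*}
\sum_{n=1}^{\ell} \mathbf{r}^{(k)}(\tau_n\eta) \;\leq\; 2\cdot\bigl|\llbracket 1-k,\ell\rrbracket\bigr| \;=\; 2(\ell+k).
\end{align*}

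I do not anticipate a serious obstacle: the whole argument reduces to the at-most-two-pairs-per-window correspondence, which itself reflects the two possible positions of the unique hole relative to the edge $\{n,n+1\}$. The only point demanding care is the endpoint behaviour: when the hole sits at an extremity of $W_m$ only one admissible pair exists, and admissible pairs with $n$ falling outside $\llbracket 1,\ell\rrbracket$ must be discarded; both remarks only improve the bound, so using the uniform factor $2$ is safe. The hypothesis $\ell\geq k$ is not strictly needed for the counting itself, but it ensures that the relevant index ranges are non-degenerate.
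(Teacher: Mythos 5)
Your proof is correct and takes essentially the same route as the paper: you reparametrize the double sum by the window's left endpoint $m=n-(k+1)+j$, whereas the paper uses the right endpoint $p=n+j$, but these index the same $\ell+k$ windows and the key observation — at most two admissible pairs $(n,j)$ per window, corresponding to the unique hole sitting at $n$ or at $n+1$ — is identical.
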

		\begin{proof}
			Note that
			\begin{align*}
				\sum_{n=1}^\ell\mathbf{r}^{(k)}(\tau_n\eta)
				=\sum_{n=1}^\ell
				\mathbf{a}(\tau_n\eta)
				\sum_{j=1}^{k+1}
				\mathbf{s}_j^{(k)}(\tau_n\eta)
				=\sum_{p=2}^{\ell+k+1}
				\sum_{n=1}^{\ell}
				\sum_{j=1}^{k+1}
				\mathbf{a}(\tau_n\eta)\mathbf{s}_j^{(k)}(\tau_n\eta)
				\mathbf{1}_{\{j+n=p\}}
				\leq
				\sum_{p=2}^{\ell+k+1}2=2(\ell+k).
			\end{align*}
			The inequality can be justified as follows.	Fixed $ p $, the quantity $ \mathbf{s}_j^{(k)}(\tau_n\eta) $ depends on the occupation of the sites
			\begin{align*}
				\llbracket -(k+1)+j+n,j+n\rrbracket\backslash\{n,n+1\}
				=\llbracket -(k+1)+p,p\rrbracket\backslash\{n,n+1\}.
			\end{align*}
			Because $ 1\leq j\leq k+1 $, then $ \{n,n+1\}\in \llbracket -(k+1)+p,p\rrbracket $ for sure. There are a number of pairs $ (j,n) $ such that $ j+n=p $, but for all of those pairs the box $ \llbracket -(k+1)+p,p\rrbracket$ is the same. Thus, for each $ p $ fixed, there are at most two pairs $ (n,j),(n',j') $ such that $ p=n+j=n'+j' $ and $ \mathbf{a}(\tau_n\eta)\mathbf{s}_j^{(k)}(\tau_n\eta)=\mathbf{a}(\tau_{n'}\eta)\mathbf{s}_{j'}^{(k)}(\tau_{n'}\eta)=1 $. Specifically, if $ (n,j) $ is as previously, then $ (n',j')=(n+1,j-1) $ or $ (n',j')=(n-1,j+1) $.
			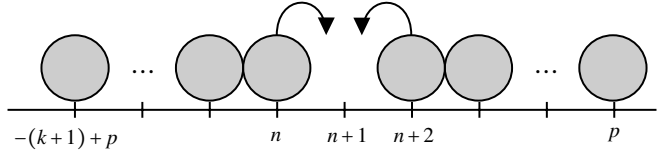
\begin{figure}[H]
				\centering

				\tikzset{every picture/.style={line width=0.75pt}} 
				
				\begin{tikzpicture}[x=0.75pt,y=0.75pt,yscale=-1,xscale=1]
					
					\draw    (220.3,140) -- (547.6,140) (254.3,136) -- (254.3,144)(288.3,136) -- (288.3,144)(322.3,136) -- (322.3,144)(356.3,136) -- (356.3,144)(390.3,136) -- (390.3,144)(424.3,136) -- (424.3,144)(458.3,136) -- (458.3,144)(492.3,136) -- (492.3,144)(526.3,136) -- (526.3,144) ;
					\draw  [fill={rgb, 255:red, 155; green, 155; blue, 155 }  ,fill opacity=0.5 ] (305.23,118.85) .. controls (305.23,109.74) and (312.82,102.35) .. (322.18,102.35) .. controls (331.55,102.35) and (339.14,109.74) .. (339.14,118.85) .. controls (339.14,127.96) and (331.55,135.35) .. (322.18,135.35) .. controls (312.82,135.35) and (305.23,127.96) .. (305.23,118.85) -- cycle ;
					\draw [color={rgb, 255:red, 0; green, 0; blue, 0 }  ,draw opacity=1 ]   (356.09,102.35) .. controls (356.13,81.44) and (378.95,80.45) .. (380.88,99.48) ;
					\draw [shift={(381.01,102.3)}, rotate = 270.36] [fill={rgb, 255:red, 0; green, 0; blue, 0 }  ,fill opacity=1 ][line width=0.08]  [draw opacity=0] (8.93,-4.29) -- (0,0) -- (8.93,4.29) -- cycle    ;
					\draw [color={rgb, 255:red, 0; green, 0; blue, 0 }  ,draw opacity=1 ]   (399.18,99.28) .. controls (401.47,80.55) and (424.04,81.55) .. (423.91,102.35) ;
					\draw [shift={(398.99,102.4)}, rotate = 270.12] [fill={rgb, 255:red, 0; green, 0; blue, 0 }  ,fill opacity=1 ][line width=0.08]  [draw opacity=0] (8.93,-4.29) -- (0,0) -- (8.93,4.29) -- cycle    ;
					\draw  [fill={rgb, 255:red, 155; green, 155; blue, 155 }  ,fill opacity=0.5 ] (440.86,118.85) .. controls (440.86,109.74) and (448.45,102.35) .. (457.82,102.35) .. controls (467.18,102.35) and (474.77,109.74) .. (474.77,118.85) .. controls (474.77,127.96) and (467.18,135.35) .. (457.82,135.35) .. controls (448.45,135.35) and (440.86,127.96) .. (440.86,118.85) -- cycle ;
					\draw  [fill={rgb, 255:red, 155; green, 155; blue, 155 }  ,fill opacity=0.5 ] (406.95,118.85) .. controls (406.95,109.74) and (414.55,102.35) .. (423.91,102.35) .. controls (433.27,102.35) and (440.86,109.74) .. (440.86,118.85) .. controls (440.86,127.96) and (433.27,135.35) .. (423.91,135.35) .. controls (414.55,135.35) and (406.95,127.96) .. (406.95,118.85) -- cycle ;
					\draw  [fill={rgb, 255:red, 155; green, 155; blue, 155 }  ,fill opacity=0.5 ] (339.14,118.85) .. controls (339.14,109.74) and (346.73,102.35) .. (356.09,102.35) .. controls (365.45,102.35) and (373.05,109.74) .. (373.05,118.85) .. controls (373.05,127.96) and (365.45,135.35) .. (356.09,135.35) .. controls (346.73,135.35) and (339.14,127.96) .. (339.14,118.85) -- cycle ;
					\draw  [fill={rgb, 255:red, 155; green, 155; blue, 155 }  ,fill opacity=0.5 ] (237.41,118.85) .. controls (237.41,109.74) and (245,102.35) .. (254.36,102.35) .. controls (263.73,102.35) and (271.32,109.74) .. (271.32,118.85) .. controls (271.32,127.96) and (263.73,135.35) .. (254.36,135.35) .. controls (245,135.35) and (237.41,127.96) .. (237.41,118.85) -- cycle ;
					\draw  [fill={rgb, 255:red, 155; green, 155; blue, 155 }  ,fill opacity=0.5 ] (508.68,118.85) .. controls (508.68,109.74) and (516.27,102.35) .. (525.64,102.35) .. controls (535,102.35) and (542.59,109.74) .. (542.59,118.85) .. controls (542.59,127.96) and (535,135.35) .. (525.64,135.35) .. controls (516.27,135.35) and (508.68,127.96) .. (508.68,118.85) -- cycle ;
					
					\draw (521.67,147.5) node [anchor=north west][inner sep=0.75pt]  [font=\scriptsize] [align=left] {$ p $};
					\draw (413.67,147.5) node [anchor=north west][inner sep=0.75pt]  [font=\scriptsize] [align=left] {$ n+2 $};
					\draw (380.17,147.5) node [anchor=north west][inner sep=0.75pt]  [font=\scriptsize] [align=left] {$ n+1 $};
					\draw (351.67,149.5) node [anchor=north west][inner sep=0.75pt]  [font=\scriptsize] [align=left] {$ n $};
					\draw (222,147.5) node [anchor=north west][inner sep=0.75pt]  [font=\scriptsize] [align=left] {$ -(k+1)+p $};
					\draw (281.32,118.25) node [anchor=north west][inner sep=0.75pt]    {$\cdots $};
					\draw (484.77,118.25) node [anchor=north west][inner sep=0.75pt]    {$\cdots $};

				\end{tikzpicture}
				\caption{Configuration with $ \mathbf{a}(\tau_n\eta)\mathbf{s}_j^{(k)}(\tau_n\eta)=\mathbf{a}(\tau_{n+1}\eta)\mathbf{s}_{j-1}^{(k)}(\tau_{n+1}\eta)=1 $ and $ p $ fixed.}
			\end{figure}
		\end{proof}
		
		Now we state a monotonicity property. {The following proposition} is used {right after} in Proposition \ref{prop:m_mono2} to prove an analogous property for the {interpolating model}.
		\begin{Prop}\label{prop:r_seq}
			For any $ \eta\in\Omega_N $, the sequence $ \big\{\frac1k \mathbf{c}^{(k-1)}(\eta)\big\}_{k\geq 1} $ is non-increasing.
		\end{Prop}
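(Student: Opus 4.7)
The plan is to reduce the claim to an elementary combinatorial inequality involving two ``run-length'' statistics of $\eta$. For $\eta\in\Omega_N$, introduce
\[
L := \sup\bigl\{n\geq 0 : \eta(-i)=1 \text{ for all } 1\leq i\leq n\bigr\}, \qquad R := \sup\bigl\{n\geq 0 : \eta(1+i)=1 \text{ for all } 1\leq i\leq n\bigr\},
\]
with the conventions $L=0$ if $\eta(-1)=0$ and $R=0$ if $\eta(2)=0$. From the definition \eqref{rate:pmm_int}, the window $\llbracket j-k, j\rrbracket\setminus\{0,1\}$ of $\mathbf{s}_j^{(k-1)}$ is entirely occupied if and only if $L\geq k-j$ and $R\geq j-1$. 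Summing over $j\in\{1,\ldots,k\}$ and setting $\ell=k-j$, $r=j-1$ yields
\[
\mathbf{c}^{(k-1)}(\eta) \;=\; \#\bigl\{(\ell,r)\in\Z_{\geq 0}^2 \,:\, \ell+r=k-1,\ \ell\leq L,\ r\leq R\bigr\},
\]
so $\mathbf{c}^{(k-1)}(\eta)$ depends on $\eta$ only through $L$ and $R$. A direct count of the integer points on the diagonal gives the closed form $\mathbf{c}^{(k)}(\eta)=\max\bigl(0,\,g(k)+1\bigr)$ with $g(k):=\min(k,L,R,L+R-k)$.

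With this representation, the target inequality $(k+1)\mathbf{c}^{(k-1)}(\eta)\geq k\,\mathbf{c}^{(k)}(\eta)$ is immediate when $\mathbf{c}^{(k)}(\eta)=0$ (which happens iff $L+R<k$), and otherwise is equivalent to
\[
(k+1)\,g(k-1) \,-\, k\,g(k) \,+\, 1 \;\geq\; 0.
\]
I plan to treat this by observing that $g$ is $1$-Lipschitz in $k$: only the terms $k$ and $L+R-k$ depend on $k$, and each changes by $\pm 1$ per unit increment, so $|g(k)-g(k-1)|\leq 1$. The cases $g(k)\leq g(k-1)$ are then immediate, since the left-hand side is at least $g(k-1)+1\geq 1$.

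The decisive case is $g(k)=g(k-1)+1$, and this is where the main obstacle lies. The key step will be to argue that such a strict increase forces the minimum defining $g(k)$ to be attained by the $k$-term itself: indeed, $L$ and $R$ are $k$-independent while $L+R-k$ decreases in $k$, so none of these three alternative candidates can produce an increase in the minimum. Consequently $g(k)=k$, which forces $L,R\geq k$ and $L+R\geq 2k$, so $g(k-1)=k-1$ and $(k+1)(k-1)-k^2+1=0$, yielding the inequality with equality. This ``forced structure'' at the increasing transition is the only non-trivial ingredient; once the closed form in terms of $L,R$ is established, the remainder is a short case analysis.
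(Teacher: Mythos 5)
Your proof is correct and takes a genuinely different route from the paper. The paper proves $u_k(\eta):=\frac{k+1}{k}\mathbf{c}^{(k-1)}(\eta)-\mathbf{c}^{(k)}(\eta)\geq 0$ by an algebraic rearrangement, exhibiting $u_k$ identically as a sum over $j$ of manifestly non-negative terms of the form $\mathbf{s}_j^{(k)}$ evaluated at singly-flipped configurations $\overline{\eta}^{\{-(k+1)+j\}}$ and $\overline{\eta}^{\{j\}}$, followed by changes of variables. You instead observe that $\mathbf{s}_j^{(k-1)}(\eta)=1$ if and only if the contiguous runs $L$ (particles at $-1,-2,\dots$) and $R$ (particles at $2,3,\dots$) satisfy $L\geq k-j$ and $R\geq j-1$, whence $\mathbf{c}^{(k-1)}(\eta)$ is a lattice-point count on the diagonal $\ell+r=k-1$ of $[0,L]\times[0,R]$, yielding the closed form $\mathbf{c}^{(k)}(\eta)=\max(0,g(k)+1)$ with $g(k)=\min(k,L,R,L+R-k)$; the monotonicity then reduces to the elementary inequality $(k+1)g(k-1)-k\,g(k)+1\geq 0$. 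Your approach buys an explicit formula for the constraint depending only on $(L,R)$ (which is essentially the observation made informally in the paper just before Figure \ref{fig:1} when introducing $\Omega_N^{x_0,x_1}$), at the cost of a case analysis; the paper's identity is less transparent but avoids any case discussion. One small point of rigor: your phrase ``none of these three alternative candidates can produce an increase in the minimum'' is slightly loose as stated (the minimum could simultaneously be attained by, say, $L+R-k$ and the $k$-term when $L=R=k$), but the intended conclusion is exactly right. The tight argument is: since $g(k)\leq \min(L,R)$ and $g(k)\leq L+R-k<L+R-(k-1)$, if one also had $g(k)\leq k-1$ then $g(k)\leq \min(k-1,L,R,L+R-k+1)=g(k-1)$; so $g(k)>g(k-1)$ forces $g(k)\geq k$, hence $g(k)=k$ and consequently $L,R\geq k$ and $g(k-1)=k-1$, giving equality. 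With that clarification, the proof is complete and correct.
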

		\begin{proof} {In order to prove the result, it is enough to show that
				\[u_k(\eta):=\frac{k+1}{k} \mathbf{c}^{(k-1)}(\eta) - \mathbf{c}^{(k)}(\eta) \geqslant 0,\] for any $\eta\in\Omega_N$.
				It turns out that this expression can we rewritten in terms of the products $\mathbf{s}_j^{(k)}$ defined in \eqref{eq:defs}, after flipping some of the configuration values $\eta(x)$. Let us be more precise. }
			
			To simplify the presentation let us introduce some notation: for any $ A\subseteq\mathbb{T}_N $ define the flip $ \eta\mapsto\overline{\eta}^A $ as
			$
			\overline{\eta}^A(x)
			=\overline{\eta}(x)\mathbf{1}_{x\in A}
			+\eta(x)\mathbf{1}_{x\notin A}
			$. 
			{Straightforward computations show that
				\begin{equation}\label{eq:summation}u_k(\eta)= \sum_{j=1}^{k+1} \bigg\{\frac{k-(j-1)}{k}
					\mathbf{s}_j^{(k)}
					\big(\overline{\eta}^{\{-(k+1)+j\}}\big)
					+
					\frac{j-1}{k}
					\mathbf{s}_j^{(k)}
					\big(\overline{\eta}^{\{j\}}\big)\bigg\}. \end{equation}
				Indeed, this is a consequence of the fact that}
		{\begin{itemize}\item  for any $ j\in\llbracket 1,k\rrbracket $ it holds
				\begin{align*}
					\mathbf{s}_j^{(k)}\big(\eta^{\{-(k+1)+j\}}\big)
					&=\overline{\eta}(-(k+1)+j)\mathbf{s}_j^{(k-1)}(\eta)
					=\mathbf{s}_j^{(k-1)}(\eta)-\mathbf{s}_j^{(k)}(\eta)
				\end{align*}
				\item and for any $ j\in\llbracket 2,k+1\rrbracket $ we have
				\begin{align*}
					\mathbf{s}_j^{(k)}\big(\eta^{\{j\}}\big)
					&=
					\mathbf{s}_{j-1}^{(k-1)}(\eta)\overline{\eta}(j)
					=\mathbf{s}_{j-1}^{(k-1)}(\eta)-\mathbf{s}_j^{(k)}(\eta).
		\end{align*}\end{itemize}}
		{Two changes of variables in the two terms of the summation in \eqref{eq:summation} then lead to the desired result.}
	\end{proof}

	Due to the analytical nature of the generalized binomial coefficients, a combinatorial interpretation of the whole model is not appropriate, as opposed to the integer case. Additionally, the problem of quantifying how, fixed some configuration, the rates change by varying $ m $ is not easy since the rates {depend in a complex manner on} $ m $ and  the  behaviour of the rate (with respect to $ m $) is different for distinct configurations. Instead of doing an extensive study of the form of the rates, we gather information about some simple monotonicity aspects of the model. We show that the reinforcement/penalization of the SSEP {given} in \eqref{PMM_rewrite} {is} non-increasing in $ k $;  {then we derive} a property of the interpolating family analogous to Proposition \ref{prop:r_seq}; and {finally} we plot on Figure \ref{fig:1} the rates in some equivalence classes of configurations which cover the values of $ c_N^{(m-1)}(\eta) $. This is, to our mind, a satisfying solution to observe the continuous deformation of the SSEP into a slow or fast diffusion model.
	\begin{Prop}\label{prop:m_mono}
		Fixed any $ \eta\in\Omega_N $ and $ m\in [0,2] $ the sequence $ \{\abs{\binom{m}{k}}\mathbf{c}^{(k-1)}(\overline{\eta})\}_{k\geq 2} $ 
		is decreasing up to the smallest $ k $ such that $ \mathbf{c}^{(k-1)}(\overline{\eta})=0 $.
	\end{Prop}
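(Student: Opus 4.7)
The plan is to show that the ratio of two consecutive terms of the sequence is bounded by $1$ whenever the denominator is non-zero, which will give the desired monotonicity. Writing
\[
\frac{\big|\binom{m}{k+1}\big|\mathbf{c}^{(k)}(\overline{\eta})}{\big|\binom{m}{k}\big|\mathbf{c}^{(k-1)}(\overline{\eta})}
=\frac{\big|\binom{m}{k+1}\big|}{\big|\binom{m}{k}\big|}\cdot\frac{\mathbf{c}^{(k)}(\overline{\eta})}{\mathbf{c}^{(k-1)}(\overline{\eta})},
\]
the two factors can be controlled independently by the elementary binomial recursion and by the already established Proposition \ref{prop:r_seq}.

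For the first factor, we use the identity $\binom{m}{k+1}=\binom{m}{k}\cdot\frac{m-k}{k+1}$ that follows directly from the defining formula \eqref{eq:binom}. Since $m\in[0,2]$ and $k\geq 2$, the numerator $m-k$ is non-positive, so taking absolute values yields
\[
\frac{\big|\binom{m}{k+1}\big|}{\big|\binom{m}{k}\big|}=\frac{k-m}{k+1}.
\]
For the second factor, Proposition \ref{prop:r_seq} applied to $\overline{\eta}$ states exactly that the sequence $\big\{\frac1k\mathbf{c}^{(k-1)}(\overline{\eta})\big\}_{k\geq 1}$ is non-increasing, which is equivalent to $\mathbf{c}^{(k)}(\overline{\eta})\leq \frac{k+1}{k}\mathbf{c}^{(k-1)}(\overline{\eta})$. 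Multiplying the two bounds gives
\[
\frac{\big|\binom{m}{k+1}\big|\mathbf{c}^{(k)}(\overline{\eta})}{\big|\binom{m}{k}\big|\mathbf{c}^{(k-1)}(\overline{\eta})}
\leq \frac{k-m}{k+1}\cdot\frac{k+1}{k}=\frac{k-m}{k}\leq 1,
\]
with strict inequality whenever $m>0$, which proves the claimed monotonicity as long as $\mathbf{c}^{(k-1)}(\overline{\eta})>0$.

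To close the argument, I would observe that the sequence stays null beyond the first index at which it vanishes: indeed, a direct inspection of \eqref{rate:pmm_int} shows that each product $\mathbf{s}_j^{(k)}$ can be written as $\eta(y)\cdot\mathbf{s}_{j'}^{(k-1)}$ for some site $y\notin\{0,1\}$ and some $j'\in\{1,\dots,k\}$, so that $\mathbf{c}^{(k-1)}(\overline{\eta})=0$ implies $\mathbf{c}^{(k)}(\overline{\eta})=0$. Hence the sequence is strictly decreasing up to the first zero of $\mathbf{c}^{(k-1)}(\overline{\eta})$, and identically zero afterwards. There is no serious obstacle here; the main point is simply to notice that Proposition \ref{prop:r_seq} is perfectly tailored so that the factor $(k+1)/k$ it produces cancels exactly the denominator $k+1$ appearing in the binomial recursion.
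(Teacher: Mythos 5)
Your proof is correct and follows essentially the same route as the paper: you write the consecutive-term ratio as the product of $|\binom{m}{k+1}|/|\binom{m}{k}| = (k-m)/(k+1)$ and the constraint ratio controlled by Proposition \ref{prop:r_seq}, which is exactly the paper's factorisation $|\binom{m}{k}|\mathbf{c}^{(k-1)}(\overline{\eta}) = \bigl(k|\binom{m}{k}|\bigr)\cdot\bigl(\tfrac1k\mathbf{c}^{(k-1)}(\overline{\eta})\bigr)$ recast in ratio form, yielding the same bound $(k-m)/k$. Your concluding observation that $\mathbf{c}^{(k-1)}(\overline{\eta})=0$ forces $\mathbf{c}^{(k)}(\overline{\eta})=0$ is a small but correct addition left implicit in the paper.
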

	\begin{proof}
		Recall that we proved in Proposition \ref{prop:r_seq} that for any $ \eta\in\Omega_N $ the sequence $ \big\{\frac1k \mathbf{c}^{(k-1)}(\eta)\big\}_{k\geq 1} $ is non-increasing. From the definition of the binomial coefficients, for $m\in(0,2)$ the sequence $\big\{k\abs{\binom{m}{k}}\big\}_{k\geq 2}$ is decreasing, since
		\[ (k+1)\abs{\binom{m}{k+1}} = k \abs{\binom{m}{k}} \; \frac{|m-k|}{k},\]
		and whenever $k\geq 2$ and $m\in(0,2)$ we have $|m-k|=k-m<k$.
	\end{proof}
	
	Before stating the monotonicity property, note that we have the following limit 
	\begin{align*}
		\lim_{m \searrow 0}\frac{1}{m}c_N^{(m-1)}(\eta)
		=\sum_{k=0}^{\ell_N-1}\frac{\mathbf{c}^{(k)}(\overline{\eta})}{k+1}.
	\end{align*}
	
	\begin{Prop} \label{prop:m_mono2}
		For any $ \eta\in\Omega_N $ the sequence $ \{\frac1mc_N^{(m-1)}(\eta)\}_{m\in[0,2]} $ is non-increasing.
	\end{Prop}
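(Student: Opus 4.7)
The plan is to rewrite $f(m):=\tfrac{1}{m}c_N^{(m-1)}(\eta)$ so that its $m$-dependence is isolated into factors that are manifestly non-negative and non-increasing on $[0,2]$, and then conclude via Abel summation using the monotonicity already established in Proposition \ref{prop:r_seq}. First, I would use the elementary identity $\tfrac{1}{m}\binom{m}{k}=\tfrac{1}{k}\binom{m-1}{k-1}$, which follows immediately from \eqref{eq:binom}, to obtain
\[
f(m)\;=\;\sum_{k=1}^{\ell_N}(-1)^{k-1}\binom{m-1}{k-1}\,g_k,\qquad g_k:=\frac{\mathbf{c}^{(k-1)}(\overline{\eta})}{k}.
\]
The coefficients $g_k$ no longer involve $m$, and by Proposition \ref{prop:r_seq} applied to $\overline\eta$, the sequence $\{g_k\}_{k\geq 1}$ is non-negative and non-increasing, so $d_k:=g_k-g_{k+1}\geq 0$ for all $k\geq 1$.

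The second step is to compute the relevant partial sums. Using Pascal's rule $\binom{m-1}{j}=\binom{m-2}{j}+\binom{m-2}{j-1}$ and a short telescoping argument, one gets
\[
A_k(m)\;:=\;\sum_{j=0}^{k-1}(-1)^j\binom{m-1}{j}\;=\;(-1)^{k-1}\binom{m-2}{k-1}\;=\;\frac{(2-m)(3-m)\cdots(k-m)}{(k-1)!},
\]
with the convention that the empty product for $k=1$ equals $1$. The crucial observation is that each factor $(j-m)$ with $j\geq 2$ is non-negative and non-increasing on $m\in[0,2]$; hence $A_k(m)$, being a product of such factors, is itself non-negative and non-increasing on $[0,2]$ for every $k\geq 1$.

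The third and final step is Abel's summation, which yields
\[
f(m)\;=\;A_{\ell_N}(m)\,g_{\ell_N}\;+\;\sum_{k=1}^{\ell_N-1}A_k(m)\,d_k.
\]
Since the $d_k$'s and $g_{\ell_N}$ are non-negative constants independent of $m$, and the $A_k(m)$'s are non-increasing on $[0,2]$, the right-hand side is a non-negative combination of non-increasing functions, hence itself non-increasing on $[0,2]$. This is the desired monotonicity.

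The main obstacle is really just recognizing the right reparametrization: one must see that the natural partial sums $A_k(m)$ of the alternating binomials collapse to $(-1)^{k-1}\binom{m-2}{k-1}$, which factorizes into \textit{monotone, non-negative} linear pieces precisely on the interval $[0,2]$. Once this Abel-summation structure is in place, Proposition \ref{prop:r_seq} supplies the remaining positivity and no further analytic work (no differentiation, no sign tracking of $\binom{m-1}{j}$) is needed.
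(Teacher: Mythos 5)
Your proof is correct, and it takes a genuinely different and cleaner route than the paper's. The paper differentiates $\tfrac1m c_N^{(m-1)}(\eta)$ in $m$, introduces $f_k(m)=\sum_{j=1}^k\tfrac{1}{j-m}$, and then carries out a delicate sign analysis: for $m\in(0,1)$ all $f_k>0$ and the sign is immediate, but for $m\in(1,2)$ one must split at an $m$-dependent index $k_0$, invoke the monotonicity of $\{\tfrac{1}{k+1}\mathbf{c}^{(k)}\}$ on either side of $k_0$ to bound the sum, and separately show uniform boundedness of an auxiliary series. You instead reindex via $\tfrac1m\binom{m}{k}=\tfrac1k\binom{m-1}{k-1}$, perform Abel summation against $g_k=\tfrac1k\mathbf{c}^{(k-1)}(\overline\eta)$, and observe that the partial sums telescope to $A_k(m)=(-1)^{k-1}\binom{m-2}{k-1}=\prod_{j=2}^{k}(j-m)/(k-1)!$, a product of non-negative linear pieces each non-increasing precisely on $[0,2]$. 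Proposition \ref{prop:r_seq} (applied to $\overline\eta$) then gives non-negativity of the Abel weights $d_k=g_k-g_{k+1}$ and of the boundary term $g_{\ell_N}$, so $f(m)$ is a non-negative combination of non-increasing functions. Your argument dispenses with differentiation, needs no case split between $m\in(0,1)$ and $m\in(1,2)$, and handles the endpoints $m\in\{0,1,2\}$ directly rather than by a separate continuity argument as the paper does; the only shared ingredient is Proposition \ref{prop:r_seq}. It also makes transparent \emph{why} the monotonicity is exactly on $[0,2]$: that is the interval on which every linear factor $(j-m)$, $j\geq 2$, stays non-negative.
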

	\begin{proof}
		From Proposition \ref{prop:low_bound_r} we can extract that $ \frac1mc_N^{(m-1)}\geq \mathbf{c}^{(0)} $ for $ m\in(0,1) $, and $ \mathbf{c}^{(0)}\geq \frac1mc_N^{(m-1)} $ for $ m\in(1,2) $. It remains to see the monotonicity of the sequence in the statement {according to the values of} $ m\in[0,2]\backslash\{1\} $. Assuming that the aforementioned sequence is non-increasing, since the binomial coefficients are continuous functions of $ m $ the interpolation property allows us to take the limit $ m\to 2 $ and as such we only need  to focus on $ m\in[0,2)\backslash\{1\} $. Rewrite 
		\begin{align*}
			\frac1m c_N^{(m-1)}(\eta)
			&=
			\mathbf{1}_{\{m\in(0,1)\}}\sum_{k=0}^{\ell_N-1} \abs{\binom{m-1}{k}}\frac{\mathbf{c}^{(k)}(\overline{\eta})}{k+1}
			+
			\mathbf{1}_{\{m\in(1,2)\}}
			\left(
			1-\sum_{k=1}^{\ell_N-1} \abs{\binom{m-1}{k}}\frac{\mathbf{c}^{(k)}(\overline{\eta})}{k+1}
			\right).
		\end{align*}
		For any $ k\geq 2 $ we compute
		\begin{align*}
			\frac{\rmd}{\rmd m}\abs{(m-1)_k}
			&=-\abs{(m-1)_k}f_k(m) 
			\quad{\text{where}}\quad f_k(m):=\sum_{j=1}^k\frac{1}{j-m}.
		\end{align*}
		This means that
		\begin{align}\label{mono_0}
			\frac{\rmd }{\rmd m}\bigg(\frac1m c_N^{(m-1)}(\eta)\bigg)
			&=-\frac{1}{2}\mathbf{c}^{(1)}(\overline{\eta})
			+\text{sign}(m-1)\sum_{k=2}^{\ell_N-1} \abs{\binom{m-1}{k}}
			f_k(m)
			\frac{\mathbf{c}^{(k)}(\overline{\eta})}{k+1}.
		\end{align}
		If $ m\in[0,1) $ then $ f_k(m)>0 $ which concludes the proof. For $ m\in(1,2) $ we need some extra work. We claim that differentiating with respect to $ m $ both sides of 
		\begin{align}\label{mono_1}
			0=1-\sum_{k=1}^{+\infty} \abs{\binom{m-1}{k}}
			\quad\text{one obtains that}\quad
			1=\sum_{k=2}^{+\infty} \abs{\binom{m-1}{k}}f_k(m).
		\end{align}
		For $ m\in[\frac32,2) $ we have $ f_k(m)>0 $ for all $ k\geq2 $ since $ f_2(m)>0 $ and $ f_k(m) $ is increasing in $ k $. If $ m\in(1,\frac32) $ then for each $ m $ there must be some $ k_0>2 $ such that $ f_k>0 $ for all $ k\geq k_0 $ so that the second summation on the previous display is equal to one. Let $ \ell_N $ be large enough so that $ k_0<\ell_N $ (otherwise the {result is obvious}). Then we can bound \eqref{mono_0} from above by taking the limit $ \ell_N\to+\infty $. Since the sequence of maps $ \{\frac{1}{k+1}\mathbf{c}^{(k)}\}_{k\geq0} $ is non-increasing, for any $ j\leq k_0\leq i$ we have
		\begin{align*}
			\frac{1}{i+1}\mathbf{c}^{(i)}
			\leq
			\frac{1}{k_0+1}\mathbf{c}^{(k_0)}
			\leq
			\frac{1}{j+1}\mathbf{c}^{(j)}.
		\end{align*}
		Then we can bound
		\begin{align}\label{mono_2}
			\begin{split}
				\frac{\rmd }{\rmd m}\bigg(\frac1m c_N^{(m-1)}(\eta)\bigg)
				&\leq
				-\frac{1}{2}\mathbf{c}^{(1)}(\overline{\eta})
				+\sum_{k=2}^{k_0-1} \abs{\binom{m-1}{k}}
				f_k(m)
				\frac{\mathbf{c}^{(k_0)}(\overline{\eta})}{k_0+1}
				+\sum_{k\geq k_0}\abs{\binom{m-1}{k}}
				f_k(m)
				\frac{\mathbf{c}^{(k_0)}(\overline{\eta})}{k_0+1}
				\\
				&=
				-\frac{\mathbf{c}^{(1)}(\overline{\eta})}{2}+\frac{\mathbf{c}^{(k_0)}(\overline{\eta})}{k_0+1}\leq 0.
			\end{split}
		\end{align}
		To conclude the proof, it is enough to show that the sequence $ (a_n)_{n\geq2} $ given by $ 0<a_n:=\sum_{k=2}^{n} \abs{\binom{m-1}{k}}f_k(m) $ is uniformly bounded. Since $ f_1(m)<0 $, we  first bound $ f_k(m) $ by the  {corresponding} integral for $ k\geq2 $:
		\begin{align*}
			f_k(m)\lesssim\log(k-m)-\log(2-m).
		\end{align*}
		Recall the inequality $ \log x\leq \frac1s x^s $ for any $ x,s\in\mathbb{R}_+ $. From this and \eqref{eq:usefulbound} it holds
		\begin{align*}
			\sum_{k=2}^{n} \abs{\binom{m-1}{k}}f_k(m)
			\lesssim
			\sum_{k=2}^{n}\frac{1}{(k-m)^{m-s}}
			-
			\log(2-m)
			\sum_{k=2}^{n}\abs{\binom{m-1}{k}}.
		\end{align*}
		Setting $ 0<s $ such that $ m-s>1 $, observing that the quantity  on the right-hand side of the previous display is increasing in $ n $ and taking $ n\to+\infty $ we end the proof.
	\end{proof}
	We now plot the evolution of $ c_N^{(m-1)}(\eta)$ with respect to $m$ (for a fixed configuration $\eta$). To that aim, let us start with the following remark: for any $ k\geq {1} $ the value of $ \mathbf{c}^{(k)}(\overline\eta) $ is uniquely  determined by the positions of the first particle to the left of $ 0 $ and the first particle to the right of $ 1 $. More precisely, for any $ x_0,x_1\in\mathbb{T}_N $ consider the set $ \Omega_N^{x_0,x_1}=\big\{\eta\in\Omega_N\; :\;  \eta(-x_0)=\eta(x_1)=1,\; \eta(x)=0, \;\text{ for all }\: x\in\llbracket -x_0+1,x_1-1\rrbracket\backslash \{0,1\}  \big\}. $
	%
	%
		%
		%
		%
		%
		%
		%

		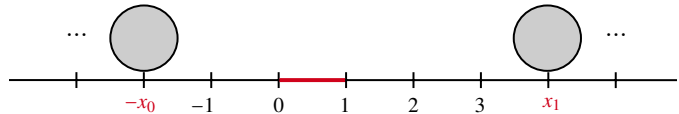
\begin{figure}[H]

			\tikzset{every picture/.style={line width=0.75pt}} 
			
			\begin{tikzpicture}[x=0.75pt,y=0.75pt,yscale=-1,xscale=1]
				
				\draw    (186.3,140) -- (523.2,140.05) (220.3,136.01) -- (220.3,144.01)(254.3,136.01) -- (254.3,144.01)(288.3,136.02) -- (288.3,144.02)(322.3,136.02) -- (322.3,144.02)(356.3,136.03) -- (356.3,144.03)(390.3,136.03) -- (390.3,144.03)(424.3,136.04) -- (424.3,144.04)(458.3,136.04) -- (458.3,144.04)(492.3,136.05) -- (492.3,144.05) ;
				\draw  [fill={rgb, 255:red, 155; green, 155; blue, 155 }  ,fill opacity=0.5 ] (440.86,118.85) .. controls (440.86,109.74) and (448.45,102.35) .. (457.82,102.35) .. controls (467.18,102.35) and (474.77,109.74) .. (474.77,118.85) .. controls (474.77,127.96) and (467.18,135.35) .. (457.82,135.35) .. controls (448.45,135.35) and (440.86,127.96) .. (440.86,118.85) -- cycle ;
				\draw  [fill={rgb, 255:red, 155; green, 155; blue, 155 }  ,fill opacity=0.5 ] (237.41,118.85) .. controls (237.41,109.74) and (245,102.35) .. (254.36,102.35) .. controls (263.73,102.35) and (271.32,109.74) .. (271.32,118.85) .. controls (271.32,127.96) and (263.73,135.35) .. (254.36,135.35) .. controls (245,135.35) and (237.41,127.96) .. (237.41,118.85) -- cycle ;
				\draw [color={rgb, 255:red, 208; green, 2; blue, 27 }  ,draw opacity=1 ][line width=1.5]    (322.7,140.02) -- (355.8,140.02) ;
				
				\draw (243,147.5) node [anchor=north west][inner sep=0.75pt]  [font=\scriptsize,color={rgb, 255:red, 208; green, 2; blue, 27 }  ,opacity=1 ] [align=left] {$\displaystyle -x_{0}$};
				\draw (454.17,147.5) node [anchor=north west][inner sep=0.75pt]  [font=\scriptsize,color={rgb, 255:red, 208; green, 2; blue, 27 }  ,opacity=1 ] [align=left] {$\displaystyle x_{1}$};
				\draw (419.67,147.5) node [anchor=north west][inner sep=0.75pt]  [font=\scriptsize] [align=left] {$\displaystyle 3$};
				\draw (386.17,147.5) node [anchor=north west][inner sep=0.75pt]  [font=\scriptsize] [align=left] {$\displaystyle 2$};
				\draw (351.67,147.5) node [anchor=north west][inner sep=0.75pt]  [font=\scriptsize] [align=left] {$\displaystyle 1$};
				\draw (318.17,147.5) node [anchor=north west][inner sep=0.75pt]  [font=\scriptsize] [align=left] {$\displaystyle 0$};
				\draw (276.5,147.5) node [anchor=north west][inner sep=0.75pt]  [font=\scriptsize] [align=left] {$\displaystyle -1$};
				\draw (485.5,115) node [anchor=north west][inner sep=0.75pt]  [font=\small] [align=left] {$\displaystyle ...$};
				\draw (213.5,115) node [anchor=north west][inner sep=0.75pt]  [font=\small] [align=left] {$\displaystyle ...$};

			\end{tikzpicture}
			\caption{Configuration belonging to $\Omega_N^{2,4}$.}
		\end{figure}
		It is simple to see that if $ \eta_0,\eta_1\in\Omega_N^{x_0,x_1}$ then $ \br^{(k)}(\overline\eta_0)=\br^{(k)}(\overline\eta_1) $ for all $ k \geq {1}$. Therefore we obtain $ c_N^{(m-1)}(\eta_0)=c_N^{(m-1)}(\eta_1) $, and for every $ \eta\in\Omega_N^{x_0,x_1} $ one can plot $ c_N^{(m)}(\eta) $ as a function of $m$, as in Figure \ref{fig:1}. To that end, for each $ m,\ell_N,x_0 $ and $ x_1 $ fixed and $ \xi\in\Omega_N^{x_0,x_1} $ we introduce $ \tilde{c}_N(x_0,x_1,m)\equiv c_N^{(m-1)}(\xi) $ .	
					\begin{figure}[H]
							\begin{table}[H]
									\begin{tabular}{lll}
											\subcaptionbox*{$ m=0.25 $}{\includegraphics[width=0.3\textwidth]{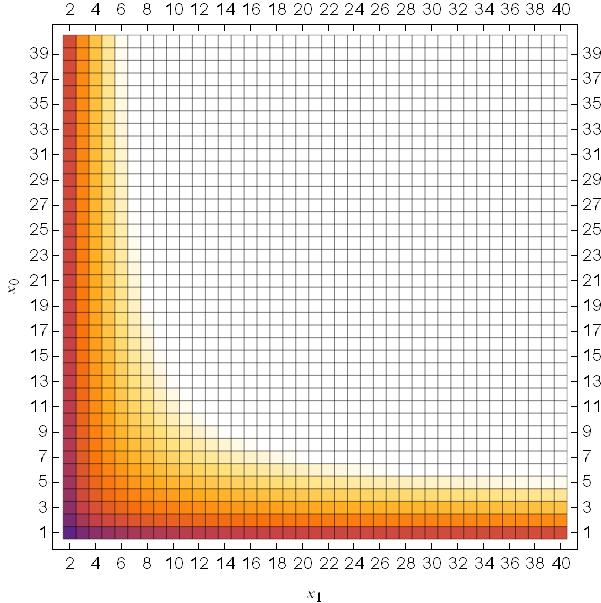}}
											&
											\subcaptionbox*{$ m=0.5 $}{\includegraphics[width=0.3\textwidth]{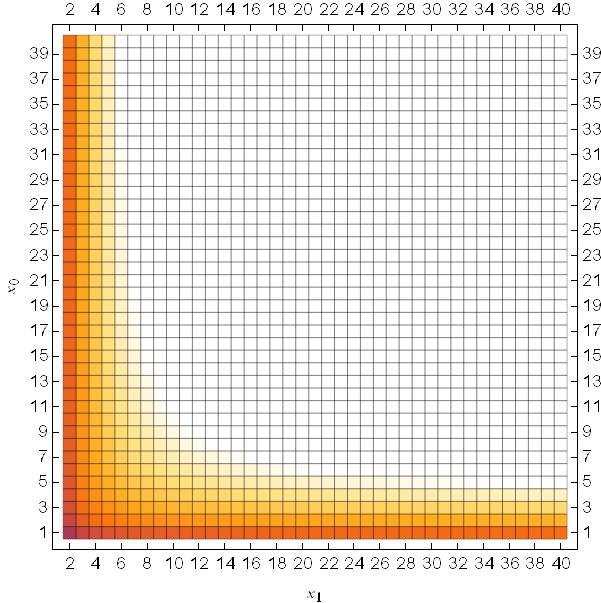}}
											&
											\subcaptionbox*{$ m=0.75 $}{\includegraphics[width=0.3\textwidth]{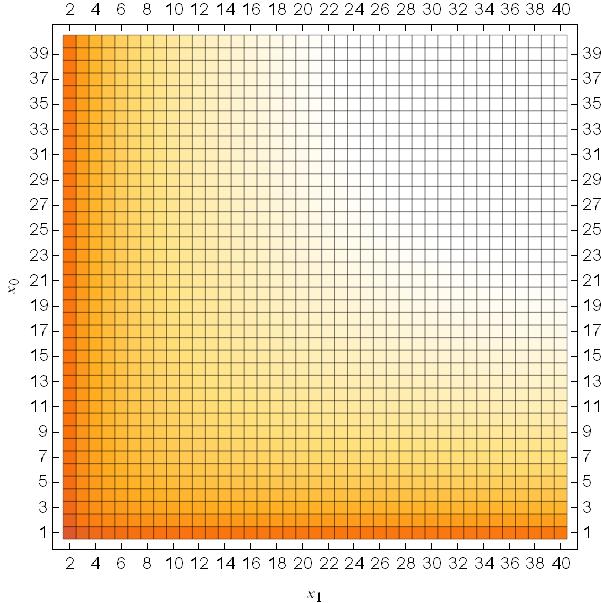}}
											\\
											\subcaptionbox*{$ m=1 $}{\includegraphics[width=0.3\textwidth]{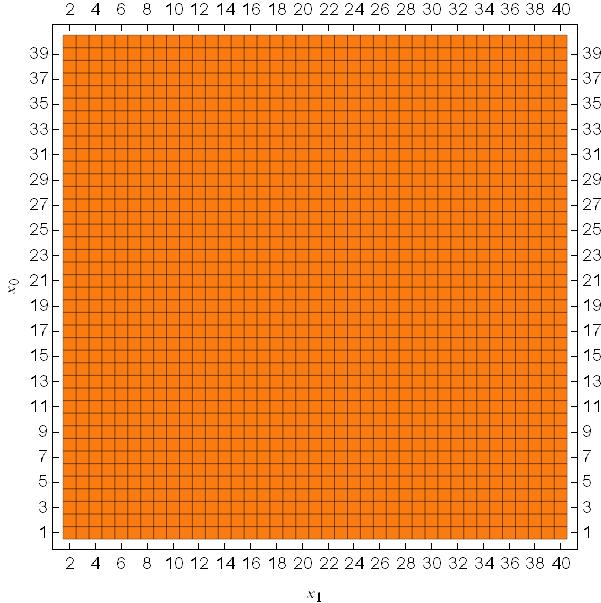}}
											&
											\subcaptionbox*{$ m=1.25 $}{\includegraphics[width=0.3\textwidth]{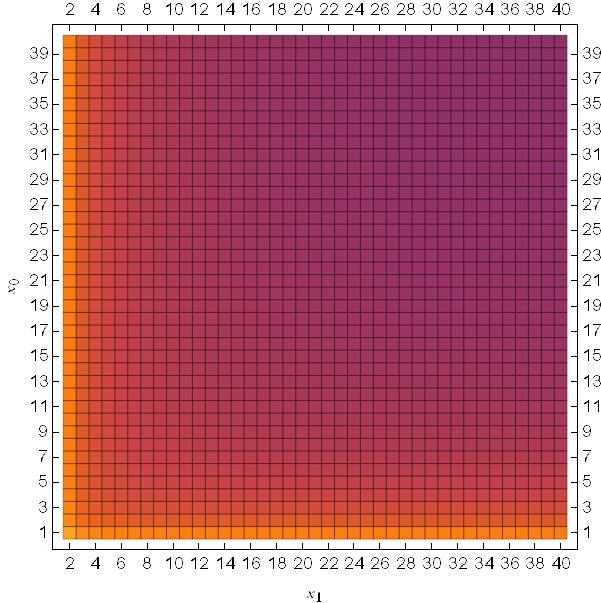}}
											&
											\subcaptionbox*{$ m=1.5 $}{\includegraphics[width=0.3\textwidth]{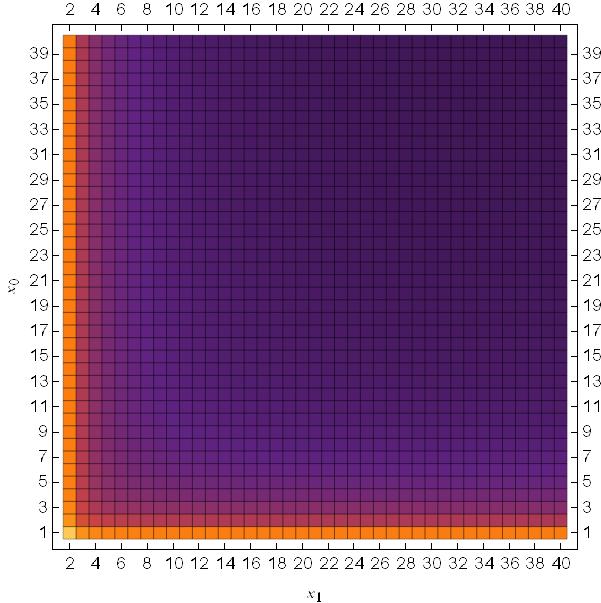}}
											\\
											\subcaptionbox*{$ m=1.75 $}{\includegraphics[width=0.3\textwidth]{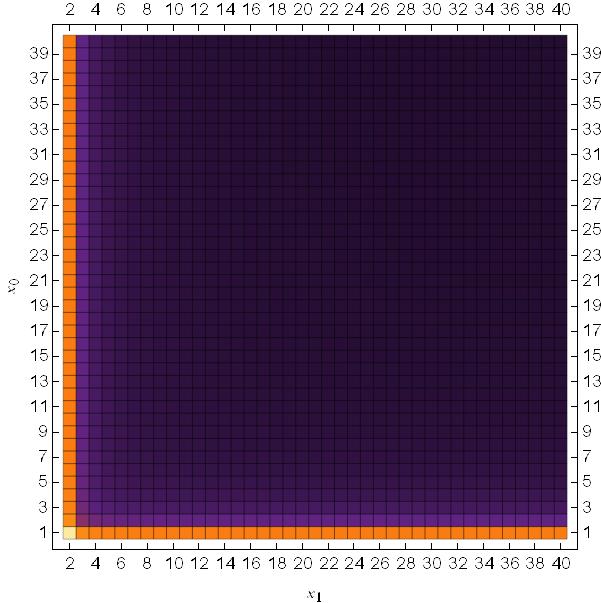}}
											&
											\subcaptionbox*{$ m=2 $}{\includegraphics[width=0.3\textwidth]{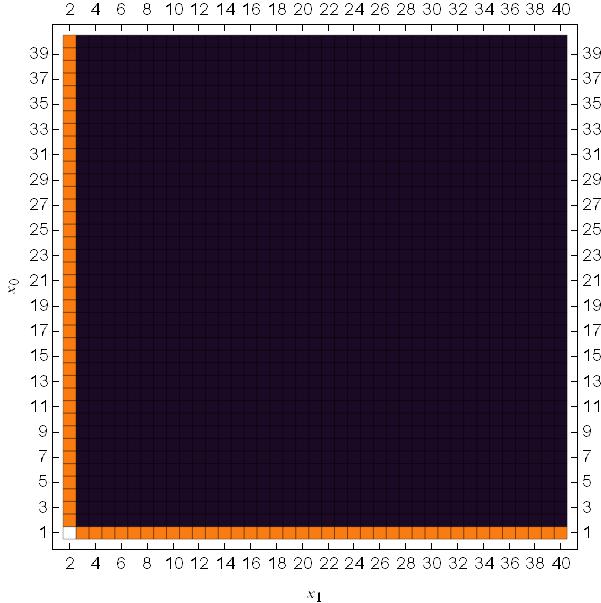}}
											&
											\qquad\includegraphics[width=0.04\textwidth]{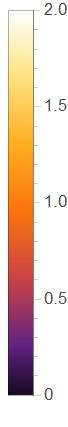}
										\end{tabular}
									\caption[ ]
									{\small Evolution of $ \tilde{c}_N(x_0,x_1,m) $ for $ \ell_N=40 $}\label{fig:1}
								\end{table}
						\end{figure}
		We stress that the previous figure presents the value of the constraint $ c_N^{(m-1)}(\eta) $ (equivalently, the rate $ r_N^{(m-1)}(\eta) $ when $ \eta(0)+\eta(1)=1 $) fixed $ x_0,x_1 $ and a representative $ \eta\in\Omega_N^{x_0,x_1} $, that is, a configuration with the first particle to the \textit{left} of the site $ 0 $ located at the site $ -x_0 $, and the first particle to the \textit{right} of the site $ 1 $ located at the site $ x_1 $. Note the symmetry of the plots with respect to $ x_0=x_1 $, which is consequence of the symmetry of the jumps. Fixed $ m $ and $ \ell_N $, varying $ x_0 $ and $ x_1 $ allow us to see all the possible values of the constraints. For example, for $ m=1 $ the rate is equal to $ 1 $ independently of $ x_0,x_1 $, hence the sub-figure, in this case, has the same colour for all $ x_0,x_1\leq 40 $. For $ m=2 $ the rate is non-zero if and only if there is at least one particle located at the site $ -1 $ or at the site $ 2 $. In other words, $ \mathbf{c}^{(1)}(\eta)=\eta(-1)+\eta(2) $. Therefore, we obtain in the respective sub-figure the horizontal and vertical orange lines, where $ \mathbf{c}^{(1)}(\eta)=1 $ for any $ \eta\in \Omega_N^{1,x_1}\cup\Omega_N^{x_0,2} $ with $ x_0\geq2 $ and $ x_1\geq3 $ and $ \mathbf{c}^{(1)}=0 $ otherwise; while at $ x_0=1 $ and $ x_1=2 $ the constraint attains its largest value, \textit{i.e.,} $ \mathbf{c}^{(1)}(\eta)=2 $ for all $ \eta\in\Omega_N^{1,2} $. In the \textit{fast-diffusion} regime, we see a "continuous" increase of the rates as $ x_0,x_1 $ increase, while the opposite in the \textit{slow-diffusion} regime. This is a clear consequence of the penalization/reinforcement terms, as seen in \eqref{PMM_rewrite}.

		\subsection{Main result}
		
		To expose our main result about the hydrodynamic limit of the interpolating model we first introduce some definitions. Let us fix a finite time horizon $[0,T]$, let $\mu_N$ be an initial probability measure on $\Omega_N$, and let $\{\eta_{N^2t}\}_{t\geq 0}$ be the Markov process generated by $N^2\mathcal{L}_N^{(m-1)}$ for $ m\in(0,2)\backslash\{1\} $, given in \eqref{PMM:m1}.
		
		\begin{Def}[Empirical measure]
			For any $ \eta\in\Omega_N $ define the empirical measure $ \pi^N(\eta,\mathrm{d}u) $ on the continuous torus $\mathbb{T}$ by
			\begin{align*}
				\pi^N(\eta,\mathrm{d}u)=\frac1N \sum_{x\in\mathbb{T}_N}\eta(x)\delta_{x/N}(\mathrm{d}u)
			\end{align*}
			where $ \delta_{x/N} $ is the Dirac measure at the macroscopic point $ x/N $. Moreover, we define its time evolution {in the diffusive time scale} by $ \pi^N_t(\eta,\rmd u)=\pi^N(\eta_{N^2t},\rmd u) $. For any function $ G:\mathbb{T}\to\mathbb{R} $,  we define the integral of $ G $ with respect to the empirical measure as
			\begin{align}\label{int:emp}
				\inner{\pi_t^N,G}
				=\int_{\mathbb{T}}G(u)\pi_t^N(\eta,\rmd u)
				=\frac{1}{N}\sum_{x\in\mathbb{T}_N}G(\tfrac{x}{N})\eta_{N^2t}(x).
			\end{align}
		\end{Def}
		Let $ \mathcal{M}_+ $ be the space of positive measures on $ [0,1] $ with total mass no larger than $ 1 $ and endowed with the weak topology.
		Let $ \mathcal{D}([0,T],\Omega_N) $ be the Skorokhod space of trajectories induced by $ \{\eta_{N^2t}\}_{t\in[0,T]} $ with initial measure $ \mu_N $. Denote by $ \mathbb{P}_{\mu_N} $ the induced probability measure on the space of trajectories $ \mathcal{D}([0,T],\Omega_N) $ and by $ \mathbb{Q}_N=\mathbb{P}_{\mu_N}\circ(\pi^N)^{-1} $ the probability measure on $ \mathcal{D}([0,T],\mathcal{M}_+) $ induced by $ \{\pi^N_t\}_{t\in[0,T]} $ and $ \mu_N $.
		
		\medskip
		
		Now we introduce the notion of weak solutions to  equation \eqref{PDE:formal} for $ m\in(0,2)$. For that purpose, for $ n\in\mathbb{N}_+\cup\{\infty\} $ let $ C^n(\mathbb{T}) $ be the set of $ n $ times continuously differentiable, real-valued functions defined on $ \mathbb{T} $; and let $ C^{n,p}([0,T]\times \mathbb{T}) $ be the set of all real-valued functions defined on $ [0,T]\times \mathbb{T} $ that are $ n $ times differentiable on the first variable and $ p $ times differentiable on the second variable and with continuous derivatives. Finally, for two functions $f,g\in L^2(\T)$, $\langle f,g\rangle$ denotes their standard euclidean product in $L^2(\T)$ and $\|\cdot\|_{L^2(\T)}$ is the associated norm. We remark that we use the notation $ \inner{\cdot,\cdot} $ {twice}, for the inner-product just introduced, and {also} in \eqref{int:emp}, although their difference will be clear from the context.

		\begin{Def}[Sobolev space] \label{def:sob}
			The semi inner-product $ \inner{\cdot,\cdot}_1 $ on the set $ C^\infty(\mathbb{T}) $ is given on $ G,H\in C^\infty(\mathbb{T}) $ by $ \inner{G,H}_1=\inner{\partial_uG,\partial_u H}=\int_\mathbb{T}\partial_uG(u)\partial_u H(u) \rmd u, $ and the associated semi-norm is denoted by $ \norm{\cdot}_1 $. Let $ \mathcal{H}^1(\mathbb{T}) $ be the Sobolev space on $ \mathbb{T} $, defined as the completion of $ C^\infty(\mathbb{T}) $ for the norm $ \norm{\cdot}_{\mathcal{H}^1(\mathbb{T})}^2=\norm{\cdot}^2_{L^2}+\norm{\cdot}_{1}^2 $, and let $ L^2([0,T];\mathcal{H}^1(\mathbb{T})) $ be the set of measurable functions $ f:[0,T]\to\mathcal{H}^1(\mathbb{T}) $ such that $ \int_0^T\norm{f_s}^2_{\mathcal{H}^1(\mathbb{T})}\rmd s<\infty $.
		\end{Def}

		\begin{Def}[Weak solutions to \eqref{PDE:formal}]\label{def:weak}
			Let $ \rho^{\rm ini}:\mathbb{T}\to[0,1] $ be a measurable function. We say that $ \rho:[0,T]\times \mathbb{T}\mapsto [0,1] $ is a weak solution of the FDE (resp. PME) with $ m\in(0,1) $ (resp. $ m\in(1,2) $) if
			\begin{enumerate}
				\item
				\begin{enumerate}
					\item For $ m\in(0,1) $ it holds $ \rho\in L^2([0,T];\mathcal{H}^1(\mathbb{T})) $,
					\item For $ m\in(1,2) $ it holds $ \rho^m\in L^2([0,T];\mathcal{H}^1(\mathbb{T})) $.
				\end{enumerate}
				\smallskip
				\item For any $ t\in[0,T] $ and $ G\in C^{1,2}([0,T]\times \mathbb{T}) $ it holds that
				\begin{equation}\label{weak}
					F(\rho^{\rm ini},\rho,G,t)
					:=
					\inner{\rho_t,G_t}-\inner{\rho^{\rm ini},G_0}
					-\int_0^t
					\big\{
					\inner{\rho_s,\partial_sG_s}
					+\inner{(\rho_s)^m,\partial_{uu} G_s}
					\big\}
					\rmd s \equiv 0.
				\end{equation}
			\end{enumerate}
		\end{Def}
		
		In the appendix, Lemmas \ref{lem:uniq_FDE} and \ref{lem:uniq_PME}, we will show that the weak solution given {by} last definition is unique, for $ m\in(0,1) $ and $ m\in(1,2) $, respectively.

		\begin{Def}[Local equilibrium distribution]\label{def:ass}
			Let $ \{\mu_N\}_{N\geq 1} $ be a sequence of probability measures on $ \Omega_N $, and let $f:\mathbb{T}\to[0,1] $ be a measurable function. If for any continuous function $ G:\mathbb{T}\to\mathbb{R} $ and every $ \delta>0 $ it holds
			\begin{align*}
				\lim_{N\to+\infty}\mu_N
				\left(
				\eta\in\Omega_N
				:\abs{\inner{\pi^N,G}-\inner{f,G}}>\delta
				\right)
				=0,
			\end{align*}
			we say that the sequence $ \{\mu_N\}_{N\geq 1} $ is a \emph{local equilibrium measure} associated to the profile $ f $.
		\end{Def}
		
		\begin{Ex} An example of a measure satisfying Definition \ref{def:ass} is the product Bernoulli measure, given on $x\in\T_N$ by
			\[ \nu_{\rho^{\rm ini}(\cdot)}^N(\eta\in\Omega_N:\;\eta(x)=1) = \rho^{\rm ini}\big(\tfrac x N\big), \] where $ \rho^{\rm ini}:\mathbb{T}\to[0,1] $ is a measurable Lipschitz profile. Then $\nu^N_{\rho^{\rm ini}(\cdot)}$ is a local equilibrium measure associated to $\rho^{\rm ini}$. For more details see, for instance, the proof of \cite[Theorem 2.2]{BMNS}.
		\end{Ex}
		
		{We are now ready to state the main result of this paper:}
		
		\begin{Th}[Hydrodynamic limit] \label{theo:hydro}
			Let $ \rho^{\rm ini}:\T\to[0,1] $ be a measurable function and let $ \{\mu_N\}_{N\geq 1} $ be a local equilibrium measure associated to it.
			Then, for any $ t\in [0,T] $, $ \delta>0 $ and $\mathbb{N}\ni\ell_N \to \infty$ such that $\ell_N\leq N $,  it holds
			\begin{align*}
				\lim_{N\to+\infty}
				\mathbb{P}_{\mu_N}
				\left(
				\abs{\inner{\pi_t^N,G}-\inner{\rho_t,G}}>\delta
				\right)=0,
			\end{align*}
			where $ \rho $ is the unique weak solution of \eqref{PDE:formal} in the sense of Definition \ref{def:weak}, with initial condition $\rho^{\rm ini}$.
		\end{Th}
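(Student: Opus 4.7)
The plan is to follow the entropy method of \cite{GPV}, with the three-step scheme (tightness, energy, identification of limit points), using the gradient structure \eqref{grad:non_int} as the key simplification, and invoking the replacement lemmas (Lemmas \ref{lem:rep_shift}, \ref{lem:rep_boxes}, \ref{lem:rep_FDM-tight}, \ref{lem:rep_FDM}) together with the energy bound of Proposition \ref{prop:energy} to pass from the microscopic dynamics to the macroscopic diffusion term $\partial_{uu}\rho^m$.

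First, I would write down Dynkin's martingale decomposition: for any $G\in C^{1,2}([0,T]\times\T)$,
\begin{align*}
M_t^N(G)=\langle\pi^N_t,G_t\rangle-\langle\pi^N_0,G_0\rangle-\int_0^t\langle\pi^N_s,\partial_sG_s\rangle\,\rmd s-\int_0^t N^2\mathcal{L}_N^{(m-1)}\langle\pi^N_s,G_s\rangle\,\rmd s
\end{align*}
is a martingale, with quadratic variation that vanishes as $N\to+\infty$ (after checking the carré-du-champ estimate: in the slow regime the rates are bounded, and in the fast regime one uses Proposition \ref{prop:low_bound_r} together with the speed-up control of Lemma \ref{lem:up_speed}). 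Thanks to the gradient property \eqref{grad:non_int} and \eqref{curr}, discrete summation by parts transforms the integral term into
\begin{align*}
\int_0^t\frac{1}{N}\sum_{x\in\T_N}(\Delta_NG_s)(\tfrac{x}{N})\,h_N^{(m-1)}(\tau_x\eta_{N^2s})\,\rmd s,
\end{align*}
where $\Delta_N$ is the discrete Laplacian, converging uniformly to $\partial_{uu}$.

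Next, I would establish tightness of $\{\mathbb{Q}_N\}$ on $\mathcal{D}([0,T],\mathcal M_+)$ via Aldous' criterion applied to the real process $t\mapsto\langle\pi^N_t,G\rangle$ for $G$ in a dense countable family in $C^2(\T)$; the martingale part is handled by the $L^2$-vanishing of the bracket, while the drift part requires an $L^1$ bound on the integrand, which in the fast regime is nontrivial because $\max_\eta r_N^{(m-1)}(\eta)\to+\infty$, and precisely here Lemma \ref{lem:rep_FDM-tight} is used. Then Proposition \ref{prop:energy} yields that any limit point $\mathbb{Q}^\star$ is concentrated on trajectories $\rho$ satisfying the appropriate Sobolev regularity of Definition \ref{def:weak}(1), which is also exactly what is needed to deduce Hölder continuity of $\rho$ (or of $\rho^m$) and therefore the validity of local approximation by empirical block averages.

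The core of the argument is the identification of limit points. Fix $G\in C^{1,2}$ and use the martingale formulation to reduce, modulo vanishing errors, to showing that
\begin{align*}
\frac{1}{N}\sum_{x\in\T_N}(\partial_{uu}G_s)(\tfrac{x}{N})\,h_N^{(m-1)}(\tau_x\eta_{N^2s})\approx\frac{1}{N}\sum_{x\in\T_N}(\partial_{uu}G_s)(\tfrac{x}{N})\,\rho_s(\tfrac{x}{N})^m.
\end{align*}
Expanding $h_N^{(m-1)}$ as a binomial sum over $\mathbf{h}^{(k-1)}(\overline\eta)$, applying the one-block and two-blocks replacement lemmas (Lemmas \ref{lem:rep_shift}, \ref{lem:rep_boxes} in the slow regime and Lemma \ref{lem:rep_FDM} in the fast regime) to replace the products of occupation variables by polynomial functions of the local empirical density $\rho^\varepsilon_s(u)$ over a macroscopic box of size $\varepsilon N$, and then using the generalized binomial identity \eqref{eq:expand} together with the non-asymptotic estimate \eqref{eq:usefulbound} to re-sum and recover $\rho_s(u)^m$, one identifies $\mathbb{Q}^\star$-a.s.\ any limit trajectory as a weak solution of \eqref{PDE:formal} in the sense of \eqref{weak}. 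Combined with the uniqueness result (Lemmas \ref{lem:uniq_PME} and \ref{lem:uniq_FDE}) and the initial condition coming from the local equilibrium assumption, tightness upgrades to convergence in probability to the deterministic Dirac mass at $\rho_t(u)\rmd u$, which is the desired statement.

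The main obstacle is the replacement step combined with the divergent truncation $\ell_N$: the binomial coefficients oscillate in sign and the tail $\sum_{k\geq\ell_N}|\binom{m}{k}|$ is only polynomially small (see \eqref{eq:m-series}), so the size of the intermediate blocks over which one replaces a $k$-fold product by an empirical average has to be chosen dynamically in $k$ so that the cumulative error, summed against $|\binom{m}{k}|$, still vanishes as $N\to+\infty$ with no restriction on $\ell_N$ beyond $\ell_N\leq N$. As indicated in the introduction, this forces one to slow down the explosion by working first up to $(\ell_N)^n$ with $0<n<1$ and treating the remaining tail by Lemma \ref{lem:bin_bound} and the bound Lemma \ref{lem:up_speed}; in the fast-diffusion regime an additional difficulty is that the rates are unbounded in $N$, so the replacement estimates must be derived with sharp prefactors that are compatible with the energy lower bound of Proposition \ref{prop:energy}.
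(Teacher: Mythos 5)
Your proposal follows essentially the same route as the paper: Dynkin plus the gradient identity, Aldous tightness (with the extra replacement lemma in the fast regime because the rates blow up), the energy bound to obtain Sobolev regularity and H\"older continuity, the $k$-dependent block sizes and the $(\ell_N)^n$ truncation to control the series of binomial coefficients, and finally uniqueness of weak solutions to upgrade subsequential weak convergence to convergence in probability. The sketch correctly identifies all the genuine obstacles (oscillating signs, heavy tails of $|\binom{m}{k}|$, unbounded rates for $m<1$) and the lemmas that resolve them, so it matches the paper's argument in structure and substance.
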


		\section{Proof of Theorem \ref{theo:hydro}}
		\label{sec:HL}
		
		We first outline the proof. As previously mentioned, to prove the hydrodynamic  limit we use the classical entropy method introduced in \cite{GPV}. The general scheme is the following:  we prove that  the sequence of  empirical measures is tight (as proved in Subsection \ref{subsec:tight}), which implies the existence of weakly convergent subsequences; and then we prove that the limiting measure is concentrated on paths of absolutely continuous measures with respect to the Lebesgue measure, whose density is a weak solution to the hydrodynamic equation \eqref{PDE:formal} (proved in Section \ref{subsec:char}). To do so we shall need an energy estimate (Section \ref{sec:energy}), which gives us some regularity of the solution to the PDE, and  replacement lemmas (Section \ref{sec:replace}) whose role is to close the equations for {the limiting profile}  at the microscopic level. Proving uniqueness of weak solutions (see Appendix \ref{app:PDE}), we see that the limit of the sequence of measures is then unique and we can conclude that the whole sequence converges to that limit.

		We introduce some discrete operators that will be important in what follows. Let us extend Definition \ref{def:translation} to functions defined on $\T_N$ (instead of $\Omega_N)$.
		Without loss of generality, we adopt the same notation. Namely, if $f:\T_N\to\R$ then its gradients are $ {\nabla}^+f=(\tau_1-\mathbf{1})f $ and $ {\nabla}^-f=(\mathbf{1}-\tau_{-1})f $, where $ \mathbf{1} $ is now the identity function defined on $ \mathbb{T}_N $. 
		Finally, for any $ N \in \mathbb{N}_+ $, we also define the \emph{rescaled gradients} on $ \mathbb{T}_N $ as
		$		{\nabla}^{\pm,N}=N{\nabla}^\pm,
		$
		and the \emph{rescaled Laplacian} as
		$		\Delta^N={\nabla}^{+,N}\circ {\nabla}^{-,N}
		={\nabla}^{-,N}\circ {\nabla}^{+,N}.$
		
		\subsection{Tightness}\label{subsec:tight}
		
		Let us start by exploiting the gradient property of our model. Recall that we consider the evolution in the diffusive  time scale $ tN^2$, that is, given by the generator
		$
		\mathcal{L}:=N^2\mathcal{L}_N^{(m-1)}.
		$
		From Dynkin's formula \cite[Appendix 1, Lemma 5.1]{KL:book}, we know that for any  $ G\in C^{1,2}([0,T]\times\mathbb{T}) $
		\begin{align}\label{dynk_0}
			M_t^N(G)
			:=\inner{\pi_t^N,G_t}-\inner{\pi_0^N,G_0}
			&-\int_0^t
			(\partial_s+\mathcal L)	\inner{\pi_s^N, G_s}\rmd s
		\end{align}
		is a martingale with respect to the natural filtration of the process.
		Observe that Lemma \ref{lem:grad} and a summation by parts imply that
		\begin{align}\label{gen-inner}
			\mathcal{L}\inner{\pi^N_s,G_s}
			&=\frac{1}{N}\sum_{x\in\mathbb{T}_N}\Delta^NG_s(\tfrac{x}{N})\sum_{k=1}^{\ell_N}\binom{m}{k}(-1)^k\mathbf{h}^{(k-1)}_s(\tau_x\overline{\eta}),
		\end{align}
		where we defined for any $ k\in\mathbb{N} $ and any $ s\in[0,t] $ the time evolution $ \mathbf{h}_s^{(k-1)}(\eta)=\mathbf{h}^{(k-1)}(\eta_{N^2s}) $. We highlight the flip $ \eta\mapsto \overline{\eta} $ which comes from the definition of the rates in \eqref{eq:transitionrates}. Therefore the martingale rewrites as
		\begin{align}\label{dynk}
			\begin{split}
				M_t^N(G)
				=\inner{\pi_t^N,G_t}-\inner{\pi_0^N,G_0}
				&-\int_0^t
				\inner{\pi_s^N,\partial_s G_s}\rmd s
				\\&-\int_0^t
				\frac{1}{N}\sum_{x\in\mathbb{T}_N}\Delta^NG_s(\tfrac{x}{N})\sum_{k=1}^{\ell_N}\binom{m}{k}(-1)^k\mathbf{h}_s^{(k-1)}(\tau_x\overline{\eta})
				\rmd s.
			\end{split}
		\end{align}
		

		\begin{Prop}[Tightness]\label{prop:tight}
			The sequence of probability measures $ (\mathbb{Q}_N)_{N\in\mathbb{N}} $ is tight with respect to the Skorokhod topology of $ \mathcal{D}\left([0,T],\mathcal{M}_+\right) $.
		\end{Prop}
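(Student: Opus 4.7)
The plan is to apply Aldous's tightness criterion. Since $\mathcal{M}_+$ is compact for the weak topology, one-point tightness of $\{\pi_t^N\}_N$ is automatic, so it suffices to verify that for every $\varepsilon>0$ and every smooth $G:\mathbb{T}\to\mathbb{R}$,
\[
\lim_{\gamma\to 0}\limsup_{N\to\infty}\sup_{\tau,\,\theta\leq \gamma}\mathbb{P}_{\mu_N}\bigl(|\langle\pi^N_{\tau+\theta},G\rangle-\langle\pi^N_{\tau},G\rangle|>\varepsilon\bigr)=0,
\]
where the supremum runs over stopping times $\tau\leq T$ and deterministic increments $\theta\in[0,\gamma]$. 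Dynkin's formula \eqref{dynk} splits this increment into the difference of the martingale $M^N(G)$ at times $\tau+\theta$ and $\tau$, plus $\int_\tau^{\tau+\theta}\mathcal{L}\langle\pi^N_s,G\rangle\,ds$, and I will control the two pieces separately.

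For the martingale, Doob's inequality reduces matters to the predictable bracket, whose carré-du-champ integrand obeys
\[
\Gamma(\langle\pi^N,G\rangle)\leq \frac{\|G'\|_\infty^2}{N^2}\sum_{x\in\mathbb{T}_N}r_N^{(m-1)}(\tau_x\eta),
\]
using $|\nabla_{x,x+1}\langle\pi^N,G\rangle|\leq \|G'\|_\infty/N^2$. Expanding the rate via \eqref{eq:transitionrates}, then applying Lemma \ref{lem:up_speed} with $\ell=N$ together with the summability of $\sum_k|\binom{m}{k}|$ that follows from \eqref{eq:usefulbound} for every $m\in(0,2)$, I obtain $\sum_x r_N^{(m-1)}(\tau_x\eta)\leq C_m N$ uniformly in $\eta$. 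Hence $\mathbb{E}[(M^N_{\tau+\theta}-M^N_\tau)^2]\lesssim \gamma/N\to 0$.

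The integral term is handled by exploiting the gradient property. Two discrete summations by parts combined with \eqref{grad:non_int} rewrite it as
\[
\int_\tau^{\tau+\theta}\frac{1}{N}\sum_{x\in\mathbb{T}_N}\Delta^N G(\tfrac{x}{N})\,h_N^{(m-1)}(\tau_x\eta_s)\,ds,
\]
whose absolute value is at most $\theta\|G''\|_\infty\sup_s\frac{1}{N}\sum_x|h_N^{(m-1)}(\tau_x\eta_s)|$. Inserting \eqref{expr:h2}, bounding $|\eta(x+i)-\eta(x+i+1)|\,\mathbf{s}_j^{(k-1)}(\tau_{x+i}\overline\eta)\leq \mathbf{r}^{(k-1)}(\tau_{x+i}\overline\eta)$, and applying Lemma \ref{lem:up_speed} a second time yields $\frac{1}{N}\sum_x|\mathbf{h}^{(k-1)}(\tau_x\overline\eta)|\lesssim k$. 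Together with \eqref{eq:usefulbound} this reduces the above supremum to a constant multiple of $\sum_{k=1}^{\ell_N}k^{-m}$. In the slow regime $m\in(1,2)$ the series is uniformly bounded in $N$, the integral is $O(\theta)$, and Markov's inequality closes Aldous's criterion.

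The delicate case is the fast-diffusion regime $m\in(0,1)$, where the same series grows like $\ell_N^{1-m}\to\infty$. This divergence mirrors the blow-up $\max_\eta r_N^{(m-1)}(\eta)\to\infty$ observed in Remark \ref{rem:bound}, so any pointwise estimate on $h_N^{(m-1)}$ cannot close the argument. The main obstacle is precisely to trade pointwise control for spatially averaged control, which is the role of the replacement Lemma \ref{lem:rep_FDM-tight}: it permits one to substitute the problematic large-$k$ contributions of $h_N^{(m-1)}$ by local empirical averages that remain uniformly bounded in $N$. Once this replacement is performed, the $\theta$-linear control of the integral persists and Aldous's criterion concludes tightness as in the slow regime.
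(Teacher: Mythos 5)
Your proposal follows the same route as the paper's proof: Aldous' criterion, Dynkin's formula to split into martingale plus generator term, the carr\'e-du-champ together with Lemma~\ref{lem:up_speed} for the quadratic variation, the pointwise bound $|\mathbf{h}^{(k-1)}|\lesssim k$ in the slow regime, and Lemma~\ref{lem:rep_FDM-tight} in the fast regime. Two imprecisions in your description of the fast case are worth flagging, though neither invalidates the argument. First, Lemma~\ref{lem:rep_FDM-tight} does not ``substitute the problematic large-$k$ contributions of $h_N^{(m-1)}$ by local empirical averages''; rather, after splitting $\mathbf{h}^{(k-1)}$ via \eqref{expr:h2}, the product term $\prod_j \overline\eta(\cdot+j)$ is still handled pointwise (using $\sum_{k\geq 1}|\binom{m}{k}|=1$ for $m\in(0,1)$), while only the gradient sums, bounded by $\sum_i\mathbf{r}^{(k-1)}(\tau_i\overline\eta)$, are fed into the entropy/Dirichlet-form estimate of the lemma. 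Second, the resulting bound is of the form $B^{-1}+|\mathcal T|B\,\ell_N^{1-m}/N$, which vanishes after first $N\to\infty$ (using $\ell_N\leq N$ and $1-m<1$) and then $B\to\infty$; the $B^{-1}$ term is independent of $\theta$, so it is not accurate to say that ``$\theta$-linear control of the integral persists'' — the $\gamma\to 0$ limit plays no role in the fast-regime conclusion.
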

		
		\begin{proof}
			To prove tightness we resort to Aldous' conditions (see, for instance, \cite[proof of Proposition 4.1]{GMO22} or, equivalently, \cite[Proposition 3.3]{BDGN} for more details). Since the occupation variable is bounded {by 1}, it is enough to show that for all $ \eps>0 $ 
			\begin{align}
				\limsup_{\gamma\to0}\limsup_{N\to+\infty}
				\mathbb{P}_{\mu_N}
				\left(
				\sup_{\abs{t-s}\leq\gamma}
				\abs{
					\inner{\pi^N_t,G}-\inner{\pi^N_s,G}
				}>\eps
				\right),
			\end{align}
			where $ G $ is a time-independent function belonging to a dense subset of $ C([0,1]) $ with respect to the uniform topology. From the fact that $ M_t^N(G) $ is a martingale (with respect to the natural filtration of the process), the previous condition can be reduced to the study of the quadratic variation of \eqref{dynk_0} and the boundedness of the generator, \textit{i.e.,} it is enough  to prove that
			\begin{align}\label{aldous0}
				\lim_{\gamma\to0}\limsup_{N\to+\infty}
				\left\{\mathbb{P}_{\mu_N}\left(
				\sup_{\abs{t-s}\leq\gamma}
				\abs{
					M^N_{t}(G)-M^N_{s}(G)
				}
				>\frac{\eps}{2}\right)
				+
				\mathbb{P}_{\mu_N}\left(
				\sup_{\abs{t-s}\leq\gamma}\abs{
					\int_s^t
					{\mathcal{L}}\inner{\pi_s^N,G}\rmd s
				}
				>\frac{\eps}{2}
				\right)
				\right\}=0.
			\end{align}
			We apply the triangular, Jensen and Doob's inequalities in the first term above, and Proposition \ref{prop:pedro} in the second term, reducing to the treatment of 
			\begin{align}\label{aldous}
				\limsup_{N\to+\infty}
				\mathbb{E}_{\mu_N}\left[
				\left(
				M^N_{T}(G)
				\right)^2
				\right]^\frac12 =0
				\quad\text{and}\quad
				\lim_{\gamma\to0}\limsup_{N\to+\infty}
				\mathbb{E}_{\mu_N}\left[
				\abs{
					\int_s^t
					{\mathcal{L}}\inner{\pi_s^N,G}\rmd s
				}
				\right]=0.
			\end{align}
			Recalling from \cite[Appendix  A, Lemma 5.1]{KL:book}  the expression for the quadratic variation of the martingale, we have that the first expectation in \eqref{aldous} equals
			\begin{align*}
				\mathbb{E}_{\mu_N}\left[
				\int_0^{T}F_s^N(G)\rmd s
				\right],
				\quad\text{where}\quad
				F_s^N(G)=N^2\left(
				\mathcal{L}_N^{(m-1)}\inner{\pi_s^N,G}^2
				-2\inner{\pi_s^N,G}\mathcal{L}_N^{(m-1)}\inner{\pi_s^N,G}
				\right).
			\end{align*}
			Since our transition rates are symmetric, we get
			\begin{align*}
				F_s^N(G)
				&=\frac{1}{N^2}\sum_{x\in\mathbb{T}_N}c_N^{(m-1)}(\tau_x\eta_{N^2s})
				\left(\eta_{N^2s}(x+1)-\eta_{N^2s}(x)\right)^2
				\left(\nabla^{+,N}G(\tfrac{x}{N})\right)^2
				\\
				&
				{			\lesssim
					\frac{1}{N^2}\norm{\partial_uG}_{L^\infty(\mathbb{T})}^{2}
					\sum_{k=1}^{\ell_N}\abs{\binom{m}{k}}\sum_{x\in\mathbb{T}_N}\mathbf{r}^{(k-1)}(\tau_x\eta_{N^2s})
				}			
				{		\lesssim\frac{1}{N}\norm{\partial_uG}_{L^\infty(\mathbb{T})}^{2},
				}
			\end{align*}
			{where we used Lemma \ref{lem:up_speed} for the last inequality.} This concludes the proof of the first condition in \eqref{aldous}. For the second, we split the proof in two cases $m\in(0,1)$ and $m\in(1,2)$.
			
			Assume first that $m \in (1,2)$. From {\eqref{expr:h2}} (or {more obviously} \eqref{expr:h1}) we  have that $ |\mathbf{h}^{(k-1)}(\eta)|\leq k $. Therefore, using the inequality \eqref{ineq:sum_coeff}, the quantity \eqref{gen-inner} can be bounded from above by
			\begin{align*}
				\frac{1}{N}\sum_{x\in\mathbb{T}_N}\abs{\Delta^NG_s(\tfrac{x}{N})}\sum_{k=1}^{\ell_N}\abs{\binom{m}{k}}k
				\lesssim
				\norm{\partial_{uu} G}_{L^1(\mathbb{T})}+\frac1N,
			\end{align*}
			which implies the second requirement in \eqref{aldous}. This finishes the proof in the case $m\in(1,2)$.
			
			For $ m\in(0,1) $ we need some extra work. Recalling that in the fast diffusion case the generator can be rewritten as in \eqref{PMM_rewrite}, we see that the second expectation in \eqref{aldous} equals
			\begin{align}\label{tight:expect_fast}
				\mathbb{E}_{\mu_N}\bigg[
				\bigg|
				\int_s^t
				\frac1N\sum_{x\in\mathbb{T}_N}\Delta_NG(\tfrac{x}{N})\sum_{k=1}^{\ell_N}\abs{\binom{m}{k}}\tau_x\mathbf{h}^{(k-1)}(\overline{\eta}_{N^2s})
				\rmd s
				\bigg|
				\bigg].
			\end{align}
			It will be fundamental to identify $ \mathbf{h}^{(k-1)} $ as a function of the constraints $ \mathbf{c}^{(k-1)} $, as in \eqref{expr:h2}. From the triangular inequality we bound the expectation \eqref{tight:expect_fast} from above by
			\begin{multline}\label{h:treat_FDM}
				\mathbb{E}_{\mu_N}\bigg[
				\bigg|
				\int_s^t
				\frac1N\sum_{x\in\mathbb{T}_N}\Delta_NG(\tfrac{x}{N})\sum_{k=1}^{\ell_N}\abs{\binom{m}{k}}
				\prod_{j=0}^{k-1}\overline{\eta}_{N^2s}(x+j)
				\rmd s
				\bigg|
				\bigg]
				\\
				+
				\frac1N\sum_{x\in\mathbb{T}_N}\abs{\Delta_NG(\tfrac{x}{N})}
				\mathbb{E}_{\mu_N}\bigg[
				\bigg|
				\int_s^t
				\sum_{k=1}^{\ell_N}
				\abs{\binom{m}{k}}
				\tau_x
				\left\{
				\sum_{i=0}^{k-2}
				(\overline{\eta}_{N^2s}(i)-\overline{\eta}_{N^2s}(i+1))\sum_{j=1}^{k-1-i}\mathbf{s}_j^{(k-1)}(\tau_i\overline{\eta}_{N^2s})
				\right\}
				\rmd s
				\bigg|
				\bigg]
			\end{multline}
			where, by convention, $ \sum_{\emptyset}\equiv 0 $. Since $ m\in(0,1) $ and the process is of exclusion type, we have
			\begin{align*}
				\sum_{k=1}^{\ell_N}\abs{\binom{m}{k}}
				\prod_{j=0}^{k-1}\overline{\eta}_{N^2s}(x+j)
				\leq
				\sum_{k=1}^{\ell_N}\abs{\binom{m}{k}}< 1
			\end{align*}
			and due to the regularity of the test function the first expectation in \eqref{h:treat_FDM} can be bounded as:
			\begin{align*}
				\mathbb{E}_{\mu_N}\bigg[
				\bigg|
				\int_s^t
				\frac1N\sum_{x\in\mathbb{T}_N}\Delta_NG(\tfrac{x}{N})\sum_{k=1}^{\ell_N}\abs{\binom{m}{k}}
				\prod_{j=0}^{k-1}\overline{\eta}_{N^2s}(x+j)
				\rmd s
				\bigg|
				\bigg]
				\lesssim
				\abs{t-s}\left(\norm{\partial_{uu}G}_{L^1(\mathbb{T})}+\frac1N\right).
			\end{align*}
			{The treatment of the second expectation in \eqref{h:treat_FDM} is more demanding. Concretely, since $ m\in(0,1) $ the tail of the series $ \sum_{k\geq1}\abs{\binom{m}{k}} $ is too heavy to either argue directly via Lemma \ref{lem:rep_FDM} or slow down the speed of explosion of $ \ell_N $ (as we shall do in a different context shortly), while maintaining $ \ell_N $ with no particular order of explosion. One then needs to invoke {the forthcoming replacement} Lemma \ref{lem:rep_FDM-tight} instead, by taking advantage of the particular expression of $ \mathbf{h}^{(k-1)} $ in \eqref{expr:h2} and bounding from above as
				\begin{align*}
					\sum_{i=1}^{k-2}
					(\overline{\eta}(i)-\overline{\eta}(i+1))\sum_{j=1}^{k-1-i}\mathbf{s}_j^{(k-1)}(\tau_i\overline{\eta})
					\leq
					\sum_{i=1}^{k}
					\abs{\overline{\eta}(i)-\overline{\eta}(i+1)}\mathbf{c}^{(k-1)}(\tau_i{\overline{\eta}})
					=\sum_{i=1}^{k}\mathbf{r}^{(k-1)}(\tau_i{\overline{\eta}}).
				\end{align*}
				{One can now} use Lemma \ref{lem:rep_FDM-tight} {for each term of the summation over $ x\in\mathbb{T}_N $, with $ \varphi_i^{(k)}(\eta)=\sum_{j=1}^{k-i}\mathbf{s}_j^{(k)}(\tau_i\eta) $,} and obtain  the {final} upper bound
				\begin{align*}
					\frac1B+\sigma B\frac{(\ell_N)^{1-m}}{N}.
				\end{align*}
				Recalling that $ \ell_N\leq N $ and $ 1-m\in(0,1) $, taking the limit $ N\to+\infty $ and then $ B\to+\infty $ we finish the proof.
			}
		\end{proof}

\subsection{Characterization of limit points}\label{subsec:char}
The goal of this subsection is to show that the limiting points of $ (\mathbb{Q}_N)_{N\in\mathbb{N}} $, which we know to {exist} as a consequence of the results of the previous section, are concentrated on trajectories of absolutely continuous measures with respect to the Lebesgue measure, whose density is a weak solution to either the FDE or the PME, depending on the value of $ m $. Showing the aforementioned {absolute} continuity  is simple since we deal with an exclusion process,  and its proof can be found (modulo small adaptations) for instance in \cite[page 57]{KL:book}. From this and the previous proposition, we know (without loss of generality) that for any $ t\in[0,T] $, the sequence $ (\pi_t^{N}(\eta,\rmd u))_{N\in\mathbb{N}} $ converges weakly with respect to $ \mathbb{Q}_N $ to an absolutely continuous measure $ \pi_{\cdot}(\rmd u)=\rho_\cdot(u)\rmd u $. In the next result we obtain information about the profile $ \rho $.
\begin{Prop}\label{prop:char}
For any limit point $ \mathbb{Q} $ of $ (\mathbb{Q}_N)_{N\in\mathbb{N}} $ it holds \[ \mathbb{Q}
\bigg(\pi\in\mathcal{D}([0,T],\mathcal{M}_+)\; : \; \begin{cases}
\text{for any } t\in[0,T],\; \pi_t(du)=\rho_t(u)du,\;  \text{where } \rho\in L^2([0,T];\mathcal{H}^1(\mathbb{T})) \\ \text{for any } t \in [0,T] \text{ and any } G\in C^{1,2}([0,T]\times\mathbb T), \; F(\rho^{\rm ini},\rho,G,t) =0 \end{cases}	\bigg)
=1, \]
where  $F(\rho^{\rm ini},\rho,G,t)$  is given in  \eqref{weak}.
\end{Prop}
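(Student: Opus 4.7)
The plan is to start from Dynkin's martingale \eqref{dynk} and pass to the limit in each of its terms along a subsequence whose existence is guaranteed by Proposition \ref{prop:tight}. First I would deal with the two ``easy'' contributions: by Doob's inequality and the computation of the quadratic variation already carried out in the proof of Proposition \ref{prop:tight}, $\mathbb{E}_{\mu_N}\!\left[\sup_{t\leq T}(M_t^N(G))^2\right]$ is of order $1/N$, so the martingale disappears in the limit; the initial term $\langle \pi_0^N,G_0\rangle$ converges in probability to $\langle \rho^{\rm ini},G_0\rangle$ by the local equilibrium assumption (Definition \ref{def:ass}); and the time-derivative term $\int_0^t\langle \pi_s^N,\partial_sG_s\rangle ds$ converges to $\int_0^t\langle \rho_s,\partial_s G_s\rangle ds$ by weak convergence along the subsequence combined with dominated convergence (occupation variables are bounded by $1$).

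The core of the argument is to show that the sum over $k$ in the last term of \eqref{dynk} can be identified, in the limit, with $\rho_s^m$. Using the explicit formula \eqref{expr:h2}, each $\mathbf{h}^{(k-1)}(\tau_x\overline{\eta}_{N^2s})$ decomposes as $\prod_{i=0}^{k-1}\overline\eta_{N^2s}(x+i)$ plus discrete-gradient terms of the form $(\overline\eta_{N^2s}(x+i)-\overline\eta_{N^2s}(x+i+1))\sum_j \mathbf s_j^{(k-1)}(\tau_{x+i}\overline\eta_{N^2s})$. The replacement lemmas (Lemmas \ref{lem:rep_shift}--\ref{lem:rep_boxes} when $m\in(1,2)$, Lemmas \ref{lem:rep_FDM-tight}--\ref{lem:rep_FDM} when $m\in(0,1)$) allow us to substitute the products of occupation variables by the corresponding power of a mesoscopic empirical average $1-\overline\rho_s^{\varepsilon N}(x/N)$, with errors summable in $k$ thanks to the sharp binomial estimate \eqref{eq:usefulbound}; the gradient terms contribute, after a summation by parts, a further $N^{-1}$ factor against $\partial_u G$ and therefore vanish. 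What remains is
\[
\int_0^t\frac{1}{N}\sum_{x\in\mathbb T_N}\Delta^NG_s(\tfrac{x}{N})\sum_{k=1}^{\ell_N}\binom{m}{k}(-1)^k\big(1-\overline\rho_s^{\varepsilon N}(\tfrac{x}{N})\big)^k\,ds,
\]
to which I apply the generalized binomial theorem (Proposition \ref{th:gen_bin}): the sum converges to $\overline\rho_s^{\varepsilon N}(x/N)^m - 1$, the tail beyond $\ell_N$ being controlled by \eqref{eq:m-series}. The constant $-1$ drops out because $\sum_{x\in\mathbb T_N}\Delta^NG_s(x/N)=0$ on the torus, and one last approximation (first $N\to+\infty$, then $\varepsilon\to 0$) yields $\int_0^t\langle \rho_s^m,\partial_{uu}G_s\rangle ds$ using continuity of $u\mapsto u^m$ on $[0,1]$.

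Putting these pieces together, $F(\rho^{\rm ini},\rho,G,t)=0$ holds $\mathbb Q$-almost surely for each fixed pair $(G,t)$. Since $C^{1,2}([0,T]\times\mathbb T)$ is separable and $t\mapsto \langle \pi_t,G_t\rangle$ is right-continuous, a countable dense family suffices and the identity extends to all $(G,t)$ outside a single $\mathbb Q$-negligible set. The regularity statement $\rho\in L^2([0,T];\mathcal H^1(\mathbb T))$ is inherited from the energy estimate of Section \ref{sec:energy} (based on the lower bound of Proposition \ref{prop:energy}), applied to the limit points.

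The main obstacle is clearly the replacement step. Because $\ell_N\to+\infty$ without any prescribed order, the mesoscopic box size must be tuned dynamically with $k$, and the tails of both the series $\sum_k |\binom{m}{k}|$ and the associated error terms must be controlled uniformly. In the fast diffusion regime the difficulty is compounded by the divergence of $\max_\eta r_N^{(m-1)}(\eta)$ (Proposition \ref{prop:low_bound_r}), which forces the use of the more delicate Lemma \ref{lem:rep_FDM-tight} alongside Lemma \ref{lem:rep_FDM}; in the porous medium regime the lower bound in Proposition \ref{prop:low_bound_r} on the Dirichlet form allows us to reduce, up to a small price $\delta_N$, to estimates analogous to those of \cite{GLT,BDGN}.
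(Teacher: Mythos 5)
Your proposal correctly identifies the overall strategy — start from Dynkin's martingale \eqref{dynk}, decompose $\mathbf{h}^{(k-1)}$ via \eqref{expr:h2}, replace products of occupation variables by macroscopic quantities, and invoke the binomial theorem — and it correctly flags the two key obstacles (the unbounded sum over $k$ and, for $m<1$, the divergence of $\max_\eta r_N^{(m-1)}$). However, two of the steps as written would not go through.

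First, the claim that the gradient contributions $(\overline\eta_{N^2s}(x+i)-\overline\eta_{N^2s}(x+i+1))\sum_j\mathbf s_j^{(k-1)}(\tau_{x+i}\overline\eta_{N^2s})$ ``vanish after a summation by parts, contributing a further $N^{-1}$'' is incorrect. Summing by parts in $x$ moves the discrete gradient onto the product $\Delta^N G_s(\tfrac{x}{N})\cdot\mathbf s_j^{(k-1)}(\tau_{x+i}\overline\eta_{N^2s})$; the factor $\mathbf s_j^{(k-1)}(\tau_{x+i}\overline\eta_{N^2s})$ is an indicator-valued function whose discrete gradient in $x$ is $O(1)$, not $O(1/N)$, and $G$ is only $C^{1,2}$ so one cannot gain a third derivative either. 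These terms are genuinely of order one and the paper has to control them with dedicated replacement estimates: Lemma \ref{lem:rep_shift} when $m\in(1,2)$ (after the truncation to $(\ell_N)^n$ with $n$ satisfying \eqref{PMM:n0}), and the pair Lemma \ref{lem:rep_FDM-tight}\slash Lemma \ref{lem:rep_FDM} when $m\in(0,1)$, exploiting the structural bound $\sum_j\mathbf s_j^{(k-1)}(\tau_i\overline\eta)\leq\mathbf c^{(k-1)}(\tau_i\overline\eta)$. Your sketch has no mechanism for this.

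Second, replacing $\prod_{i=0}^{k-1}\overline\eta(x+i)$ by ``the corresponding power of a single mesoscopic average $(1-\overline\rho_s^{\varepsilon N}(x/N))^k$'' does not scale correctly once the sum over $k$ is unbounded. Each of the $k$ one-/two-block replacements produces an error of roughly the same size, so the $k$-th term carries an error $\gtrsim k\cdot(\text{one-block error})$; multiplying by $|\binom{m}{k}|\sim k^{-m-1}$ and summing gives $\sum_k k^{-m}$, which diverges precisely in the fast-diffusion range $m\in(0,1)$. The paper avoids this by replacing the product by $\prod_{j=0}^{k-1}\big(1-\langle\pi_s,\iota_{\epsilon_k}^{\cdot+j\epsilon_k}\rangle\big)$ — $k$ empirical averages over pairwise disjoint boxes of a size $\epsilon_k=k^{-\beta}\epsilon$ that shrinks with $k$ — together with a preliminary ``rearrangement'' step \eqref{step1} that spreads the $k$ sites over the distinct boxes. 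The key estimate, Lemma \ref{lem:eps_seq}, shows that the total macroscopic error is $\sum_k|\binom{m}{k}|k^2\epsilon_k^\alpha\lesssim\eps^\alpha$ once $\beta$ is large enough, and this dynamical tuning in $k$ is exactly what you acknowledge is needed in your final paragraph but never implement. On top of this the sum in $k$ must first be slowed down to $(\ell_N)^n$ with $n$ as in \eqref{PMM:n0} before the replacement lemmas are applied — without this the per-$k$ error budgets from Lemmas \ref{lem:rep_shift}, \ref{lem:rep_boxes}, \ref{lem:rep_FDM} do not close. Finally, passing from weak convergence of $\mathbb{Q}_N$ to the microscopic estimates requires more than separability of $C^{1,2}$: the empirical averages enter through the discontinuous cut-offs $\iota_\eps$, and one must mollify (as in \eqref{conv-to-moll}), apply Portmanteau, and then undo the mollification; your right-continuity argument bypasses this point.

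Your choice to argue ``micro-to-macro'' directly from the martingale, rather than the paper's ``macro-to-micro'' route through the weak formulation \eqref{weak} and Portmanteau's theorem, is in principle a legitimate alternative and would likely simplify the bookkeeping of the error terms had the gaps above been filled.
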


Before showing Proposition \ref{prop:char}, we introduce some definitions and technical results.

\begin{Def}
For any $ x\in\mathbb{T}_N $ and $ \ell\in\mathbb{N} $ consider the following microscopic box of size $\ell$, and the empirical average over it, given by
\begin{align*}
\Lambda_x^\ell=\llbracket x,x+\ell-1\rrbracket,
\quad\text{and}\quad
\eta^\ell(x)=\frac1\ell \sum_{y\in\Lambda_x^\ell}\eta(y).
\end{align*}
Moreover, for $ \eps>0 $ and $ u,v\in\mathbb{T} $, let $ {\iota}_\eps^u(v)=\frac{1}{\eps}\mathbf{1}_{v\in[u,u+\eps)}$.
\end{Def}
\begin{Lemma}\label{lem:rho_emp-approx}
Let  $ m\in(0,2)\backslash\{1\} $ be fixed.	For any $ \eps>0 $,  for a.e.~$ u\in\mathbb{T} $ and $ s\in[0,T] $ it holds that
\begin{align*}
\abs{\rho_s(u)-\inner{\pi_s,{\iota}^u_\eps}}\lesssim \eps^{\alpha},
\quad \text{ where }
\alpha\equiv \alpha(m):=\frac12 \mathbf{1}_{\{m\in(0,1)\}}+\frac14 \mathbf{1}_{\{m\in(1,2)\}}.
\end{align*}
\end{Lemma}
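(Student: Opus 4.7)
The plan is to reduce the lemma to Hölder regularity of $\rho_s$ (or $\rho_s^m$), obtained by Sobolev embedding in dimension one, and then compute the local average error by brute force.

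First I would observe that, since $\pi_s(\rmd u) = \rho_s(u)\rmd u$ on the support of any limit point $\mathbb{Q}$ (as established by Proposition~\ref{prop:char}), the integral against $\iota_\eps^u$ is simply
\begin{align*}
\inner{\pi_s,\iota_\eps^u} = \frac{1}{\eps}\int_u^{u+\eps} \rho_s(v)\,\rmd v,
\end{align*}
so the claim becomes
\begin{align*}
\Big|\rho_s(u)-\tfrac{1}{\eps}\int_u^{u+\eps}\rho_s(v)\rmd v\Big| \leq \tfrac{1}{\eps}\int_u^{u+\eps}|\rho_s(u)-\rho_s(v)|\rmd v \lesssim \eps^{\alpha}.
\end{align*}
Hence everything reduces to a uniform modulus of continuity for $\rho_s$ on a window of size $\eps$, valid for a.e.~$s$.

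For $m\in(0,1)$, Definition~\ref{def:weak}(1)(a) and the membership $\rho\in L^2([0,T];\mathcal{H}^1(\mathbb{T}))$ granted by Proposition~\ref{prop:char} give $\rho_s\in\mathcal{H}^1(\mathbb{T})$ for a.e.~$s\in[0,T]$. The one-dimensional Sobolev embedding $\mathcal{H}^1(\mathbb{T}) \hookrightarrow C^{0,1/2}(\mathbb{T})$ then yields, by Cauchy--Schwarz applied to $\rho_s(u)-\rho_s(v)=\int_v^u \partial_w\rho_s(w)\rmd w$,
\begin{align*}
|\rho_s(u)-\rho_s(v)| \leq \norm{\rho_s}_{1}\,|u-v|^{1/2},
\end{align*}
so the average estimate above gives directly the bound $\lesssim \norm{\rho_s}_{1}\eps^{1/2}$, matching $\alpha=1/2$.

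For $m\in(1,2)$, Definition~\ref{def:weak}(1)(b) gives $\rho_s^m\in\mathcal{H}^1(\mathbb{T})$ for a.e.~$s$, hence the same Sobolev argument applied to $\rho_s^m$ gives $|\rho_s^m(u)-\rho_s^m(v)|\lesssim \eps^{1/2}$ on a window of size $\eps$. To transfer this to $\rho_s$ itself, I would use the elementary inequality $|a^{1/m}-b^{1/m}|\leq |a-b|^{1/m}$, valid for $a,b\in[0,1]$ because $x\mapsto x^{1/m}$ is concave (as $1/m<1$). This produces
\begin{align*}
|\rho_s(u)-\rho_s(v)| \leq |\rho_s^m(u)-\rho_s^m(v)|^{1/m} \lesssim \eps^{1/(2m)}.
\end{align*}
Since $m\in(1,2)$ implies $1/(2m)\geq 1/4$, and $\eps<1$ gives $\eps^{1/(2m)}\leq \eps^{1/4}$, the claimed bound with $\alpha=1/4$ follows after averaging as above.

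The only subtlety I foresee is that the Sobolev/Hölder bound holds \emph{a.e.}~in $s$ (not everywhere), but the statement is already quantified "for a.e.~$s\in[0,T]$", so this is exactly the correct form. No single step is a real obstacle; the one to state carefully is the subadditivity inequality $|a^{1/m}-b^{1/m}|\leq |a-b|^{1/m}$ in the PME regime, because that is precisely where the degraded exponent $1/(2m)$ (and hence the worst-case $1/4$) originates.
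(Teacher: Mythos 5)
Your proof is correct. For $m\in(0,1)$ it coincides with the paper's argument: once $\rho_s\in\mathcal H^1(\T)$ is known, Cauchy--Schwarz gives the $\tfrac12$-H\"older modulus (this is exactly Proposition~\ref{prop:continuity_FDE}), and averaging over $[u,u+\eps]$ yields the claim. For $m\in(1,2)$ you take a genuinely different route to the H\"older modulus of $\rho_s$. The paper (Corollary~\ref{cor:cont}) deduces the $\tfrac14$ exponent from the difference-quotient identity $f^m-g^m=(f-g)V^{(m)}(f,g)$ of Proposition~\ref{prop:power_diff}, which gives $|f-g|\leq A^{-1}|f^m-g^m|+\tfrac{2A}{m(m-1)}$ for every $A>0$; Cauchy--Schwarz on $\partial_u(\rho_s^m)$ followed by the optimal balancing $A=|u-v|^{1/4}$ then produces the exponent $\tfrac14$. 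You replace all of this with the elementary subadditivity $|a^{1/m}-b^{1/m}|\leq|a-b|^{1/m}$ on $[0,1]$ (concavity of $x\mapsto x^{1/m}$ together with $0^{1/m}=0$), which converts the $\tfrac12$-H\"older continuity of $\rho_s^m$ directly into $\tfrac{1}{2m}$-H\"older continuity of $\rho_s$, with constant $\norm{\partial_u(\rho_s^m)}_{L^2(\T)}^{1/m}$. Since $\tfrac1{2m}>\tfrac14$ for $m\in(1,2)$, this is strictly sharper and subsumes the stated bound for $\eps\leq1$ (as you note); it also avoids constructing the series $V^{(m)}$ altogether, and your constant stays bounded as $m\downarrow1$, whereas the paper's factor $\tfrac{2}{m(m-1)}$ blows up. The paper's uniform exponent $\tfrac14$ is just a convenient worst case over $m\in(1,2)$; either version plugs into Lemma~\ref{lem:eps_seq} and Proposition~\ref{prop:char} unchanged since they work with $\alpha=\tfrac14$ anyway.
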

\begin{proof}
This is a direct consequence of the fact that $ \pi_t(\rmd u)=\rho_t(u)\rmd u $, plus two facts: first, we have that $\rho \text{ (resp.~} \rho^m\text{)}$ {belongs to} $L^2([0,T];\mathcal{H}^1(\mathbb{T}))$ for $ m\in(0,1) $ (resp.~$ m\in(1,2) $), and this will be proved in Section \ref{sec:energy}; and second, we have the H\"{o}lder continuity of $ \rho $ (see Proposition \ref{prop:continuity_FDE} {in the  case $m \in (0,1)$} and Corollary \ref{cor:cont} {in the case $m\in (1,2)$}).
\end{proof}

\begin{Lemma}\label{lem:eps_seq}
Let  $ m\in(0,2)\backslash\{1\} $ be fixed and take $ \alpha $ as in the previous lemma.	 For any $ \eps>0 $, consider the sequence $ (\eps_k)_{k\geq1} $ defined by
\[
\eps_k=k^{-\beta}\eps, \quad \text{ for some } \beta>\frac{2-m}{\alpha}>0.
\]
Then, for any $ k\in\mathbb{N}_+ $, a.e.~$ u\in\mathbb{T} $ and $s\in[0,T]$, it holds that
\begin{align*}
\bigg|\sum_{k\geq 2}\binom{m}{k}(-1)^k(1-\rho_s(u))^{k}
-
\sum_{k\geq 2}\binom{m}{k}(-1)^k\prod_{j=0}^{k-1}
\left(1-\inner{\pi_s,{\iota}_{\eps_{k}}^{u+j\eps_{k}}}\right)
\bigg|\lesssim\eps^\alpha.
\end{align*}
\end{Lemma}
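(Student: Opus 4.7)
My plan is to reduce everything to the pointwise bound provided by Lemma \ref{lem:rho_emp-approx} together with the H\"older continuity of $\rho_s$, inserted into a telescoping product identity, and then control the resulting series by the asymptotics of the binomial coefficients from \eqref{eq:usefulbound}.

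\textbf{Step 1 (telescoping).} Writing $a := 1-\rho_s(u)$ and $b_j := 1 - \langle \pi_s, \iota_{\eps_k}^{u+j\eps_k}\rangle$, both of which lie in $[0,1]$, I use the standard identity
\begin{align*}
a^k - \prod_{j=0}^{k-1} b_j = \sum_{i=0}^{k-1}\Big(\prod_{j<i} b_j\Big)(a-b_i)\Big(\prod_{j>i} a\Big),
\end{align*}
which, since each factor has modulus at most $1$, yields
$\big|(1-\rho_s(u))^k - \prod_{j=0}^{k-1}(1-\langle\pi_s,\iota_{\eps_k}^{u+j\eps_k}\rangle)\big| \leqslant \sum_{i=0}^{k-1}\big|\rho_s(u) - \langle\pi_s,\iota_{\eps_k}^{u+j\eps_k}\rangle\big|$.

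\textbf{Step 2 (pointwise error).} For each $i\in\{0,\ldots,k-1\}$ I split
\begin{align*}
\big|\rho_s(u) - \langle\pi_s,\iota_{\eps_k}^{u+i\eps_k}\rangle\big| \leqslant \big|\rho_s(u)-\rho_s(u+i\eps_k)\big| + \big|\rho_s(u+i\eps_k)-\langle \pi_s, \iota_{\eps_k}^{u+i\eps_k}\rangle\big|.
\end{align*}
The first term is bounded by $(i\eps_k)^\alpha$ thanks to the H\"older continuity of $\rho_s$ with exponent $\alpha=\alpha(m)$, which comes from Proposition \ref{prop:continuity_FDE} when $m\in(0,1)$ and from Corollary \ref{cor:cont} when $m\in(1,2)$. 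The second term is bounded by $\eps_k^\alpha$ directly by Lemma \ref{lem:rho_emp-approx}. Summing over $i$ and using that $\eps_k = k^{-\beta}\eps$,
\begin{align*}
\sum_{i=0}^{k-1}\big|\rho_s(u) - \langle\pi_s,\iota_{\eps_k}^{u+i\eps_k}\rangle\big| \; \lesssim \; k^{1+\alpha}\,\eps_k^\alpha + k\,\eps_k^\alpha \; \lesssim \; k^{1+\alpha(1-\beta)}\,\eps^\alpha.
\end{align*}

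\textbf{Step 3 (series control).} It remains to sum over $k\geqslant 2$ weighted by $|\binom{m}{k}|$. From \eqref{eq:usefulbound} applied with $m+1$ in place of $m$, we have $|\binom{m}{k}|\lesssim k^{-(m+1)}$, so
\begin{align*}
\sum_{k\geqslant 2}\Big|\binom{m}{k}\Big|\, k^{1+\alpha(1-\beta)}\,\eps^\alpha \; \lesssim \; \eps^\alpha \sum_{k\geqslant 2} k^{\,\alpha-\alpha\beta-m}.
\end{align*}
The convergence of the series is precisely equivalent to $\alpha-\alpha\beta - m < -1$, i.e.\ $\beta > (\alpha+1-m)/\alpha = (2-m)/\alpha$, which is the assumption of the lemma. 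This gives the desired bound $\lesssim \eps^\alpha$ and concludes the proof.

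The only subtle point is the choice of $\beta$: one pays a factor $k^{1+\alpha}$ coming from summing the H\"older increments over the $k$ consecutive boxes covering $[u,u+k\eps_k)$, which has to be compensated by the tail decay of the binomial coefficients. The threshold $\beta > (2-m)/\alpha$ is exactly the balance point, and no further difficulty arises.
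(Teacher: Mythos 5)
Your proof follows the same strategy as the paper's: a telescoping decomposition, a pointwise estimate combining the H\"older continuity of $\rho$ with Lemma~\ref{lem:rho_emp-approx}, and a summation controlled by the decay of the binomial coefficients from \eqref{eq:usefulbound}. The only flaw is a purely algebraic slip at the end: the condition $\alpha - \alpha\beta - m < -1$ is equivalent to $\beta > (\alpha + 1 - m)/\alpha$, and this is \emph{not} equal to $(2-m)/\alpha$ (that equality would need $\alpha = 1$, whereas here $\alpha \in \{1/4, 1/2\}$). What is actually happening is that your estimate $\sum_{i=0}^{k-1} i^\alpha \lesssim k^{1+\alpha}$ is slightly sharper than the paper's, which bounds $i^\alpha \leq i$ for $i \geq 1$ and $\alpha \leq 1$ to get $\delta_{k,s} \lesssim k^2 \eps_k^\alpha$; because $\alpha < 1$, the threshold you actually need, $\beta > (\alpha+1-m)/\alpha$, is strictly \emph{weaker} than the stated hypothesis $\beta > (2-m)/\alpha$, so your argument is sound and even gives a small improvement. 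Just delete the spurious ``$= (2-m)/\alpha$'' and instead note that the hypothesis $\beta > (2-m)/\alpha$ implies $\beta > (\alpha+1-m)/\alpha$ since $\alpha < 1$.
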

\begin{proof}
We first observe that for any $ a_0, b_0, a_1, b_1 $ we can rewrite $ a_0a_1=a_0(a_1-b_1)+(a_0-b_0)b_1+b_0b_1 $. With this rationale, summing and subtracting appropriate terms we can rewrite
\begin{align*}
(1-\rho_s(u))^k
=
\prod_{j=0}^{k-1}
\left(1-\inner{\pi_s,{\iota}_{\eps_{k}}^{u+j\eps_{k}}}\right)
+\delta_{k,s}(u),
\quad\text{with}\quad
\delta_{k,s}(u)
\leq
\sum_{i=0}^{k-1}\abs{\rho_s(u)-\inner{\pi_s,{\iota}_{\eps_{k}}^{u+i\eps_{k}}}}
\end{align*}
since for any $ u\in\mathbb{T} $ and $s\in[0,T]$ it holds that $ \rho_s(u)\leq 1 ,$ and $\inner{\pi_s,{\iota}_{\eps_{k}}^{u+i\eps_{k}}}\leq 1 $.

From Lemma \ref{lem:rho_emp-approx} and the H\"{o}lder continuity of $ \rho $ ({Proposition \ref{prop:continuity_FDE} and Corollary \ref{cor:cont}}), for any $ i\geq 1 $ we can estimate
\begin{align}\label{eps_seq:eq0}
\abs{\rho_s(u)-\inner{\pi_s,{\iota}_{\eps_{k}}^{u+i\eps_{k}}}}
\leq
\abs{
\rho_s(u+i\eps_{k})
-\inner{\pi,{\iota}_{\eps_{k}}^{u+i\eps_{k}}}
}
+\abs{
\rho_s(u+i\eps_{k})
-\rho_s(u)
}
\lesssim i\eps_k^\alpha.
\end{align}
For $ i=0 $ we resort directly to Lemma \ref{lem:rho_emp-approx}. In this way, and from the upper bound {of the binomial coefficient given} in Lemma \ref{lem:bin_bound} we have
\begin{align*}
\delta_{k,s}(u)
\lesssim
\eps_{k}^\alpha
\left(1
+
\sum_{i=1}^{k-1}
i
\right)
\lesssim
k^{2}\eps_{k}^\alpha
\quad \text{ which implies }  \quad
\sum_{k\geq 2}\abs{\binom{m}{k}}\delta_{k,s}(u)
\lesssim
\eps^\alpha\sum_{k\geq 2}
\frac{1}{k^{m-1+\alpha\beta}}.
\end{align*}
The condition on $ \beta $ given in the statement of the lemma guarantees the convergence of the series above.
\end{proof}

The largest issue now is how to handle the products of occupation variables in the martingale decomposition \eqref{dynk}. The final goal is to close the equation, relating the correlation terms with the power terms in the weak formulation \eqref{weak}. The idea behind the forthcoming approach is to replace a product of $ \rho's $ by a product of empirical averages with respect to different, non-intersecting boxes. This last requirement avoids the correlations between the occupation variables on these microscopic boxes. For the macroscopic replacements to be justified, we need information on the regularity of the weak solution.

In order to prove the Proposition \ref{prop:char} we will make use of several  replacement lemmas, whose statements and proofs will be given in Section \ref{sec:replace}. The fact that the limiting measure $\mathbb{Q}$ concentrates on absolute continuous trajectories of measures that have a density in the right Sobolev space is also provided by Proposition \ref{prop:power_in_sob}, proved in Section \ref{sec:energy}.

\begin{proof}[Proof of Proposition \ref{prop:char}.]

From Proposition \ref{prop:power_in_sob} we know that  $ \rho\in L^2([0,T];\mathcal{H}^1(\mathbb{T})) $. If  $ \mathbb{Q} $ is a  limit point of  $ (\mathbb{Q}_N)_{N\in\mathbb{N}} $ then \[ \mathbb{Q}\big(	\text{for any } t\in[0,T],\; \pi_t(du)=\rho_t(u)du,\;  \text{where } \rho\in L^2([0,T];\mathcal{H}^1(\mathbb{T}))\big)=1 .\] In the weak formulation \eqref{weak}, let us replace $ \rho^m $ by its binomial expansion as in \eqref{eq:expand}. Since we are on the torus we have $ \inner{\partial_{uu}G,1}=0 $, and therefore the binomial series starts from the second term. {Otherwise, this would lead to boundary conditions.} In this way, it is enough to show that for any $ \delta>0 $ it holds
\begin{align}\label{q:prob_equiv}
\mathbb{Q}\left(
\sup_{t\in[0,T]}
\bigg|
\inner{G_t,\rho_t}-\inner{G_0,\rho^{\rm ini}}-\int_0^t\inner{\rho_s,\partial_sG_s}\rmd s
-\int_0^t
\Big\langle
\partial_{uu} G_s,
\sum_{k\geq 1}\binom{m}{k}(-1)^k
(1-\rho_s)^{k}
\rmd s
\Big\rangle\bigg|
>\delta
\right)=0.
\end{align}
Last probability is bounded from above by
\begin{align} \notag
\mathbb{Q}\Bigg(
\sup_{t\in[0,T]}
&\bigg|
\inner{G_t,\rho_t}  -\inner{G_0,\rho_0}-\int_0^t\inner{\rho_s,\partial_sG_s}\rmd s
+m
\int_0^t
\inner{\partial_{uu}G_s,1-\rho_s}
\rmd s
\\
& \qquad\qquad \qquad \qquad
-\int_0^t
\sum_{k\geq 2}\binom{m}{k}(-1)^k
\Big\langle
\partial_{uu} G_s,
\prod_{j=0}^{k-1}
\big\langle
1-\pi_s,{\iota}_{\eps_{k}}^{\cdot+j\eps_{k}}
\big\rangle
\Big\rangle \rmd s\bigg|>\frac{\delta}{2^2}
\Bigg)\label{prob_eq0:1}
\\\label{prob_eq0:2} 
&+\mathbb{Q}\left(
\sup_{t\in[0,T]}
\bigg|
\int_0^t
\sum_{k\geq 2}\binom{m}{k}(-1)^k
\Big\langle
\partial_{uu} G_s,
(1-\rho_s)^{k}
-\prod_{j=0}^{k-1}
\big\langle
1-\pi_s,{\iota}_{\eps_{k}}^{\cdot\;+j\eps_{k}}
\big\rangle
\Big\rangle \rmd s
\bigg|>\frac{\delta}{2}\right)
\\
&+\mathbb{Q}
\left(
\big|
\inner{G_0,\rho_0-\rho^{\rm ini}}
\big|
>\frac{\delta}{2^2}
\right),\label{prob_eq0:3} 
\end{align}
with $ (\eps_k)_{k\geq0} $ as in Lemma \ref{lem:eps_seq}, with $ \beta $ {there} still to be fixed. Observe that the third probability \eqref{prob_eq0:3} is equal to zero since the initial probability measure $ \mu_N $ is a local equilibrium measure associated to the profile $ \rho^{\rm ini} $. From Markov's inequality and Lemma \ref{lem:eps_seq}, the second probability \eqref{prob_eq0:2} is no larger than $ 2\eps^\alpha/\delta $, reducing us to treat the first probability \eqref{prob_eq0:1}.
\par
We now want to apply Portmanteau's Theorem, and relate the micro and macro scales. For that purpose we need to argue that the whole function of our trajectories is continuous with respect to the Skorokhod topology, thus preserving the open sets. Although this is not the case due to the cutoff functions $\iota_\varepsilon$, one can perform approximations  of these functions by continuous functions, as in \cite{FGN} and \cite{BDGN}. Moreover, one has to be careful with the martingale \eqref{dynk} which involves a \textit{finite} sum. We first treat the truncation problem, then the continuity. Let us fix $ 1<\ell_{1/\eps}\xrightarrow[\eps\to0]{}+\infty $.
Note that for any bounded sequence $ (a_k)_{k\geq1} $ we have
\begin{align*}
\bigg|	\sum_{k\geq 2}
\binom{m}{k}(-1)^ka_k
-
\sum_{k=2}^{\ell_{1/\eps}}
\binom{m}{k}(-1)^ka_k\bigg|
\lesssim
\frac{1}{(\ell_{1/\eps})^m}.
\end{align*}
In this way, we truncate the sum in some $ \ell_{1/\eps} $ step, approximate the necessary functions by continuous functions, apply Portmanteau's Theorem and then replace back the approximated functions with a vanishing error, as $\varepsilon \to 0$. As for the continuity problem, since $ G $ is continuous, one needs only to mollify the terms involving the cut-off functions $ {\iota_\varepsilon} $. Since these terms are bounded in $ L^\infty(\mathbb{T}) $ this approximation by smooth functions converges a.e.~to the original functions (see for instance the proof of Lemma \ref{lem:uniq_FDE}). However, it will be important to perform this approximation with some care and take advantage of $ (1-\rho)^k $ being decreasing in $ k $ to mollify each product of $ k $ terms in a small neighbourhood depending on $ k $. More precisely, fix $ \eps>0 $ and $ t\in(0,T], $ and for any $ 0\leq s\leq t $ consider the map
\begin{align*}
\pi
\mapsto
\Phi_\pi^\eps(s,\cdot)
=\inner{\pi_s,{\iota}^{\cdot}_\eps}
,\qquad \pi\in\mathcal{D}([0,T],\mathcal{M}_+).
\end{align*}
Note that $ \Phi_\pi^\eps(s,\cdot) $ can be discontinuous. Let $ \varphi $ be some mollifier  and for each $\tilde{\eps}>0$,   define $ \varphi_{\tilde{\eps}}(u)={\tilde{\eps}}^{-1}\varphi({\tilde{\eps}}^{-1}u) $. One can argue that the convolution function $ \Phi_\pi^\eps\star\varphi_{\tilde{\eps}} $ is a continuous approximation of $ \Phi_\pi^\eps $ from the fact that  $ \rho $ is $ \alpha-$H\"{o}lder continuous, with $ \alpha $ as in Lemma \ref{lem:rho_emp-approx} and the convergence is uniform:
\begin{align*}
\left(\Phi_\pi^\eps(s,\cdot)\star\varphi_{\tilde{\eps}}\right)(u)
-\Phi_\pi^\eps(u)
&=\int_{\mathbb{T}}
\varphi_{\tilde{\eps}}(z)
\left(
\Phi_\pi^\eps(s,u-z)-\Phi_\pi^\eps(s,u)
\right)\rmd z,
\end{align*}
hence by  continuity of $ \rho $
\begin{align*}
\abs{\Phi_\pi^\eps(s,u-z)-\Phi_\pi^\eps(s,u)}
=\frac{1}{\eps}\abs{\int_u^{u+\eps}\rho_s(w-z)-\rho_s(w)\rmd w}
\lesssim
\frac{1}{\eps}\int_u^{u+\eps}z^\alpha \rmd w
=z^\alpha.
\end{align*}
In this way, since $ \varphi_{\tilde{\eps}} $ is normalized we have that $ \abs{\left(\Phi_\pi^\eps(s,\cdot)\star\varphi_{\tilde{\eps}}\right)(u)-\Phi_\pi^\eps(u)}\lesssim \tilde{\eps}^\alpha $, 
and we conclude that
\begin{align}\label{conv-to-moll}
\sup_{u\in\mathbb{T}}
\abs{
\left(\Phi_\pi^\eps(s,\cdot)\star\varphi_{\tilde{\eps}}\right)(u)
-\Phi_\pi^\eps(s,u)
}
\lesssim
\tilde{\eps}^\alpha.
\end{align}
In particular, consider the sequence $ (\tilde{\eps_k})_{k\geq1} $ with $ \tilde{\eps}_k=k^{-\beta}\tilde{\eps} $, similarly to the sequence $ (\eps_k)_{k\geq1} $ in Lemma \ref{lem:eps_seq},
\begin{align*}
\abs{
	\sum_{k=2}^{\ell_{1/\eps}}\binom{m}{k}(-1)^k
	\left[
	\prod_{j=0}^{k-1}
	\big(
	1-\Phi_\pi^{\eps_{k}}(s,u+j\eps_{k})
	\big)
	-\prod_{j=0}^{k-1}
	\Big(
	\big(1-(\Phi_\pi^{\eps_{k}}(s,\cdot+j\eps_{k})\star \varphi_{{\tilde{\eps_{k}}}})(u)\big)
	\Big)
	\right]
}
\\
\leq
\sum_{k=2}^{\ell_{1/\eps}}\abs{\binom{m}{k}}
\sum_{j=0}^{k-1}\big|
\Phi_\pi^{\eps_{k}}(s,u+j\eps_{k})
-(\Phi_\pi^{\eps_{k}}(s,\cdot+j\eps_{k})\star \varphi_{{\tilde{\eps_{k}}}})(u)
\big|
\end{align*}
where from \eqref{conv-to-moll} we see that the right-hand side of last display can be bounded from above by some positive constant times
\begin{align*}
\sum_{k=2}^{\ell_{1/\eps}}
\frac{\tilde{\eps_k^\alpha}}{k^m}
=\tilde{\eps}^\alpha
\sum_{k=2}^{\ell_{1/\eps}}
\frac{1}{k^{m+\alpha\beta}}
\lesssim \tilde{\eps}^\alpha,
\end{align*}
{and in the last estimate above} we used the fact that $ m+\alpha\beta>1 $, since by hypothesis we have $ m+\alpha\beta>2 $. At this point, it remains to show that the map
\begin{multline*}
\pi\mapsto
\sup_{t\in[0,T]}
\bigg|
\inner{\pi_t,G_t}-\inner{G_0,\rho_0}-\int_0^t\inner{\pi_s,\partial_sG_s}\rmd s
+m\int_0^t\inner{\partial_u^2G_s,1-\pi_s}\rmd s
\\
-\int_0^t
\sum_{k=2}^{\ell_{1/\eps}}\binom{m}{k}(-1)^k
\Big\langle
\partial_{uu} G_s,
\prod_{j=0}^{k-1}
\left(\inner{1-\pi_s,{\iota}_{\eps_{k}}^{\cdot\;+j\eps_{k}}}
\star \varphi_{{\tilde{\eps_k}}}\right)(\cdot)
\Big\rangle\rmd s\bigg|
\end{multline*}
is continuous with respect to the Skorokhod weak topology. From \cite[Proposition A.3]{FGN} it is enough to show the continuity of the map
\begin{align*}
\pi\mapsto
\sup_{t\in[0,T]}
\bigg|
\int_0^t
\sum_{k=2}^{\ell_{1/\eps}}\binom{m}{k}(-1)^k
\Big\langle
\partial_u^2 G_s,
\prod_{j=0}^{k-1}
\left(\inner{1-\pi_s,{\iota}_{\eps_{k}}^{\cdot\; +j\eps_{k}}}
\star \varphi_{{\tilde{\eps_k}}}\right)(\cdot)
\Big\rangle \rmd s\bigg|,
\end{align*}
which can be done using the definition of the Skorokhod metric and is also consequence of our definition of the sequences $ (\tilde{\eps}_k)_{k\geq1} $ and $ (\eps_k)_{k\geq1} $. Applying Portmanteau's Theorem, we are reduced to treat
\begin{align}\label{on_Q_N}
\begin{split}
\liminf_{N\to+\infty}\mathbb{Q}_N
\bigg(
\sup_{t\in[0,T]}
\bigg|
\inner{\pi_t^N,G_t}
&-\inner{G_0,\rho_0}
-\int_0^t\inner{\pi_s^N,\partial_sG_s}\rmd s
+m\int_0^t\inner{\partial_{uu}G_s,1-\pi_s^N}\rmd s
\\
&
-\int_0^t
\sum_{k=2}^{\ell_{1/\eps}}\binom{m}{k}(-1)^k
\Big\langle
\partial_{uu} G_s,
\prod_{j=0}^{k-1}
\left(
\inner{1-\pi_s^N,{\iota}_{\eps_{k}}^{\cdot\; +j\eps_{k}}}
\star \varphi_{{\tilde{\eps_k}}}
\right)(\cdot)
\Big\rangle \rmd s
\bigg|>\frac{\delta}{2^4}
\bigg).
\end{split}
\end{align}
We stress that, although for small $ \eps>0 $ we can have $ \ell_{1/\eps}>N $, for $ N $ fixed, the sum
\begin{align*}
\sum_{k=2}^{\ell_{1/\eps}}\binom{m}{k}(-1)^k
\prod_{j=0}^{k}
\left(\inner{1-\pi_s^N,{\iota}_{\eps_{k}}^{\cdot\; +j\eps_{k}}}
\star \varphi_{{\tilde{\eps_k}}}\right)(u)
\end{align*}
is indeed well-defined for any $ u\in\mathbb{T} $ and one obtains, for $ k $ large enough, repeated terms in the product above. Now we can replace back $ \big(\inner{\pi_s^N,{\iota}_{\eps_{k}}^{\cdot\;+j\eps_{k}}}\star \varphi_{{\tilde{\eps_k}}}\big)(\cdot) $ by $ \inner{\pi_s^N,{\iota}_{\eps_{k}}^{\cdot\;+j\eps_{k}}} $ with the previous rationale. Fixed $ N $, since the martingale \eqref{dynk} involves a sum up to $ \ell_N $, we compare again
\begin{align}\label{truncate2}
\abs{
\sum_{k= 2}^{\ell_N+1}\binom{m}{k}(-1)^ka_k
-\sum_{k=2}^{\ell_{1/\eps}}\binom{m}{k}(-1)^ka_k
}
\lesssim
\abs{(\ell_{1/\eps})^{-m}-(\ell_N)^{-m}}.
\end{align}
%
Summing and subtracting the appropriate terms, and recalling \eqref{dynk}, the first probability  \eqref{prob_eq0:1}, after the aforementioned replacements, is no larger than the sum of terms of order $ (\ell_{1/\eps})^{-m},\eps^\alpha,\tilde{\eps}^\alpha $ and also of order $ \abs{(\ell_{1/\eps})^{-m}-(\ell_N)^{-m}} $ plus
\begin{align}
\notag&\liminf_{N\to+\infty}\mathbb{Q}_N\left(
\sup_{t\in[0,T]}
\left|
M_t^N(G)+\sum_{k=2}^{\ell_N}\binom{m}{k}(-1)^k\int_0^t
\frac{1}{N}\sum_{x\in\mathbb{T}_N}\Delta^NG_s(\tfrac{x}{N})\mathbf{h}^{(k-1)}_s(\tau_x\overline{\eta})
\rmd s
\right.\right.\\
\notag&\qquad\qquad\qquad\qquad\qquad\;\left.\left.
+\sum_{k=2}^{\ell_N}\binom{m}{k}(-1)^k\int_0^t
\Big\langle
\partial_{uu}G_s,
\prod_{j=0}^{k-1}
\inner{1-\pi_s^N,{\iota}_{\eps_{k}}^{\cdot+j\eps_{k}}}
\Big\rangle \rmd s\right|>\frac{\delta}{2^6}
\right)\\
\notag&\leq
\mathbb{P}_{\mu_N}
\left(
\sup_{t\in[0,T]}
\abs{
\sum_{k=2}^{\ell_N}\binom{m}{k}(-1)^k
\int_0^t
\Big\langle
\partial_{uu}G_s-\Delta^NG_s,
\prod_{j=0}^{k-1}
\inner{1-\pi_s^N,{\iota}_{\eps_{k}}^{\cdot+j\eps_{k}}}
\Big\rangle \rmd s
}>\frac{\delta}{3\times2^6}
\right)
\\
\notag&\quad+
\mathbb{P}_{\mu_N}
\left(
\sup_{t\in[0,T]}
\left|
\sum_{k=2}^{\ell_N}\binom{m}{k}(-1)^k
\int_0^t
\frac1N \sum_{x\in\mathbb{T}_N}
\Delta^N G_s(\tfrac{x}{N})
\bigg[
\prod_{j=0}^{k-1}
\inner{1-\pi_s^N,{\iota}_{\eps_{k}}^{\frac{x}{N}+j\eps_{k}}}
-\mathbf{h}_s^{(k-1)}(\tau_x\overline{\eta})
\bigg]\rmd s
\right|>\frac{\delta}{3\times2^6}
\right)
\\
&
\quad+\mathbb{P}_{\mu_N}
\left(
\sup_{t\in[0,T]}
\abs{
M_t^N(G)
}>\frac{\delta}{3\times2^6}
\right)
.
\end{align}
Note that the linear term $ \inner{\partial_{uu}G_s,1-\pi_s^N} $ in \eqref{on_Q_N} was absorbed into the martingale $ M_t^N(G) $, and so the challenge is to treat the non-linear terms. The first probability on the right-hand side above vanishes as $ N\to+\infty $ since $ G_s\in C^2(\mathbb{T}) $ for all $ s\in[0,t] $; the second probability is treated using the replacement lemmas with a scheme that we present shortly; the third with Doob's inequality and the proof of the first condition in \eqref{aldous}. Let us give more details for the second one. {Recall the second expression of $ \mathbf{h}^{(k)} $ from Lemma \ref{lem:grad}. We split the second probability on the right-hand side of last display into
\begin{align}\label{eq:h_prob}
	&\mathbb{P}_{\mu_N}
	\left(
	\sup_{t\in[0,T]}
	\left|
	\sum_{k=2}^{\ell_N}\binom{m}{k}(-1)^k
	\int_0^t
	\frac1N \sum_{x\in\mathbb{T}_N}
	\Delta^N G_s(\tfrac{x}{N})
	\times\right.\right.
	\\\notag&
	\qquad\qquad\qquad\left.\left.
	\times\tau_x
	\left\{
	\sum_{i=0}^{k-2}
	(\eta_{N^2s}(i)-\eta_{N^2s}(i+1))\sum_{j=1}^{k-1-i}\mathbf{s}_j^{(k-1)}(\tau_i\overline{\eta}_{N^2s})
	\right\}
	\rmd s
	\right|>\frac{\delta}{3\times2^7}
	\right)
	\\
	\notag&+\mathbb{P}_{\mu_N}
	\left(
	\sup_{t\in[0,T]}
	\left|
	\sum_{k=2}^{\ell_N}\binom{m}{k}(-1)^k
	\int_0^t
	\frac1N \sum_{x\in\mathbb{T}_N}
	\Delta^N G_s(\tfrac{x}{N})
	\bigg[
	\prod_{j=0}^{k-1}
	\inner{1-\pi_s^N,{\iota}_{\eps_{k}}^{\frac{x}{N}+j\eps_{k}}}
	-\prod_{i=0}^{k-1}\overline{\eta}_{N^2s}(x+i)
	\bigg]\rmd s
	\right|>\frac{\delta}{3\times2^7}
	\right).
\end{align}
Focus on the first probability in the previous display. We apply Proposition \ref{prop:pedro} and triangle's inequality and then pass the summation over $ x $ to outside the expectation. For $ m\in(0,1) $, since the summation starts at $ k=2 $, the resulting quantity is treated using both  Lemma \ref{lem:rep_FDM-tight} and Lemma \ref{lem:rep_FDM} for each term of the summation over $ x $ with, for each $ x $ fixed and $ i\in\{0,\dots,k-2\} $, 
\begin{align*}
	\varphi_i^{(k-1)}(s,\eta)=\Delta^N G_s(\tfrac{x}{N})\sum_{j=1}^{k-1-i}\mathbf{s}_j^{(k-1)}(\tau_{i+x}\overline{\eta})
	\leq 
	\norm{\Delta^N G}_{L^\infty([0,T]\times\mathbb{T}_N)}
	\mathbf{c}^{(k-1)}(\tau_{i+x}\overline{\eta}_{N^2s}),
\end{align*}
 estimating it by
\begin{align*}
\frac1B+TB\frac{(\ell_N)^{1-m}}{N}, 
\end{align*}
for any $ B>0 $, which will be taken to infinity after $ N\to+\infty $.

For $ m\in(1,2) $ we could either prove an analogue of Lemma \ref{lem:rep_FDM-tight} for the slow regime, or take advantage of the tail of the sum of the binomial coefficients being just light enough, in this regime, to slow down the explosion of $ \ell_N $, avoiding further restrictions. We present the second alternative. {Let 
\begin{align}\label{PMM:n0}
0<n<\frac{2-m}{5-m}.
\end{align}}
Since
\begin{align*}
\sum_{i=1}^{k-2}
(\eta_{N^2s}(i)-\eta_{N^2s}(i+1))\sum_{j=1}^{k-1-i}\mathbf{s}_j^{(k-1)}(\tau_i\eta_{N^2s})
\leq k
\end{align*}
we can estimate
\begin{align*}
\sum_{k=(\ell_N)^n+1}^{\ell_N}&\binom{m}{k}(-1)^k
\frac1N \sum_{x\in\mathbb{T}_N}
\Delta^N G_s(\tfrac{x}{N})
\sum_{i=1}^{k-2}
\sum_{j=1}^{k-1-i}
\tau_x
\left\{
(\eta_{N^2s}(i)-\eta_{N^2s}(i+1))\mathbf{s}_j^{(k-1)}(\tau_i\overline{\eta}_{N^2s})
\right\}
\\
&	\leq
\frac1N \sum_{x\in\mathbb{T}_N}
\abs{\Delta^N G_s(\tfrac{x}{N})}
\sum_{k=(\ell_N)^n}^{\ell_N}\abs{\binom{m}{k}}
k
\\
&	\lesssim
\frac1N \sum_{x\in\mathbb{T}_N}
\abs{\Delta^N G_s(\tfrac{x}{N})}
\left(\frac{1}{(\ell_N)^{n(m-1)}}-\frac{1}{(\ell_N)^{m-1}}\right),
\end{align*}
which vanishes by taking the limit $ N\to+\infty $. This means that we can replace the summation up to $ \ell_N $ by a summation up to $ (\ell_N)^n $. In this way,
from Proposition \ref{prop:pedro}, the previous truncation at $ (\ell_N)^n $ and triangle's inequalities, we are then reduced to treating
\begin{align*}
\sum_{k=2}^{(\ell_N)^n}\abs{\binom{m}{k}}\sum_{i=1}^{k-2}
\sum_{j=1}^{k-1-i}
\frac1N \sum_{x\in\mathbb{T}_N}
\mathbb{E}_{\mu_N}
\left[
\abs{
\int_0^t\Delta^N G_s(\tfrac{x}{N})
\tau_x
\left\{
(\eta_{N^2s}(i)-\eta_{N^2s}(i+1))\mathbf{s}_j^{(k-1)}(\tau_i\overline{\eta}_{N^2s})
\right\}
\rmd s
}
\right].
\end{align*}
Applying the replacement Lemma \ref{lem:rep_shift} to each term of the sum over $ j $ with $ \varphi(s,\eta)=\Delta^N G_s(\tfrac{x}{N})\mathbf{s}_j^{(k-1)}(\tau_{i+x}\overline{\eta}) $ 
we obtain an upper bound of the order of
\begin{align*}
\sum_{k=2}^{(\ell_N)^n}\abs{\binom{m}{k}}k^2
\left(
\frac{1}{B_k}
+
TB_k\frac{(\ell_N)^{m-1}}{N}
\right).
\end{align*}
{
Let $ B_k=kB>0 $. Then last display is bounded from above by some constant times
\begin{align*}
\frac1B \sum_{k=2}^{(\ell_N)^n}
\frac{1}{k^{m}}
+
TB\frac{(\ell_N)^{m-1}}{N}
\sum_{k=2}^{(\ell_N)^n}\frac{1}{k^{m-2}} 
\lesssim \frac1B
+
TB\left(\frac{(\ell_N)^{m-1+n(3-m)}}{N}
+\frac{(\ell_N)^{m-1}}{N}\right),
\end{align*}
and the right-hand side converges to zero as $ N\to+\infty $ and $ B\to+\infty $ since by the definition of $ n $ in \eqref{PMM:n0} we have $ m-1+n(3-m)<1 $.
}

Now the main goal is to estimate {for $ m\in(0,2)\backslash\{1\} $} the quantity
\begin{align}\label{rep1:eq1}
&\sum_{k=2}^{\ell_N}\abs{\binom{m}{k}}\;
\mathbb{E}_{\mu_N}
\left[
\bigg|
\int_0^t
\frac1N \sum_{x\in\mathbb{T}_N}
\Delta^N G_s(\tfrac{x}{N})
\bigg(\prod_{j=0}^{k-1}
\left(1-\inner{\pi_s^N,{\iota}_{\eps_{k}}^{\frac{x}{N}+j\eps_{k}}}\right)
-
\prod_{i=0}^{k-1}\overline{\eta}_{N^2s}(x+i)\bigg)
\rmd s		\bigg|
\right]
\end{align}
where, again, we applied Proposition \ref{prop:pedro}. It will be important to slow down the explosion $ \ell_N\to+\infty $ for $ m\in(0,1) $ too before applying repeatedly the replacement lemmas. Consider the sequence $ (a_k)_{k\geq1} $ with $ a_k\equiv a_k(t,G,\eta) $ defined by
\begin{align*}
a_k
=\bigg|
\int_0^t
\frac1N \sum_{x\in\mathbb{T}_N}
\Delta^N G_s(\tfrac{x}{N})
\bigg(\prod_{j=0}^{k-1}
\left(1-\inner{\pi_s^N,{\iota}_{\eps_{k}}^{\frac{x}{N}+j\eps_{k}}}\right)
-
\prod_{i=0}^{k-1}\overline{\eta}_{N^2s}(x+i)\bigg)
\rmd s		\bigg|.
\end{align*}
From the triangle inequality and the fact that $ G_s\in C^2(\mathbb{T}) $ it is simple to see that the sequence $ (a_k)_k $ is uniformly bounded by $ \int_0^t N^{-1}\sum_{x\in\T_N}\abs{\Delta^NG_s(\tfrac{x}{N})}\rmd s\xrightarrow[N\to+\infty]{}\norm{\partial_{uu}G}_{L^1([0,T]\times\mathbb{T})}<\infty $. In particular, 
\begin{align*}
\abs{
\sum_{k=2}^{\ell_N}\abs{\binom{m}{k}}a_k
-\sum_{k=2}^{(\ell_N)^n}\abs{\binom{m}{k}}a_k
}
=\sum_{k=(\ell_N)^n+1}^{\ell_N}\abs{\binom{m}{k}}a_k
\lesssim (\ell_N)^{-nm}-(\ell_N)^{-m}\xrightarrow[N\to+\infty]{}0.
\end{align*}
In this way, the treatment of \eqref{rep1:eq1} gives place to the treatment of
\begin{align}\label{rep1:eq0}
&\sum_{k=2}^{(\ell_N)^n}\abs{\binom{m}{k}}
\frac1N \sum_{x\in\mathbb{T}_N}\;
\mathbb{E}_{\mu_N}
\left[
\bigg|
\int_0^t
\Delta^N G_s(\tfrac{x}{N})
\bigg(\prod_{j=0}^{k-1}
\left(1-\inner{\pi_s^N,{\iota}_{\eps_{k}}^{\frac{x}{N}+j\eps_{k}}}\right)
-
\prod_{i=0}^{k-1}\overline{\eta}_{N^2s}(x+i)\bigg)
\rmd s		\bigg|
\right].
\end{align}}
To treat \eqref{rep1:eq0} we now split into the slow and fast diffusion cases. In what follows, we fix $ \beta=4 $ in Lemma \ref{lem:eps_seq}, considering thus the sequence $ (\eps_k)_{k\geq1} $ with $ \eps_k=k^{-4}\eps $. 

\medskip

\textsc{$\bullet$ Slow-diffusion,} $ m\in(1,2) $: 		We can follow a slightly simplified version of the scheme in \cite{BDGN}. Consider a {non-increasing} sequence $ (L_k)_{k\geq 1}\subseteq \mathbb{N} $ having in mind that for each $ k,N\in\mathbb{N} $ we have $ L_k\equiv L_k(N) $. We will fix this sequence shortly. In what follows, we define $ \prod_{\emptyset}=1 $. The forthcoming lemmas will be applied to each term of the summation over $ x\in\mathbb{T}_N $.
\begin{enumerate}\addtocounter{enumi}{0}
\item \textit{Rearrangements}: rewrite
\end{enumerate}
\begin{align}\label{step1}
	\prod_{j=0}^{k-1}\overline{\eta}(jL_{k})
	-\prod_{j=0}^{k-1}\overline{\eta}(j)
	=\left(
	\eta(iL_{k})-\eta(i)
	\right)
	\tilde{\varphi}_i^{(1)}(\eta)
\end{align}
where for every $ i\in\{1,\dots,k-1\} $ we defined $ \tilde{\varphi}_i^{(1)}(\eta)=\prod_{j=0}^{i-1}\overline{\eta}(j)\prod_{j=i+1}^{k-1}\overline{\eta}(jL_{k}) $. The random variable $\varphi_i^{(1)}(s,\eta)\equiv \Delta^NG_s(\tfrac{x}{N})\tilde{\varphi}_i^{(1)}(\tau_x\eta) $ is independent of the occupation variables at sites $ \llbracket i,iL_{k}\rrbracket $ and, fixed $ x $ and applying the triangle inequality we treat each term of the summation over $ i $ in \eqref{step1} with Lemma \ref{lem:rep_shift}. With the choice $ B_k=Bk^{-b_1} $ for $ k\geq 2 $, with $ B>0 $ {and $ 0<b_1<m-1 $}, we obtain an upper bound of the order of
\begin{equation}\label{step1:est}
	\begin{split}
		\sum_{k=2}^{(\ell_N)^n}\abs{\binom{m}{k}}\sum_{i=1}^{k-1}
		\left\{
		\frac{1}{B_k}+TB_kiL_{k}\frac{(\ell_N)^{m-1}}{N}
		\right\}
		&\lesssim
		\sum_{k=2}^{(\ell_N)^n}\frac{1}{k^m}
		\left\{
		\frac{1}{B_k}+\frac{(\ell_N)^{m-1}}{N}TB_kL_kk
		\right\}
		\\
		&=\frac1B\sum_{k=2}^{(\ell_N)^n}\frac{1}{k^{m-b_1}}
		+
		\frac{(\ell_N)^{m-1}}{N}TBL
		\sum_{k=2}^{(\ell_N)^n}\frac{1}{k^{m+b_1+3}}.
	\end{split}
\end{equation}
{Note that both summations converge when taking the limit $ N\to+\infty $ since $ m-b_1,m+b_1+3>1 $.} At this point, we can set $ L=(\ell_N)^{2-m} $, then recall that $ \ell_N\leq N $ and take the limits accordingly;
%
\begin{enumerate}\addtocounter{enumi}{1}
\item \textit{One-block estimates}: rewrite
\end{enumerate}
\begin{equation}\label{step2}
	\prod_{j=0}^{k-1}\overline{\eta}^{L_{k}}(jL_{k})
	-\prod_{j=0}^{k-1}\overline{\eta}(jL_{k})
	=
	\sum_{i=0}^{k-1}
	\left(
	\eta^{L_{k}}(iL_{k})-\eta(iL_{k})
	\right)
	\tilde{\varphi}_i^{(2)}(\eta)
\end{equation}
where for every $ i\in\{1,\dots,k-1\} $ we defined $ \tilde{\varphi}_i^{(2)}(\eta)=\prod_{j=0}^{i-1}\overline{\eta}^L_{k}(jL_{k})\prod_{j=i+1}^{k-1}\overline{\eta}(jL_{k}) $. The random variable $ \varphi_i^{(2)}(s,\eta)\equiv \Delta^NG_s(\tfrac{x}{N})\tilde{\varphi}_i^{(2)}(\tau_x\eta) $ is independent of the occupation variables at sites $ \llbracket iL_{k},(i+1)L_{k}-1 \rrbracket $ and, fixed $ x $ and applying the triangle inequality we treat each term of the summation over $ i $ in \eqref{step2} with Corollary \ref{cor:rep_ell_box}. We obtain an upper bound of the order of
\begin{align*}
	\sum_{k=2}^{(\ell_N)^n}
	\abs{\binom{m}{k}}
	\sum_{i=0}^{k-1}
	\left\{
	\frac{1}{B_k}
	+TB_kL_{k}\frac{(\ell_N)^{m-1}}{N}
	\right\}.
\end{align*}
This quantity is no larger than the quantity on the left-hand side of \eqref{step1:est}, therefore the same rationale used there guarantees that these errors vanish by taking the limits;
\begin{enumerate}\addtocounter{enumi}{2}
\item \textit{Two-block estimates}: rewrite
\end{enumerate}
\begin{align}\label{step3}
	\prod_{j=0}^{k-1}\overline{\eta}^{\floor{N\eps_{k}}}(j\floor{N\eps_{k}})
	-\prod_{j=0}^{k-1}\overline{\eta}^{L_{k}}(jL_{k})
	=\sum_{i=0}^{k-1}
	\left(
	\eta^{\floor{N\eps_{k}}}(i\floor{N\eps_{k}})
	-\eta^{L_{k}}(iL_{k})
	\right)
	\tilde{\varphi}_i^{(3)}(\eta)
\end{align}
where for every $ i\in\{1,k-1\} $ we defined $ \tilde{\varphi}_i^{(3)}(\eta)=\prod_{j=0}^{i-1}\overline{\eta}^{L_{k}}(jL_{k})\prod_{j=i+1}^{k-1}\overline{\eta}^{\floor{N\eps_{k}}}(j\floor{N\eps_{k}}) $. The random variable $ \varphi_i^{(3)}(s,\eta)\equiv \Delta^NG_s(\tfrac{x}{N})\tilde{\varphi}_i^{(3)}(\tau_x\eta) $ is independent of the occupation variables at sites contained in \[ \llbracket iL_{k},(i+1)\floor{N\eps_{k}}-1 \rrbracket\cup  \llbracket iL_{k},iL_{k}+\floor{N\eps_{k}}-1\rrbracket \] provided $ \floor{N\eps_{k}}\geq L_{k} $, that is, $ \floor{N\eps}\geq L. $ Fixed $ x $ and applying the triangle inequality we treat each term of the summation over $ i $ in \eqref{step3} with Lemma \ref{lem:rep_boxes}, leading to an upper bound of the order of 
\begin{multline*}
	\sum_{k=2}^{(\ell_N)^n}\abs{\binom{m}{k}}
	\sum_{i=0}^{k-1}
	\left\{\frac{1}{B_k}
	+T
	\left[
	\frac{1}{L_{k}}
	+B_k
	\left(
	\frac{L_{k}(\ell_N)^{m-1}}{N}
	+\frac{iL_{k}}{N}
	+\eps_{k}(i+1)
	\right)
	\right]\right\}
	\\
	\lesssim
	\sum_{k=2}^{(\ell_N)^n}\frac{1}{B_kk^{m}}
	+T\sum_{k=2}^{(\ell_N)^n}\frac{1}{L_kk^{m}}
	+\frac{T(\ell_N)^{m-1}}{N}\sum_{k=2}^{(\ell_N)^n}\frac{B_kL_k}{k^{m}}
	+T\sum_{k=2}^{(\ell_N)^n}\frac{B_k}{k^{m-1}}\left(\frac{L_k}{N}+\eps_k\right).
\end{multline*}
{Fix $ B_k=Bk^{-b_3}>0 $ with $ B>0 $ and $ 0<b_3<m-1 $.} We analyse each term above. From $ m-1>b_3 $ and $ m-4<1 $ we have that 
\begin{align*}
	\sum_{k=2}^{(\ell_N)^n}\frac{1}{B_kk^{m}}
	\lesssim \frac1B
	\quad\text{and}\quad
	\sum_{k=2}^{(\ell_N)^n}\frac{1}{L_kk^{m}}
	\lesssim
	\frac{1}{L}
	\ell_N^{n(5-m)}
	=(\ell_N)^{m-2+n(5-m)},
\end{align*}
respectively. From \eqref{PMM:n0} it holds $ m-2+n(5-m)<0 $. Note that this is, indeed, the constraint \eqref{PMM:n0}, and defines the largest interval $ n $ can belong to. Similarly, since $ b_3>0 $ we have
\begin{align*}
	\frac{(\ell_N)^{m-1}}{N}\sum_{k=2}^{(\ell_N)^n}\frac{B_kL_k}{k^{m}}
	\lesssim
	B\frac{\ell_N}{N}
	\quad\text{and}\quad
	\sum_{k=2}^{(\ell_N)^n}\frac{B_k}{k^{m-1}}\left(\frac{L_k}{N}+\eps_k\right)
	\lesssim
	B\left(\frac{(\ell_N)^{2-m}}{N}+\eps\right),
\end{align*}
respectively;
\begin{enumerate}\addtocounter{enumi}{3}
	\item \textit{Conclusion}: rewrite
\end{enumerate}

\begin{multline}\label{step4:1}
\prod_{j=0}^{k-1}
\left(1-\inner{\pi^N,{\iota}_{\eps_{k}}^{j\eps_{k}}}\right)
-\prod_{j=0}^{k-1}\left(1-\eta^{\floor{N\eps_{k}}}(j\floor{N\eps_{k}})\right)
\\
=\sum_{i=0}^{k-1}
\left\{
	\bigg[
	\prod_{j=1}^{i-1}1-{\eta}^{\floor{N\eps_{k}}}(j\floor{N\eps_{k}})
	\bigg]
		\left(
		{\eta}^{\floor{N\eps_{k}}}(i\floor{N\eps_{k}})-\inner{\pi^N,{\iota}_{\eps_{k}}^{i\eps_{k}}}
		\right)
	\bigg[
		\prod_{j=i+1}^{k-1}1-\inner{\pi^N,{\iota}_{\eps_{k}}^{j\eps_{k}}}
	\bigg]
\right\},
\end{multline}
and since $ \mid\inner{\pi_s^N,{\iota}_{\eps}^{\frac{x}{N}}}-\eta_{N^2s}^{\floor{N\eps}}(x)\mid\leq\floor{N\eps}^{-1} $, previous display is no larger than $ k\floor{N\eps}^{-1} $. This way, we need to bound from above
\begin{align*}
	\sum_{k=2}^{(\ell_N)^n}\abs{\binom{m}{k}}\frac{k}{\floor{N\eps_{k}}}
	\lesssim
	\frac{1}{\floor{N\eps}}\sum_{k=2}^{(\ell_N)^n}\frac{1}{k^{m-4}}
	\lesssim\frac{\ell_N^{n(5-m)}}{\floor{N\eps}}.
\end{align*}
{Note that since $ 2-m<1 $, by the definition of $ n $ in \eqref{PMM:n0} we have $ n(5-m)<1 $.}
To conclude the proof it is enough to recall that $ \ell_N\leq N $, and then take the limit $ N\to+\infty $ and $ \eps\to0 $ and then $ B\to+\infty $.


\medskip
\textsc{$\bullet$ Fast-diffusion,} $ m\in(0,1) $: 		Recall that the goal is to treat \eqref{rep1:eq0}. The strategy now is similar but simpler than for the slow diffusion case. The specific maps $ \varphi:\Omega_N\to\mathbb{R} $ in the statement of the replacement lemmas in Subsection \ref{lem:rep_FDM-tight} can be introduced analogously to the slow-diffusion case, therefore we omit their definition.
\begin{enumerate}\addtocounter{enumi}{0}
\item \textit{Rearrangements}: rewrite
\end{enumerate}
\begin{align}\label{FDMstep1}
\prod_{j=0}^{k-1}\overline{\eta}(j\floor{N\eps_{k}})
-\prod_{j=0}^{k-1}\overline{\eta}(j)
=\sum_{i=1}^{k-1}\bigg[\prod_{j=0}^{i-1}\overline{\eta}(j)\bigg]
\left(
\eta(i\floor{N\eps_{k}})-\eta(i)
\right)
\bigg[\prod_{j=i+1}^{k-1}\overline{\eta}(j\floor{N\eps_{k}})\bigg]
\end{align}
and apply Lemma \ref{lem:rep_FDM} to each term of the summation in $ i $, with the total cost of the order of
\begin{align*}
\sum_{k=1}^{(\ell_N)^n}\abs{\binom{m}{k}}
\sum_{i=1}^{k-1}
\left\{
\frac{1}{B_k}+TB_k\frac{i(\floor{N\eps_{k}}-1)}{N}
\right\}
\lesssim
\sum_{k=1}^{(\ell_N)^n}
\left\{
\frac{1}{B_kk^{m}}
+\eps T\frac{B_k}{k^{3+m}}
\right\}
,
\end{align*}
for any $ B_k>0.$ {The choice $ B_k=Bk^b_1>0 $ with $ B>0 $ and $ 1-m<b_1<m+2 $ guarantees the convergence of the series as $ N\to+\infty $.}
\begin{enumerate}\addtocounter{enumi}{1}
\item \textit{One-block and two-blocks estimates}: rewrite
\end{enumerate}
\begin{multline}\label{FDMstep2}
\prod_{j=0}^{k-1}\overline{\eta}^{\floor{N\eps_{k}}}(j\floor{N\eps_{k}})
-\prod_{j=0}^{k-1}\overline{\eta}(j\floor{N\eps_{k}})\\
=
\sum_{i=0}^{k-1}
\bigg[\prod_{j=0}^{i-1}\overline{\eta}^{\floor{N\eps_{k}}}(j\floor{N\eps_{k}})\bigg]
\left(
\eta^{\floor{N\eps_{k}}}(i\floor{N\eps_{k}})-\eta(i\floor{N\eps_{k}})
\right)
\bigg[	\prod_{j=i+1}^{k-1}\overline{\eta}(j\floor{N\eps_{k}})	\bigg]
\end{multline}
and apply Lemma \ref{lem:rep_FDM} to each term of the summation in $ i $, leading to an upper bound of the order of
\begin{align*}
\sum_{k=1}^{(\ell_N)^n}\abs{\binom{m}{k}}
\sum_{i=0}^{k-1}
\bigg\{
\frac{1}{B_k}
+TB_k\frac{1}{\floor{N\eps_{k}}}
\sum_{y\in \Lambda^{\floor{N\eps_{k}}}_{i\floor{N\eps_{k}}}}
\frac{\abs{i\floor{N\eps_{k}}-y}}{N}
\bigg\}
&\lesssim
\sum_{k=1}^{(\ell_N)^n}
\frac{1}{B_kk^{m}}
+
\eps T\sum_{k=1}^{(\ell_N)^n}
\frac{B_k}{k^{4+m}}.
\end{align*}
The choice $ B_k=Bk^b_2>0 $ with $ B>0 $ and $ 1-m<b_2<3+m $ guarantees the convergence as $ N\to+\infty $.
\begin{enumerate}\addtocounter{enumi}{2}
\item \textit{Conclusion}: rewrite
\end{enumerate}
\begin{align}
\prod_{j=0}^{k-1}
\left(1-\inner{\pi^N,{\iota}_{\eps_{k}}^{j\eps_{k}}}\right)
&-\prod_{j=0}^{k-1}\left(1-\eta^{\floor{N\eps_{k}}}(j\floor{N\eps_{k}})\right)
\notag \\
&=\sum_{i=0}^{k-1}
\bigg[\prod_{j=1}^{i-1}1-\eta^{\floor{N\eps_{k}}}(j\floor{N\eps_{k}})\bigg]
\left(
\eta^{\floor{N\eps_{k}}}(j\floor{N\eps_{k}})-\inner{\pi^N,{\iota}_{\eps_{k}}^{i\eps_{k}}}
\right)
\bigg[\prod_{j=i+1}^{k-1}1-\inner{\pi^N,{\iota}_{\eps_{k}}^{j\eps_{k}}}\bigg],\label{FDMstep3}
\end{align}
and proceed as in \eqref{step4:1}, leading to an upper bound of the order of
\begin{align*}
\sum_{k=1}^{(\ell_N)^n}\abs{\binom{m}{k}}
\frac{k}{\floor{N\eps_{k}}}
\lesssim
\frac{1}{\floor{N\eps}}
\sum_{k=1}^{(\ell_N)^n}
\frac{1}{k^{m-4}}
\lesssim
\frac{1}{\floor{N\eps}}(\ell_N)^{n(5-m)}.
\end{align*}
It is enough to fix $ n>0 $ such that $ n(5-m)<1 $.

To conclude one takes the corresponding limits as previously.
\end{proof}

\section{Replacement Lemmas}\label{sec:replace}
\subsection{Dirichlet forms}
We start with some definitions and notation.
\begin{Def}[Dirichlet Form and Carré du Champ]
For a probability measure $ \mu $ on $ \Omega_N $ and $ f:\Omega_N\to\mathbb{R} $ density with respect to $ \mu $, we define the Dirichlet form for any $ m\in(0,2] $ as
\begin{align*}
\mathcal{E}_N^{(m-1)}(f,\mu)
=\inner{f,\big(-\mathcal{L}_N^{(m-1)}\big)f}_\mu
=\int_{\Omega_N}   f(\eta)\sum_{x\in\mathbb{T}_N}\big(-r_N^{(m-1)}(\tau_x\eta)\big)(\nabla_{x,x+1}f)(\eta) \mu(\rmd\eta)
\end{align*}
and the non-negative quadratic form
\begin{align*}
\Gamma_N^{(m-1)}(f,\mu)
=\int_{\Omega_N}\sum_{x\in\mathbb{T}_N}r_N^{(m-1)}(\tau_x\eta)\left[\left(\nabla_{x,x+1}f\right)(\eta)\right]^2\mu(\rmd\eta).
\end{align*}
\end{Def}
We remark that rewriting $- a(b-a)=(a-b)^2/2+(a^2-b^2)/2 $, one obtains the identity
\begin{align}\label{id:dir-car}
\frac12\Gamma_N^{(m-1)}(\sqrt{f},\mu)
=\mathcal{E}_N^{(m-1)}(\sqrt{f},\mu)
+\frac{1}{2}\big\langle \mathcal{L}_N^{(m-1)}f\big\rangle_\mu.
\end{align}

The key observation in order to proceed similarly to \cite{BDGN} is the following proposition.

\begin{Prop}[Energy lower bound]\label{prop:energy}
Let $ \nu_\gamma^N $ be the Bernoulli product measure on $ \Omega_N $ where $ \gamma:[0,1]\to(0,1) $ {is either Lipschitz non-constant or constant,} and let $ f $ be a density with respect to $ \nu_\gamma^N $. For any $ m\in(0,2)\backslash \{1\} $ and any $ N\in\mathbb{N}_+ $ such that $ \ell_N\geq 2 $ it holds
\begin{align}\label{dir:bound}
\mathcal{E}_N^{(m-1)}(\sqrt{f},\nu_\gamma^N)
\geq
& \; \mathbf{1}_{\{m\in(1,2)\}}\frac{m}{4}
\left(
\delta_N\Gamma_N^{(0)}(\sqrt{f},\nu_\gamma^N)
+\frac{m-1}{2}\Gamma_N^{(1)}(\sqrt{f},\nu_\gamma^N)
\right)
+\mathbf{1}_{\{m\in(0,1)\}}\frac14\Gamma_N^{(0)}(\sqrt{f},\mu)
-\frac{\mathbf{c}_\gamma}{4N} \vphantom{\Bigg(}
\end{align}
where $ \mathbf{c}_\gamma>0 $ if $ \gamma $ is Lipschitz {non-constant}, or $ \mathbf{c}_\gamma=0 $ if $ \gamma $ is constant.
\end{Prop}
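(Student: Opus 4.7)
The plan is to combine the identity~\eqref{id:dir-car}, rearranged as
\[\mathcal{E}_N^{(m-1)}(\sqrt{f},\nu_\gamma^N)=\tfrac12\Gamma_N^{(m-1)}(\sqrt{f},\nu_\gamma^N)-\tfrac12\big\langle\mathcal{L}_N^{(m-1)}f\big\rangle_{\nu_\gamma^N},\]
with the pointwise rate estimates of Proposition~\ref{prop:low_bound_r} and a symmetrisation trick that absorbs the non-invariance of $\nu_\gamma^N$ into a fraction of $\Gamma_N^{(m-1)}$ plus an error of order $N^{-1}$. The rate bounds directly translate into $\Gamma_N^{(m-1)}\geq m\,\Gamma_N^{(0)}$ for $m\in(0,1)$ and $\Gamma_N^{(m-1)}\geq m\delta_N\,\Gamma_N^{(0)}+\binom{m}{2}\Gamma_N^{(1)}$ for $m\in(1,2)$; these produce the leading terms in~\eqref{dir:bound}, and the whole proof reduces to showing that $|\langle\mathcal{L}_N^{(m-1)}f\rangle_{\nu_\gamma^N}|\leq\tfrac12\Gamma_N^{(m-1)}(\sqrt{f},\nu_\gamma^N)+\tfrac{c_\gamma}{2N}$.

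When $\gamma$ is constant, Lemma~\ref{lem:first} gives invariance and the correction term vanishes, so we may assume $\gamma$ is Lipschitz and bounded away from $\{0,1\}$, which yields $|\rho_x(\eta)-1|\leq C_\gamma/N$ for $\rho_x(\eta):=\nu_\gamma^N(\eta^{x,x+1})/\nu_\gamma^N(\eta)$. The crucial structural fact here is that $\mathbf{c}_{x,x+1}^{(k-1)}$ does not depend on $\eta(x),\eta(x+1)$ (cf.~\eqref{rate:pmm_int}); consequently $r_N^{(m-1)}(\tau_x\eta^{x,x+1})=r_N^{(m-1)}(\tau_x\eta)$, and a change of variables gives
\[\big\langle\mathcal{L}_N^{(m-1)}f\big\rangle_{\nu_\gamma^N}=\sum_{x\in\mathbb{T}_N}\int r_N^{(m-1)}(\tau_x\eta)\,f(\eta)\,[\rho_x(\eta)-1]\,\nu_\gamma^N(\mathrm{d}\eta).\]
Applying the involution $\eta\mapsto\eta^{x,x+1}$ to this expression (it sends $\rho_x\mapsto1/\rho_x$ and carries Jacobian $\rho_x$) shows that the right-hand side equals its negative after replacing $f(\eta)$ by $f(\eta^{x,x+1})$, which forces the antisymmetric representation
\[\big\langle\mathcal{L}_N^{(m-1)}f\big\rangle_{\nu_\gamma^N}=\tfrac12\sum_{x\in\mathbb{T}_N}\int r_N^{(m-1)}(\tau_x\eta)\,\big[f(\eta)-f(\eta^{x,x+1})\big]\,[\rho_x(\eta)-1]\,\nu_\gamma^N(\mathrm{d}\eta).\]

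To finish, I would factorise $f(\eta)-f(\eta^{x,x+1})=\big(\sqrt{f(\eta)}-\sqrt{f(\eta^{x,x+1})}\big)\big(\sqrt{f(\eta)}+\sqrt{f(\eta^{x,x+1})}\big)$ and apply Young's inequality $|ab|\leq\tfrac{1}{2A}a^2+\tfrac{A}{2}b^2$ with a suitable $A$, so that the first factor contributes to $\Gamma_N^{(m-1)}(\sqrt{f},\nu_\gamma^N)$ and the second produces a quadratic error controlled by $|\rho_x-1|^2\leq C_\gamma^2/N^2$. The resulting error term involves $\sum_{x\in\mathbb{T}_N}\int r_N^{(m-1)}(\tau_x\eta)(f(\eta)+f(\eta^{x,x+1}))\,\nu_\gamma^N(\mathrm{d}\eta)$, which by Lemma~\ref{lem:up_speed}, the estimate~\eqref{eq:usefulbound} and the identity $\int f\,\mathrm{d}\nu_\gamma^N=1$ is bounded by $C(m)\,N$ uniformly in $f$; combined with the $1/N^2$ prefactor this yields the claimed $c_\gamma/N$ bound. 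Inserting into the splitting produces $\mathcal{E}_N^{(m-1)}(\sqrt{f},\nu_\gamma^N)\geq\tfrac14\Gamma_N^{(m-1)}(\sqrt{f},\nu_\gamma^N)-\tfrac{c_\gamma}{4N}$, and combining with the first-paragraph bound on $\Gamma_N^{(m-1)}$ yields~\eqref{dir:bound}. The main obstacle I anticipate is the control $\sum_{x\in\mathbb{T}_N}r_N^{(m-1)}(\tau_x\eta)\leq C(m)\,N$ in the fast-diffusion regime, since the tail $\sum_k|\binom{m}{k}|k$ diverges and one must balance the combinatorial identity of Lemma~\ref{lem:up_speed} against this divergence using precisely the cutoff $\ell_N\leq N$.
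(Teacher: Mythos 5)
Your proposal is correct and follows essentially the same route as the paper: the identity \eqref{id:dir-car}, Young's inequality to absorb the non-invariance error into a fraction of $\Gamma_N^{(m-1)}$, the bound $\sum_x r_N^{(m-1)}(\tau_x\eta)\lesssim N$ \emph{via} Lemma \ref{lem:up_speed} (which indeed rests on $\ell_N\leq N$, exactly the point you flag), and finally Proposition \ref{prop:low_bound_r} to pass from $\Gamma_N^{(m-1)}$ to $\Gamma_N^{(0)}$ and $\Gamma_N^{(1)}$. The only cosmetic difference is that you antisymmetrize to obtain the $[f(\eta)-f(\eta^{x,x+1})][\rho_x(\eta)-1]$ representation before applying Young, while the paper decomposes $f(\eta^{x,x+1})-f(\eta)=\sqrt{f}(\eta)\nabla_{x,x+1}\sqrt{f}+\sqrt{f}(\eta^{x,x+1})\nabla_{x,x+1}\sqrt{f}$ and changes variables on the second piece; both reach an equivalent expression.
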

\begin{Rem}
	We highlight that, throughout the rest of the article, we will fix $ \nu_{\gamma}^N $, with $ \gamma(\cdot)\equiv \gamma\in(0,1) $ a constant function as the reference measure. We chose to present the previous result with $ \gamma(\cdot) $ also not constant since it is a fundamental step in order to extend the present model to the open boundary setting.
\end{Rem}

\begin{proof}[Proof of Proposition \ref{prop:energy}]
Recalling the identity \eqref{id:dir-car}, let us focus on the rightmost term there. Note that $ b-a=\sqrt{a}(\sqrt{b}-\sqrt{a})+\sqrt{b}(\sqrt{b}-\sqrt{a}) $, thus
\begin{align*}
\big\langle \mathcal{L}_N^{(m-1)}f\big\rangle_{\nu_\gamma^N}
&=\sum_{x\in\mathbb{T}_N}\int_{\eta\in\Omega_N}
r_N^{(m-1)}(\tau_x\eta)\sqrt{f}(\eta)\left(\nabla_{x,x+1}\sqrt{f}\right)(\eta)\nu^N_\gamma(\rmd\eta)\\
&\quad +\sum_{x\in\mathbb{T}_N}\int_{\eta\in\Omega_N}
r_N^{(m-1)}(\tau_x\eta)\sqrt{f}(\eta^{x,x+1})\left(\nabla_{x,x+1}\sqrt{f}\right)(\eta)\nu^N_\gamma(\rmd\eta).
\end{align*}
Performing the change of variables $ \eta\mapsto\eta^{x,x+1} $ on the second term above and using the symmetry of the rates, $ r_N^{(m-1)}(\tau_x\eta^{x,x+1})=r_N^{(m-1)}(\tau_x\eta) $, we obtain
\begin{align*}
\big\langle \mathcal{L}_N^{(m-1)}f\big\rangle_{\nu_\gamma^N}
=\sum_{x\in\mathbb{T}_N}\sum_{\eta\in\Omega_N}
r_N^{(m-1)}(\tau_x\eta)\sqrt{f}(\eta)\left(\nabla_{x,x+1}\sqrt{f}\right)(\eta)
\bigg(1-\frac{\nu_\gamma^N(\eta^{x,x+1})}{\nu_\gamma^N(\eta)}\bigg)
\nu_\gamma^N(\eta).
\end{align*}
Note that the previous quantity equals zero if $ \gamma(\cdot) $ is constant. Otherwise, applying Young's inequality with $ A>0 $,
\begin{equation*}
\sqrt{f}(\eta)\left(\nabla_{x,x+1}\sqrt{f}\right)(\eta)
\bigg(1-\frac{\nu_\gamma^N(\eta^{x,x+1})}{\nu_\gamma^N(\eta)}\bigg)
\leq
\frac{1}{2A}\abs{\left(\nabla_{x,x+1}\sqrt{f}\right)(\eta)}^2
+\frac{A}{2}
f(\eta)\bigg|1-\frac{\nu_\gamma^N(\eta^{x,x+1})}{\nu_\gamma^N(\eta)}\bigg|^2
\end{equation*}
and therefore
\begin{align*}
\big\langle \mathcal{L}_N^{(m-1)}f\big\rangle_{\nu_\gamma^N}
\leq
\frac{1}{2A}\Gamma_N^{(m-1)}(\sqrt{f},\nu_\gamma^N)
+
\frac{A}{2}\sum_{x\in\mathbb{T}_N}\sum_{\eta\in\Omega_N}
r_N^{(m-1)}(\tau_x\eta)\bigg|1-\frac{\nu_\gamma^N(\eta^{x,x+1})}{\nu_\gamma^N(\eta)}\bigg|^2f(\eta)\nu_\gamma^N(\eta),
\end{align*}
where $ \abs{1-\nu_\gamma^N(\eta^{x,x+1})/\nu_\gamma^N(\eta)}^2\leq \mathbf{c}_\gamma N^{-2} $ with $ \mathbf{c}_\gamma>0 $ for $ \gamma(\cdot) $ a Lipschitz function. From Lemma \ref{lem:up_speed} we can bound
\begin{align*}
\sum_{x\in\mathbb{T}_N}
r_N^{(m-1)}(\tau_x\eta)
\leq
\sum_{k=1}^{\ell_N}\abs{\binom{m}{k}}
\sum_{x\in\mathbb{T}_N}
\mathbf{r}^{(k-1)}(\tau_x\eta)
\leq
2\sum_{k=1}^{\ell_N}\abs{\binom{m}{k}}(N+k-1)\lesssim N.
\end{align*}
In this way, recalling that $ f $ is a density with respect to $ \nu_\gamma^N $, we obtain the upper bound
\begin{align*}
\big\langle \mathcal{L}_N^{(m-1)}f\big\rangle_{\nu_\gamma^N}
&\leq
\frac{1}{2A}\Gamma_N^{(m-1)}(\sqrt{f},\nu_\gamma^N)
+
\frac{A}{2}\frac{\mathbf{c}_\gamma}{N}.
\end{align*}
Plugging this upper bound into identity \eqref{id:dir-car} with the choice $ A=1 $ we obtain
\begin{align}\label{energyup:eq0}
\mathcal{E}_N^{(m-1)}
\geq
\frac14\Gamma_N^{(m-1)}(\sqrt{f},\mu)
-\frac{\mathbf{c}_\gamma}{4N}.
\end{align}
To finish the proof, we see that the lower bounds in Proposition \ref{prop:low_bound_r} imply that
\begin{align*}
\Gamma_N^{(m-1)}(\sqrt{f},\mu)
\geq
\mathbf{1}_{m\in(1,2)}
m\left[
\delta_N\Gamma_N^{(0)}(\sqrt{f},\mu)+\frac{m-1}{2}\Gamma_N^{(1)}(\sqrt{f},\mu)
\right]
+\mathbf{1}_{m\in(0,1)}\Gamma_N^{(0)}(\sqrt{f},\mu).
\end{align*}
\end{proof}

The next two technical results are standard but will invoked in the proof of the replacement lemmas and in their applications. As such, we present them here for future reference.
\begin{Lemma}\label{lem:change}
	Consider $ x,y\in\mathbb{T} $ and let $ \varphi:[0,T]\times\Omega_N\to\mathbb{R} $ be invariant for the map $ \eta\mapsto\eta^{x,y} $. Moreover, consider the measure $ \nu_{\gamma}^N $ with $ \gamma(\cdot)\in(0,1) $ a constant function and let $ f:\Omega_N\to\mathbb{R} $. For all $ s\in[0,T] $ it holds that
	\begin{align*}
		\int_{\Omega_N}\varphi(s,\eta)(\eta(x)-\eta(y))f(\eta)\nu_{\gamma}^N(\rmd\eta)
		=\frac12\int_{\Omega_N}\varphi(s,\eta)(\eta(y)-\eta(x))(f(\eta^{x,y})-f(\eta))\nu_{\gamma}^N(\rmd\eta)
	\end{align*}
\end{Lemma}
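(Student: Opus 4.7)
The plan is to perform the standard involution trick based on the change of variables $\eta\mapsto\eta^{x,y}$, exploiting that all objects other than $(\eta(x)-\eta(y))f(\eta)$ are symmetric under this involution.

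First I would record three elementary observations: (i) since $\nu_\gamma^N$ is a Bernoulli product measure with \emph{constant} parameter, it is invariant under the exchange $\eta\mapsto\eta^{x,y}$, namely $\nu_\gamma^N(\eta)=\nu_\gamma^N(\eta^{x,y})$ for every $\eta\in\Omega_N$; (ii) by the hypothesis on $\varphi$, we have $\varphi(s,\eta^{x,y})=\varphi(s,\eta)$ for every $s\in[0,T]$; (iii) the action of the involution on the occupation variables gives $\eta^{x,y}(x)-\eta^{x,y}(y)=\eta(y)-\eta(x)=-(\eta(x)-\eta(y))$. Denoting by $I$ the left-hand side of the identity, the change of variables $\eta\mapsto\eta^{x,y}$ together with (i), (ii), (iii) then yields
\[
I=\int_{\Omega_N}\varphi(s,\eta)\,(\eta(y)-\eta(x))\,f(\eta^{x,y})\,\nu_\gamma^N(\rmd\eta).
\]

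Writing $I=\tfrac12 I+\tfrac12 I$ and using the original expression for one of the two copies and the just-derived one for the other, I obtain
\[
I=\frac12\int_{\Omega_N}\varphi(s,\eta)\Big[(\eta(x)-\eta(y))f(\eta)+(\eta(y)-\eta(x))f(\eta^{x,y})\Big]\nu_\gamma^N(\rmd\eta).
\]
Factoring out $(\eta(y)-\eta(x))$ in the bracket (using $(\eta(x)-\eta(y))=-(\eta(y)-\eta(x))$) produces exactly $(\eta(y)-\eta(x))(f(\eta^{x,y})-f(\eta))$, which is the desired right-hand side.

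This is a purely algebraic manipulation with no real obstacle; the only point that deserves care is to check that the hypothesis ``$\gamma$ constant'' is actually used, and it is used precisely in (i) to ensure the invariance of $\nu_\gamma^N$ under $\eta\mapsto\eta^{x,y}$ (otherwise a Radon--Nikodym factor $\nu_\gamma^N(\eta^{x,y})/\nu_\gamma^N(\eta)$ would appear). The invariance of $\varphi$ under the involution is the other essential ingredient, ensuring that after the change of variables only the antisymmetric factor changes sign.
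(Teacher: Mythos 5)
Your proposal is correct and takes essentially the same approach as the paper: both use the involution $\eta\mapsto\eta^{x,y}$, exploiting that $\nu_\gamma^N$ (constant Bernoulli) and $\varphi$ are invariant while $\eta(x)-\eta(y)$ is antisymmetric. The only difference is cosmetic — you average $I$ with its transformed copy, whereas the paper writes $f(\eta)=\tfrac12(f(\eta)-f(\eta^{x,y}))+\tfrac12(f(\eta)+f(\eta^{x,y}))$ and shows the symmetric term integrates to zero; these are the same computation.
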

\begin{proof}
	Summing and subtracting the appropriate term we have 
	\begin{align*}
		\int_{\Omega_N}\varphi(s,\eta)(\eta(x)-\eta(y))f(\eta)\nu_{\gamma}^N(\rmd\eta)
		&=\frac12 \int_{\Omega_N}\varphi(s,\eta)(\eta(x)-\eta(y))(f(\eta)-f(\eta^{x,y}))\nu_{\gamma}^N(\rmd\eta)
		\\
		&+\frac12 \int_{\Omega_N}\varphi(s,\eta)(\eta(x)-\eta(y))(f(\eta)+f(\eta^{x,y}))\nu_{\gamma}^N(\rmd\eta).
	\end{align*}
	To see that the second term in the right-hand side equals zero, simply note that performing the change of variables $ \eta\mapsto\eta^{x,y} $ and using that $ \varphi(s,\eta^{x,y})=\varphi(s,\eta) $ and $ \nu_\gamma^N(\eta^{x,y})=\nu_\gamma^N(\eta) $ we obtain
	\begin{align*}
		\int_{\Omega_N}\varphi(s,\eta)(\eta(x)-\eta(y))f(\eta^{x,y})\nu_{\gamma}^N(\rmd\eta)
		&=-\int_{\Omega_N}\varphi(s,\eta)(\eta(x)-\eta(y))f(\eta)\nu_{\gamma}^N(\rmd\eta).
	\end{align*} 
\end{proof}
The next proposition is applied in the second term in \eqref{aldous0} and in \eqref{eq:h_prob}
\begin{Prop}\cite[Lemma 4.3.2]{phd:pedro}.\label{prop:pedro}
	Assume there exists a family $ \mathcal{F} $ of functions $ F_{N,\eps}:[0,T]\times \mathcal{D}([0,T],\Omega)\to\mathbb{R} $ satisfying
	\begin{align*}
		\sup_{\substack{\eps\in(0,1),N\geq1\\s\in[0,T],\eta\in\mathcal{D}([0,T],\Omega)}}
		\abs{F_{N,\eps}(s,\eta)}\leq M<\infty.
	\end{align*}
	Above, the interval for (0,1) for $ \eps $ is arbitrary. We also assume that for all $ t\in[0,T] $,
	\begin{align*}
		\limsup_{\eps\to0^+}\limsup_{N\to+\infty}
		\mathbb{E}_{\mu_N}
		\left[
		\abs{
			\int_0^tF_{N,\eps}(s,\eta_s)\rmd s
		}
		\right]=0.
	\end{align*}
	Then we have for all $ \delta>0 $,
	\begin{align*}
		\limsup_{\eps\to0^+}\limsup_{N\to+\infty}\mathbb{P}_{\mu_N}
		\left(
		\sup_{t\in[0,T]}
		\abs{
			\int_0^t F_{N,\eps}(s,\eta_s)\rmd s
		}
		>\delta
		\right)=0.
	\end{align*}
\end{Prop}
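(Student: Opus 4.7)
The plan is to exploit the uniform boundedness of $F_{N,\eps}$ in order to reduce the supremum over the continuous time parameter $t\in[0,T]$ to a maximum over a finite grid whose size depends only on $\delta$, and then to apply Markov's inequality together with the hypothesis, which is a pointwise-in-$t$ $L^1$ bound.

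First, I would observe that since $|F_{N,\eps}(s,\eta)|\leq M$ for all $(s,\eta)$ and all $N,\eps$, the map
\[ t\mapsto I_{N,\eps}(t):=\int_0^t F_{N,\eps}(s,\eta_s)\rmd s \]
is $M$-Lipschitz on $[0,T]$, uniformly in $N,\eps$ and in the trajectory $\eta$. Fix $\delta>0$ and choose a deterministic mesh $0=t_0<t_1<\cdots<t_K=T$ with $K=K(\delta):=\lceil 4MT/\delta\rceil$ and $t_{k+1}-t_k\leq \delta/(4M)$. Crucially $K$ depends only on $M,T,\delta$, and not on $N$ or $\eps$. The Lipschitz bound yields $|I_{N,\eps}(t)-I_{N,\eps}(t_k)|\leq \delta/4$ whenever $t\in[t_k,t_{k+1}]$, and therefore
\[ \sup_{t\in[0,T]}\abs{I_{N,\eps}(t)} \leq \max_{0\leq k\leq K}\abs{I_{N,\eps}(t_k)}+\frac{\delta}{4}. \]

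Second, I would combine a union bound with Markov's inequality to estimate
\[ \mathbb{P}_{\mu_N}\!\left(\sup_{t\in[0,T]}\abs{I_{N,\eps}(t)}>\delta\right) \leq \sum_{k=0}^{K}\mathbb{P}_{\mu_N}\!\left(\abs{I_{N,\eps}(t_k)}>\tfrac{3\delta}{4}\right) \leq \frac{4}{3\delta}\sum_{k=0}^{K}\mathbb{E}_{\mu_N}\!\left[\abs{I_{N,\eps}(t_k)}\right]. \]
Taking $\limsup_{N\to+\infty}$ followed by $\limsup_{\eps\to 0^+}$, each of the $K+1$ terms on the right vanishes by the hypothesis applied with $t=t_k$. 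Since the sum is over a fixed finite index set independent of $N$ and $\eps$, the whole right-hand side tends to $0$, yielding the claim.

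There is no serious obstacle here: the only delicate point is that the discretization parameter $K$ must be chosen \emph{before} passing to the limits in $N$ and $\eps$, which requires precisely the uniform-in-$(N,\eps)$ bound $M$ on $\mathcal{F}$ in order to make the Lipschitz step independent of those parameters. This is a standard tightness-style upgrade from $L^1$ control at fixed times to control of the supremum in probability.
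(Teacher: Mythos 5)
Your argument is correct: the uniform bound $M$ gives an $M$-Lipschitz estimate on $t\mapsto I_{N,\eps}(t)$ uniformly in $N,\eps,\eta$, which lets you replace the supremum over $[0,T]$ by a maximum over a finite mesh of size depending only on $M,T,\delta$; a union bound plus Markov's inequality then reduces everything to the hypothesis at the finitely many mesh points, and the double $\limsup$ passes through the finite sum. The paper states this proposition and cites it to \cite{phd:pedro} without reproducing the proof, but the discretization argument you give is the standard one and there is nothing to fix.
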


\subsection{Replacement Lemmas for \texorpdfstring{$m\in(1,2)$}.}
\begin{Lemma}\label{lem:rep_shift}
Consider $ x,y\in\mathbb{T}_N $. Let $ \varphi:[0,T]\times\Omega_N\to\mathbb{R} $ be such that $ \norm{\varphi}_{L^\infty([0,T]\times\Omega_N)}\leq c_\varphi<\infty $ and invariant for the map $ \eta\mapsto\eta^{z,z+1} $ with $ z\in\llbracket x,y-1\rrbracket $. Then for all $ B>0 $ and for all $ t\in[0,T] $
\begin{align*}
\mathbb{E}_{\mu_N}
\left[\abs{
\int_0^t
\varphi(s,\eta_{N^2s})
(
\eta_{N^2s}(x)-\eta_{N^2s}(y)
)
\rmd s}	\right]
\lesssim
\frac{1}{B}
+
TB\abs{y-x}\frac{(\ell_N)^{m-1}}{N}
.
\end{align*}
\end{Lemma}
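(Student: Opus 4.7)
I will follow the classical entropy method, where the crucial ingredient is the energy lower bound of Proposition \ref{prop:energy}: taking $\gamma$ constant (so $\mathbf{c}_\gamma = 0$), one reads off from it the key inequality
\[
\Gamma_N^{(0)}(\sqrt{f},\nu_\gamma^N) \leq \frac{4}{m\delta_N}\,\mathcal{E}_N^{(m-1)}(\sqrt{f},\nu_\gamma^N),
\qquad \delta_N \asymp (\ell_N)^{-(m-1)}.
\]
Writing $F(s,\eta) = \varphi(s,\eta)(\eta(x)-\eta(y))$ and combining Jensen's inequality applied to $\pm F$ with the entropy inequality and Feynman–Kac's formula for the semigroup of $N^2\mathcal{L}_N^{(m-1)}$, one obtains for every $\theta > 0$
\[
\mathbb{E}_{\mu_N}\Bigl[\Bigl|\int_0^t F(s,\eta_{N^2 s})\,\rmd s\Bigr|\Bigr]
\leq \frac{\log 2 + H(\mu_N | \nu_\gamma^N)}{\theta}
+ \int_0^t\sup_f\Bigl\{\int F(s,\eta) f(\eta)\,\rmd\nu_\gamma^N - \frac{N^2}{\theta}\mathcal{E}_N^{(m-1)}(\sqrt{f},\nu_\gamma^N)\Bigr\}\rmd s,
\]
the supremum being taken over densities $f$ with respect to $\nu_\gamma^N$; since $\gamma$ is constant, $H(\mu_N|\nu_\gamma^N) \lesssim N$.

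To bound the variational problem, I plan to telescope $\eta(x) - \eta(y) = \sum_{z=x}^{y-1}(\eta(z)-\eta(z+1))$ and, using the hypothesis that $\varphi$ is invariant under each exchange $\eta \mapsto \eta^{z,z+1}$ for $z \in \llbracket x, y-1\rrbracket$, apply Lemma \ref{lem:change} to rewrite each term as $\tfrac{1}{2}\int\varphi(\eta(z+1)-\eta(z))(f(\eta^{z,z+1})-f(\eta))\,\rmd\nu_\gamma^N$. Factoring $f(\eta^{z,z+1}) - f(\eta) = (\sqrt{f(\eta^{z,z+1})}-\sqrt{f(\eta)})(\sqrt{f(\eta^{z,z+1})}+\sqrt{f(\eta)})$ and applying Young's inequality with a parameter $A > 0$, the square-difference piece summed over $z$ reconstructs exactly $\tfrac{1}{4A}\Gamma_N^{(0)}(\sqrt{f},\nu_\gamma^N)$ (recalling $\mathbf{r}^{(0)}(\tau_z\eta) = |\eta(z)-\eta(z+1)|$), while the square-sum piece is bounded by $c_\varphi^2 A|y-x|$ after the change of variables $\eta \mapsto \eta^{z,z+1}$, which preserves $\nu_\gamma^N$, and the normalisation $\int f\,\rmd\nu_\gamma^N = 1$. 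Inserting the energy lower bound to replace $\Gamma_N^{(0)}$ by $(4/m\delta_N)\mathcal{E}_N^{(m-1)}$ yields
\[
\int F f\,\rmd\nu_\gamma^N - \frac{N^2}{\theta}\mathcal{E}_N^{(m-1)}(\sqrt{f},\nu_\gamma^N) \leq c_\varphi^2 A|y-x| + \Bigl(\frac{1}{Am\delta_N} - \frac{N^2}{\theta}\Bigr)\mathcal{E}_N^{(m-1)}(\sqrt{f},\nu_\gamma^N).
\]

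Choosing $\theta = Am\delta_N N^2$ cancels the Dirichlet-form coefficient, and then choosing $A = B(\ell_N)^{m-1}/N$ makes $\theta \asymp BN$, so that $H(\mu_N|\nu_\gamma^N)/\theta \lesssim 1/B$ and $c_\varphi^2 A|y-x| \lesssim B|y-x|(\ell_N)^{m-1}/N$; integrating in $s \in [0,T]$ reproduces exactly the announced bound. The main technical subtlety lies in this joint tuning of $A$ and $\theta$: the loss $\delta_N^{-1}\asymp (\ell_N)^{m-1}$ incurred when inverting the energy bound forces $A$ to scale like $(\ell_N)^{m-1}/N$, and $\theta$ must simultaneously be large enough that $H/\theta$ stays of order $1/B$ yet small enough not to reinstate a non-negligible Dirichlet-form contribution. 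That the two constraints are compatible is precisely the content of the sharp estimate $\delta_N(\ell_N)^{m-1}\asymp 1$ from Proposition \ref{prop:low_bound_r}, and it is this tight compatibility that will make the proof work at the threshold.
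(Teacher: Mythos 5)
Your proposal is correct and follows essentially the same route as the paper's own proof: entropy inequality plus Feynman--Kac, telescoping $\eta(x)-\eta(y)$ along the bond path, the change-of-variables identity of Lemma~\ref{lem:change}, Young's inequality to produce a $\Gamma_N^{(0)}$ term and an $O(|y-x|)$ remainder, the energy lower bound of Proposition~\ref{prop:energy} to absorb $\Gamma_N^{(0)}$ into $\mathcal{E}_N^{(m-1)}$, and finally the tuning of constants with $\delta_N\asymp(\ell_N)^{-(m-1)}$. The only cosmetic difference is that you keep the Feynman--Kac parameter $\theta$ and the Young parameter $A$ both free until the end, whereas the paper fixes $\theta = BN$ at the outset and tunes only $A$; the constraints you identify are exactly what that single choice enforces.
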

\begin{proof}
From the entropy inequality (see \cite[Appendix 1, Chapter 8]{KL:book}) with $ \nu_\gamma^N $ as reference measure and Feynman Kac's formula (see \cite[page 14]{BMNS} for instance), we bound the previous expectation from above by
\begin{align}\label{rep0:var0}
\frac{c_\gamma}{B}
+\int_0^T
\sup_{f
}
\left\{
\abs{
\inner{
\varphi(s,\eta)
(
\eta(x)-\eta(y)
)
,f}_{\nu_\gamma^N}
}
-\frac{N}{B}\mathcal{E}_N^{(m-1)}(\sqrt{f},\nu_\gamma^N)
\right\}
\rmd s,
\end{align}
where the supremum is over densities with respect to $ \nu_\gamma^N $. Rewriting $ \eta(x)-\eta(y)=\sum_{z=x}^{y-1}\eta(z)-\eta(z+1) $, from Lemma \ref{lem:change} the first term inside the supremum  in \eqref{rep0:var0} can be rewritten as
\begin{align}\label{rep0:eq0}
\frac12\int_{\Omega_N}\sum_{z=x}^{y-1}
\varphi(s,\eta)
(\eta(z)-\eta(z+1))
(f(\eta)-f(\eta^{z,z+1}))
\nu_\gamma^N(\rmd\eta).
\end{align}
From Young's inequality we bound this from above by $ c_\varphi $ times
\begin{align*}
\frac{1}{4 A}\int_{\Omega_N} & \sum_{z=x}^{y-1}
\left(\sqrt{f}(\eta^{z,z+1})+\sqrt{f}(\eta)\right)^2
\nu_\gamma^N(\rmd\eta)
+
\frac{A}{4}\int_{\Omega_N}\sum_{z=x}^{y-1}
\left(\nabla_{z,z+1}\sqrt{f}(\eta)\right)^2
\nu_\gamma^N(\rmd\eta)
\\
& \leq
\frac{\abs{y-x}}{2A}
+
\frac{A}{2}\Gamma_N^{(0)}(\sqrt{f},\nu_\gamma^N),
\end{align*}
where we performed a change of variables on the first term. Summarizing, applying Proposition \ref{prop:energy} on \eqref{rep0:var0} we bound \eqref{rep0:var0} from above by
\begin{align*}
\frac{c_\gamma}{B}
+T
\left(
\frac12 \Gamma_N^{(0)}(\sqrt{f},\nu_\gamma^N)
\left(\frac{c_\varphi}{4}A
-\frac{N}{B}\delta_N
\right)
+c_\varphi
\frac{\abs{y-x}}{2A}
\right).
\end{align*}
Fixing $ A=4N\delta_N/c_\varphi B $ and recalling from Proposition \ref{prop:low_bound_r} that $ 0<\delta_N=\sum_{k\geq\ell_N}\abs{\binom{m-1}{k}}\lesssim (\ell_N)^{-(m-1)} $, the proof is concluded.
\end{proof}
\begin{Cor}\label{cor:rep_ell_box}
Fixed $ N $, for any $ i\in\mathbb{N}_+ $ and $ L\in\mathbb{N}_+ $ such that $ L<N $, let $ \varphi:[0,T]\times\Omega_N\to\mathbb{R} $ be {such that} $ \norm{\varphi}_{L^\infty([0,T]\times\Omega_N)}\leq c_\varphi<\infty $ and invariant for the map $ \eta\mapsto\eta^{z,z+1} $ with $ z\in\llbracket iL,(i+1)L-2\rrbracket $. 
Then, for all $ B>0 $ and for all $ t\in[0,T] $ it holds
\begin{align*}
\mathbb{E}_{\mu_N}
\left[
\abs{
\int_0^t
\varphi(s,\eta_{N^2s})
\left(\eta_{N^2s}(i L)-\eta_{N^2s}^L(iL)\right)
\rmd s		}
\right]
\lesssim
\frac1B
+TB\frac{(L+1)(\ell_N)^{m-1}}{N}
.
\end{align*}
\end{Cor}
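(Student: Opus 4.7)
The plan is to reduce the claim directly to Lemma \ref{lem:rep_shift} by writing the empirical average as a telescoping mean of point-wise differences. Specifically, since $\eta^L(iL) = L^{-1}\sum_{y=iL}^{iL+L-1}\eta(y)$, we write
\begin{align*}
\eta_{N^2s}(iL) - \eta_{N^2s}^L(iL) = \frac{1}{L}\sum_{y=iL}^{iL+L-1}\big(\eta_{N^2s}(iL) - \eta_{N^2s}(y)\big).
\end{align*}
Bringing $\varphi$ inside, applying the triangle inequality, and exchanging the sum with the expectation, the quantity to estimate is bounded from above by
\begin{align*}
\frac{1}{L}\sum_{y=iL}^{iL+L-1} \mathbb{E}_{\mu_N}\Bigg[\bigg|\int_0^t \varphi(s,\eta_{N^2s})\big(\eta_{N^2s}(iL) - \eta_{N^2s}(y)\big)\rmd s\bigg|\Bigg].
\end{align*}

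Next, one verifies that the hypothesis of Lemma \ref{lem:rep_shift} is met for each term. For $y \in \llbracket iL, iL+L-1\rrbracket$, the set of exchanges needed for that lemma is $z \in \llbracket iL, y-1\rrbracket \subseteq \llbracket iL, (i+1)L-2\rrbracket$, which is exactly the range for which the corollary's assumption grants invariance of $\varphi$ under $\eta \mapsto \eta^{z,z+1}$. Applying Lemma \ref{lem:rep_shift} with $x = iL$ then yields, for each fixed $B>0$,
\begin{align*}
\mathbb{E}_{\mu_N}\Bigg[\bigg|\int_0^t \varphi(s,\eta_{N^2s})\big(\eta_{N^2s}(iL) - \eta_{N^2s}(y)\big)\rmd s\bigg|\Bigg] \lesssim \frac{1}{B} + TB\,|y-iL|\,\frac{(\ell_N)^{m-1}}{N}.
\end{align*}

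To conclude, one sums over $y$ and divides by $L$. Since $\sum_{y=iL}^{iL+L-1}|y-iL| = L(L-1)/2 \lesssim L(L+1)$, the final bound is
\begin{align*}
\frac{1}{L}\sum_{y=iL}^{iL+L-1}\bigg(\frac{1}{B} + TB\,|y-iL|\,\frac{(\ell_N)^{m-1}}{N}\bigg) \lesssim \frac{1}{B} + TB\,\frac{(L+1)(\ell_N)^{m-1}}{N},
\end{align*}
which is the desired estimate. There is no real obstacle here beyond checking the compatibility of the invariance hypothesis; the result is essentially a direct averaging of Lemma \ref{lem:rep_shift} over a box of size $L$, with the cost growing linearly in $L$ owing to the linear dependence on $|y-x|$ in that lemma.
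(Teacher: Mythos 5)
Your proposal is correct and follows essentially the same route as the paper: rewrite $\eta(iL)-\eta^L(iL)$ as the average $\frac{1}{L}\sum_{y\in\Lambda_{iL}^L}(\eta(iL)-\eta(y))$, apply Lemma \ref{lem:rep_shift} termwise, and sum the resulting bounds. Your version is somewhat more explicit than the paper's (notably in verifying that the invariance range $\llbracket iL, y-1\rrbracket$ sits inside $\llbracket iL, (i+1)L-2\rrbracket$), but there is no difference in substance.
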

\begin{proof}
Observing that $ \eta(0)-\eta^L(0)=\frac{1}{L}\sum_{y\in \Lambda_0^L}\left(\eta(0)-\eta(y)\right) $, from Lemma \ref{lem:rep_shift} we can bound from above the expectation in the statement of the corollary by a constant times
\begin{align*}
\frac{1}{L}\sum_{y\in \Lambda_0^L}
\frac{1}{B}
+TBy\frac{(\ell_N)^{m-1}}{N}
\lesssim
\frac1B
+TB\frac{(L+1)(\ell_N)^{m-1}}{N}.
\end{align*}
\end{proof}

Let us now state the two-blocks estimate:

\begin{Lemma}\label{lem:rep_boxes}
Fix $ \eps>0 $ and $ N\in\mathbb{N} $. For $ i\in\mathbb{N} $ and $ L<\eps N $ fixed, let $ \varphi:[0,T]\times\Omega_N\to\mathbb{R}_+ $ be such that $ \norm{\varphi}_{L^\infty([0,T]\times\Omega_N)}\leq c_\varphi<\infty $ and invariant for the map $ \eta\mapsto\eta^{z,z+1} $ with $ z\in\llbracket iL,iL+\floor{N\eps}-1\rrbracket $. Then for all $ B>0 $ and for all $ t\in[0,T] $ it holds that
\begin{align*}
\mathbb{E}_{\mu_N}
\left[
\abs{
\int_0^t
\varphi(s,\eta_{N^2s})
\left(
\eta_{N^2s}^{\floor{N\eps}}(i\floor{N\eps})
-\eta_{N^2s}^{L}(iL)
\right)
\rmd s		}
\right]
\lesssim
\frac{1}{B}
+T
\left[
\frac{1}{L}
+B
\bigg(\frac{L(\ell_N)^{m-1}}{N}
+i
\frac{L}{N}
+
\eps(i+1)
\bigg)
\right].
\end{align*}
\end{Lemma}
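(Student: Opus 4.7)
The approach adapts the classical two-blocks scheme \cite[Chapter 5]{KL:book} to the interpolating model, using the Dirichlet-form lower bound of Proposition \ref{prop:energy}, namely $\mathcal{E}_N^{(m-1)}(\sqrt f,\nu_\gamma^N)\geqslant \tfrac{m\delta_N}{4}\Gamma_N^{(0)}(\sqrt f,\nu_\gamma^N)-\tfrac{c_\gamma}{4N}$ with $\delta_N\asymp(\ell_N)^{-(m-1)}$. First, applying the entropy inequality with reference $\nu_\gamma^N$ together with Feynman--Kac, exactly as in the proof of Lemma \ref{lem:rep_shift}, the expectation is bounded by
\begin{align*}
\frac{c_\gamma}{B}+\int_0^T\sup_f\bigg\{\big\langle \varphi(s,\cdot)\big(\eta^{\floor{N\eps}}(i\floor{N\eps})-\eta^L(iL)\big),f\big\rangle_{\nu_\gamma^N}-\frac{N}{B}\mathcal{E}_N^{(m-1)}(\sqrt f,\nu_\gamma^N)\bigg\}\rmd s,
\end{align*}
the supremum being over $\nu_\gamma^N$-densities $f$.

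I would then decompose the difference of empirical means in two stages. Setting $M=\floor{N\eps}-L+1$, I would first write
\begin{align*}
\eta^{\floor{N\eps}}(i\floor{N\eps})=\frac{1}{M}\sum_{j=0}^{M-1}\eta^L(i\floor{N\eps}+j)+R(\eta),\qquad |R(\eta)|\lesssim\tfrac{L}{\floor{N\eps}}\lesssim\tfrac{1}{L},
\end{align*}
where $R(\eta)$ arises from the non-uniform weights at the boundary of $\Lambda_{i\floor{N\eps}}^{\floor{N\eps}}$; integrated against $\varphi$, this produces the $T/L$ contribution. Then for each $j\in\{0,\dots,M-1\}$ I would further expand
\begin{align*}
\eta^L(i\floor{N\eps}+j)-\eta^L(iL)=\tfrac{1}{L}\textstyle\sum_{y=0}^{L-1}\bigl(\eta(i\floor{N\eps}+j+y)-\eta(iL+y)\bigr),
\end{align*}
and, using Lemma \ref{lem:change}, rewrite each one-site difference as a telescopic sum of nearest-neighbour swaps along a chosen path from $iL+y$ to $i\floor{N\eps}+j+y$. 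Applying Young's inequality on each swap, invoking Proposition \ref{prop:energy}, and choosing the Young parameter $A\asymp N\delta_N/B$ so as to absorb the Dirichlet contributions into $-\tfrac{N}{B}\mathcal{E}_N^{(m-1)}$, should produce the three remaining contributions $BL(\ell_N)^{m-1}/N$, $B\,iL/N$ and $B\eps(i+1)$.

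The main obstacle is the combinatorial organisation of these transport paths: the distance from $iL+y$ to $i\floor{N\eps}+j+y$ is of order $i\floor{N\eps}$, and a naive single-path choice shared across $(j,y)$ would contribute a term of order $B\,i\floor{N\eps}(\ell_N)^{m-1}/N$, much larger than the target. To obtain the sharp bound, I would split the transport into three distinct stages and arrange the paths so that each lattice bond is used by $O(1)$ telescopic increments: an \emph{internal} shuffle inside the small box of length $L$ (producing the $BL(\ell_N)^{m-1}/N$ term via standard Dirichlet-form absorption), a common \emph{block translation} of the whole $L$-box through distance $\asymp iL$ (producing $B\,iL/N$), and a \emph{local alignment} within a window of size $\asymp\eps N$ averaged over $j\in\{0,\dots,M-1\}$ (producing $B\eps(i+1)$). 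The delicate point, absent from the simpler shift estimate of Lemma \ref{lem:rep_shift}, is precisely this path decoupling, together with the verification that the last two contributions do not inherit the $(\ell_N)^{m-1}$ factor despite their Dirichlet-form origin.
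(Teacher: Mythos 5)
Your first step (entropy + Feynman--Kac to reach the variational problem) matches the paper exactly, and your instinct to split the transport into a short ``internal'' shuffle, a long ``block translation'' and a ``local alignment'' is the right intuition. But the proposal has a genuine gap precisely at the step you flag as ``delicate'': you never identify a mechanism by which the long-range contributions can escape the $(\ell_N)^{m-1}$ factor, and in fact with the tools you invoke they cannot.

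You quote Proposition \ref{prop:energy} only in the form $\mathcal{E}_N^{(m-1)}\geqslant \tfrac{m\delta_N}{4}\Gamma_N^{(0)}-\tfrac{c_\gamma}{4N}$. If every nearest-neighbour swap in your telescoping paths is absorbed, via Young's inequality, into this SSEP Dirichlet form, then the Young parameter you must pick is $A\asymp N\delta_N/B$, and \emph{every} swap contributes $\asymp 1/A\asymp B(\ell_N)^{m-1}/N$, regardless of how cleverly the paths are decoupled so that each bond is used $O(1)$ times. The paths from $iL$ to $i\floor{N\eps}$ have length of order $iL+i\floor{N\eps}$, so the resulting cost is $B(iL+i\floor{N\eps})(\ell_N)^{m-1}/N$, which is too large --- exactly the objection you raise yourself, without resolving it. The paper explicitly remarks (just before the proof) that one cannot get rid of $\delta_N$ by a better constant $\kappa\Gamma_N^{(0)}$, since $\inf_\eta c_N^{(m-1)}(\eta)$ is not bounded away from $0$ uniformly in $N$. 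The missing ingredient is the \emph{other} half of Proposition \ref{prop:energy} for $m\in(1,2)$: the lower bound also contains $\tfrac{m(m-1)}{8}\Gamma_N^{(1)}(\sqrt f,\nu_\gamma^N)$, the PMM($1$) Dirichlet form, with a constant \emph{independent of} $\ell_N$. The paper's path argument is engineered precisely around this: it conditions on the event $X_j$ that at least one of the two boxes contains $\geqslant 3$ particles (the complement $X_j^c$ is what actually produces the $T/L$ term, not a sliding-window remainder), forms a \emph{mobile cluster} inside the $L$-box, translates it across the $O(iL+i\floor{N\eps})$ distance using PMM($1$) jumps, and only uses SSEP jumps for the $O(L)$-length internal shuffle. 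The PMM($1$) steps are absorbed into $\Gamma_N^{(1)}$ with Young parameter $A_1\asymp N/B$ (no $\delta_N$), giving $B(iL+(i+1)\floor{N\eps})/N\lesssim BiL/N+B\eps(i+1)$, while only the $O(L)$ SSEP steps are absorbed into $\delta_N\Gamma_N^{(0)}$ with $A_0\asymp N\delta_N/B$, giving $BL(\ell_N)^{m-1}/N$. Without distinguishing which elementary swaps go to which Dirichlet form --- which in turn requires verifying that the PMM($1$) constraint $\mathbf{c}^{(1)}>0$ holds at every PMM($1$)-labelled step, hence the mobile-cluster construction and the $X_j$ conditioning --- the claimed bound cannot be reached.

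A smaller issue: your sliding-window decomposition with $M=\floor{N\eps}-L+1$ does not quite give $|R(\eta)|\lesssim 1/L$; the boundary-weight error is of order $L/\floor{N\eps}$, which is only $\lesssim 1/L$ if $L^2\lesssim N\eps$, a constraint the paper does not impose (recall $\ell_N$ can be as large as $N$ and $L=(\ell_N)^{2-m}$). The paper avoids this by partitioning $\llbracket 0,\floor{N\eps}-1\rrbracket$ into $K=\floor{N\eps}/L$ \emph{disjoint} $L$-blocks, which is exact, and obtains the $1/L$ term from the low-particle-number event $X_j^c$ instead.
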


Before proving this lemma, let us comment on the proof: we will follow closely the path argument in \cite{BDGN}, although with some warm up before its application and some minor adjustments. Although for $ m\in(1,2) $ the state-space is irreducible, the exclusion rates are not fast enough to travel along $ \floor{N\eps} $--distances for every configuration, which would avoid the use of the path argument below (as it is the case for $ m\in(0,1) $). A simple way to see this quantitatively is to take $ \abs{y-x}=\eps N $ in Lemma \ref{lem:rep_shift}. The main reason for the resulting blow up is that the rate decreases as $ \inf\{k\in\llbracket1,\ell_N\rrbracket : \br^{(k)}(\overline{\eta})=0\} $ increases, and so for certain configurations the jumping rate can be as small as $ \delta_N\lesssim (\ell_N)^{-(m-1)} $ (see Proposition \ref{prop:low_bound_r}).

In order not to use the path argument we would need to replace the lower bound \eqref{dir:bound} by $ \kappa\Gamma_N^{(0)}(\sqrt{f},\nu_\gamma^N) $, for some constant $ \kappa>0 $ independent of $ N $, which cannot be done because there is no such constant such that $ \inf_{\eta\in\Omega_N} c_N^{(m-1)}(\eta)\geq \kappa $.
Then again, we cannot relate the function inside the expectation in the statement of the two-blocks estimate solely with $ \Gamma_N^{(1)} $, since this would require an initial shuffling of the configuration in order to move the particles with the PMM(1), hence there is the need to compare it with a SSEP term as well. In this way, we are restricted to finding some useful lower bound, such as \eqref{dir:bound}. This introduces a second issue: in \cite{BDGN}, the replacement scheme relies on the treatment of
\begin{align*}
\mathbb{E}_{\mu_N}
\left[\abs{\int_0^t
\eta_{N^2s}^L(-L)
\left(
\eta_{N^2s}^{\floor{N\eps}}(0)-\eta_{N^2s}^{L}(0)
\right)\rmd s}\right],
\end{align*}
analogously to \eqref{step3}. There, the authors start by conditioning on the number of particles in $ \eta_{N^2s}^L(-L) $, which allows them to introduce the PMM($ 1 $) rates via Young's inequality. In our case however, we have $ \overline{\eta}_{N^2s}^L(0) $, meaning that we must condition on the number of \textit{holes} instead. Controlling the holes does not allow us to introduce the PMM($ 1 $) rates, but the $ \overline{\text{PMM}}( 1 ) $ rates instead, which are incompatible with the lower bound \eqref{dir:bound}. To avoid this, one could distribute the products of empirical averages in \eqref{step3}, but doing so would necessarily lead to restrictions on the explosion rate of $ \ell_N $. The simple workaround is to replace \textit{directly} $ \eta_{N^2s}^{\floor{N\eps}}(0) $ by $ \eta_{N^2s}^{L}(0) $ with the conditioning happening inside either the $ \floor{N\eps}$ or $ L $--boxes and not outside, and at the final step of the proof invoke Proposition \ref{prop:energy}.

Let us now go into the proof.

\begin{proof}[Proof of Lemma \ref{lem:rep_boxes}]
Analogously to the previous replacement lemmas, the expectation in the statement of the lemma can be estimated by some constant times
\begin{align}\label{rep_boxes_eq0}
\frac{c_\gamma}{B}
+\int_0^t
\sup_{f
}
\left\{
\abs{
\big\langle
\varphi(s,\eta)
\left(
\eta^{\floor{N\eps}}(i\floor{N\eps})
-\eta^{L}(iL)
\right)
,f\big\rangle_{\nu_\gamma^N}
}
-\frac{N}{B}\mathcal{E}_N^{(m-1)}(\sqrt{f},\nu_\gamma^N)
\right\}
\rmd s,
\end{align}
where the supremum is over densities with respect to $ \nu_\gamma^N $ and $ \gamma(\cdot)\in(0,1) $ is a constant function. Now we break the box $ \Lambda^{\floor{N\eps}}_0 $ into $ K $ smaller $ L-$sized boxes:
\begin{align*}
\llbracket 0,\floor{N\eps}-1\rrbracket
=
\llbracket 0,KL-1\rrbracket
=\bigcup_{j=1}^{K}\llbracket (j-1)L,jL-1\rrbracket
,\qquad K=\frac{\floor{N\eps}}{L},
\end{align*}
leading to
\begin{align*}
\eta^{\floor{N\eps}}(i\floor{N\eps})
-\eta^{L}(iL)
=\frac{1}{K}\sum_{j=1}^{K}
\left(
\eta^{L}(i\floor{N\eps}+(j-1)L)
-\eta^{L}(iL)
\right).
\end{align*}
Note that we can do this only if $ \floor{N\eps}>L $, which is the case given that $ L/N<\eps $. Moreover, $ K $ might not be an integer. Nevertheless, since for any bounded function $ \psi:\Omega_N\to\mathbb{R} $ we have
\begin{align}
\frac{1}{\ceil{K}}\sum_{j=1}^{\ceil{K}}\psi(\tau_j\eta)
-
\frac{1}{\floor{K}}\sum_{j=1}^{\floor{K}}\psi(\tau_j\eta)
\lesssim
\frac{1}{\ceil{K}}+\left(1-\frac{\floor{K}}{\ceil{K}}\right)\xrightarrow{K\to+\infty}0
\end{align}
we proceed as if $ K\in\mathbb{N}_+ $.

For each $ j\in\{1,\dots,K\} $ consider the event
\begin{align*}
X_j=
\left\{
\eta\in\Omega_N :
\eta^L (iL)\geq \frac{3}{L}
\right\}
\bigcup
\left\{
\eta\in\Omega_N :
\eta^L (i\floor{N\eps}+(j-1)L)\geq \frac{3}{L}
\right\},
\end{align*}
meaning that there are at least $ 3 $ particles in at least one of the boxes
\begin{align*}
\Lambda_{iL}^L=\llbracket iL,(i+1)L-1\rrbracket
\quad\text{or}\quad
\Lambda_{i\floor{N\eps}+(j-1)L}^L=\llbracket i\floor{N\eps}+(j-1)L,i\floor{N\eps}+jL-1\rrbracket.
\end{align*}
The integral, over $ (X_j)^c $, of the first term in the variational formula \eqref{rep_boxes_eq0} is of order $ L^{-1} $, therefore we can bound from above the first term in the aforementioned variational formula by a term of order $ L^{-1} $ plus
\begin{align}\label{rep_boxes_eq1}
\frac{1}{2KL}
\sum_{j=1}^K
\sum_{z\in \Lambda^L}
\left|
\int_{X_j}
\varphi(s,\eta)
\left(\eta(z+i\floor{N\eps}+(j-1)L)-\eta(z+iL)\right)
\left(f(\eta)-f(\eta^{z+i\floor{N\eps}+(j-1)L,z+iL})\right)
\nu_\gamma^N(\rmd\eta)
\right|,
\end{align}
where we used Lemma \ref{lem:change}. To estimate the quantity in the previous display, we use a path argument in the same spirit as in \cite[Lemma 5.8]{BDGN}, we claim that we can decompose
\begin{align}\label{path}
f(\eta)-f(\eta^{z+i\floor{N\eps}+(j-1)L,z+iL})
=\sum_{n\in J^{\text{PMM}(0)}}\left(f(\eta^{(n-1)})-f(\eta^{(n)})\right)
+\sum_{n\in J^{\text{PMM}(1)}}\left(f(\eta^{(n-1)})-f(\eta^{(n)})\right)
\end{align}
where
\begin{itemize}
\item $ \eta^{(0)}=\eta,\; \eta^{(n+1)}=(\eta^{(n)})^{x(n),x(n)+1} $; \medskip
\item $ \left(x(n)\right)_{n=0,\dots,N(x_1)} $ is a sequence of moves (following the procedure to be described shortly) taking values in the set $ \{x_1,\dots,z+i\floor{N\eps}+(j-1)L\} $, with $ N(x_1) $ the number of nodes we have to exchange;\medskip
\item $ J^{\text{PMM}(0)},J^{\text{PMM}(1)} $ are the sets of indexes that count the nodes used with the PMM($ 0 $) and PMM($ 1 $) dynamics, respectively, and are such that
\begin{align*}
\big|J^{\text{PMM}(0)}\big|\leq J_0 L
\quad\text{and}\quad
\big|J^{\text{PMM}(1)}\big|\leq J_1( iL+jL+ i\floor{N\eps})
\end{align*}
for some finite constants $ J_0,J_1>0 $;
\item for each $ n\in J^{\text{PMM}(1)} $ we have $ \mathbf{c}^{(1)}(\tau_{x(n-1)}\eta^{(n-1)})>0 $.
\end{itemize}
Assuming all this, for $ i\in\{0,1\} $ and $ j\in\{1,\dots,K\} $ we have that
\begin{align}
\begin{split}\label{path1}
\sum_{n\in J^{\text{PMM}(i)}}\int_{X_j}&\abs{f(\eta^{(n-1)})-f(\eta^{(n)})}\nu_\gamma^N(\rmd\eta)
\\
&=\sum_{n\in J^{\text{PMM}(i)}}\int_{X_j}
\abs{\sqrt{f}(\eta^{(n-1)})-\sqrt{f}(\eta^{(n)})}
\abs{\sqrt{f}(\eta^{(n-1)})+\sqrt{f}(\eta^{(n)})}
\nu_\gamma^N(\rmd\eta)
\\
&\leq
\frac{A_i}{2}\sum_{n\in J^{\text{PMM}(i)}}\int_{X_j}
\mathbf{r}^{(i)}(\tau_{x(n-1)}\eta^{(n-1)})\abs{\sqrt{f}(\eta^{(n-1)})-\sqrt{f}(\eta^{(n)})}^2
\nu_\gamma^N(\rmd\eta)
\\
&\quad +\frac{1}{2A_i}\sum_{n\in J^{\text{PMM}(i)}}\int_{X_j}
\frac{1}{\mathbf{r}^{(i)}(\tau_{x(n-1)}\eta^{(n-1)})}\abs{\sqrt{f}(\eta^{(n-1)})+\sqrt{f}(\eta^{(n)})}^2
\nu_\gamma^N(\rmd\eta)
\end{split}
\end{align}
for any $ A_i>0 $. The inequality requires some justification. Fix some $ n\in J^{\text{PMM}(0)}\cup J^{\text{PMM}(1)} $ and let us write $ \xi=\eta^{(n-1)} $. Then
\begin{align*}
f(\eta^{(n-1)})-f(\eta^{(n)})
&=\mathbf{1}_{\{\eta\in\Omega_N:\;\eta_{x(n-1)}+\eta_{x(n-1)+1}=1\}}(\xi)
\left(
f(\xi)-f(\xi^{x(n-1),x(n-1)+1})
\right)
\\
&=\mathbf{r}^{(0)}(\tau_{x(n-1)}\xi)
\left(
f(\xi)-f(\xi^{x(n-1),x(n-1)+1})
\right).
\end{align*}
If $ n\in J^{\text{PMM}(0)} $ we are done. Otherwise, since $ n\in J^{\text{PMM}(1)} $,  we have $ \xi_{x(n-1)-1}+\xi_{x(n-1)+2}>0 $. Consider the set $ \Omega_x^{(1)}=\{\eta\in\Omega_N:\;\mathbf{c}^{(1)}(\tau_x\eta)>0\} $. Then $ f(\xi)=f(\xi)\mathbf{1}_{\{\Omega_{x(n-1)}^{(2)}\}}(\xi) $, and since the constraints are independent of the occupation at the sites $ x(n-1),x(n-1)+1 $ we also have that $ \xi^{x(n-1),x(n-1)+1}\in\Omega_{x(n-1)}^{(1)} $. As such,
\begin{align*}
f(\eta^{(n-1)})-f(\eta^{(n)})
=
\mathbf{r}^{(0)}(\tau_{x(n-1)}\xi)
\left(
f(\xi)\mathbf{1}_{\Omega_{x(n-1)}^{(1)}}(\xi)
-f(\xi^{x(n-1),x(n-1)+1})
\mathbf{1}_{\Omega_{x(n-1)}^{(1)}}(\xi^{x(n-1),x(n-1)+1})\right).
\end{align*}
And since the change of variables $ \xi\mapsto \xi^{x(n-1),x(n-1)+1}\in\Omega_{x(n-1)}^{(1)}$ is a bijection of $\Omega_{x(n-1)}^{(1)} $, we conclude that
\begin{align*}
f(\eta^{(n-1)})-f(\eta^{(n)})
&=
\mathbf{1}_{\Omega_{x(n-1)}^{(1)}}(\xi)\mathbf{r}^{(0)}(\tau_{x(n-1)}\xi)
\left(
f(\xi)
-f(\xi^{x(n-1),x(n-1)+1})
\right)
\end{align*}
and the rates for the PMM($ 1 $) can be introduced by using Young's inequality.

We treat the integral on the first term on the right-hand side of \eqref{path1}. Recall that $ \xi=\eta^{(n-1)} $. For $ i\in\{0,1\} $ and $ j\in\{1,\dots,K\} $,
\begin{multline*}
\sum_{\eta\in X_j}
\mathbf{r}^{(i)}(\tau_{x(n-1)}\xi)\abs{\sqrt{f}(\xi)-\sqrt{f}(\xi^{x(n-1),x(n-1)+1})}^2
\nu_\gamma^N(\eta)
\\
\leq
\sum_{\eta\in \Omega_N}
\mathbf{r}^{(i)}(\tau_{x(n-1)}\xi)\abs{\sqrt{f}(\xi)-\sqrt{f}(\xi^{x(n-1),x(n-1)+1})}^2
\nu_\gamma^N(\xi)
.
\end{multline*}
Since $ \eta\in\Omega_N\Leftrightarrow\xi=\eta^{(n-1)}\in\Omega_N $, rearranging the first summation in the previous display and relabelling the terms yields
\begin{align*}
\sum_{\eta\in \Omega_N}
\mathbf{r}^{(i)}(\tau_{x(n-1)}\eta)\abs{\sqrt{f}(\eta)-\sqrt{f}(\eta^{x(n-1),x(n-1)+1})}^2
\nu_\gamma^N(\eta).
\end{align*}
Consequently, the first term on the right-hand side of \eqref{path1} can be bounded from above by $ A_i\Gamma_N^{(i)}(\sqrt{f},\nu_\gamma^N) $, while the second can be bounded from above by
\begin{align*}
\frac{1}{A_i}\sum_{n\in J^{\text{PMM}(i)}}\int_{\Omega_N}
\left(f(\eta^{(n-1)})+f(\eta^{(n)})\right)
\nu_\gamma^N(\rmd\eta)
=\frac{2}{A_i}\abs{J^{\text{PMM}(i)}}.
\end{align*}
In this way, \eqref{rep_boxes_eq1} is no larger than
\begin{align*}
\frac12 A_0\Gamma_N^{(0)}(\sqrt{f},\nu_\gamma^N)
+\frac12 A_1\Gamma_N^{(1)}(\sqrt{f},\nu_\gamma^N)
+
J_0\frac{L}{A_0}
+J_1
\left(\frac{iL+i\floor{N\eps}}{A_1}
+\frac{KL}{A_1}
\right).
\end{align*}
Recalling Proposition \ref{prop:energy}, the quantity \eqref{rep_boxes_eq0} is overestimated by
\begin{multline*}
\frac{c_\gamma}{B}+T
\sup_f
\left\{
3\frac{c_\varphi}{L}
+J_0\frac{L}{A_0}
+J_1
\frac{iL+(i+1)\floor{N\eps}}{A_1}
\right.\\\left.
+
\frac12\Gamma_N^{(0)}(\sqrt{f},\nu_\gamma^N)
\left(
A_0-\frac{N}{B}\frac{m}{2}\delta_N
\right)
+\frac12\Gamma_N^{(1)}(\sqrt{f},\nu_\gamma^N)
\left(
A_1-\frac{N}{B}\frac{m-1}{4}
\right)
\right\},
\end{multline*}
where we recall that $ KL=\floor{N\eps} $. Setting
\begin{align*}
A_0=\delta_N\frac{N}{B}\frac{m}{2}
\quad\text{and}\quad
A_1=\frac{N}{B}\frac{m-1}{4}
\end{align*}
we obtain an upper bound of the order of 
\begin{align*}
	\frac{1}{B}
	+T
	\left[
	\frac{1}{L}
	+B
	\bigg(\frac{L(\ell_N)^{m-1}}{N}
	+i
	\frac{L}{N}
	+
	\eps(i+1)
	\bigg)
	\right].
\end{align*}
Now we prove our claim with the path argument.
The goal is to exchange the occupation variables of the sites
\begin{align*}
z_{i,\eps,L}:=z+i\floor{N\eps}+(j-1)L \quad\text{and}\quad z_{i,L}:=z+iL,
\qquad
z\in \llbracket 0\;, L-1 \rrbracket.
\end{align*}
Recall that there are at least three particles either in $ \Lambda^{L}_{iL} $ or in $ \Lambda_{i\floor{N\eps}+(j-1)L}^L $. We outline the argument only for the case of at least three particles in $ \Lambda^{L}_{iL} $ since the other one is analogous and leads to an equivalent estimate. It is sufficient to consider configurations in \eqref{rep_boxes_eq1} such that $ \eta(z_{i,\eps,L})+\eta(z_{i,L})=1 $. The decomposition \eqref{path} illustrates a path on the state-space starting from the configuration $ \eta $ and ending at $ \eta^{z_{i,\eps,L},z_{i,L}} $. Note that we can consider without loss of generalization that $ \eta(z_{i,L})=1 $, since if $ \eta(z_{i,L})=0 $ then we construct an analogous path starting from $ \eta^{z_{i,\eps,L},z_{i,L}} $ and ending at $ \eta $.

Recall that a \textit{mobile cluster} with respect to the PMM($ 1 $) is a local configuration which can be translated on the lattice by a sequence of jumps dictated by the PMM($ 1 $). For example, the smallest mobile cluster for the PMM($ 1 $) corresponds to a local configuration where $ \eta(x)+\eta(x+1)+\eta(x+2)=2 $, for some $ x\in\mathbb{T}_N $.

Since $ \eta(z+iL)=1 $, there are at least two other particles in $ \Lambda_{iL}^L $. Pick the two closest to the site $ z+iL $ and label them as $ P_1 $ and $ P_2 $. Let us also denote the particle at site $ z_{i,L} $ by $ P_{z_{i,L}} $. We use the SSEP dynamics to move $ P_1 $ and $ P_2 $ to the vicinity of $ P_{z_{i,L}} $, forming a "mobile cluster". This can be done with a number of steps of order $ L $. We arrive at one of the following three local configurations.
\begin{figure}[H]

\centering

\tikzset{every picture/.style={line width=0.75pt}} 

\tikzset{every picture/.style={line width=0.75pt}} 

\begin{tikzpicture}[x=0.75pt,y=0.75pt,yscale=-1,xscale=1]

\draw    (117.1,140.65) -- (249.9,140.65) (151.1,136.65) -- (151.1,144.65)(185.1,136.65) -- (185.1,144.65)(219.1,136.65) -- (219.1,144.65) ;
\draw  [fill={rgb, 255:red, 155; green, 155; blue, 155 }  ,fill opacity=0.5 ] (134.3,119.5) .. controls (134.3,110.39) and (141.89,103) .. (151.25,103) .. controls (160.62,103) and (168.21,110.39) .. (168.21,119.5) .. controls (168.21,128.61) and (160.62,136) .. (151.25,136) .. controls (141.89,136) and (134.3,128.61) .. (134.3,119.5) -- cycle ;
\draw    (270.1,140.65) -- (402.9,140.65) (304.1,136.65) -- (304.1,144.65)(338.1,136.65) -- (338.1,144.65)(372.1,136.65) -- (372.1,144.65) ;
\draw  [fill={rgb, 255:red, 155; green, 155; blue, 155 }  ,fill opacity=0.5 ] (287.3,119.5) .. controls (287.3,110.39) and (294.89,103) .. (304.25,103) .. controls (313.62,103) and (321.21,110.39) .. (321.21,119.5) .. controls (321.21,128.61) and (313.62,136) .. (304.25,136) .. controls (294.89,136) and (287.3,128.61) .. (287.3,119.5) -- cycle ;
\draw  [fill={rgb, 255:red, 155; green, 155; blue, 155 }  ,fill opacity=0.5 ] (321.21,119.5) .. controls (321.21,110.39) and (328.8,103) .. (338.16,103) .. controls (347.53,103) and (355.12,110.39) .. (355.12,119.5) .. controls (355.12,128.61) and (347.53,136) .. (338.16,136) .. controls (328.8,136) and (321.21,128.61) .. (321.21,119.5) -- cycle ;
\draw  [fill={rgb, 255:red, 155; green, 155; blue, 155 }  ,fill opacity=0.5 ] (355.12,119.5) .. controls (355.12,110.39) and (362.71,103) .. (372.07,103) .. controls (381.44,103) and (389.03,110.39) .. (389.03,119.5) .. controls (389.03,128.61) and (381.44,136) .. (372.07,136) .. controls (362.71,136) and (355.12,128.61) .. (355.12,119.5) -- cycle ;
\draw    (424.3,140.65) -- (557.1,140.65) (458.3,136.65) -- (458.3,144.65)(492.3,136.65) -- (492.3,144.65)(526.3,136.65) -- (526.3,144.65) ;
\draw  [fill={rgb, 255:red, 155; green, 155; blue, 155 }  ,fill opacity=0.5 ] (441.5,119.5) .. controls (441.5,110.39) and (449.09,103) .. (458.45,103) .. controls (467.82,103) and (475.41,110.39) .. (475.41,119.5) .. controls (475.41,128.61) and (467.82,136) .. (458.45,136) .. controls (449.09,136) and (441.5,128.61) .. (441.5,119.5) -- cycle ;
\draw  [fill={rgb, 255:red, 155; green, 155; blue, 155 }  ,fill opacity=0.5 ] (475.41,119.5) .. controls (475.41,110.39) and (483,103) .. (492.36,103) .. controls (501.73,103) and (509.32,110.39) .. (509.32,119.5) .. controls (509.32,128.61) and (501.73,136) .. (492.36,136) .. controls (483,136) and (475.41,128.61) .. (475.41,119.5) -- cycle ;
\draw  [fill={rgb, 255:red, 155; green, 155; blue, 155 }  ,fill opacity=0.5 ] (509.32,119.5) .. controls (509.32,110.39) and (516.91,103) .. (526.27,103) .. controls (535.64,103) and (543.23,110.39) .. (543.23,119.5) .. controls (543.23,128.61) and (535.64,136) .. (526.27,136) .. controls (516.91,136) and (509.32,128.61) .. (509.32,119.5) -- cycle ;
\draw  [fill={rgb, 255:red, 155; green, 155; blue, 155 }  ,fill opacity=0.5 ] (168.21,119.5) .. controls (168.21,110.39) and (175.8,103) .. (185.16,103) .. controls (194.53,103) and (202.12,110.39) .. (202.12,119.5) .. controls (202.12,128.61) and (194.53,136) .. (185.16,136) .. controls (175.8,136) and (168.21,128.61) .. (168.21,119.5) -- cycle ;
\draw  [fill={rgb, 255:red, 155; green, 155; blue, 155 }  ,fill opacity=0.5 ] (202.12,119.5) .. controls (202.12,110.39) and (209.71,103) .. (219.07,103) .. controls (228.44,103) and (236.03,110.39) .. (236.03,119.5) .. controls (236.03,128.61) and (228.44,136) .. (219.07,136) .. controls (209.71,136) and (202.12,128.61) .. (202.12,119.5) -- cycle ;

\draw (144.4,113.05) node [anchor=north west][inner sep=0.75pt]    {$P_{1}$};
\draw (178.4,113.05) node [anchor=north west][inner sep=0.75pt]    {$P_{2}$};
\draw (212.6,113.05) node [anchor=north west][inner sep=0.75pt]    {$P_{z_{i,L}}$};
\draw (212.14,147.39) node [anchor=north west][inner sep=0.75pt]  [font=\scriptsize]  {$z_{i,L}$};
\draw (297.4,113.05) node [anchor=north west][inner sep=0.75pt]    {$P_{1}$};
\draw (331.4,113.05) node [anchor=north west][inner sep=0.75pt]    {$P_{z_{i,L}}$};
\draw (365.6,113.05) node [anchor=north west][inner sep=0.75pt]    {$P_{2}$};
\draw (331.14,147.39) node [anchor=north west][inner sep=0.75pt]  [font=\scriptsize]  {$z_{i,L}$};
\draw (451.6,113.05) node [anchor=north west][inner sep=0.75pt]    {$P_{z_{i,L}}$};
\draw (485.6,113.05) node [anchor=north west][inner sep=0.75pt]    {$P_{1}$};
\draw (519.8,113.05) node [anchor=north west][inner sep=0.75pt]    {$P_{2}$};
\draw (451.34,147.39) node [anchor=north west][inner sep=0.75pt]  [font=\scriptsize]  {$z_{i,L}$};

\end{tikzpicture}

\end{figure}

Note that we still need an empty site in the vicinity of these three particles to construct a mobile cluster. Nevertheless, if this is not the case we can assume that they are part of a larger mobile cluster. Moreover, we can relabel the particles and use the SSEP dynamics to have the local configuration (for example) as in the first case of the previous figure.  Now we move this mobile cluster to the left of the (empty) site $ z_{i,\eps,L} $ with the PMM($ 1 $) dynamics.
\begin{figure}[H]
\centering

\tikzset{every picture/.style={line width=0.75pt}} 

\begin{tikzpicture}[x=0.75pt,y=0.75pt,yscale=-1,xscale=1]

\draw    (149.1,84.66) -- (316.53,84.7) (183.1,80.67) -- (183.1,88.67)(217.1,80.67) -- (217.1,88.67)(251.1,80.68) -- (251.1,88.68)(285.1,80.69) -- (285.1,88.69) ;
\draw  [fill={rgb, 255:red, 155; green, 155; blue, 155 }  ,fill opacity=0.5 ] (200.21,63.51) .. controls (200.21,54.39) and (207.8,47.01) .. (217.16,47.01) .. controls (226.53,47.01) and (234.12,54.39) .. (234.12,63.51) .. controls (234.12,72.62) and (226.53,80.01) .. (217.16,80.01) .. controls (207.8,80.01) and (200.21,72.62) .. (200.21,63.51) -- cycle ;
\draw  [fill={rgb, 255:red, 155; green, 155; blue, 155 }  ,fill opacity=0.5 ] (234.12,63.51) .. controls (234.12,54.39) and (241.71,47.01) .. (251.07,47.01) .. controls (260.44,47.01) and (268.03,54.39) .. (268.03,63.51) .. controls (268.03,72.62) and (260.44,80.01) .. (251.07,80.01) .. controls (241.71,80.01) and (234.12,72.62) .. (234.12,63.51) -- cycle ;
\draw  [color={rgb, 255:red, 208; green, 2; blue, 27 }  ,draw opacity=1 ] (268.03,63.51) .. controls (268.03,54.39) and (275.62,47.01) .. (284.98,47.01) .. controls (294.35,47.01) and (301.94,54.39) .. (301.94,63.51) .. controls (301.94,72.62) and (294.35,80.01) .. (284.98,80.01) .. controls (275.62,80.01) and (268.03,72.62) .. (268.03,63.51) -- cycle ;
\draw    (352.77,84.65) -- (520.2,84.69) (386.77,80.66) -- (386.77,88.66)(420.77,80.67) -- (420.77,88.67)(454.77,80.68) -- (454.77,88.68)(488.77,80.68) -- (488.77,88.68) ;
\draw  [fill={rgb, 255:red, 155; green, 155; blue, 155 }  ,fill opacity=0.5 ] (369.97,63.5) .. controls (369.97,54.39) and (377.56,47) .. (386.92,47) .. controls (396.28,47) and (403.88,54.39) .. (403.88,63.5) .. controls (403.88,72.61) and (396.28,80) .. (386.92,80) .. controls (377.56,80) and (369.97,72.61) .. (369.97,63.5) -- cycle ;
\draw  [color={rgb, 255:red, 208; green, 2; blue, 27 }  ,draw opacity=1 ] (437.78,63.5) .. controls (437.78,54.39) and (445.38,47) .. (454.74,47) .. controls (464.1,47) and (471.69,54.39) .. (471.69,63.5) .. controls (471.69,72.61) and (464.1,80) .. (454.74,80) .. controls (445.38,80) and (437.78,72.61) .. (437.78,63.5) -- cycle ;
\draw  [fill={rgb, 255:red, 155; green, 155; blue, 155 }  ,fill opacity=0.5 ] (471.69,63.5) .. controls (471.69,54.39) and (479.28,47) .. (488.65,47) .. controls (498.01,47) and (505.6,54.39) .. (505.6,63.5) .. controls (505.6,72.61) and (498.01,80) .. (488.65,80) .. controls (479.28,80) and (471.69,72.61) .. (471.69,63.5) -- cycle ;
\draw    (352.8,183.65) -- (520.23,183.69) (386.8,179.66) -- (386.8,187.66)(420.8,179.67) -- (420.8,187.67)(454.8,179.68) -- (454.8,187.68)(488.8,179.68) -- (488.8,187.68) ;
\draw  [fill={rgb, 255:red, 155; green, 155; blue, 155 }  ,fill opacity=0.5 ] (370,162.5) .. controls (370,153.39) and (377.59,146) .. (386.95,146) .. controls (396.32,146) and (403.91,153.39) .. (403.91,162.5) .. controls (403.91,171.61) and (396.32,179) .. (386.95,179) .. controls (377.59,179) and (370,171.61) .. (370,162.5) -- cycle ;
\draw  [fill={rgb, 255:red, 155; green, 155; blue, 155 }  ,fill opacity=0.5 ] (437.82,162.5) .. controls (437.82,153.39) and (445.41,146) .. (454.77,146) .. controls (464.14,146) and (471.73,153.39) .. (471.73,162.5) .. controls (471.73,171.61) and (464.14,179) .. (454.77,179) .. controls (445.41,179) and (437.82,171.61) .. (437.82,162.5) -- cycle ;
\draw  [color={rgb, 255:red, 208; green, 2; blue, 27 }  ,draw opacity=1 ] (403.91,162.5) .. controls (403.91,153.39) and (411.5,146) .. (420.86,146) .. controls (430.23,146) and (437.82,153.39) .. (437.82,162.5) .. controls (437.82,171.61) and (430.23,179) .. (420.86,179) .. controls (411.5,179) and (403.91,171.61) .. (403.91,162.5) -- cycle ;
\draw   (165.86,47.01) -- (268.47,47.01) -- (268.47,80.01) -- (165.86,80.01) -- cycle ;
\draw  [fill={rgb, 255:red, 155; green, 155; blue, 155 }  ,fill opacity=0.5 ] (166.3,63.51) .. controls (166.3,54.39) and (173.89,47.01) .. (183.25,47.01) .. controls (192.62,47.01) and (200.21,54.39) .. (200.21,63.51) .. controls (200.21,72.62) and (192.62,80.01) .. (183.25,80.01) .. controls (173.89,80.01) and (166.3,72.62) .. (166.3,63.51) -- cycle ;
\draw  [fill={rgb, 255:red, 155; green, 155; blue, 155 }  ,fill opacity=0.5 ] (403.88,63.5) .. controls (403.88,54.39) and (411.47,47) .. (420.83,47) .. controls (430.19,47) and (437.78,54.39) .. (437.78,63.5) .. controls (437.78,72.61) and (430.19,80) .. (420.83,80) .. controls (411.47,80) and (403.88,72.61) .. (403.88,63.5) -- cycle ;
\draw  [fill={rgb, 255:red, 155; green, 155; blue, 155 }  ,fill opacity=0.5 ] (471.73,162.5) .. controls (471.73,153.39) and (479.32,146) .. (488.68,146) .. controls (498.05,146) and (505.64,153.39) .. (505.64,162.5) .. controls (505.64,171.61) and (498.05,179) .. (488.68,179) .. controls (479.32,179) and (471.73,171.61) .. (471.73,162.5) -- cycle ;
\draw    (149.1,183.65) -- (316.53,183.69) (183.1,179.66) -- (183.1,187.66)(217.1,179.67) -- (217.1,187.67)(251.1,179.68) -- (251.1,187.68)(285.1,179.68) -- (285.1,187.68) ;
\draw  [fill={rgb, 255:red, 155; green, 155; blue, 155 }  ,fill opacity=0.5 ] (200.21,162.5) .. controls (200.21,153.39) and (207.8,146) .. (217.16,146) .. controls (226.53,146) and (234.12,153.39) .. (234.12,162.5) .. controls (234.12,171.61) and (226.53,179) .. (217.16,179) .. controls (207.8,179) and (200.21,171.61) .. (200.21,162.5) -- cycle ;
\draw  [fill={rgb, 255:red, 155; green, 155; blue, 155 }  ,fill opacity=0.5 ] (234.12,162.5) .. controls (234.12,153.39) and (241.71,146) .. (251.07,146) .. controls (260.44,146) and (268.03,153.39) .. (268.03,162.5) .. controls (268.03,171.61) and (260.44,179) .. (251.07,179) .. controls (241.71,179) and (234.12,171.61) .. (234.12,162.5) -- cycle ;
\draw  [color={rgb, 255:red, 208; green, 2; blue, 27 }  ,draw opacity=1 ] (166.3,162.5) .. controls (166.3,153.39) and (173.89,146) .. (183.25,146) .. controls (192.62,146) and (200.21,153.39) .. (200.21,162.5) .. controls (200.21,171.61) and (192.62,179) .. (183.25,179) .. controls (173.89,179) and (166.3,171.61) .. (166.3,162.5) -- cycle ;
\draw  [fill={rgb, 255:red, 155; green, 155; blue, 155 }  ,fill opacity=0.5 ] (268.03,162.5) .. controls (268.03,153.39) and (275.62,146) .. (284.98,146) .. controls (294.35,146) and (301.94,153.39) .. (301.94,162.5) .. controls (301.94,171.61) and (294.35,179) .. (284.98,179) .. controls (275.62,179) and (268.03,171.61) .. (268.03,162.5) -- cycle ;
\draw   (199.77,146) -- (302.38,146) -- (302.38,179) -- (199.77,179) -- cycle ;
\draw    (234.12,89) -- (234.12,139) ;
\draw [shift={(234.12,141)}, rotate = 270] [color={rgb, 255:red, 0; green, 0; blue, 0 }  ][line width=0.75]    (10.93,-3.29) .. controls (6.95,-1.4) and (3.31,-0.3) .. (0,0) .. controls (3.31,0.3) and (6.95,1.4) .. (10.93,3.29)   ;

\draw (210.4,57.06) node [anchor=north west][inner sep=0.75pt]    {$P_{2}$};
\draw (244.6,57.06) node [anchor=north west][inner sep=0.75pt]    {$P_{z_{i,L}}$};
\draw (244.07,91.91) node [anchor=north west][inner sep=0.75pt]  [font=\scriptsize]  {$z_{i,L}$};
\draw (380.07,57.05) node [anchor=north west][inner sep=0.75pt]    {$P_{1}$};
\draw (482.27,57.05) node [anchor=north west][inner sep=0.75pt]    {$P_{z_{i,L}}$};
\draw (447.81,91.39) node [anchor=north west][inner sep=0.75pt]  [font=\scriptsize]  {$z_{i,L}$};
\draw (380.1,156.05) node [anchor=north west][inner sep=0.75pt]    {$P_{1}$};
\draw (448.1,156.25) node [anchor=north west][inner sep=0.75pt]    {$P_{2}$};
\draw (446.84,190.39) node [anchor=north west][inner sep=0.75pt]  [font=\scriptsize]  {$z_{i,L}$};
\draw (327.5,57.9) node [anchor=north west][inner sep=0.75pt]    {$\mapsto $};
\draw (442,114.65) node [anchor=north west][inner sep=0.75pt]  [rotate=-89.91]  {$\mapsto $};
\draw (175.6,57.66) node [anchor=north west][inner sep=0.75pt]    {$P_{1}$};
\draw (413.8,57.26) node [anchor=north west][inner sep=0.75pt]    {$P_{2}$};
\draw (482.17,155.85) node [anchor=north west][inner sep=0.75pt]    {$P_{z_{i,L}}$};
\draw (210.07,156.38) node [anchor=north west][inner sep=0.75pt]    {$P_{1}$};
\draw (244.4,156.25) node [anchor=north west][inner sep=0.75pt]    {$P_{2}$};
\draw (244.14,190.39) node [anchor=north west][inner sep=0.75pt]  [font=\scriptsize]  {$z_{i,L}$};
\draw (278.47,155.85) node [anchor=north west][inner sep=0.75pt]    {$P_{z_{i,L}}$};
\draw (344.33,167.08) node [anchor=north west][inner sep=0.75pt]  [rotate=-180.49]  {$\mapsto $};

\end{tikzpicture}
\end{figure}

The number of steps can be crudely bounded above by a term of order $ L+(i\floor{N\eps}+(j-1)L) $.
By hypothesis, $ \eta(z_{i,\eps,L})=0 $ and so we leave $ P_{z_{i,L}} $ at site $ z_{i,\eps,L} $ using either the SSEP or the PMM($ 1 $) dynamics, and transport the hole to the site $ z_{i,L} $ with the PMM($ 1 $) dynamics.
\begin{figure}[H]
\centering

\tikzset{every picture/.style={line width=0.75pt}} 

\begin{tikzpicture}[x=0.75pt,y=0.75pt,yscale=-1,xscale=1]

\draw    (303.1,99.66) -- (470.53,99.7) (337.1,95.67) -- (337.1,103.67)(371.1,95.67) -- (371.1,103.67)(405.1,95.68) -- (405.1,103.68)(439.1,95.69) -- (439.1,103.69) ;
\draw  [fill={rgb, 255:red, 155; green, 155; blue, 155 }  ,fill opacity=0.5 ] (320.3,78.51) .. controls (320.3,69.39) and (327.89,62.01) .. (337.25,62.01) .. controls (346.62,62.01) and (354.21,69.39) .. (354.21,78.51) .. controls (354.21,87.62) and (346.62,95.01) .. (337.25,95.01) .. controls (327.89,95.01) and (320.3,87.62) .. (320.3,78.51) -- cycle ;
\draw  [fill={rgb, 255:red, 155; green, 155; blue, 155 }  ,fill opacity=0.5 ] (354.21,78.51) .. controls (354.21,69.39) and (361.8,62.01) .. (371.16,62.01) .. controls (380.53,62.01) and (388.12,69.39) .. (388.12,78.51) .. controls (388.12,87.62) and (380.53,95.01) .. (371.16,95.01) .. controls (361.8,95.01) and (354.21,87.62) .. (354.21,78.51) -- cycle ;
\draw  [fill={rgb, 255:red, 155; green, 155; blue, 155 }  ,fill opacity=0.5 ] (422.03,78.51) .. controls (422.03,69.39) and (429.62,62.01) .. (438.98,62.01) .. controls (448.35,62.01) and (455.94,69.39) .. (455.94,78.51) .. controls (455.94,87.62) and (448.35,95.01) .. (438.98,95.01) .. controls (429.62,95.01) and (422.03,87.62) .. (422.03,78.51) -- cycle ;
\draw    (99.1,99.66) -- (266.53,99.7) (133.1,95.67) -- (133.1,103.67)(167.1,95.67) -- (167.1,103.67)(201.1,95.68) -- (201.1,103.68)(235.1,95.69) -- (235.1,103.69) ;
\draw  [fill={rgb, 255:red, 155; green, 155; blue, 155 }  ,fill opacity=0.5 ] (116.3,78.51) .. controls (116.3,69.39) and (123.89,62.01) .. (133.25,62.01) .. controls (142.62,62.01) and (150.21,69.39) .. (150.21,78.51) .. controls (150.21,87.62) and (142.62,95.01) .. (133.25,95.01) .. controls (123.89,95.01) and (116.3,87.62) .. (116.3,78.51) -- cycle ;
\draw  [fill={rgb, 255:red, 155; green, 155; blue, 155 }  ,fill opacity=0.5 ] (150.21,78.51) .. controls (150.21,69.39) and (157.8,62.01) .. (167.16,62.01) .. controls (176.53,62.01) and (184.12,69.39) .. (184.12,78.51) .. controls (184.12,87.62) and (176.53,95.01) .. (167.16,95.01) .. controls (157.8,95.01) and (150.21,87.62) .. (150.21,78.51) -- cycle ;
\draw  [fill={rgb, 255:red, 155; green, 155; blue, 155 }  ,fill opacity=0.5 ] (184.12,78.51) .. controls (184.12,69.39) and (191.71,62.01) .. (201.07,62.01) .. controls (210.44,62.01) and (218.03,69.39) .. (218.03,78.51) .. controls (218.03,87.62) and (210.44,95.01) .. (201.07,95.01) .. controls (191.71,95.01) and (184.12,87.62) .. (184.12,78.51) -- cycle ;

\draw (330.4,72.06) node [anchor=north west][inner sep=0.75pt]    {$P_{1}$};
\draw (364.4,72.06) node [anchor=north west][inner sep=0.75pt]    {$P_{2}$};
\draw (432.6,72.06) node [anchor=north west][inner sep=0.75pt]    {$P_{z_{i,L}}$};
\draw (430.14,106.4) node [anchor=north west][inner sep=0.75pt]  [font=\scriptsize]  {$z_{i,\epsilon ,L}$};
\draw (126.4,72.06) node [anchor=north west][inner sep=0.75pt]    {$P_{1}$};
\draw (160.4,72.06) node [anchor=north west][inner sep=0.75pt]    {$P_{2}$};
\draw (194.6,72.06) node [anchor=north west][inner sep=0.75pt]    {$P_{z_{i,L}}$};
\draw (226.14,106.4) node [anchor=north west][inner sep=0.75pt]  [font=\scriptsize]  {$z_{i,\epsilon ,L}$};
\draw (276.69,72.54) node [anchor=north west][inner sep=0.75pt]    {$\mapsto $};

\end{tikzpicture}
\end{figure}

If the site to the left of $ P_1 $ is either empty or occupied, we can perform the following transport with either the PMM($ 1 $) or a relabelling in the last step.
\begin{figure}[H]
\centering

\tikzset{every picture/.style={line width=0.75pt}} 

\begin{tikzpicture}[x=0.75pt,y=0.75pt,yscale=-1,xscale=1]

\draw    (47.1,129.66) -- (214.53,129.7) (81.1,125.67) -- (81.1,133.67)(115.1,125.67) -- (115.1,133.67)(149.1,125.68) -- (149.1,133.68)(183.1,125.69) -- (183.1,133.69) ;
\draw  [fill={rgb, 255:red, 155; green, 155; blue, 155 }  ,fill opacity=0.5 ] (98.21,108.51) .. controls (98.21,99.39) and (105.8,92.01) .. (115.16,92.01) .. controls (124.53,92.01) and (132.12,99.39) .. (132.12,108.51) .. controls (132.12,117.62) and (124.53,125.01) .. (115.16,125.01) .. controls (105.8,125.01) and (98.21,117.62) .. (98.21,108.51) -- cycle ;
\draw  [fill={rgb, 255:red, 155; green, 155; blue, 155 }  ,fill opacity=0.5 ] (132.12,108.51) .. controls (132.12,99.39) and (139.71,92.01) .. (149.07,92.01) .. controls (158.44,92.01) and (166.03,99.39) .. (166.03,108.51) .. controls (166.03,117.62) and (158.44,125.01) .. (149.07,125.01) .. controls (139.71,125.01) and (132.12,117.62) .. (132.12,108.51) -- cycle ;
\draw    (250.77,129.65) -- (418.2,129.69) (284.77,125.66) -- (284.77,133.66)(318.77,125.67) -- (318.77,133.67)(352.77,125.68) -- (352.77,133.68)(386.77,125.68) -- (386.77,133.68) ;
\draw  [fill={rgb, 255:red, 155; green, 155; blue, 155 }  ,fill opacity=0.5 ] (267.97,108.5) .. controls (267.97,99.39) and (275.56,92) .. (284.92,92) .. controls (294.28,92) and (301.88,99.39) .. (301.88,108.5) .. controls (301.88,117.61) and (294.28,125) .. (284.92,125) .. controls (275.56,125) and (267.97,117.61) .. (267.97,108.5) -- cycle ;
\draw  [fill={rgb, 255:red, 155; green, 155; blue, 155 }  ,fill opacity=0.5 ] (335.78,108.5) .. controls (335.78,99.39) and (343.38,92) .. (352.74,92) .. controls (362.1,92) and (369.69,99.39) .. (369.69,108.5) .. controls (369.69,117.61) and (362.1,125) .. (352.74,125) .. controls (343.38,125) and (335.78,117.61) .. (335.78,108.5) -- cycle ;
\draw    (454.1,129.65) -- (621.53,129.69) (488.1,125.66) -- (488.1,133.66)(522.1,125.67) -- (522.1,133.67)(556.1,125.68) -- (556.1,133.68)(590.1,125.68) -- (590.1,133.68) ;
\draw  [fill={rgb, 255:red, 155; green, 155; blue, 155 }  ,fill opacity=0.5 ] (471.3,108.5) .. controls (471.3,99.39) and (478.89,92) .. (488.25,92) .. controls (497.62,92) and (505.21,99.39) .. (505.21,108.5) .. controls (505.21,117.61) and (497.62,125) .. (488.25,125) .. controls (478.89,125) and (471.3,117.61) .. (471.3,108.5) -- cycle ;
\draw  [fill={rgb, 255:red, 155; green, 155; blue, 155 }  ,fill opacity=0.5 ] (505.21,108.5) .. controls (505.21,99.39) and (512.8,92) .. (522.16,92) .. controls (531.53,92) and (539.12,99.39) .. (539.12,108.5) .. controls (539.12,117.61) and (531.53,125) .. (522.16,125) .. controls (512.8,125) and (505.21,117.61) .. (505.21,108.5) -- cycle ;
\draw  [color={rgb, 255:red, 208; green, 2; blue, 27 }  ,draw opacity=1 ] (64.3,108.51) .. controls (64.3,99.39) and (71.89,92.01) .. (81.25,92.01) .. controls (90.62,92.01) and (98.21,99.39) .. (98.21,108.51) .. controls (98.21,117.62) and (90.62,125.01) .. (81.25,125.01) .. controls (71.89,125.01) and (64.3,117.62) .. (64.3,108.51) -- cycle ;
\draw  [color={rgb, 255:red, 208; green, 2; blue, 27 }  ,draw opacity=1 ] (301.88,108.5) .. controls (301.88,99.39) and (309.47,92) .. (318.83,92) .. controls (328.19,92) and (335.78,99.39) .. (335.78,108.5) .. controls (335.78,117.61) and (328.19,125) .. (318.83,125) .. controls (309.47,125) and (301.88,117.61) .. (301.88,108.5) -- cycle ;
\draw  [color={rgb, 255:red, 208; green, 2; blue, 27 }  ,draw opacity=1 ] (539.12,108.5) .. controls (539.12,99.39) and (546.71,92) .. (556.07,92) .. controls (565.44,92) and (573.03,99.39) .. (573.03,108.5) .. controls (573.03,117.61) and (565.44,125) .. (556.07,125) .. controls (546.71,125) and (539.12,117.61) .. (539.12,108.5) -- cycle ;
\draw   (98.21,92.01) -- (200.82,92.01) -- (200.82,125.01) -- (98.21,125.01) -- cycle ;

\draw (108.4,102.06) node [anchor=north west][inner sep=0.75pt]    {$P_{1}$};
\draw (142.6,102.06) node [anchor=north west][inner sep=0.75pt]    {$P_{2}$};
\draw (174.14,136.4) node [anchor=north west][inner sep=0.75pt]  [font=\scriptsize]  {$z_{i,\epsilon ,L}$};
\draw (278.07,102.05) node [anchor=north west][inner sep=0.75pt]    {$P_{1}$};
\draw (346.27,102.05) node [anchor=north west][inner sep=0.75pt]    {$P_{2}$};
\draw (377.81,136.39) node [anchor=north west][inner sep=0.75pt]  [font=\scriptsize]  {$z_{i,\epsilon ,L}$};
\draw (481.4,102.05) node [anchor=north west][inner sep=0.75pt]    {$P_{1}$};
\draw (515.4,102.05) node [anchor=north west][inner sep=0.75pt]    {$P_{2}$};
\draw (581.14,136.39) node [anchor=north west][inner sep=0.75pt]  [font=\scriptsize]  {$z_{i,\epsilon ,L}$};
\draw (222.5,101.9) node [anchor=north west][inner sep=0.75pt]    {$\mapsto $};
\draw (426.16,101.9) node [anchor=north west][inner sep=0.75pt]    {$\mapsto $};

\end{tikzpicture}
\end{figure}

If the aforementioned site was occupied, we can exchange the hole and the particle at site $ z+i\floor{N\eps}+(j-1)L-4 $ with the PMM($ 1 $) dynamics, otherwise there is nothing to do and we relabel the hole, obtaining
\begin{figure}[H]
\centering

\tikzset{every picture/.style={line width=0.75pt}} 

\begin{tikzpicture}[x=0.75pt,y=0.75pt,yscale=-1,xscale=1]

\draw    (454.1,129.65) -- (621.53,129.69) (488.1,125.66) -- (488.1,133.66)(522.1,125.67) -- (522.1,133.67)(556.1,125.68) -- (556.1,133.68)(590.1,125.68) -- (590.1,133.68) ;
\draw  [fill={rgb, 255:red, 155; green, 155; blue, 155 }  ,fill opacity=0.5 ] (471.3,108.5) .. controls (471.3,99.39) and (478.89,92) .. (488.25,92) .. controls (497.62,92) and (505.21,99.39) .. (505.21,108.5) .. controls (505.21,117.61) and (497.62,125) .. (488.25,125) .. controls (478.89,125) and (471.3,117.61) .. (471.3,108.5) -- cycle ;
\draw  [fill={rgb, 255:red, 155; green, 155; blue, 155 }  ,fill opacity=0.5 ] (505.21,108.5) .. controls (505.21,99.39) and (512.8,92) .. (522.16,92) .. controls (531.53,92) and (539.12,99.39) .. (539.12,108.5) .. controls (539.12,117.61) and (531.53,125) .. (522.16,125) .. controls (512.8,125) and (505.21,117.61) .. (505.21,108.5) -- cycle ;
\draw  [color={rgb, 255:red, 208; green, 2; blue, 27 }  ,draw opacity=1 ] (573.03,108.5) .. controls (573.03,99.39) and (580.62,92) .. (589.98,92) .. controls (599.35,92) and (606.94,99.39) .. (606.94,108.5) .. controls (606.94,117.61) and (599.35,125) .. (589.98,125) .. controls (580.62,125) and (573.03,117.61) .. (573.03,108.5) -- cycle ;
\draw   (470.86,92) -- (573.47,92) -- (573.47,125) -- (470.86,125) -- cycle ;

\draw (481.4,102.05) node [anchor=north west][inner sep=0.75pt]    {$P_{1}$};
\draw (515.4,102.05) node [anchor=north west][inner sep=0.75pt]    {$P_{2}$};
\draw (581.14,136.39) node [anchor=north west][inner sep=0.75pt]  [font=\scriptsize]  {$z_{i,\epsilon ,L}$};

\end{tikzpicture}
\end{figure}
This procedure is repeated at most an order of $ L+(i\floor{N\eps}+(j-1)L) $ steps, moving the mobile cluster to the vicinity of the site $ z_{i,L} $. The SSEP dynamics is then used to shuffle the configuration restricted to the box $ \Lambda_{iL}^L $, moving $ P_1 $ and $ P_2 $ to their original sites with a cost of at most an order of $ L $ steps.

\end{proof}

\subsection{Replacement Lemmas for \texorpdfstring{$m\in(0,1)$}.}
{\begin{Lemma}\label{lem:rep_FDM-tight}
For each $ n,k\in\mathbb{N}_+ $ such that $ n\leq k $, let $ \varphi_n^{(k)}:[0,T]\times\Omega_N\to\mathbb{R} $ be invariant for the map $ \eta\mapsto\eta^{n,n+1} $ and such that for $ \eta\in\Omega_N $ and every $ t\in [0,T] $ 
\begin{align*}
\varphi_n^{(k)}(t,\eta)\leq M(t)\mathbf{c}^{(k)}(\tau_n\eta)
\end{align*}
where $ M:[0,T]\to\mathbb{R}_+$ is uniformly bounded by some constant $ M>0 $. Then for all $ B>0 $ and $ \mathcal{T}\subseteq[0,T] $ it holds that
\begin{align*}
\mathbb{E}_{\mu_N}
\left[
\abs{
\int_{\mathcal{T}}
\sum_{k=1}^{\ell_N}\abs{\binom{m}{k}}
\sum_{n=0}^{k}
(\overline{\eta}_{N^2s}(n)-\overline{\eta}_{N^2s}(n+1))
\varphi_n^{(k-1)}(s,\overline{\eta}_{N^2s})
\rmd s
}	\right]
\lesssim
\frac{1}{B}
+
\abs{\mathcal{T}}
B\frac{(\ell_N)^{1-m}}{N}
.
\end{align*}
\end{Lemma}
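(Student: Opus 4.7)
The plan is to follow the same entropy-method scheme used for Lemma~\ref{lem:rep_shift}. First, the entropy inequality with reference measure $\nu_\gamma^N$ (for some constant $\gamma\in(0,1)$) combined with Feynman--Kac's formula reduce the expectation to be bounded by
\[
\frac{c_\gamma}{B} + \int_{\mathcal{T}} \sup_{f} \Big\{ |\langle V(s,\cdot), f\rangle_{\nu_\gamma^N}| - \tfrac{N}{B}\mathcal{E}_N^{(m-1)}(\sqrt{f},\nu_\gamma^N)\Big\}\,ds,
\]
where $V(s,\eta)=\sum_{k=1}^{\ell_N}|\binom{m}{k}|\sum_{n=0}^{k}(\overline{\eta}(n)-\overline{\eta}(n+1))\varphi_n^{(k-1)}(s,\overline{\eta})$ and the sup is over densities $f$ w.r.t.\ $\nu_\gamma^N$. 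Using $\overline{\eta}(n)-\overline{\eta}(n+1)=\eta(n+1)-\eta(n)$, the invariance of $\eta\mapsto\varphi_n^{(k-1)}(\overline{\eta})$ under $\eta\mapsto\eta^{n,n+1}$ (inherited since $\overline{\eta^{n,n+1}}=(\overline{\eta})^{n,n+1}$), and Lemma~\ref{lem:change}, one rewrites each term in $\langle V,f\rangle_{\nu_\gamma^N}$ as $-\tfrac12\int\varphi_n^{(k-1)}(\overline{\eta})(\eta(n+1)-\eta(n))(f(\eta^{n,n+1})-f(\eta))\,d\nu_\gamma^N$.

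The key step is the choice of Young's inequality. Writing $|\varphi_n^{(k-1)}(\overline{\eta})||\eta(n+1)-\eta(n)|\leq M\mathbf{c}^{(k-1)}(\tau_n\overline{\eta})\mathbf{a}(\tau_n\eta)=M\mathbf{r}^{(k-1)}(\tau_n\overline{\eta})$ and $|f(\eta^{n,n+1})-f(\eta)|=|\sqrt{f(\eta^{n,n+1})}-\sqrt{f(\eta)}|\cdot(\sqrt{f(\eta^{n,n+1})}+\sqrt{f(\eta)})$, I would apply $|uv|\leq\tfrac{Au^2}{2}+\tfrac{v^2}{2A}$ with $u=\sqrt{\mathbf{r}^{(k-1)}(\tau_n\overline{\eta})}(\sqrt{f(\eta^{n,n+1})}-\sqrt{f(\eta)})$ and $v=\sqrt{\mathbf{r}^{(k-1)}(\tau_n\overline{\eta})}(\sqrt{f(\eta^{n,n+1})}+\sqrt{f(\eta)})$ (same weight $\sqrt{\mathbf{r}^{(k-1)}}$ on both sides). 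Crucially, for $m\in(0,1)$ the sign identity $(-1)^{k-1}\binom{m}{k}=|\binom{m}{k}|$ yields $r_N^{(m-1)}(\tau_x\eta)=\sum_k|\binom{m}{k}|\mathbf{r}^{(k-1)}(\tau_x\overline{\eta})$, so that summing the first Young term $\tfrac{A}{2}\mathbf{r}^{(k-1)}(\tau_n\overline{\eta})(\nabla_{n,n+1}\sqrt{f})^2$ over $k,n$ with weights $|\binom{m}{k}|$ reproduces exactly $MA\,\mathcal{E}_N^{(m-1)}(\sqrt{f},\nu_\gamma^N)$ (up to a factor $2$), after enlarging the partial sum to $x\in\mathbb{T}_N$.

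Choosing $A=2N/(MB)$ cancels this against the $\tfrac{N}{B}\mathcal{E}_N^{(m-1)}$ term in the variational formula. For the second Young term, $\int\mathbf{r}^{(k-1)}(\tau_n\overline{\eta})(\sqrt{f(\eta^{n,n+1})}+\sqrt{f(\eta)})^2\,d\nu_\gamma^N\leq 4\int\mathbf{r}^{(k-1)}(\tau_n\overline{\eta})f\,d\nu_\gamma^N$ by the invariance of $\mathbf{r}^{(k-1)}(\tau_n\overline{\eta})$ under $\eta\mapsto\eta^{n,n+1}$. Then Lemma~\ref{lem:up_speed} gives the deterministic bound $\sum_{n=0}^{k}\mathbf{r}^{(k-1)}(\tau_n\overline{\eta})\lesssim k$, and the binomial estimate \eqref{eq:usefulbound} implies $\sum_{k=1}^{\ell_N}|\binom{m}{k}|k\lesssim(\ell_N)^{1-m}$, so the second term contributes at most $\tfrac{M(\ell_N)^{1-m}}{A}$. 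Plugging the chosen $A$ yields $\tfrac{M^2 B(\ell_N)^{1-m}}{N}$, and integrating over $\mathcal{T}$ produces the claimed $|\mathcal{T}|B(\ell_N)^{1-m}/N$ term.

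The main obstacle is resisting the natural but suboptimal route of lower-bounding $\mathcal{E}_N^{(m-1)}$ by $\Gamma_N^{(0)}$ via Proposition~\ref{prop:energy}: doing so would force the coarse estimate $(\mathbf{r}^{(k-1)})^2\leq k\,\mathbf{r}^{(k-1)}$ inside Young's inequality, producing a spurious extra factor $k$ and a final order $(\ell_N)^{2-m}/N$ in place of $(\ell_N)^{1-m}/N$. The proof genuinely requires exploiting the decomposition $r_N^{(m-1)}=\sum_k|\binom{m}{k}|\mathbf{r}^{(k-1)}(\overline{\eta})$, which is available only in the fast-diffusion regime where all coefficients $(-1)^{k-1}\binom{m}{k}$ have the same sign; this structural identity is exactly what makes the Young weighting $\sqrt{\mathbf{r}^{(k-1)}}$ match the Dirichlet form of the interpolating model term by term.
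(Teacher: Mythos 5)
Your proposal matches the paper's own proof: the entropy/Feynman--Kac reduction, the Young weighting by $\sqrt{\mathbf{r}^{(k-1)}(\tau_n\overline{\eta})}$ on both sides (so that summing over $k$ with weights $\abs{\binom{m}{k}}$ reconstitutes $\Gamma_N^{(m-1)}$ exactly), Lemma~\ref{lem:up_speed} plus the binomial estimate \eqref{eq:usefulbound} to control the complementary Young term by $(\ell_N)^{1-m}/A$, and the choice $A\sim N/(MB)$ are precisely the steps taken there. Your closing remark also correctly pins down the structural point the argument hinges on: in the fast-diffusion regime all coefficients $(-1)^{k-1}\binom{m}{k}$ are positive, so $r_N^{(m-1)}=\sum_k\abs{\binom{m}{k}}\mathbf{r}^{(k-1)}(\overline{\,\cdot\,})$ term by term, and it is exactly this identity that avoids the spurious factor of $k$ and the suboptimal order $(\ell_N)^{2-m}/N$ that the $\Gamma_N^{(0)}$-based route would produce.
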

}\begin{proof}
Proceeding as previously, we have to estimate
\begin{align}\label{rep:FDM_var}
\frac{c_\gamma}{B}
+\int_{\mathcal{T}}
\sup_{f
}
\left\{
\abs{
\sum_{k=1}^{\ell_N}\abs{\binom{m}{k}}\sum_{n=0}^{k}\inner{\varphi_n^{(k-1)}(s,\overline{\eta})
(\overline{\eta}(n)-\overline{\eta}(n+1))
,f}_{\nu_\gamma^N}
}
-\frac{N}{B}\mathcal{E}_N^{(m-1)}(\sqrt{f},\nu_\gamma^N)
\right\}
\rmd s.
\end{align}
By the hypothesis on $ \varphi_n^{(k-1)} $ and Young's inequality we can bound from above the summation over $ n $ by:
\begin{multline*}
\frac{M}{4A}\int_{\Omega_N}
\sum_{n=0}^{k}
\abs{\eta(n)-\eta(n+1)}\mathbf{c}^{(k-1)}(\tau_n\overline{\eta})
\left(\sqrt{f}(\eta^{n,n+1})+\sqrt{f}(\eta)\right)^2
\nu_\gamma^N(\rmd\eta)
\\
+
\frac{AM}{4}\int_{\Omega_N}
\sum_{n=0}^{k}
\abs{\eta(n)-\eta(n+1)}\mathbf{c}^{(k-1)}(\tau_n\overline{\eta})
\left(\nabla_{n,n+1}\sqrt{f}(\eta)\right)^2
\nu_\gamma^N(\rmd\eta)
.
\end{multline*}
Taking the binomial coefficients into consideration, and since $ \abs{\eta(n)-\eta(n+1)}=\mathbf{a}(\tau_n\eta), $ which is the exclusion constraint, we bound from above
\begin{align*}
\int_{\Omega_N}
\sum_{k=1}^{\ell_N}\abs{\binom{m}{k}}
\sum_{n=0}^{k}
\abs{\eta(n)-\eta(n+1)}\mathbf{c}^{(k-1)}(\tau_n\overline{\eta})
\left(\nabla_{n,n+1}\sqrt{f}(\eta)\right)^2
\nu_\gamma^N(\rmd\eta)
\leq
\Gamma_N^{(m-1)}(\sqrt{f},\nu_{\gamma}^N).
\end{align*}
We have the following upper bounds
\begin{align*}
\int_{\Omega_N}
\sum_{k=1}^{\ell_N}\abs{\binom{m}{k}}
&	\sum_{n=0}^{k}
\abs{\eta(n)-\eta(n+1)}\mathbf{c}^{(k-1)}(\tau_n\overline{\eta})
\left(\sqrt{f}(\eta^{n,n+1})+\sqrt{f}(\eta)\right)^2
\nu_\gamma^N(\rmd\eta)
\\
&	\leq
2\int_{\Omega_N}
\sum_{k=1}^{\ell_N}\abs{\binom{m}{k}}
\sum_{n=0}^{k}
\abs{\eta(n)-\eta(n+1)}\mathbf{c}^{(k-1)}(\tau_n\overline{\eta})
\left(f(\eta^{n,n+1})+f(\eta)\right)
\nu_\gamma^N(\rmd\eta)
\\
&	\leq
4\int_{\Omega_N}
\sum_{k=1}^{\ell_N}\abs{\binom{m}{k}}
\left(
\sum_{n=0}^{k}
\abs{\eta(n)-\eta(n+1)}\mathbf{c}^{(k-1)}(\tau_n\overline{\eta})
\right)
f(\eta)
\nu_\gamma^N(\rmd\eta)
\\
&	\leq
8M
\sum_{k=1}^{\ell_N}\abs{\binom{m}{k}}k
\int_{\Omega_N}
f(\eta)
\nu_\gamma^N(\rmd\eta)
\leq
8M (\ell_N)^{1-m}.
\end{align*}
The previous inequalities follow, respectively, from Young's inequality, Lemma \ref{lem:change}, Lemma \ref{lem:up_speed}, the fact of $ f $ being a density and then Lemma \ref{lem:bin_bound} and an integral comparison. With all this, we obtain the following estimate for \eqref{rep:FDM_var}
\begin{align*}
\frac{c_\gamma}{B}
+\abs{\mathcal{T}}
\left(
2M\frac{(\ell_N)^{1-m}}{A}
+\Gamma^{(m-1)}_N(\sqrt{f},\nu_{\gamma}^N)
\left(
\frac14AM-\frac{N}{B}
\right)
\right).
\end{align*}
Fixing $ A=4N/BM $ concludes the proof.
\end{proof}

\begin{Lemma}\label{lem:rep_FDM}
Consider $ x,y\in\mathbb{T}_N $. Let $ \varphi:[0,T]\times\Omega_N\to\mathbb{R} $ such that $ \norm{\varphi}_{L^\infty([0,T]\times\Omega_N)}<\infty $ and invariant for the map $ \eta\mapsto\eta^{z,z+1} $ with $ z\in\llbracket x,y-1\rrbracket $. Then, for all $ B>0 $ and for all $ t\in[0,T] $ it holds
\begin{align*}
\mathbb{E}_{\mu_N}
\left[\abs{
\int_0^t
\varphi(s,\eta_{N^2s})
(
\eta_{N^2s}(x)-\eta_{N^2s}(y)
)
\rmd s}	\right]
\lesssim
\frac{1}{B}
+
T
B\frac{\abs{y-x}}{N}
.
\end{align*}

\end{Lemma}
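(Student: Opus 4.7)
The plan is to mirror the strategy of Lemma~\ref{lem:rep_shift}, but exploit the sharper lower bound on the Dirichlet form provided by Proposition~\ref{prop:energy} in the fast-diffusion regime, namely $\mathcal{E}_N^{(m-1)}(\sqrt f,\nu_\gamma^N)\geq \tfrac14 \Gamma_N^{(0)}(\sqrt f,\nu_\gamma^N)$ (taking $\gamma(\cdot)$ to be a constant in $(0,1)$, so that $\mathbf{c}_\gamma=0$). This lower bound has \emph{no} factor $\delta_N \lesssim (\ell_N)^{-(m-1)}$, which is precisely why the bound we obtain here is better than the one in Lemma~\ref{lem:rep_shift}.

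First, by the entropy inequality with reference measure $\nu_\gamma^N$ and the Feynman--Kac formula, the expectation on the left-hand side is bounded by
\[
\frac{c_\gamma}{B} + \int_0^t \sup_f \left\{ \big|\langle \varphi(s,\eta)(\eta(x)-\eta(y)),f\rangle_{\nu_\gamma^N}\big| - \frac{N}{B}\mathcal{E}_N^{(m-1)}(\sqrt f,\nu_\gamma^N) \right\}\rmd s,
\]
where the supremum is taken over densities $f$ with respect to $\nu_\gamma^N$.

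Second, I would telescope $\eta(x)-\eta(y)=\sum_{z=x}^{y-1}(\eta(z)-\eta(z+1))$. Since $\varphi$ is invariant under each swap $\eta\mapsto \eta^{z,z+1}$ for $z\in \llbracket x,y-1\rrbracket$, Lemma~\ref{lem:change} rewrites the inner product as
\[
\frac12 \sum_{z=x}^{y-1} \int_{\Omega_N}\varphi(s,\eta)(\eta(z)-\eta(z+1))(f(\eta)-f(\eta^{z,z+1}))\nu_\gamma^N(\rmd\eta).
\]
Writing $f(\eta)-f(\eta^{z,z+1})=(\sqrt f(\eta)-\sqrt f(\eta^{z,z+1}))(\sqrt f(\eta)+\sqrt f(\eta^{z,z+1}))$, applying Young's inequality with parameter $A>0$, and using that $\varphi$ is bounded by $c_\varphi$, $|\eta(z)-\eta(z+1)|\leq \mathbf{a}(\tau_z\eta)\leq \mathbf{r}^{(0)}(\tau_z\eta)$, together with $\int f\,\rmd\nu_\gamma^N=1$, we bound this by
\[
c_\varphi\left(\frac{|y-x|}{2A} + \frac{A}{2}\,\Gamma_N^{(0)}(\sqrt f,\nu_\gamma^N)\right).
\]

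Third, injecting this and Proposition~\ref{prop:energy} into the variational formula, we get the upper bound
\[
\frac{c_\gamma}{B} + T\left( \frac{c_\varphi|y-x|}{2A} + \Gamma_N^{(0)}(\sqrt f,\nu_\gamma^N)\left(\frac{c_\varphi A}{2} - \frac{N}{4B}\right)\right).
\]
Choosing $A=\tfrac{N}{2c_\varphi B}$ makes the coefficient of $\Gamma_N^{(0)}$ non-positive, and yields the announced estimate $\lesssim \tfrac1B + TB\tfrac{|y-x|}{N}$.

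There is essentially no serious obstacle here: the entire mechanism is standard once the correct lower bound on $\mathcal{E}_N^{(m-1)}$ is in place. The only point to keep in mind is that in the fast-diffusion regime $m\in(0,1)$, the lower bound $\mathbf{c}^{(0)}\equiv 1$ gives direct access to the SSEP carré du champ, so no path-argument or auxiliary truncation (of the kind needed in Lemma~\ref{lem:rep_boxes}) is required, and the bound is uniform in $\ell_N$.
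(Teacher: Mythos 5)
Your proposal is correct and follows the same route as the paper, which simply references the computations from Lemma~\ref{lem:rep_shift} and then invokes the $m\in(0,1)$ branch of Proposition~\ref{prop:energy} before choosing $A\sim N/B$. You have spelled out those steps explicitly; the key point you isolate — that the lower bound $\mathcal{E}_N^{(m-1)}\gtrsim \Gamma_N^{(0)}$ carries no factor $\delta_N\lesssim(\ell_N)^{-(m-1)}$ in the fast-diffusion regime, which is exactly why the $(\ell_N)^{m-1}$ in Lemma~\ref{lem:rep_shift} disappears here — matches the paper's intent.
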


\begin{proof}
Repeating the computations in the proof of Lemma \ref{lem:rep_shift}, there exist constants $ c_0,c_1,c_2>0 $ such that we can overestimate the expectation by
\begin{align*}
\frac{c_0}{B}
+T\sup_{f}
\left\{
c_1A\Gamma_N^{(0)}(\sqrt{f},\nu_\gamma^N) + c_2\frac{\abs{y-x}}{A}
-\frac{N}{B}\mathcal{E}^{(m-1)}_N(\sqrt{f},\nu_\gamma^N)
\right\}.
\end{align*}
Recalling the lower bound for the Dirichlet form in Proposition \ref{prop:energy} we can choose $ A=mN/c_1B $.
\end{proof}

\section{Energy Estimate}\label{sec:energy}

We recall some classical results that we will invoke throughout this section.
Let $ \mathcal{H} $ be a Hilbert space with corresponding norm $ \norm{\cdot}_{\mathcal{H}} $ and $ \mathcal{f}f:\mathcal{H}\to\mathbb{R} $ a linear functional. The (dual) norm of the linear functional $ f $ is defined as
\begin{align*}
\nnorm{f}=\sup_{\norm{x}_{\mathcal{H}}\leq1,x\in\mathcal{H}}\abs{f(x)}.
\end{align*}
We know that (see for instance \cite[Proposition A.1.1.]{phd:adriana}) if there exists $ K_0>0 $ and
a positive real number $ \kappa $ such that
$
\sup_{x\in\mathcal{H}}
\left\{
f(x)-\kappa \norm{x}_{\mathcal{H}}^2
\right\}
\leq K_0,
$
then $ f $ is bounded.
%
%
Let us now introduce:
\begin{Def}
Let $ L^2([0,T]\times \mathbb{T}) $ be the (Hilbert) space of measurable functions $ G:[0,T]\times \mathbb{T}\to\mathbb{R} $ such that
\begin{align*}
\int_0^T\norm{G_s}_{L^2(\mathbb{T})}^2\rmd s<\infty,
\end{align*}
endowed with the scalar product $ \llangle G,H\rrangle $ defined by
\begin{align*}
\llangle G,H\rrangle=\int_0^T\inner{G_s,H_s}\rmd s.
\end{align*}
For any $ r\in\mathbb{R}_+ $ fixed, define the linear functional $ \ell^{(r)} $ on $ C^{0,1}\left([0,T]\times\mathbb{T}\right) $ by $ \ell_\rho^{(r)}(G)=\llangle\partial_u G,\rho^r\rrangle $.
\end{Def}

An important result is the following:\begin{Lemma}\cite[Lemma A.1.9]{phd:adriana}.\label{lem:reg_xi}
If  $ \xi\in L^2([0,T]\times \mathbb{T}) $ is such that there exists a function $ \partial\xi\in L^2([0,T]\times \mathbb{T}) $ satisfying for all $ G\in C^{0,1}([0,T]\times\mathbb{T}) $ the identity
\begin{align*}
\llangle\partial_uG,\xi\rrangle=-\llangle G,\partial\xi\rrangle,
\end{align*}
then $ \xi\in L^2([0,T];\mathcal{H}^1(\mathbb{T})) $.
\end{Lemma}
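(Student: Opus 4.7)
The plan is to use a standard argument based on a countable dense family of test functions and Fubini's theorem to pass from the ``global'' weak derivative on $[0,T] \times \mathbb{T}$ to a ``slicewise'' weak spatial derivative for a.e.\ $t$.

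First, I would fix a countable family $\{\psi_n\}_{n \geq 1} \subset C^\infty(\mathbb{T})$ which is dense in $C^1(\mathbb{T})$ for the $C^1$-norm, and an arbitrary $\varphi \in C^1([0,T])$. Applying the hypothesis to the tensor product $G(t,u) = \varphi(t)\psi_n(u)$, which belongs to $C^{0,1}([0,T] \times \mathbb{T})$, yields
\[
\int_0^T \varphi(t) \int_\mathbb{T} \psi_n'(u) \xi(t,u) \, \rmd u \, \rmd t = -\int_0^T \varphi(t) \int_\mathbb{T} \psi_n(u) \partial\xi(t,u) \, \rmd u \, \rmd t.
\]
Both inner integrals define functions of $t$ which lie in $L^1([0,T])$ by Fubini and Cauchy--Schwarz, since $\xi, \partial\xi \in L^2([0,T] \times \mathbb{T})$. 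Since $\varphi \in C^1([0,T])$ is arbitrary, the variational lemma gives a null set $\mathcal{N}_n \subset [0,T]$ such that for every $t \notin \mathcal{N}_n$,
\[
\int_\mathbb{T} \psi_n'(u) \xi(t,u) \, \rmd u = -\int_\mathbb{T} \psi_n(u) \partial\xi(t,u) \, \rmd u.
\]

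Next, setting $\mathcal{N} := \bigcup_{n \geq 1} \mathcal{N}_n$, which is again a null set, for every $t \notin \mathcal{N}$ the identity above holds simultaneously for all $n \geq 1$. By density of $\{\psi_n\}_{n \geq 1}$ in $C^1(\mathbb{T})$, the identity extends to every $\psi \in C^1(\mathbb{T})$ (using that $\xi(t,\cdot), \partial\xi(t,\cdot) \in L^1(\mathbb{T})$ for a.e.\ $t$). Thus, for a.e.\ $t \in [0,T]$, the function $\xi_t : u \mapsto \xi(t,u)$ admits $\partial\xi(t,\cdot)$ as its weak spatial derivative, and this derivative belongs to $L^2(\mathbb{T})$ for a.e.\ $t$ again by Fubini applied to $\partial\xi \in L^2([0,T] \times \mathbb{T})$. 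Hence $\xi_t \in \mathcal{H}^1(\mathbb{T})$ for a.e.\ $t$.

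Finally, the conclusion follows from a direct Fubini computation:
\[
\int_0^T \norm{\xi_t}_{\mathcal{H}^1(\mathbb{T})}^2 \rmd t = \int_0^T \norm{\xi_t}_{L^2(\mathbb{T})}^2 \rmd t + \int_0^T \norm{\partial\xi_t}_{L^2(\mathbb{T})}^2 \rmd t = \norm{\xi}_{L^2([0,T] \times \mathbb{T})}^2 + \norm{\partial\xi}_{L^2([0,T] \times \mathbb{T})}^2 < +\infty,
\]
so that $\xi \in L^2([0,T]; \mathcal{H}^1(\mathbb{T}))$. The only mildly technical point is the joint measurability ensuring that $t \mapsto \norm{\xi_t}_{\mathcal{H}^1(\mathbb{T})}^2$ is measurable; this is guaranteed by the identification of the slicewise weak derivative with the measurable function $\partial\xi(t,\cdot)$ obtained above. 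The rest of the argument is routine soft analysis, with no real obstruction once the density step is carried out carefully.
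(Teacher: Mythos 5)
The paper does not prove this lemma itself; it simply cites \cite[Lemma~A.1.9]{phd:adriana}, so there is no in-paper proof to compare against. Your argument is correct and is the standard way to establish the result: tensor-product test functions, the fundamental lemma of the calculus of variations to deduce a slicewise identity for a.e.\ $t$, a countable-dense-family trick to obtain a single exceptional null set, a density argument in $C^1(\mathbb{T})$ to pass to arbitrary spatial test functions, and finally Fubini to assemble the $L^2([0,T];\mathcal{H}^1(\mathbb{T}))$ bound. Two small points worth flagging, though neither is a gap: (i) when you extend the identity from $\{\psi_n\}$ to all $\psi \in C^1(\mathbb{T})$ you should enlarge the exceptional set $\mathcal{N}$ to also exclude those $t$ for which $\xi(t,\cdot)$ or $\partial\xi(t,\cdot)$ fail to lie in $L^2(\mathbb{T})$ (another null set by Fubini); (ii) you implicitly use the standard equivalence, on the torus, between the completion definition of $\mathcal{H}^1(\mathbb{T})$ given in Definition~\ref{def:sob} and the characterization via existence of an $L^2$ weak derivative — this is correct but should be recorded, since the paper defines $\mathcal{H}^1$ by completion.
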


\begin{Def}
For $ G\in C^{0,1}([0,T]\times\mathbb{T}),\; r,\kappa\in\mathbb{R}_+ $ define $ \mathscr{E}_{G,\kappa}^{(m)}:\mathcal{D}([0,T],\mathcal{M}_+)\to\mathbb{R}\cup\{\infty\} $ by
\begin{align*}
\mathscr{E}_{G,\kappa}^{(r)}(\pi)
=
\begin{cases}
\ell^{(r)}(G)-\kappa\norm{G}^2_2, & \text{ if }
\pi\in\mathcal{D}([0,T],\mathcal{M}_+),\\
+\infty, & \text{ otherwise},
\end{cases}
\end{align*}
and the energy functional $ \mathscr{E}^{(r)}_\kappa:\mathcal{D}([0,T],\mathcal{M}_+)\to\mathbb{R}\cup\{\infty\} $ by
\begin{align*}
\mathscr{E}^{(r)}_\kappa(\pi)=\sup_{G\in C^{0,1}([0,T]\times\mathbb{T})}\mathscr{E}_{G,\kappa}^{(r)}(\pi).
\end{align*}
\end{Def}
\begin{Rem}
Note that $ \mathscr{E}^{(r)}_\kappa(\pi)\geq0 $. To see this it is enough to take $ G=0 $.
\end{Rem}
Recall that the measure $ \mathbb{Q} $ is the weak limit of a subsequence of $ \mathbb{Q}_N $ as $ N\to+\infty $, where $ \mathbb{Q}_N $ is the measure induced by the empirical measure in the Skorokhod space of trajectories $ \mathcal{D}([0,T],\Omega_N) $. Recall also the definition of the target Sobolev space (Definition \ref{def:sob}). The main goal of this section is to prove the next proposition.
\begin{Prop}\label{prop:power_in_sob}
The measure $ \mathbb{Q} $ is concentrated on trajectories of absolutely continuous measures with respect to the Lebesgue measure, $ \pi_\cdot(\mathrm{d}u)=\rho_\cdot(u)\mathrm{d}u $, such that $ \rho^m\in L^2([0,T];\mathcal{H}^1(\mathbb{T})) $, for $ m\in(1,2) $, and $ \rho\in L^2([0,T],\mathcal{H}^1(\mathbb{T})) $, for $ m\in(0,1) $.
\end{Prop}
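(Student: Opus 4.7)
The plan is to deduce weak differentiability of $\rho^m$ (in the slow regime) and of $\rho$ (in the fast regime) by combining the energy functional method with the microscopic energy lower bound of Proposition \ref{prop:energy}, and then to conclude via Lemma \ref{lem:reg_xi}. Since $\mathscr{E}_\kappa^{(r)}(\pi) \geq 0$, it suffices to exhibit $\kappa > 0$ and $K_0 < \infty$ such that $\mathbb{E}_{\mathbb{Q}}[\mathscr{E}_\kappa^{(r)}(\pi)] \leq K_0$ with $r = m$ if $m \in (1,2)$ and $r = 1$ if $m \in (0,1)$. Because $\mathscr{E}_\kappa^{(r)}$ is the supremum (over a countable dense subset of $C^{0,1}([0,T]\times\mathbb{T})$) of lower semi-continuous functionals $\mathscr{E}_{G,\kappa}^{(r)}$ (up to a mollification as in the proof of Proposition \ref{prop:char}), monotone convergence and Portmanteau's theorem reduce the task to proving $\liminf_N \mathbb{E}_{\mathbb{Q}_N}[\mathscr{E}_{G,\kappa}^{(r),N}(\pi^N)] \leq K_0$ for each fixed $G$, where $\mathscr{E}_{G,\kappa}^{(r),N}$ is an appropriate discrete analogue of $\mathscr{E}_{G,\kappa}^{(r)}$.

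The choice of discrete analogue uses the gradient structure \eqref{grad:non_int}: for $m\in(1,2)$ one sets
\[
\mathscr{E}_{G,\kappa}^{(m),N}(\pi^N) := \int_0^T \frac{1}{N}\sum_{x\in\mathbb{T}_N} \partial_u G_s(\tfrac{x}{N})\, h_N^{(m-1)}(\tau_x \eta_{N^2 s})\,\rmd s - \kappa \|G\|^2_{L^2([0,T]\times \mathbb{T})},
\]
which is the right discrete surrogate because $\int h_N^{(m-1)}(\eta)\,\rmd\nu_\rho^N = \sum_{k=1}^{\ell_N}\binom{m}{k}(-1)^k (1-\rho)^k \to \rho^m - 1$ by the generalized binomial theorem; while for $m\in(0,1)$ one takes simply $\mathscr{E}_{G,\kappa}^{(1),N}(\pi^N) := \int_0^T \inner{\partial_u G_s, \pi_s^N}\rmd s - \kappa \|G\|^2_{L^2}$. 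The uniform bound $\mathbb{E}_{\mathbb{Q}_N}[\mathscr{E}_{G,\kappa}^{(r),N}] \leq K_0$ is obtained by the entropy inequality against $\nu_\gamma^N$ ($\gamma\in(0,1)$ constant) followed by Feynman-Kac, after which a summation by parts converts the test against $\partial_u G$ into a test of $G$ against $\nabla^{+,N} h_N^{(m-1)} = N\,c_N^{(m-1)}\nabla^+\eta$ in the slow regime, or against $\nabla^{+,N}\eta$ in the fast regime. Applying Young's inequality produces a Dirichlet-type quadratic form $\Gamma_N^{(j)}(\sqrt{f})$ plus a term proportional to $\|G\|^2_{L^2}$; the quadratic form is absorbed into $N\,\mathcal{E}_N^{(m-1)}(\sqrt{f},\nu_\gamma^N)$ via Proposition \ref{prop:energy}, using $\Gamma_N^{(1)}$ in the slow regime and $\Gamma_N^{(0)}$ in the fast regime.

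The main obstacle is handling the sum $\sum_{k=1}^{\ell_N}$ appearing in $h_N^{(m-1)}$ when $m\in(1,2)$. Young's inequality must be performed term-by-term in $k$, with weights $B_k = B\,k^{-b}$ chosen so that both the series $\sum_k \abs{\binom{m}{k}}/B_k$ and the series $\sum_k B_k \abs{\binom{m}{k}} k / \ell_N^{m-1}$ are summable, using the sharp bounds of Lemma \ref{lem:bin_bound} and the monotonicity of Proposition \ref{prop:r_seq}; and the truncation $\ell_N \leadsto (\ell_N)^n$ with $0 < n < (2-m)/(5-m)$ already used in the proof of Proposition \ref{prop:char} is again required to keep the two-blocks contributions under control after the replacement lemmas identify the discrete functional with its continuous limit. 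In the fast regime the analysis is simpler because Proposition \ref{prop:low_bound_r} gives $r_N^{(m-1)} \geq m\,\mathbf{r}^{(0)}$ and hence $\mathcal{E}_N^{(m-1)} \geq \tfrac{1}{4}\Gamma_N^{(0)} - O(N^{-1})$, which directly yields an SSEP-type energy estimate for the functional tested against $\eta$. Once the uniform bound holds, taking the supremum over $G$ in a dense countable family gives $\mathscr{E}_\kappa^{(r)}(\pi) < \infty$ $\mathbb{Q}$-almost surely, so that $\ell^{(r)}$ is bounded in the dual norm $\mathbb{Q}$-a.s., and Lemma \ref{lem:reg_xi} yields the desired $\rho^m \in L^2([0,T];\mathcal{H}^1(\mathbb{T}))$ (resp.~$\rho \in L^2([0,T];\mathcal{H}^1(\mathbb{T}))$).
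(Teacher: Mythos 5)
Your plan is structurally the same as the paper's (entropy inequality plus Feynman--Kac, summation by parts to exploit the gradient identity \eqref{grad:non_int}, Young's inequality, absorption into the Dirichlet form, then Riesz's representation and Lemma~\ref{lem:reg_xi}), and the fast-diffusion side is essentially correct. However, the proposed handling of the $k$-sum in the slow regime would not go through as written. You suggest a term-by-term Young's inequality in $k$ with weights $B_k$. Since Young's inequality is applied to the absolute value of a product, this produces a quadratic form of the type
$\sum_{k}\big|\binom{m}{k}\big|\int\sum_{x}\mathbf{r}^{(k-1)}(\tau_x\overline{\eta})\big(\nabla_{x,x+1}\sqrt f\big)^2\,\rmd\nu_\gamma^N$,
i.e.~with all the $\overline{\text{PMM}}(k-1)$-contributions \emph{added}. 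But for $m\in(1,2)$ the decomposition \eqref{PMM_rewrite} carries alternating signs, so $\Gamma_N^{(m-1)}$ is the SSEP form \emph{minus} those contributions; the quantity produced by a term-by-term Young strictly dominates $\Gamma_N^{(m-1)}$ and therefore cannot be absorbed into $N\mathcal{E}_N^{(m-1)}(\sqrt f,\nu_\gamma^N)$, which equals $\tfrac{N}{2}\Gamma_N^{(m-1)}$ for constant $\gamma$. The paper's Proposition~\ref{prop:energy_est_PME} instead performs a \emph{single} Young's inequality keeping the non-negative weight $c_N^{(m-1)}$ intact: the quadratic side then produces $\Gamma_N^{(m-1)}$ exactly, and the linear side is controlled by the uniform bound $\sup_\eta c_N^{(m-1)}(\eta)\leq m$ of Proposition~\ref{prop:low_bound_r}. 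The absorption goes through \eqref{energyup:eq0}, not through the refined $\Gamma_N^{(0)}$/$\Gamma_N^{(1)}$ decomposition in Proposition~\ref{prop:energy} that you invoke for the slow regime. Your $B_k$-weights and the truncation $\ell_N\leadsto(\ell_N)^n$ are indeed needed, but only in the replacement-lemma step that connects the empirical averages to the occupation variables; keeping that role separate from the final single-Young energy inequality repairs the argument.
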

This will be shown to be consequence of existing positive real numbers $ \kappa_0,\kappa_1,K_0 $ and $ K_1 $ such that for $ m\in(0,1) $ holds $ E_\mathbb{Q}\left[
\mathscr{E}_{\kappa_0}^{(1)}(\pi)
\right]\leq K_0 $, and for $ m\in(1,2) $ holds $ E_\mathbb{Q}\left[
\mathscr{E}_{\kappa_1}^{(m)}(\pi)
\right]\leq K_1 $, 
{where $E_\mathbb{Q}$ denotes the expectation with respect to $\mathbb{Q}$.} This will be proved in Proposition \ref{prop:energy_est_FDE} and \ref{prop:energy_est_PME}, respectively. Recall \eqref{grad:non_int}.  For the slow diffusion case, the argument is analogous to \cite[Section $ 6 $]{BDGN} but we make evident that this argument works due to the fact that the rates are uniformly bounded by a constant independent of $ N $ and the fact that the model is gradient. In particular, the argument is suited to show that the "macroscopic" quantity
\begin{align*}
\rho^m=\lim_{N\to+\infty} \int
h_N^{(m-1)}(\eta) \nu_\rho^N(\rmd \eta)
\end{align*}
lives in the target Sobolev space, where $ \rho(\cdot)\in(0,1) $ is a constant function. As in \cite{BDGN}, the argument does not allow us to show that $ \rho $ has a weak derivative, the reason being that $ \rho^m\leq \rho $.

For $ m\in(0,1) $ we have the opposite problem. Without imposing any restriction on the initial profile, we cannot show that $ \rho^m\in L^2([0,T],\mathcal{H}^1(\mathbb{T})) $, the reason being that  (see Remark \ref{rem:bound})
\begin{align*}
\lim_{N\to+\infty}\sup_{\eta\in\Omega_N}r_N^{(m-1)}(\eta)=+\infty.
\end{align*}
This is the discrete analogous to $ \rho^{m-1}\to +\infty $ as $\rho\to0$. Yet, we can show that $ \rho\in L^2([0,T];\mathcal{H}^1(\mathbb{T})) $ because the transition rates, in this case, are larger than the ones for the SSEP (analogous to $ \rho\leq \rho^{m} $ in this case), which is again a gradient model. \begin{proof}[Proof of Proposition \ref{prop:power_in_sob}]
Recall that up to this point we have proved that the measure $ \mathbb{Q} $ is a \textit{Dirac measure}, namely $ \mathbb{Q}=\delta_{\pi} $ with $ \pi_\cdot $ the trajectory of absolutely continuous measures $ \pi_\cdot(\rmd u)=\rho_\cdot(u)\rmd u $, where $ \rho $ satisfies the weak formulation \eqref{weak}.  For $ m\in(1,2) $, from Proposition \ref{prop:energy_est_PME} the functional $ \ell^{(m)} $ is bounded $ \mathbb{Q}-$a.s. Since $ C^{0,1}([0,T]\times\mathbb{T}) $ is dense in $ L^2([0,T]\times\mathbb{T}) $, we can extend $ \ell^{(m)} $ to a $ \mathbb{Q}-$a.s.~bounded functional in $ L^2([0,T]\times\mathbb{T}) $. One can thus invoke Riesz's representation Theorem and conclude that for any $ m\in(1,2) $ there exists a function $ \partial\rho^{m}\in L^2([0,T]\times\mathbb{T}) $ such that
\begin{align*}
\ell_\rho^{(m)}(G)=-\llangle G,\partial\rho^{m}\rrangle.
\end{align*}
To finish the proof, since $ \rho^m\in L^2([0,T]\times\mathbb{T}) $, one invokes Lemma \ref{lem:reg_xi}.\par
For $ m\in(0,1) $ the same argument leads to $ \rho\in L^2([0,T],\mathcal{H}^1(\mathbb{T})) $ but now one should invoke instead  Proposition \ref{prop:energy_est_FDE} which states that the functional $ \ell^{(1)} $ is bounded.
\end{proof}

\begin{Prop}\label{prop:energy_est_PME}
For any $ m\in(1,2) $ there are finite constants $ \kappa,K>0 $ such that
\begin{align*}
E_{\mathbb{Q}}
\left[
\mathscr{E}^{(m)}_{\kappa}(\pi)
\right]
\leq K.
\end{align*}
\end{Prop}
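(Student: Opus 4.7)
The argument proceeds along the lines of the energy estimate in \cite[Section 6]{BDGN}, but must be adapted to handle the infinite sum in $h_N^{(m-1)}$. My plan is to first reduce the problem to a microscopic variational bound, then invoke the gradient property together with the Dirichlet form lower bound of Proposition \ref{prop:energy}.

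Let $\{G_j\}_{j\geq 1}\subset C^{0,1}([0,T]\times\T)$ be a countable dense family. By monotone convergence, it suffices to establish a uniform-in-$n$ bound on $E_\mathbb{Q}\big[\max_{1\le j\le n}\mathscr{E}_{G_j,\kappa}^{(m)}(\pi)\big]$. Because $\pi\mapsto\llangle\partial_u G,\rho^m\rrangle$ is not Skorokhod-continuous as written, I will first mollify $\rho^m$ via the binomial expansion \eqref{eq:expand} and the convolution arguments of the proof of Proposition \ref{prop:char}, so that $\rho^m$ is replaced by a continuous functional of the empirical measure up to vanishing error; this lets me invoke Portmanteau's theorem and pass to $\liminf_{N\to+\infty} E_{\mathbb{Q}_N}[\max_j\mathscr{E}_{G_j,\kappa}^{(m)}(\pi^N)]$.

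Next, I will carry out a \emph{microscopic replacement}: using the slow-regime replacement Lemmas \ref{lem:rep_shift}, Corollary \ref{cor:rep_ell_box}, and Lemma \ref{lem:rep_boxes} exactly as in the characterization of limit points (Proposition \ref{prop:char}), I will substitute $\rho_s(u)^m$ by its microscopic counterpart coming from \eqref{grad:non_int}, namely $h_N^{(m-1)}(\tau_{\lfloor uN\rfloor}\eta_{N^2s})$ plus a constant which drops out since $\sum_x\nabla^{+,N}G_s(x/N)=0$ on the torus. This reduces the task to bounding
\[
\limsup_{N\to+\infty}E_{\mathbb{Q}_N}\bigg[\max_{1\le j\le n}\bigg\{\int_0^T\frac{1}{N}\sum_{x\in\T_N}\nabla^{+,N}G_{j,s}(\tfrac{x}{N})\,h_N^{(m-1)}(\tau_x\eta_{N^2s})\,\rmd s-\kappa\|G_j\|^2_2\bigg\}\bigg].
\]
Combining $\max_j X_j\le\tfrac1A\log\sum_j e^{AX_j}$ with the entropy inequality (reference measure $\nu_\gamma^N$, $\gamma\in(0,1)$ constant) and Feynman--Kac, this is controlled by $\tfrac{\log n+H(\mu_N\mid\nu_\gamma^N)}{AN}$ plus $T\sup_f\{V_G(f)-\tfrac{N}{A}\mathcal{E}_N^{(m-1)}(\sqrt f,\nu_\gamma^N)\}$, where $f$ ranges over densities relative to $\nu_\gamma^N$ and $V_G(f)=\int\frac1N\sum_x\nabla^{+,N}G_s(\tfrac xN)\,h_N^{(m-1)}(\tau_x\eta)\,f(\eta)\,\nu_\gamma^N(d\eta)-\kappa\|G\|^2_2/T$.

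The decisive step is bounding $V_G(f)$ by the Carré-du-Champ. Summation by parts and the gradient identity \eqref{grad:non_int} transform the linear term into
\[
\frac{1}{N}\sum_x\nabla^{+,N}G_s(\tfrac xN)\,h_N^{(m-1)}(\tau_x\eta)=-\sum_x G_s(\tfrac xN)\,c_N^{(m-1)}(\tau_{x-1}\eta)\nabla^+\eta(x-1).
\]
Since $c_N^{(m-1)}(\tau_x\eta)$ is invariant under the exchange $\eta\mapsto\eta^{x,x+1}$, Lemma \ref{lem:change} converts each summand into a difference $f(\eta^{x,x+1})-f(\eta)$, after which Young's inequality with parameter $A'>0$ separates it into a $G_s(\tfrac{x}{N})^2$ contribution (tuned to be absorbed by $\kappa\|G\|^2_2$) and a Dirichlet-form-type contribution $\tfrac{A'}{2}\langle r_N^{(m-1)}(\tau_x\eta)(\nabla_{x,x+1}\sqrt f)^2\rangle_{\nu_\gamma^N}$, using crucially the uniform bound $r_N^{(m-1)}\le m$ from Proposition \ref{prop:low_bound_r} (valid only for $m\in(1,2)$). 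Summing over $x$ yields $\tfrac{A'}{2}\Gamma_N^{(m-1)}(\sqrt f,\nu_\gamma^N)$, which by Proposition \ref{prop:energy} is at most $\tfrac{2A'}{m-1}\mathcal{E}_N^{(m-1)}(\sqrt f,\nu_\gamma^N)+O(N^{-1})$ in the PME range. Choosing $A,A',\kappa$ so that this cancels the $\tfrac{N}{A}\mathcal{E}_N^{(m-1)}$ term leaves a finite constant $K$, independent of $n$ and $N$. The main obstacle I expect is keeping uniform-in-$\ell_N$ control when performing the Young inequality: this is handled using $\sum_{k=1}^{\ell_N}|\binom{m}{k}|k<m$ (see \eqref{ineq:sum_coeff}) together with the $\ell_N$-truncation scheme $\ell_N\mapsto(\ell_N)^n$ already developed in the proof of Proposition \ref{prop:char}.
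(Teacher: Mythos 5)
Your proposal follows the paper's argument step by step: reduction to a countable dense family of test functions via monotone convergence, mollification and Portmanteau, microscopic replacement via the slow-regime lemmas, summation by parts combined with the gradient identity \eqref{grad:non_int}, Lemma \ref{lem:change}, Young's inequality, and the crucial uniform bound $c_N^{(m-1)}\le m$ from Proposition \ref{prop:low_bound_r}. The only imprecision is the last bookkeeping: the conversion of $\Gamma_N^{(m-1)}$ into $\mathcal{E}_N^{(m-1)}$ does not come from the statement of Proposition \ref{prop:energy} (which bounds $\mathcal{E}_N^{(m-1)}$ from below by $\Gamma_N^{(0)}$ and $\Gamma_N^{(1)}$), but rather from identity \eqref{id:dir-car}, which for constant $\gamma$ gives the exact equality $\Gamma_N^{(m-1)}(\sqrt f,\nu_\gamma^N)=2\,\mathcal{E}_N^{(m-1)}(\sqrt f,\nu_\gamma^N)$; your constant $\tfrac{2}{m-1}$ should be $2$, which does not affect the conclusion but is cleaner.
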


\begin{proof}
Recall that from the binomial theorem we can expand
\begin{align*}
\rho^m=\sum_{k\geq 0}\binom{m}{k}(-1)^k(1-\rho)^k,
\end{align*}
and since we are on the torus we can treat
\begin{align*}
E_{\mathbb{Q}}
\left[
\sup_{G\in C^{0,1}\left([0,T]\times\mathbb{T}_N\right)}
\bigg\{
\sum_{k\geq 1}\binom{m}{k}(-1)^{k}
\llangle
(1-\rho)^{k}
,\partial_u G
\rrangle-\kappa_1\norm{G}_2^2
\bigg\}
\right]
.
\end{align*}
Recalling that $ C^{0,1}\left([0,T]\times \mathbb{T}\right) $ is separable with respect to the norm $ \norm{\cdot}_{\mathcal{H}^1(\mathbb{T})} $, consider a countable dense subset, $ \{G_p\}_{p\in\mathbb{N}} $, in $ C^{0,1}\left([0,T]\times \mathbb{T}\right) $. An application of the monotone convergence theorem then reduces the problem to that of treating
\begin{align*}
\lim_{\ell\to+\infty}
E_{\mathbb{Q}}
\bigg[\max_{\substack{G_p\\p\leq \ell}}
\left\{\mathcal{E}_{G_p}(\pi)\right\}\bigg].
\end{align*}
Fixed $ G_p $, Lemma \ref{lem:eps_seq} allow us to replace $ (1-\rho)^{k}
$ by $ \prod_{j=0}^{k-1}
\big(1-\inner{\pi,{\iota}_{\eps_{k}}^{\cdot+j\eps_{k}}}\big) $, with the sequence $ (\eps_k)_{k\geq0} $ depending on the regime of $ m $ and with a cost of $ \mathcal{O}(\eps^{\frac14}) $, leaving us with
\begin{align*}
E_{\mathbb{Q}}
\bigg[\max_{\substack{G_p\\p\leq \ell}}
\left\{\mathcal{E}_{G_p}(\pi)\right\}\bigg]
\leq
E_{\mathbb{Q}}
\bigg[
\max_{\substack{G_p\\p\leq \ell}}
\left\{
\sum_{k\geq 1}\binom{m}{k}(-1)^k
\Big\llangle{
\prod_{j=0}^{k-1}
\left(1-\inner{\pi,{\iota}_{\eps_{k}}^{\cdot+j\eps_{k}}}\right)
,\partial_u G_p
}\Big\rrangle
-\kappa_1\norm{G_p}_2^2
\right\}
\bigg]
+\mathcal{O}(\eps^\frac14).
\end{align*}
Recalling that the $ \limsup $ is monotone, we can take $ \limsup_{\eps\to0} $ on both sides of the inequality above. Note that we need to take the $ \limsup_{\eps\to0} $ outside of the expectation, since otherwise we get from the reverse of Fatou's lemma that $ E_\mathbb{Q}\limsup\geq \limsup E_\mathbb{Q} $. And so, we further reduce the problem to the study of
\begin{align*}
\lim_{\ell\to+\infty}\limsup_{\eps\to 0}
E_{\mathbb{Q}}
\bigg[
\max_{\substack{G_p\\p\leq \ell}}
\left\{
\sum_{k\geq 1}\binom{m}{k}(-1)^k
\Big\llangle{
\prod_{j=0}^{k-1}
\left(1-\inner{\pi,{\iota}_{\eps_{k}}^{\cdot+j\eps_{k}}}\right)
,\partial_u G_p
}\Big\rrangle
-\kappa_1\norm{G_p}_2^2
\right\}
\bigg].
\end{align*}
To make the link between the microscopic system and the macroscopic PDE we want to express $ \mathbb{Q} $ as the limit of a subsequence of $ (\mathbb{Q}_N)_{N\geq 0} $, thus replacing $ \pi $ by $ \pi^N $ and then recovering the occupation variables from the application of replacement lemmas. To do this, as previously, one wants to argue that the map
\begin{align*}
\pi\mapsto\Psi(\pi)
=\max_{\substack{G_p\\p\leq \ell}}
\left\{
\sum_{k\geq 1}\binom{m}{k}(-1)^k
\Big\llangle{
\prod_{j=0}^{k-1}
\left(1-\inner{\pi,{\iota}_{\eps_{k}}^{\cdot+j\eps_{k}}}\right)
,\partial_u G_p
}\Big\rrangle
-\kappa_1\norm{G}_2^2
\right\}
\end{align*}
is continuous with respect to the Skorokhod topology, hence lower semicontinuous and therefore $ \Psi(\pi)\leq \liminf_{N\to+\infty}\Psi(\pi^N). $ Although this is not the case, one can first truncate the series at an $ \ell_{1/\eps} $ step, then replace $ \inner{\pi,{\iota}_{\eps_{k}}^{\cdot+j\eps_{k}}} $ by $ \inner{\pi,{\iota}_{\eps_{k}}^{\cdot+j\eps_{k}}}\star \varphi_{\tilde{\eps}_k} $ as in \eqref{conv-to-moll}, and then argue by lower semicontinuity. Next, we replace the sum up to $ \ell_{1/\eps} $ by a sum up to $ \ell_N $, as in \eqref{truncate2}, and finally we replace back $ \inner{\pi^N,{\iota}_{\eps_{k}}^{\cdot+j\eps_{k}}}\star \varphi_{\tilde{\eps}_k} $ by $ \inner{\pi^N,{\iota}_{\eps_{k}}^{\cdot+j\eps_{k}}} $. In this way, we have to treat
\begin{align*}
\lim_{\ell\to+\infty}\limsup_{\eps\to 0}
E_{\mathbb{Q}_N}\bigg[
\liminf_{N\to+\infty}
\max_{\substack{G_p\\p\leq \ell}}
\left\{
\sum_{k= 1}^{\ell_N}\binom{m}{k}(-1)^k
\Big\llangle{
\prod_{j=0}^{k-1}
\left(1-\inner{\pi^N,{\iota}_{\eps_{k}}^{\cdot+j\eps_{k}}}\right)
,\partial_u G_p
}\Big\rrangle
-\kappa_1\norm{G_p}_2^2
\right\}
\bigg],
\end{align*}
where we recall again that $ \mathbb{Q} $ is a Dirac measure. Now we apply Fatou's lemma to exchange the expectation and the $ \liminf $. Hence, it is enough to show that there exists some constant $ K_1 $ independent of $ \{G_p\}_{p\leq\ell} $ for any $ \ell\in\mathbb{N} $ such that
\begin{align*}
\mathbb{E}_{\mu_N}\bigg[
\max_{\substack{G_p\\p\leq \ell}}
\left\{
\sum_{k= 1}^{\ell_N}\binom{m}{k}(-1)^k
\Big\llangle{
\prod_{j=0}^{k-1}
\left(1-\inner{\pi^N,{\iota}_{\eps_{k}}^{\cdot+j\eps_{k}}}\right)
,\partial_u G_p
}\Big\rrangle
-\kappa_1\norm{G_p}_2^2
\right\}
\bigg]
\leq K_1.
\end{align*}
Because $ \partial_uG_p $ is bounded in $ L^1 $ and the products involving the empirical measure are bounded by $ 1 $, we can replace $ \ell_N $ by $ (\ell_N)^n $ with $ n $ as previously (see \eqref{PMM:n0} and the computations that follow it). Now we are able to proceed backwards in the replacement lemmas' scheme (from \eqref{step1} to \eqref{step4:1}), approximating the space integral by the Riemann sum along the way. At this point we have to estimate
\begin{align*}
\mathbb{E}_{\mu_N}
\bigg[
\max_{p\leq \ell}
\left\{
\int_0^T
\bigg(
\frac{1}{N}\sum_{x\in\mathbb{T}_N}\sum_{k=1}^{(\ell_N)^n}\binom{m}{k}(-1)^k
\prod_{j=0}^{k-1}\overline{\eta}_{N^2s}(x+j)
\partial_u G_p(s,\tfrac{x}{N})
-\kappa_1\norm{G_p(s,\cdot)}_2^2
\bigg)
\rmd s
\right\}
\bigg],
\end{align*}
where we recall that $ (\ell_N)^n $ can be replaced back by $ \ell_N $ since the terms involving $ \eta $ are bounded and $ \partial_uG_p $ is bounded in $ L^1 $. We are able to introduce
\begin{align*}
\tau_x
\left\{
\sum_{i=1}^{k-2}
(\eta(i)-\eta(i+1))\sum_{j=1}^{k-1-i}\mathbf{s}_j^{(k-1)}(\tau_i\eta)
\right\}
\end{align*}
inside the summations over $ x $ and $ k $ (see the treatment of the first probability in \eqref{eq:h_prob}). This is important because now we have
\begin{align*}
\mathbb{E}_{\mu_N}
\bigg[
\max_{p\leq \ell}
\left\{
\int_0^T
\bigg(
\frac{1}{N}\sum_{x\in\mathbb{T}_N}
h_{N}^{(m-1)}(\tau_x\eta_{N^2s})
\partial_u G_p(s,\tfrac{x}{N})
-\kappa_1\norm{G_p(s,\cdot)}_2^2
\bigg)
\rmd s
\right\}
\bigg]
\end{align*}
which will be used to exploit the gradient property of the model. Analogously to the replacement lemmas, we obtain the upper bound 
\begin{align}\label{energy:var}
c_\gamma+\int_0^T\sup_{f}
\left\{
\bigg\langle{\frac{1}{N}\sum_{x\in\mathbb{T}_N}
h_{N}^{(m-1)}(\tau_x\overline{\eta})
\partial_u G_p(s,\tfrac{x}{N})
,\;f}\bigg\rangle_{\nu^N_\gamma}
-\kappa_1\norm{G_p(s,\cdot)}_2^2
-N\mathcal{E}_N^{(m-1)}(\sqrt{f},\nu_\gamma^N)
\right\}\rmd s
\end{align}
where $ c_\gamma>0 $ is a constant. 
Let us now focus on the inner product above, specifically on
\begin{align*}
\frac{1}{N}\sum_{x\in\mathbb{T}_N}
\partial_u G_p(s,\tfrac{x}{N})
\sum_{\eta\in \Omega_N}
h_{N}^{(m-1)}(\tau_x\overline{\eta})
f(\eta)\nu_{\gamma}^N(\eta).
\end{align*}
One can replace the space derivative by its discrete version with a cost
\begin{align*}
\frac{1}{N}\sum_{x\in\mathbb{T}_N}
\abs{\left(\partial_u -N\nabla^+\right)G_p(s,\tfrac{x}{N})}
\sum_{\eta\in \Omega_N}
h_{N}^{(m-1)}(\tau_x\overline{\eta})
f(\eta)\nu_{\gamma}^N(\eta)
\leq
\norm{\left(\partial_u -N\nabla^+\right)G_p(s,\cdot)}_{\infty}
\sup_{\eta\in\Omega_N}h_{N}^{(m-1)}(\eta).
\end{align*}
This vanishes on the limit $ N\to+\infty $ since $ \norm{\left(\partial_u -N\nabla^+\right)G_p(s,\cdot)}_{\infty}\lesssim \frac1N $ and, since $ \mathbf{h}^{(k)}\leq k $, we have that $h_{N}^{(m-1)}(\eta)\lesssim 1 $.

At this point the discrete derivative can be passed to $ h^{(m-1)}_{N} $ by performing a summation by parts, which puts us in place to use the gradient property of the model:
\begin{align*}
\sum_{x\in\mathbb{T}_N}
\nabla^{+}G_p(s,\tfrac{x}{N})
&	\sum_{\eta\in \Omega_N}
h_{N}^{(m-1)}(\tau_x\overline{\eta})
f(\eta)\nu_{\gamma}^N(\eta)
=
-\sum_{x\in\mathbb{T}_N}
G_p(s,\tfrac{x+1}{N})
\sum_{\eta\in \Omega_N}
c_N^{(m-1)}(\overline{\eta})
\nabla^{+}\eta(x)
f(\eta)\nu_{\gamma}^N(\eta).
\end{align*}
From Lemma \ref{lem:change},
\begin{align*}
\sum_{\eta\in \Omega_N}
c_N^{(m-1)}(\tau_x\overline{\eta})\nabla^{+}\eta(x)
f(\eta)\nu_{\gamma}^N(\eta)
&=
-\frac12
\sum_{\eta\in \Omega_N}
c_N^{(m-1)}(\tau_x\overline{\eta})\nabla^{+}\eta(x)
\nabla_{x,x+1}f(\eta)\nu_{\gamma}^N(\eta).
\end{align*}
and we are left with
\begin{align*}
\frac12&\sum_{x\in\mathbb{T}_N}
G_p(s,\tfrac{x+1}{N})\sum_{\eta\in \Omega_N}
c_N^{(m-1)}(\tau_x\overline{\eta})\nabla^{+}\eta(x)
\nabla_{x,x+1}f(\eta)\nu_{\gamma}^N(\eta)
\\
&	\leq
\frac{1}{4A}\sum_{x\in\mathbb{T}_N}\sum_{\eta\in \Omega_N}
c_N^{(m-1)}(\tau_x\overline{\eta})
\left(G_p(s,\tfrac{x+1}{N})\right)^2
\left(\sqrt{f}(\eta)+\sqrt{f}(\eta^{x,x+1})\right)^2
\nu_{\gamma}^N(\eta)
\\
&\quad +\frac{A}{4}\sum_{x\in\mathbb{T}_N}\sum_{\eta\in \Omega_N}
c_N^{(m-1)}(\tau_x\overline{\eta})\left(\nabla^{+}\eta(x)\right)^2
\left(
(\nabla_{x,x+1}\sqrt{f})(\eta)
\right)^2
\nu_{\gamma}^N(\eta)
\\
&\leq
\frac{1}{A}\sup_{\eta\in\Omega_N}\{c_N^{(m-1)}(\eta)\}\sum_{x\in\mathbb{T}_N}\left(G_p(s,\tfrac{x+1}{N})\right)^2
+\frac{A}{4}\Gamma_N^{(m-1)}(\sqrt{f},\nu_\gamma^N).
\end{align*}
Recalling that $ \sup_{\eta\in\Omega_N}\{c_N^{(m-1)}(\eta)\}\leq m $, fixing $ A=N $ and replacing all this into \eqref{energy:var}, then taking the corresponding limits finishes the proof.
\end{proof}

\begin{Prop}\label{prop:energy_est_FDE}
For any $ m\in(0,1) $ there are finite constants $ \kappa,K>0 $ such that
\begin{align*}
E_{\mathbb{Q}}
\left[
\mathscr{E}^{(1)}_{\kappa}(\pi)
\right]
\leq K.
\end{align*}
\end{Prop}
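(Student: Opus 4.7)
The proof parallels that of Proposition \ref{prop:energy_est_PME}, but it is noticeably simpler because the target Sobolev bound is on $\rho$ itself rather than on $\rho^m$. This is both fortunate and forced: the SSEP current $\eta(x)-\eta(x+1)$ is already the microscopic counterpart of $-\partial_u\rho$, so the binomial expansion of the slow-diffusion case is not needed; and since by Proposition \ref{prop:low_bound_r} one has $\sup_\eta r_N^{(m-1)}(\eta)\to+\infty$ when $m\in(0,1)$, any attempt to control $\rho^m$ in the Sobolev space via an analogous argument would be doomed. The plan is therefore to copy, verbatim, the reductions used in the PME case (density of $C^{0,1}([0,T]\times\T)$ in the $L^2$ setting, monotone convergence, lower semicontinuity of the functional inside the max, weak convergence $\mathbb{Q}_N\Rightarrow\mathbb{Q}$, and the entropy inequality combined with Feynman--Kac with parameter of order $N$), reducing matters to finding $\kappa,K>0$ such that, uniformly in $\ell$ and $N$,
\[ c_\gamma + \int_0^T \sup_f\Big\{\max_{p\leq\ell}\Big(\mathbb{A}_{G_p}(s,f)-\kappa\|G_p(s,\cdot)\|_{L^2}^2\Big) - N\mathcal{E}_N^{(m-1)}(\sqrt f,\nu_\gamma^N)\Big\}\,\mathrm{d} s \leq K, \]
where $\gamma\in(0,1)$ is a constant, $f$ is a density with respect to $\nu_\gamma^N$, and $\mathbb{A}_G(s,f):=\inner{\tfrac1N\sum_x\partial_uG(s,\tfrac{x}{N})\eta(x),f}_{\nu_\gamma^N}$.

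The microscopic manipulation of $\mathbb{A}_{G}(s,f)$ is the heart of the argument. First, replace $\partial_uG(s,\tfrac{x}{N})$ by the discrete gradient $N\nabla^+G(s,\tfrac{x}{N})$ at cost $O(1/N)$, then perform a summation by parts to obtain
\[ \mathbb{A}_{G}(s,f) = -\sum_{x\in\T_N}G(s,\tfrac{x+1}{N})\inner{\eta(x+1)-\eta(x),f}_{\nu_\gamma^N} + O(1/N). \]
Since $G(s,\tfrac{x+1}{N})$ does not depend on $\eta$, Lemma \ref{lem:change} applies and rewrites each summand as $\tfrac12\int G(s,\tfrac{x+1}{N})(\eta(x+1)-\eta(x))\nabla_{x,x+1}f\,\mathrm{d}\nu_\gamma^N$. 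Factorising $\nabla_{x,x+1}f=(\nabla_{x,x+1}\sqrt{f})(\sqrt{f}(\eta^{x,x+1})+\sqrt{f}(\eta))$, applying Young's inequality with a parameter $A>0$, and using the identity $(\eta(x+1)-\eta(x))^2=\mathbf{a}_{x,x+1}(\eta)$ together with the invariance of both $\mathbf{a}_{x,x+1}$ and $\nu_\gamma^N$ under the exchange $\eta\mapsto\eta^{x,x+1}$ (plus the fact that $f$ is a density), one gets, after summation in $x$ and recognising $\tfrac1N\sum_xG^2(s,\tfrac{x+1}{N})$ as a Riemann approximation of $\|G(s,\cdot)\|_{L^2(\T)}^2$,
\[ \mathbb{A}_G(s,f) \leq \frac{C_1 N}{A}\,\|G(s,\cdot)\|_{L^2(\T)}^2 + C_2\, A\,\Gamma_N^{(0)}(\sqrt f,\nu_\gamma^N) + O(1), \]
for constants $C_1,C_2>0$ independent of $N,G$ and $f$.

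To close the estimate, invoke Proposition \ref{prop:energy} in the regime $m\in(0,1)$ with constant reference profile (so that $\mathbf{c}_\gamma=0$): $\mathcal{E}_N^{(m-1)}(\sqrt f,\nu_\gamma^N)\geq \tfrac14\Gamma_N^{(0)}(\sqrt f,\nu_\gamma^N)$. Substituting, the integrand in the variational formula is bounded above by
\[ \max_{p\leq\ell}\Big[\Big(\tfrac{C_1 N}{A}-\kappa\Big)\|G_p(s,\cdot)\|_{L^2(\T)}^2\Big] + \Big(C_2 A - \tfrac{N}{4}\Big)\Gamma_N^{(0)}(\sqrt f,\nu_\gamma^N) + O(1). \]
Choosing $A=N/(8C_2)$ makes the coefficient of $\Gamma_N^{(0)}$ nonpositive, after which $\kappa=8C_1C_2$ makes the coefficient of $\|G_p\|_{L^2(\T)}^2$ nonpositive for every $p$, so the entire supremum is nonpositive; the desired bound $K$ follows by integrating in time.

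The only conceptual obstacle is recognising that the naive SSEP-type lower bound $\mathcal{E}_N^{(m-1)}\geq\tfrac14\Gamma_N^{(0)}$ of Proposition \ref{prop:energy}, though too weak to control $\rho^m$, is nonetheless exactly what is needed to control $\rho$ itself. No truncation of the binomial series, no slowing down of $\ell_N$ and no gradient decomposition of $c_N^{(m-1)}(\eta)$ are required here, in sharp contrast with the slow-diffusion setting.
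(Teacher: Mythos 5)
Your argument reproduces the paper's proof: reduction to the variational form via the entropy inequality and Feynman--Kac, discrete gradient plus summation by parts, Lemma~\ref{lem:change}, Young's inequality to produce a $\|G_p\|_{L^2}^2$ term and a $\Gamma_N^{(0)}(\sqrt f,\nu_\gamma^N)$ term, and finally the lower bound $\mathcal{E}_N^{(m-1)}\geq\tfrac14\Gamma_N^{(0)}$ from Proposition~\ref{prop:energy} in the regime $m\in(0,1)$. The only cosmetic difference is that you work directly with $\eta(x)$ whereas the paper carries the box-average $\eta^{\varepsilon N}(x)$ through the variational formula; this is a harmless simplification here since the $\rho$-functional is linear, so $\inner{\partial_u G_s,\pi_s^N}=\tfrac1N\sum_x\partial_u G_s(\tfrac xN)\eta_{N^2s}(x)$ already and no intermediate averaging or mollification is needed.
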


\begin{proof}
The proof is almost identical to the one for the SSEP (see for example \cite[Proposition B.1]{EGN1}). Besides the fact that there the authors have a boundary term, the differences lie in that we apply the replacement lemmas in the present text, and that in the final step we need to invoke Proposition \ref{prop:energy}. We outline the main steps. The treatment of the expectation in the statement can be reduced to the treatment of
\begin{align}\label{energy:var_FDE}
\sup_{f}
\left\{
\bigg\langle{\frac{1}{N}\sum_{x\in\mathbb{T}_N}
\eta^{\eps N}(x)
\partial_u G_p(s,\tfrac{x}{N})
-\kappa_0\norm{G_p(s,\cdot)}_2^2,f}\bigg\rangle_{\nu^N_\gamma}
-N\mathcal{E}_N^{(m-1)}(\sqrt{f},\nu_\gamma^N)
\right\}
.
\end{align}
Above, the $ \sup $ is taken over the set of densities with respect to $ \nu_\gamma^N $, and $ \{G_p\}_{p\in\mathbb{N}} $ is a countable dense subset in $ C^{0,1}([0,T]\times\mathbb{T}) $. Exchanging the continuous derivative by a discrete one, then performing a summation by parts we end up having to treat
\begin{align*}
\sup_{f}
\left\{
\bigg\langle{-\sum_{x\in\mathbb{T}_N}
\nabla^+\eta^{\eps N}(x)
G_n(s,\tfrac{x}{N})
-\kappa_0\norm{G_n(s,\cdot)}_2^2,f}\bigg\rangle_{\nu^N_\gamma}
-N\mathcal{E}_N^{(m-1)}(\sqrt{f},\nu_\gamma^N)
\right\}.
\end{align*}
Then again, from Lemma \ref{lem:change} we have that
\begin{align*}
\int_{\Omega_N}
\nabla^+\eta^{\eps N}(x)
f(\eta)\nu_\gamma^N(\rmd\eta)
=\frac{1}{2\eps N}\sum_{i\in\Lambda_N^{\eps N}}
\int_{\Omega_N}
\left(\eta(x+i+1)-\eta(x+i)\right)
\left(f(\eta)-f(\eta^{x+i,x+i+1})\right)\nu_\gamma^N(\rmd\eta).
\end{align*}
Taking our function $ G_p $ back into consideration and recalling that the process is of exclusion type we have that
\begin{align*}
\sum_{x\in\mathbb{T}_N}
G_p(s,\tfrac{x}{N})&\frac{1}{2\eps N}\sum_{i\in\Lambda_N^{\eps N}}
\int_{\Omega_N}
\left(\eta(x+i+1)-\eta(x+i)\right)
\left(f(\eta)-f(\eta^{x+i,x+i+1})\right)\nu_\gamma^N(\rmd\eta)
\\
&\leq
\frac{1}{4A}
\sum_{x\in\mathbb{T}_N}
\left(G_p(s,\tfrac{x}{N})\right)^2
\int_{\Omega_N}
\left(\sqrt{f}(\eta)+\sqrt{f}(\eta^{x+i,x+i+1})\right)^2\nu_\gamma^N(\rmd\eta)
+\frac{A}{4}\Gamma_N^{(0)}(\sqrt{f},\nu_\gamma^N).
\end{align*}
Since $ f $ is a density, last display is no larger than
\begin{align*}
\frac{1}{2A}
\sum_{x\in\mathbb{T}_N}
\left(G_p(s,\tfrac{x}{N})\right)^2
+\frac{A}{4}\Gamma_N^{(0)}(\sqrt{f},\nu_\gamma^N).
\end{align*}
Plugging this into \eqref{energy:var_FDE}, we obtain
\begin{align*}
\sup_{f}
\left\{
\frac{1}{2A}
\sum_{x\in\mathbb{T}_N}
\left(G_p(s,\tfrac{x}{N})\right)^2
-\kappa\norm{G_p(s,\cdot)}_2^2
+\frac{A}{4}\Gamma_N^{(0)}(\sqrt{f},\nu_\gamma^N)
-N\mathcal{E}_N^{(m-1)}(\sqrt{f},\nu_\gamma^N)
\right\}.
\end{align*}
Applying the lower bound for $ \mathcal{E}_N^{(m-1)}(\sqrt{f},\nu_\gamma^N) $ from Proposition \ref{prop:energy} and
hence setting $ A=4N $ and $ \kappa=1/8 $, to conclude we just need to perform the necessary limits.
\end{proof}

\appendix
\section{Auxiliary results}\label{app:aux_res}

\begin{Lemma}\label{lem:bin_bound}
For any $ m\in\mathbb{R}_+ $ and any $ k\in\mathbb{N}_+ $ such that $ k\geq 2 $, it holds
\begin{align*}
\frac{\Gamma(m)\abs{\sin(\pi(k-m))}}{\pi(k+1)^m}
<
\abs{\binom{m-1}{k}}
<
\frac{\Gamma(m)\abs{\sin(\pi(k-m))}}{\pi(k-m)^m}
\lesssim \frac{1}{k^m},
\end{align*}
where the $ \Gamma-$function is defined, for any $ z\in\mathbb{C} $ such that $ \mathrm{Re}(z)>0 $, as
\begin{align*}
\Gamma(z)=\int_0^{+\infty} u^{z-1}e^{-u}\rmd u.
\end{align*}
\end{Lemma}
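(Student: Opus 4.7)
The cornerstone of the proof is to rewrite the generalized binomial coefficient via the Gamma function. Starting from the definition and iterating $\Gamma(z+1)=z\Gamma(z)$, we have
\[
\binom{m-1}{k}=\frac{(m-1)(m-2)\cdots(m-k)}{k!}=\frac{\Gamma(m)}{\Gamma(k+1)\,\Gamma(m-k)}.
\]
Since $m-k\leq 0$ for $k\geq \lceil m\rceil$, to avoid evaluating $\Gamma$ at non-positive arguments I would apply Euler's reflection formula $\Gamma(z)\Gamma(1-z)=\pi/\sin(\pi z)$ with $z=m-k$, which yields the clean identity
\[
\Big|\binom{m-1}{k}\Big|=\frac{\Gamma(m)\,|\sin(\pi(k-m))|}{\pi}\cdot\frac{\Gamma(k-m+1)}{\Gamma(k+1)}.
\]
This is already the \emph{exact} form of the two middle terms in the statement: the inequalities of the lemma then reduce to the purely real-variable two-sided estimate
\[
\frac{1}{(k+1)^m}<\frac{\Gamma(k-m+1)}{\Gamma(k+1)}<\frac{1}{(k-m)^m}.
\]

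To prove this estimate I would rely on the log-convexity of $\log\Gamma$ on $(0,\infty)$ (Bohr--Mollerup). For $0<m\leq 1$ this is Wendel's inequality and is immediate: comparing the slope of the chord through $(k-m+1,\log\Gamma(k-m+1))$ and $(k+1,\log\Gamma(k+1))$ to the slopes of the chords based at the neighbouring integers, then exponentiating, produces both inequalities. For general $m>0$, I would write $m=n+s$ with $n=\lfloor m\rfloor\in\mathbb{N}$ and $s\in[0,1)$, extract the integer part via
\[
\Gamma(k+1)=k(k-1)\cdots(k-n+1)\,\Gamma(k-n+1),
\]
so that the fractional ratio $\Gamma(k-n-s+1)/\Gamma(k-n+1)$ is handled by Wendel while the integer part contributes a product which is trivially sandwiched between $(k-m)^{-n}$ and $(k-n+1)^{-n}$, recombining to give the stated bounds. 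The condition $k\geq 2$ ensures all arguments stay in $(0,\infty)$ where log-convexity applies.

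Finally, the asymptotic $|\binom{m-1}{k}|\lesssim k^{-m}$ follows without any further work: one observes that $\sin(\pi(k-m))=(-1)^{k+1}\sin(\pi m)$, so $|\sin(\pi(k-m))|=|\sin(\pi m)|$ is a $k$-independent constant, while $(k-m)^{-m}\leq 2^m\,k^{-m}$ for $k\geq 2m$ (the finitely many smaller $k$'s being absorbed in the implicit constant). The main technical obstacle is unifying the Gamma-ratio estimate across all $m>0$: the upper bound degenerates as $k-m\to 0^+$, so the integer-part extraction above is essential to control the non-asymptotic constants and to keep every argument of $\Gamma$ safely positive.
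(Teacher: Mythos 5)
Your proof is correct and starts from the same identity as the paper: express $\binom{m-1}{k}$ via the $\Gamma$-function, then apply the reflection formula to get $|\binom{m-1}{k}| = \frac{\Gamma(m)|\sin(\pi(k-m))|}{\pi}\cdot\frac{\Gamma(k+1-m)}{\Gamma(k+1)}$. Where you genuinely diverge from the paper is in estimating the ratio $\Gamma(k+1-m)/\Gamma(k+1)$. The paper converts it into the Beta function $\mathrm{B}(m,k+1-m)$, then sandwiches $\mathrm{B}$ between scaled copies of $\Gamma(m)$ by rescaling the $\Gamma$-integral and comparing the integrands using $e^x\geq 1+x$. You instead invoke log-convexity of $\Gamma$ (Bohr--Mollerup / Wendel), comparing the chord of $\log\Gamma$ over $[k-m+1,k+1]$ to the chords over the adjacent unit intervals $[k-m,k-m+1]$ and $[k+1,k+2]$, whose slopes are exactly $\log(k-m)$ and $\log(k+1)$. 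This is a cleaner and more self-contained route: it avoids the Beta integral representation and the somewhat delicate integrand comparison, and the strict inequalities fall out of strict convexity of $\log\Gamma$. One simplification you could make: the integer-part extraction $m=n+s$ is not actually needed, since the three-point convexity comparison above already applies verbatim for any $m>0$ with $k-m>0$ (which is the only regime where the formula makes sense, and is automatic for $m\in(0,2)$, $k\geq 2$ as used in the paper); your "sandwich" for the integer product also has the two bounds swapped ($k^{-n}\leq\frac{1}{k\cdots(k-n+1)}\leq(k-n+1)^{-n}$), though this does not affect the final conclusion. Your final observation that $|\sin(\pi(k-m))|=|\sin(\pi m)|$ is independent of $k$, and that $(k-m)^{-m}\lesssim k^{-m}$ with finitely many small $k$ absorbed into the constant, matches the paper and closes the argument.
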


\begin{proof}
The binomial coefficients have the following classical representation in terms of the $ \Gamma-$function 
\begin{align*}
\binom{m-1}{k}
=\frac{\Gamma(m)}{\Gamma(k+1)\Gamma(m-k)}.
\end{align*}
From the reflection formula
\begin{align*}
\Gamma(m-k)\Gamma(k+1-m)=\frac{\pi}{\sin(\pi(m-k))},
\end{align*}
we can rewrite
\begin{align*}
\binom{m-1}{k}
=\frac{\sin(\pi(m-k))}{\pi}\frac{\Gamma(m)\Gamma(k+1-m)}{\Gamma(k+1)}.
\end{align*}
Recall now the $ \mathrm{B}-$function, defined on $  z,w\in\mathbb{C}:\; \mathrm{Re}(z),\mathrm{Re}(w)>0 $, as 
\begin{align*}
\mathrm{B}(z,w)=\int_0^1v^{z-1}(1-v)^{w-1}\rmd v
=
\int_0^{+\infty}\frac{s^{w-1}}{(s+1)^{w+z}}\rmd s,
\end{align*}
where the equality of the representations above can be checked by performing the change of variables $ v=s/(s+1) $ on the first integral.
From the definition of $ \Gamma $, one can show the following classical relationship between the $ \Gamma $ and $ \mathrm{B} $ functions, for all $ z,w\in\mathbb{C}:\; \mathrm{Re}(z),\mathrm{Re}(w)>0 $:
\begin{align*}
\mathrm{B}(z,w)=\frac{\Gamma(z)\Gamma(w)}{\Gamma(z+w)}.
\end{align*}
In this way, we can rewrite
\begin{align}\label{eq:bin-beta}
\binom{m-1}{k}
=\frac{\sin(\pi(m-k))}{\pi}\mathrm{B}(m,k+1-m).
\end{align}
Recall that for $ k\geq 2 $ holds $ (m-1)_k=(-1)^{k-\floor{m}}\abs{(m-1)_k} $. Noticing that $ \mathrm{B}(m,k+1-m)>0 $, we then have that $ \sin(\pi(m-k))=(-1)^{k-\floor{m}}\abs{\sin(\pi(m-k))} $ and we need only to find an upper and lower bound for the $ \mathrm{B}-$function.	From the inequality $ e^x\geq 1+x $, valid for $ x\in\mathbb{R} $, the rescaling $ v=u/(w-1) $ with $ w>1 $ on
\begin{align*}
\Gamma(z)
=\int_0^{+\infty}u^{z-1}e^{-u}du
=(w-1)^z\int_0^{+\infty}v^{z-1}e^{-(w-1)v}dv
>
(w-1)^z\mathrm{B}(z,w),
\end{align*}
and from the rescaling $ v=u/(z+w)^z $,
\begin{align*}
\Gamma(z)
=\int_0^{+\infty}u^{z-1}e^{-u}du
=(z+w)^z\int_0^{+\infty}v^{z-1}e^{-(w-1)v}dv
<
(z+w)^z\mathrm{B}(z,w).
\end{align*}
We conclude that
\begin{align*}
\frac{\Gamma(m)}{(k+1)^m}<\mathrm{B}(m,k+1-m)<\frac{\Gamma(m)}{(k-m)^m}.
\end{align*}
\end{proof}

We now prove Lemma \ref{lem:grad}.

\begin{proof}
From \cite{GNP21} we have the following expression
\begin{align}\label{expr:h1}
	\mathbf{h}^{(k)}(\eta)=\sum_{j=1}^{k+1}\prod_{i=j-(k+1)}^{j-1}\eta(i)
	-\sum_{j=1}^{k}\prod_{\substack{i=-(k+1)+j\\i\neq0}}^j\eta(i).	
\end{align}
Expression \eqref{expr:h2}
is a consequence of a rearrangement which turns out to be fundamental for maintaining $ \ell_N $ with no restrictions. Indeed, we can rewrite
\begin{align*}
\sum_{j=1}^{k+1}\prod_{i=j-(k+1)}^{j-1}\eta(i)
-\sum_{j=1}^{k}\prod_{\substack{i=-(k+1)+j\\i\neq0}}^j\eta(i)
=\prod_{i=0}^{k}\eta(i)
+\sum_{j=1}^{k}\left(\eta(0)-\eta(j)\right)\prod_{\substack{i=-(k+1)+j\\i\neq 0}}^{j-1}\eta(i).
\end{align*}
Note that
\begin{align*}
\left(\eta(0)-\eta(j)\right)
\prod_{\substack{i=-(k+1)+j\\i\neq 0}}^{j-1}\eta(i)
=\prod_{i=-(k+1)+j}^{j-1}\eta(i)
-\prod_{\substack{i=-(k+1)+j\\i\neq 0}}^{j}\eta(i).
\end{align*}
Now we reorganize the products on the second term above. For $ n\in\llbracket -(k+1)+j,j-1\rrbracket $ we have
\begin{align*}
\prod_{\substack{i=-(k+1)+j\\i\neq n+1}}^{j}\eta(i)
=
\left(\eta(n)-\eta(n+1)\right)
\prod_{\substack{i=-(k+1)+j\\i\neq n,n+1}}^{j}\eta(i)
+\prod_{\substack{i=-(k+1)+j\\i\neq n}}^{j}\eta(i).
\end{align*}
Observing that a change of variables yields
\begin{align*}
\prod_{\substack{i=-(k+1)+j\\i\neq n,n+1}}^{j}\eta(i)
=\prod_{\substack{i=-(k+1)+j-n\\i\neq 0,1}}^{j-n}\eta(i+n)
=\mathbf{s}_{j-n}^{(k)}(\tau_n\eta),
\end{align*}
by iteration we see that
\begin{align*}
\prod_{\substack{i=-(k+1)+j\\i\neq 0}}^{j}\eta(i)
=
\prod_{\substack{i=-(k+1)+j\\i\neq j}}^{j}\eta(i)
-\sum_{i=0}^{j-1}
(\eta(i)-\eta(i+1))\mathbf{s}_{j-i}^{(k)}(\tau_i\eta).
\end{align*}
Exchanging the summations and performing a change of variables,
\begin{align*}
\sum_{j=1}^{k}
\sum_{i=0}^{j-1}
(\eta(i)-\eta(i+1))\mathbf{s}_{j-i}^{(k)}(\tau_i\eta)
=\sum_{i=0}^{k-1}
(\eta(i)-\eta(i+1))
\sum_{j=1}^{k-i}
\mathbf{s}_{j}^{(k)}(\tau_i\eta),
\end{align*}
which ends the proof.
\end{proof}

\section{PDE results}\label{app:PDE}

\subsection{Slow diffusion}

The following result extends   \cite[Lemma $ 6.2 $]{BDGN}  to  the case $ m\in(1,2) $.
\begin{Prop}\label{prop:power_diff}
Let $ f,g\in[0,1] $ with $ f\neq g $. If $ m\in(1,2) $ then, for all $ A>0 $ we have
\begin{align*}
\abs{f-g}\leq \frac{(f)^m-(g)^m}{V^{(m)}(f,g)+A}+A\frac{2}{m(m-1)}.
\end{align*}
where
\begin{align*}
0<V^{(m)}(f,g)
=\sum_{k\geq1}\binom{m}{k}(-1)^{k+1}v_k(1-f,1-g)<\infty
\end{align*}
and
\begin{align*}
v_k(f,g)
=\mathbf{1}_{k=1}
+\mathbf{1}_{k=2}(f+g)
+\mathbf{1}_{k\geq 3}\left(f^{k-1}+g^{k-1}+\sum_{i=1}^{k-2}g^if^{k-1-i}\right).
\end{align*}
\end{Prop}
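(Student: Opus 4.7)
The plan is to reduce the inequality to an elementary convexity estimate via an explicit factorization of $f^m - g^m$. Without loss of generality I assume $f > g$, so that $|f-g| = f-g$.

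\textbf{Step 1 (Factorization).} By the generalized binomial theorem (Proposition~\ref{th:gen_bin}), writing $f^m = (1-(1-f))^m$ and similarly for $g$,
\begin{equation*}
  f^m - g^m = \sum_{k \geq 1} \binom{m}{k}(-1)^{k-1}\bigl[(1-g)^k - (1-f)^k\bigr].
\end{equation*}
Applying the telescoping identity $a^k - b^k = (a-b)\sum_{i=0}^{k-1}a^i b^{k-1-i}$ with $a = 1-g$ and $b = 1-f$ (so $a-b = f-g$), the bracket equals $(f-g)\sum_{i=0}^{k-1}(1-g)^i(1-f)^{k-1-i}$. A short case analysis on $k=1,2,\geq 3$ identifies this last geometric sum with $v_k(1-f,1-g)$, yielding the key factorization
\begin{equation*}
  f^m - g^m = (f-g)\,V^{(m)}(f,g).
\end{equation*}
Since $x \mapsto x^m$ is strictly increasing on $[0,1]$ for $m>1$, both sides are positive, so $V^{(m)}(f,g) > 0$; convergence of the series was already established around \eqref{eq:expand}.

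\textbf{Step 2 (Convexity bound).} I claim that for $m \in (1,2)$ and $0 \leq g < f \leq 1$,
\begin{equation*}
  f^m - g^m \geq \frac{m(m-1)}{2}(f-g)^2.
\end{equation*}
Fix $g$ and set $\phi(f) := f^m - g^m - \frac{m(m-1)}{2}(f-g)^2$ on $[g,1]$. Then $\phi(g) = 0$, $\phi'(f) = m f^{m-1} - m(m-1)(f-g)$ with $\phi'(g) = m g^{m-1} \geq 0$, and
\begin{equation*}
  \phi''(f) = m(m-1)\bigl(f^{m-2} - 1\bigr) \geq 0 \quad\text{on } (0,1],
\end{equation*}
since $m-2 < 0$ forces $f^{m-2} \geq 1$ there. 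Hence $\phi'$ is non-decreasing and remains non-negative on $[g,1]$, so $\phi \geq \phi(g) = 0$.

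\textbf{Step 3 (Conclusion).} Dividing the convexity bound by $f-g>0$ yields $V^{(m)}(f,g) \geq \frac{m(m-1)}{2}(f-g)$; adding $A>0$ on the left preserves the inequality, which rearranges to
\begin{equation*}
  \frac{(f-g) A}{V^{(m)}(f,g)+A} \leq \frac{2A}{m(m-1)}.
\end{equation*}
Decomposing $f - g = \frac{(f-g)\,V^{(m)}(f,g)}{V^{(m)}(f,g)+A} + \frac{(f-g) A}{V^{(m)}(f,g)+A}$ and substituting the factorization of Step~1 into the first summand gives exactly
\begin{equation*}
  |f-g| \leq \frac{f^m - g^m}{V^{(m)}(f,g)+A} + \frac{2A}{m(m-1)}.
\end{equation*}
The only mild obstacle is the combinatorics in Step~1: matching the coefficients of the binomial expansion with the piecewise definition of $v_k$ (the cases $k=1,2$ being slightly different from $k\geq 3$). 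The convexity estimate and the final algebra are routine, provided one uses the sign of $m-2$ correctly.
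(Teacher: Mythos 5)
Your proof is correct, and it diverges from the paper's argument at the decisive step. Step~1 (the factorization $f^m-g^m=(f-g)V^{(m)}(f,g)$ via the telescoping identity applied term by term in the binomial series) is the same starting point as the paper. The divergence is in how you obtain the quantitative lower bound on $V^{(m)}(f,g)+A$. The paper stays entirely within the series: it pulls out a factor of $m$ and reindexes to get $V^{(m)}(f,g)=m\sum_{k\geq 1}\abs{\binom{m-1}{k}}\bigl(1-\tfrac{v_{k+1}}{k+1}\bigr)$, peels off the $k=1$ term to obtain the decomposition $V^{(m)}(f,g)=\tfrac{m(m-1)}{2}(f+g)+W^{(m)}(f,g)$ with a strictly positive remainder $W^{(m)}$, and then the desired ratio bound is immediate. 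You instead bypass the series altogether and prove the pointwise convexity estimate $f^m-g^m\geq\tfrac{m(m-1)}{2}(f-g)^2$ for $0\leq g<f\leq 1$ by a second-derivative argument on $\phi(f)=f^m-g^m-\tfrac{m(m-1)}{2}(f-g)^2$, using that $f^{m-2}\geq 1$ on $(0,1]$ since $m<2$. Dividing by $f-g$ gives $V^{(m)}(f,g)\geq\tfrac{m(m-1)}{2}(f-g)$, which suffices. Your route is more elementary (pure calculus, no series manipulation beyond Step~1) and arguably cleaner; the paper's route yields the slightly sharper bound $V^{(m)}(f,g)\geq\tfrac{m(m-1)}{2}(f+g)$ and stays structurally closer to the rest of the section, where everything is phrased through binomial expansions. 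Both are valid, and your Step~3 algebra is exactly the paper's; the WLOG $f>g$ is harmless since $v_k$, hence $V^{(m)}$, is symmetric in its arguments.
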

\begin{proof}
We start with $ f,g\in(0,1) $.
\begin{align*}
(f)^m-(g)^m
&=\sum_{k\geq 1}\binom{m}{k}(-1)^k\left((1-f)^k-(1-g)^k\right).
\end{align*}
We now recall that one can rewrite, for any $ k\in\mathbb{N}_+ $,
\begin{align}\label{diff_powers}
a^k-b^k
=(a-b)
\left[
\mathbf{1}_{k=1}
+\mathbf{1}_{k=2}(a+b)
+\mathbf{1}_{k\geq 3}\left(a^{k-1}+b^{k-1}+\sum_{i=1}^{k-2}b^ia^{k-1-i}\right)
\right]
=(a-b)v_k(a,b).
\end{align}
In this way,
\begin{align*}
(f)^m-(g)^m
=(f-g)\sum_{k\geq1}\binom{m}{k}(-1)^{k+1}v_k(1-f,1-g)
=(f-g)V^{(m)}(f,g).
\end{align*}
We show that $ V^{(m)}(f,g)>0 $. Assume $ f,g\in(0,1) $ with $ f>g $. Then, $ f^m-g^m>0 $ implies $ V^{(m)}(f,g)>0 $. Similarly, if $ f<g $ then $ f^m-g^m<0\implies V^{(m)}(f,g)>0 $. With this in mind, we can rewrite
\begin{align*}
(f)^m-(g)^m=(f-g)\left(V^{(m)}(f,g)\pm A\right)
\Leftrightarrow
f-g=\frac{(f)^m-(g)^m}{V^{(m)}(f,g)+A}+A\frac{f-g}{V^{(m)}(f,g)+A}, \quad \text{for any }A>0.
\end{align*}
Now we will treat the second term on the right-hand side of last display. Note that
\begin{align*}
V^{(m)}(f,g)
=m\sum_{k\geq0}\binom{m-1}{k}(-1)^{k}\frac{v_{k+1}(1-f,1-g)}{k+1}.
\end{align*}
Since $ m\in(1,2) $ and $ v_{1}(1-f,1-g)=1 $, then
\begin{align*}
V^{(m)}(f,g)
&=m\left(1-\sum_{k\geq1}\abs{\binom{m-1}{k}}\frac{v_{k+1}(1-f,1-g)}{k+1}\right)
=m\sum_{k\geq1}\abs{\binom{m-1}{k}}\left(1-\frac{v_{k+1}(1-f,1-g)}{k+1}\right),
\end{align*}
where we note that
\begin{align*}
1-\sum_{k\geq1}\abs{\binom{m-1}{k}}=0.
\end{align*}
Since $ f,g\in(0,1) $ we also have $ 0<\frac{v_{k+1}(1-f,1-g)}{k+1}<1 $, and so let us introduce
\begin{align*}
W^{(m)}(f,g)=m\sum_{k\geq2}\abs{\binom{m-1}{k}}\left(1-\frac{v_{k+1}(1-f,1-g)}{k+1}\right)>0.
\end{align*}
In this way, we can write
\begin{align*}
V^{(m)}(f,g)
&=m(m-1)\left(1-\frac{v_2(1-f,1-g)}{2}\right)+W^{(m)}(f,g)
=m\frac{m-1}{2}(f+g)+W^{(m)}(f,g).
\end{align*}
Now back to our main problem,
\begin{align*}
A\frac{f-g}{V^{(m)}(f,g)+A}
=A\frac{2}{m(m-1)}\frac{m\frac{m-1}{2}(f+g)+W^{(m)}(f,g)+A-\left(m(m-1)g+W^{(m)}(f,g)+A\right)}{m\frac{m-1}{2}(f+g)+W^{(m)}(f,g)+A},
\end{align*}
hence,
\begin{align*}
A\frac{f-g}{V^{(m)}(f,g)+A}\leq A\frac{2}{m(m-1)}.
\end{align*}
If $ f=1 $ we can write $ 1-(g)^m=(1-g)V(1,g), $ 
while if $ f=0 $, we use instead that $ 0=	\sum_{k\geq 0}\binom{m}{k}(-1)^k. $
For either $ f\in\{0,1\} $, the rest of the proof is analogous.

To check that $ V^{(m)} $ is bounded is enough to bound from above $ v_k\leq k $ and use the estimate for the binomial coefficients from Lemma \ref{lem:bin_bound}.
\end{proof}
\begin{Cor}[$ \frac14-$H\"{o}lder continuity]\label{cor:cont}
If $ \rho^m\in L^2([0,T];\mathcal{H}^1(\mathbb{T})) $, with $ m\in(1,2) $, then {for any} $t\in[0,T] $
\begin{align*}
\abs{\rho_t(u)-\rho_t(v)}\leq
\abs{v-u}^{\frac14}\left(\frac{2}{m(m-1)}
+\norm{\partial_u(\rho_t^m)}_{L^2(\mathbb{T})}\right)\quad a.e.\;u,v\;{ \in\mathbb T.}
\end{align*}
\end{Cor}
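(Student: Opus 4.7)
The plan is to combine Proposition \ref{prop:power_diff} with the one-dimensional Sobolev embedding $\mathcal{H}^1(\mathbb{T}) \hookrightarrow C^{0,1/2}(\mathbb{T})$. Concretely, fix $t \in [0,T]$ such that $\rho_t^m \in \mathcal{H}^1(\mathbb{T})$ (which holds for a.e.~$t$ by assumption). Then $\rho_t^m$ admits an absolutely continuous representative, so for any $u, v \in \mathbb{T}$,
\[
|\rho_t^m(u) - \rho_t^m(v)| = \Big|\int_v^u \partial_w (\rho_t^m)(w)\,\mathrm{d}w\Big| \leq |u-v|^{\frac12}\,\|\partial_u(\rho_t^m)\|_{L^2(\mathbb{T})}
\]
by the Cauchy--Schwarz inequality.

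Next, I would apply Proposition \ref{prop:power_diff} pointwise with $f = \rho_t(u)$ and $g = \rho_t(v)$ (assuming WLOG $f \neq g$, and using symmetry in $f,g$ to obtain the absolute value on the right-hand side when necessary). Since $V^{(m)}(f,g) > 0$, one has the crude bound
\[
|\rho_t(u) - \rho_t(v)| \leq \frac{|\rho_t^m(u) - \rho_t^m(v)|}{A} + \frac{2A}{m(m-1)}
\]
for every $A > 0$. Substituting the Sobolev estimate from the previous paragraph yields
\[
|\rho_t(u) - \rho_t(v)| \leq \frac{|u-v|^{\frac12}\,\|\partial_u(\rho_t^m)\|_{L^2(\mathbb{T})}}{A} + \frac{2A}{m(m-1)}.
\]

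Finally, the optimal (indeed, only) choice that equates the two exponents of $|u-v|$ is $A = |u-v|^{\frac14}$. Plugging this in, the first term becomes $|u-v|^{\frac14}\,\|\partial_u(\rho_t^m)\|_{L^2(\mathbb{T})}$ and the second becomes $\frac{2}{m(m-1)}|u-v|^{\frac14}$, giving exactly the claimed estimate. There is no real obstacle here: all of the hard work is embedded in Proposition \ref{prop:power_diff}, which already isolates the defect $2A/(m(m-1))$ needed to pass from $\rho^m$ to $\rho$; the only minor subtlety is that Proposition \ref{prop:power_diff} is stated with $(f)^m - (g)^m$ rather than its absolute value, but this is remedied by symmetry in $f,g$ since $V^{(m)}(f,g) = V^{(m)}(g,f)$.
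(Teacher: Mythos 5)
Your proof is correct and follows the paper's argument exactly: both apply Proposition \ref{prop:power_diff}, drop the nonnegative term $V^{(m)}$ in the denominator, bound the increment of $\rho_t^m$ via Cauchy--Schwarz (equivalently, the embedding $\mathcal{H}^1(\mathbb{T})\hookrightarrow C^{0,1/2}(\mathbb{T})$), and then choose $A=\abs{u-v}^{1/4}$. Your remark about restoring the absolute value on $(f)^m-(g)^m$ via the symmetry $V^{(m)}(f,g)=V^{(m)}(g,f)$ is a correct (and tacitly used) observation, not a deviation from the paper's route.
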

\begin{proof}
Since $ \rho^m $ is in the target Sobolev space, we have a weak derivative of $ \rho $ and can write a.e., from the previous proposition
\begin{align*}
\abs{\rho_t(u)-\rho_t(v)}
\leq
\frac{\int_u^v\partial_w(\rho_t^m)\rmd w}{V^{(m)}(\rho_t(u),\rho_t(v))+A}+\frac{2A}{m(m-1)}
\leq
\frac{1}{A}\int_u^v\partial_w(\rho_t^m)\rmd w+\frac{2A}{m(m-1)}.
\end{align*}
We now apply Cauchy-Schwarz's inequality and set $ A=\abs{v-u}^{\frac14} $.
\end{proof}

\begin{Lemma}[Uniqueness of weak solutions]\label{lem:uniq_PME}
For $ \rho^{\rm ini}:\mathbb{T}\to[0,1] $ a measurable initial profile the weak solution of \eqref{PDE:formal}, in the sense of Definition \ref{def:weak}, is unique.
\end{Lemma}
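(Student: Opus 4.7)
Let $\rho^1, \rho^2$ be two weak solutions of the PME in the sense of Definition \ref{def:weak} with the same initial datum $\rho^{\rm ini}$, and set $w_s := \rho_s^1 - \rho_s^2$ and $W_s := (\rho_s^1)^m - (\rho_s^2)^m$. Subtracting the two weak formulations \eqref{weak}, for any $G \in C^{1,2}([0,T]\times \T)$ I have
\[\inner{w_t, G_t} = \int_0^t \big\{\inner{w_s, \partial_s G_s} + \inner{W_s, \partial_{uu} G_s}\big\}\rmd s.\]
Choosing $G\equiv 1$ shows $\int_\T w_s \rmd u = 0$ for a.e.~$s \in [0,T]$. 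Moreover, by Proposition \ref{prop:power_diff}, one can write $W_s = a_s w_s$ with $a_s(u)\geq 0$, and both $w$ and $W$ belong to $L^\infty([0,T]\times\T)$ since $\rho^i \in [0,1]$.

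The strategy is the classical $H^{-1}$-energy argument on the torus. Let $v_s$ denote the unique mean-zero solution of $-\partial_{uu} v_s = w_s$ on $\T$. Since $w \in L^\infty([0,T]; L^2(\T))$, one has $v \in L^\infty([0,T]; \mathcal H^2(\T))$, and the key observation is that formally
\[-\partial_{uu}(\partial_s v_s) \;=\; \partial_s w_s \;=\; \partial_{uu} W_s, \qquad \text{so} \qquad \partial_s v_s \;=\; -W_s + c(s),\]
with $c(s) := |\T|^{-1}\int_\T W_s \rmd u$ chosen so that the right-hand side remains mean-zero. The crucial fact that $\partial_s v \in L^2([0,T]\times\T)$ then follows from $W \in L^2([0,T]\times\T)$. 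Since $v$ is not a priori admissible in the weak formulation, I will approximate it by $v^\eps \in C^{1,2}([0,T]\times\T)$ obtained by joint mollification in $(s,u)$ (which preserves the mean-zero structure on $\T$), plug $G = v^\eps$ into the identity above, and pass to the limit $\eps\to 0$ exploiting the $L^2$-convergences $v^\eps \to v$, $\partial_s v^\eps \to -W + c$, $\partial_{uu} v^\eps \to -w$.

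The limit identity reads
\[\inner{w_t, v_t} \;=\; \int_0^t \big\{\inner{w_s, -W_s + c(s)} + \inner{W_s, -w_s}\big\} \rmd s \;=\; -2\int_0^t \inner{w_s, W_s}\rmd s,\]
where the term involving $c(s)$ disappears because $\int_\T w_s\rmd u = 0$. The left-hand side equals $\inner{-\partial_{uu} v_t, v_t} = \|\partial_u v_t\|_{L^2(\T)}^2 \geq 0$, while the right-hand side is non-positive by the monotonicity of $x\mapsto x^m$, namely $\inner{w_s,W_s} = \inner{a_s w_s, w_s}\geq 0$. Consequently $\|\partial_u v_t\|_{L^2}^2 = 0$ for every $t\in[0,T]$, which combined with the zero-mean normalization forces $v_t \equiv 0$ and hence $w_t \equiv 0$.

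The main technical obstacle is the approximation step: one must construct the mollified test function $v^\eps$, justify that $\partial_s v^\eps$ converges in $L^2([0,T]\times\T)$ to $-W + c$, and control the resulting error terms uniformly. This uses the characterization of $v_s$ through inverse Laplacian on $\T$, together with the fact that the $L^2$--in--time identity $\partial_s v = -W + c$ can be established in the distributional sense from the weak formulation for $w$ before mollification. Once these regularity facts are in place, the passage to the limit in $\eps$ is routine and the monotonicity-based estimate closes the argument.
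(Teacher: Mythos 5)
Your argument is correct, and it is a genuinely different route from the paper's. The paper (following \cite[Lemma 6.3]{HJV20}) chooses the \emph{time anti-derivative} test function $G_s(u)=\int_s^t\big[(\rho_r^{(1)})^m-(\rho_r^{(2)})^m\big]\,\mathrm{d}r$; since $G_t\equiv 0$, $\partial_sG_s=-W_s$, and the second integral becomes a perfect square, the weak formulation collapses to
\[
0=-\int_0^t\langle w_s,W_s\rangle\,\mathrm{d}s-\tfrac12\Big\|\int_0^t\partial_u W_r\,\mathrm{d}r\Big\|_{L^2(\mathbb{T})}^2,
\]
where both terms are non-positive. You instead use the \emph{space anti-derivative}: the mean-zero inverse Laplacian $v_s=(-\partial_{uu})^{-1}w_s$, which after testing yields $\|\partial_u v_t\|_{L^2}^2=-2\int_0^t\langle w_s,W_s\rangle\,\mathrm{d}s$, again a non-negative quantity equal to a non-positive one. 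Both proofs exploit the same monotonicity $\langle w,W\rangle\geq 0$, and both need to justify inserting a test function that is not in $C^{1,2}$ by an approximation/density argument (the paper makes this explicit: $G$ is only in $L^2([0,T];\mathcal{H}^1(\mathbb{T}))$ with $\partial_tG\in L^2$). Your distributional identity $\partial_s v=-W+c(s)$ tested against mean-zero smooth functions, the mean-zero constraint $\int_\mathbb{T} w_s=0$ obtained from $G\equiv 1$, and the $H^2$-regularity of $v_s$ (since $w_s\in L^\infty\subset L^2$) are all correctly invoked, so the mollification/limit step is indeed a routine but non-trivial task. One genuine difference worth noting: the paper's test function requires $\partial_u W\in L^2([0,T]\times\mathbb{T})$, i.e.\ $\rho^m\in L^2([0,T];\mathcal{H}^1(\mathbb{T}))$, which is precisely the regularity guaranteed by Definition \ref{def:weak} for $m\in(1,2)$. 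Your $H^{-1}$ method never differentiates $W$ in space and so only needs $W\in L^2$; it would equally cover the fast-diffusion regime $m\in(0,1)$, where the paper uses the more involved dual-problem argument of Lemma \ref{lem:uniq_FDE}. So your approach buys generality at the price of building the inverse Laplacian and the auxiliary identity $\partial_sv=-W+c$; the paper's buys a shorter calculation in the slow-diffusion regime at the price of relying on the extra Sobolev regularity of $\rho^m$.
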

\begin{proof}
The proof relies on the same choice of test function as in \cite[Lemma 6.3]{HJV20}, there for solutions of the FDE with $ m=-1 $. Note that for $ m\in(1,2) $ holds $ \rho^m\in L^2([0,T];\mathcal{H}^1(\mathbb{T})) $. A solution $ \rho $ of \eqref{PDE:formal} satisfies then the formulation \eqref{weak} or, equivalently,
\begin{align*}
0=\inner{\rho_t,G_t}-\inner{\rho^{\rm ini},G_0}
-\int_0^t
\inner{\rho_s,\partial_sG_s}	\rmd s
+\int_0^t
\inner{\partial_u(\rho_s)^m,\partial_u G_s}
\rmd s
\end{align*}
for any $ G\in C^{1,2}([0,T]\times\mathbb{T}) $. In particular, one can consider the alternative formulation where the regularity of $ G $ above is reduced to $ G\in L^2([0,T];\mathcal{H}^1(\mathbb{T})) $ and $ \partial_tG\in L^2([0,T];L^2[0,1]) $ (satisfying the equality on the previous display), and then show the equivalence of formulations by approximating $ G $ by a sequence of functions in $ C^{1,2}([0,T]\times\mathbb{T}) $. 
Assume that $ \rho^{(1)},\rho^{(2)} $ are two solutions starting from the same profile $ \rho^{\rm ini} $ and write $ w=\rho^{(1)}-\rho^{(2)} $.
	Then $ w $ satisfies the equality
	\begin{align*}
		\inner{w_t,G_t}
		=\int_0^t\inner{w_s(u),\partial_sG_s}\rmd s
		-\int_0^t\inner{\partial_u\left((\rho_s^{(1)})^m-(\rho_s^{(2)})^m\right),\partial_u G_s}\rmd s.
	\end{align*}
With the choice of test function
\begin{align}\label{uniq:test}
G_s(u)=\int_s^t
(\rho_r^{(1)}(u))^m-(\rho_r^{(2)}(u))^m
\rmd r,
\end{align}
we obtain
\begin{align*}
\inner{w_t,G_t}
=0
=-\int_0^t
\big\langle{
w_s,
\left((\rho_s^{(1)})^m-(\rho_s^{(2)})^m\right)
}\big\rangle
\rmd s
-\frac12 \norm{\int_0^t\partial_u
	\left(
	(\rho_r^{(1)})^m-(\rho_r^{(2)})^m
	\right)\rmd r}_2^2
.
\end{align*}
It is simple to see that
$ 
w_s(u)
\left(\rho_1^m(s,u)-\rho_2^m(s,u)\right)
\geq 0 
$ 
for a.e. $ u\in\mathbb{T} $, implying $ w=0 $ almost everywhere.
\end{proof}


\subsection{Fast diffusion}

\begin{Prop}[$ \frac12-$H\"{o}lder continuity]\label{prop:continuity_FDE}
If $ \rho\in L^2([0,T];\mathcal{H}^1(\mathbb{T})) $ then {for any} $ t\in[0,T] $ it holds that
\begin{align*}
\abs{\rho_t(u)-\rho_t(v)}\leq \abs{u-v}^\frac12\norm{\partial\rho_t}_{L^2(\mathbb{T})}\quad a.e.\;u,v { \in \mathbb{T}}
\end{align*}
\end{Prop}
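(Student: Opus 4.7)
The plan is to prove this statement as a direct instance of the one-dimensional Sobolev embedding $\mathcal{H}^1(\mathbb{T}) \hookrightarrow C^{1/2}(\mathbb{T})$, which reduces to the fundamental theorem of calculus combined with the Cauchy--Schwarz inequality.

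First, I would fix $t \in [0,T]$ for which $\rho_t \in \mathcal{H}^1(\mathbb{T})$ (this holds for a.e.~$t$ by the assumption $\rho \in L^2([0,T];\mathcal{H}^1(\mathbb{T}))$). By the definition of $\mathcal{H}^1(\mathbb{T})$ as the completion of $C^\infty(\mathbb{T})$ with respect to the $\|\cdot\|_{\mathcal{H}^1(\mathbb{T})}$ norm, there exists a sequence $(\varphi_n)_{n\geq 1} \subset C^\infty(\mathbb{T})$ such that $\varphi_n \to \rho_t$ in $\mathcal{H}^1(\mathbb{T})$, and in particular $\varphi_n \to \rho_t$ in $L^2(\mathbb{T})$ and $\partial_u \varphi_n \to \partial \rho_t$ in $L^2(\mathbb{T})$.

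Second, for any smooth $\varphi \in C^\infty(\mathbb{T})$ and any pair $u,v \in \mathbb{T}$ (identified, say, with $[0,1)$), the fundamental theorem of calculus gives $\varphi(v) - \varphi(u) = \int_u^v \partial_w \varphi(w)\,\rmd w$, and Cauchy--Schwarz yields
\begin{align*}
|\varphi(v) - \varphi(u)| \;\leq\; |v-u|^{1/2}\,\Big(\int_u^v (\partial_w \varphi(w))^2\,\rmd w\Big)^{1/2} \;\leq\; |v-u|^{1/2}\,\|\partial_u \varphi\|_{L^2(\mathbb{T})}.
\end{align*}
Applying this to each $\varphi_n$ and passing to the limit gives, for a.e.~$u, v \in \mathbb{T}$ (those Lebesgue points at which the $L^2$-convergent subsequence converges pointwise), the desired bound
\begin{align*}
|\rho_t(v) - \rho_t(u)| \;\leq\; |v-u|^{1/2}\,\|\partial \rho_t\|_{L^2(\mathbb{T})}.
\end{align*}

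There is essentially no hard step: the only minor subtlety is to interpret $|u-v|$ correctly on the torus (one should take the shorter of the two arc lengths, which only sharpens the bound), and to note that after this procedure $\rho_t$ admits a continuous representative, so the a.e.~statement can be upgraded to hold for every $u,v$ in that representative. No replacement lemma, energy estimate, or PDE-level argument is needed; the entire content is the $d=1$ Sobolev embedding.
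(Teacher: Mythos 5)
Your proof is correct and is exactly what the paper intends: the paper's own proof is the single sentence ``This is a simple consequence of Cauchy--Schwarz's inequality,'' which is precisely the fundamental-theorem-of-calculus-plus-Cauchy--Schwarz argument (i.e.\ the one-dimensional Sobolev embedding $\mathcal{H}^1(\mathbb{T})\hookrightarrow C^{1/2}(\mathbb{T})$) that you spell out via smooth approximation.
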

\begin{proof}
{This is a simple consequence of Cauchy-Schwarz's inequality. }
\end{proof}

\begin{Lemma}[Uniqueness of weak solutions]\label{lem:uniq_FDE}  
For $ \rho^{\rm ini}:\mathbb{T}\to[0,1] $ a measurable initial profile the weak solution of \eqref{PDE:formal} in the sense of Definition \ref{def:weak} is unique.
\end{Lemma}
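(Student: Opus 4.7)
The plan is to prove uniqueness by Holmgren's duality method, adapted to the unbounded ``diffusion coefficient'' $\rho^{m-1}$ as in the very-weak-solution treatment of the FDE in \cite{vazquez}. Let $\rho^{(1)},\rho^{(2)}$ be two weak solutions starting from the same profile $\rho^{\rm ini}$, and set $w=\rho^{(1)}-\rho^{(2)}$. Subtracting the weak formulations \eqref{weak}, for every $G\in C^{1,2}([0,T]\times\mathbb{T})$
\begin{equation*}
\inner{w_T,G_T}
=\int_0^T\inner{w_s,\partial_sG_s}\rmd s
+\int_0^T\inner{A_sw_s,\partial_{uu}G_s}\rmd s,
\qquad
A_s(u):=\frac{(\rho_s^{(1)}(u))^m-(\rho_s^{(2)}(u))^m}{\rho_s^{(1)}(u)-\rho_s^{(2)}(u)}\mathbf{1}_{\{\rho_s^{(1)}(u)\neq \rho_s^{(2)}(u)\}}.
\end{equation*}
Since $x\mapsto x^m$ is increasing, $A\geq 0$, but for $m\in(0,1)$ this coefficient can blow up where $\rho^{(i)}\to 0$. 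This is precisely what obstructs the direct dual test function \eqref{uniq:test} used in Lemma \ref{lem:uniq_PME}, so we proceed by regularization.

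Fix any $\psi\in C^\infty(\mathbb{T})$, and for each $n\geq 1$ let $A_n\in C^\infty([0,T]\times\mathbb{T})$ satisfy $n^{-1}\leq A_n\leq n$ and $A_n\to A$ a.e.\ (concretely, take $\max(n^{-1},\min(A,n))$ followed by a standard space-time mollification). By classical parabolic theory, the \emph{backward} linear problem
\begin{equation*}
\partial_sG_n+A_n\,\partial_{uu}G_n=0 \quad \text{on }[0,T]\times\mathbb{T},
\qquad G_n(T,\cdot)=\psi,
\end{equation*}
admits a unique smooth solution $G_n$. Multiplying by $\partial_{uu}G_n$ and integrating by parts in space and then in time (using $G_n(T)=\psi$) yields the uniform energy estimate
\begin{equation*}
\int_0^T\!\!\int_{\mathbb{T}}A_n(\partial_{uu}G_n)^2\rmd u\,\rmd s
\;\leq\;\tfrac12\|\partial_u\psi\|_{L^2(\mathbb{T})}^2.
\end{equation*}
Plugging $G_n$ into the subtracted weak formulation and using $\partial_sG_n=-A_n\partial_{uu}G_n$ gives
\begin{equation*}
\inner{w_T,\psi}
=\int_0^T\!\!\int_{\mathbb{T}}w\,(A-A_n)\,\partial_{uu}G_n\,\rmd u\,\rmd s.
\end{equation*}

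It remains to show the right-hand side vanishes as $n\to+\infty$. Split the integration domain along $\{A_n=A\}\cup\{A<n^{-1}\}\cup\{A>n\}$. The first region contributes $0$. On $\{A<n^{-1}\}$ one has $|A-A_n|\leq n^{-1}$ and $A_n\geq n^{-1}$, so Cauchy-Schwarz against $\sqrt{A_n}\partial_{uu}G_n$ bounds the contribution by $n^{-1/2}\|w\|_{L^2}\|\partial_u\psi\|_{L^2}/\sqrt{2}$. The crucial region is $\{A>n\}$, where $A_n=n$ and $A$ is unbounded; the key observation is the pointwise identity
\begin{equation*}
|wA|=\big|(\rho^{(1)})^m-(\rho^{(2)})^m\big|\leq 1,
\end{equation*}
since $\rho^{(i)}\in[0,1]$. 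Hence
\begin{equation*}
\int_{\{A>n\}}\frac{(wA)^2}{A_n}\rmd u\,\rmd s
=\frac{1}{n}\int_{\{A>n\}}(wA)^2\rmd u\,\rmd s
\leq \frac{T}{n},
\end{equation*}
and pairing $w(A-A_n)=(wA)(1-n/A)$ (bounded by $|wA|$) with $\partial_{uu}G_n=\sqrt{A_n}^{-1}\sqrt{A_n}\partial_{uu}G_n$ through Cauchy-Schwarz yields again an $O(n^{-1/2})$ contribution. Letting $n\to+\infty$ we conclude $\inner{w_T,\psi}=0$ for every smooth $\psi$, whence $w_T\equiv 0$; since $T\in[0,T]$ was arbitrary, uniqueness follows.

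The main obstacle is that $A$ is generally not in $L^\infty$ (this is the essential new difficulty compared with the slow-diffusion uniqueness of Lemma \ref{lem:uniq_PME}), which forces the adjoint equation to be singular. The workaround is to never estimate $A$ alone: we always couple it with $w$, and exploit the a priori $L^\infty$-bound $|wA|=|(\rho^{(1)})^m-(\rho^{(2)})^m|\leq 1$ to tame the truncation error in the region where $A_n\neq A$.
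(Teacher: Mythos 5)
Your strategy is the same in spirit as the paper's: a Holmgren-type duality argument with a regularized backward parabolic problem, powered by the crucial $L^\infty$ bound $|w A|=|(\rho^{(1)})^m-(\rho^{(2)})^m|\leq 1$ which tames the singularity of the diffusion coefficient where $\rho\to 0$. The energy estimate $\int A_n(\partial_{uu}G_n)^2\leq\frac12\|\partial_u\psi\|_{L^2}^2$ and the Cauchy--Schwarz decomposition are identical in purpose to what the paper does. The main structural difference is in the regularization: you clip $A$ to $[n^{-1},n]$ and then mollify in space--time, whereas the paper replaces $(\rho^{(1)})^m-(\rho^{(2)})^m$ by a \emph{truncated binomial series} $wV^\ell$, which is automatically bounded (between $m$ and $O(\ell^{1-m})$) and, crucially, is $\tfrac12$-H\"older in space because $\rho^{(i)}$ are; the paper then only mollifies in time, and invokes Friedman's existence theorem, which requires H\"older continuity in space. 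Your route avoids the binomial machinery but then must rely on smoothing for the backward problem to have a classical solution.

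This is where the one genuine gap appears. You split the domain as $\{A_n=A\}\cup\{A<n^{-1}\}\cup\{A>n\}$ and claim the first region contributes nothing. But once you mollify, $A_n$ is no longer equal to the clipped function $\max(n^{-1},\min(A,n))$ pointwise, so $\{A_n=A\}$ is generically negligible and those three sets do \emph{not} cover $[0,T]\times\mathbb{T}$. You leave uncontrolled the region $\{n^{-1}\leq A\leq n\}$, where the clipped function equals $A$ but the mollified $A_n$ does not. The error there is $\int w\,(A_n^{\mathrm{trunc}}-A_n)\,\partial_{uu}G_n$ with $A_n^{\mathrm{trunc}}=\max(n^{-1},\min(A,n))$; it can be estimated by Cauchy--Schwarz as $\|A_n^{\mathrm{trunc}}-A_n\|_{L^2}\,\|\partial_{uu}G_n\|_{L^2}\leq \|A_n^{\mathrm{trunc}}-A_n\|_{L^2}\cdot\sqrt{n/2}\,\|\partial_u\psi\|_{L^2}$, which vanishes only if the mollification scale $\delta_n$ is chosen small enough relative to $n$ (e.g., so that $\|A_n^{\mathrm{trunc}}-A_n\|_{L^2}\leq n^{-1}$). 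This fix is straightforward, but as written the splitting is inconsistent with the regularization and the argument does not close.

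A second, smaller point: the class of test functions in Definition \ref{def:weak} is $C^{1,2}([0,T]\times\mathbb{T})$, so you do need $G_n\in C^{1,2}$. With $A_n$ smooth and uniformly elliptic this holds by classical parabolic theory, as you say, but it is worth noting that this is precisely why the mollification step cannot be dropped (a merely bounded measurable $A_n^{\mathrm{trunc}}$ would force you to first extend the admissible test class, as the paper does in the slow-diffusion Lemma \ref{lem:uniq_PME}). Once the mollification error is controlled as above, your proof is sound and is arguably a cleaner, more ``PDE-textbook'' version of the paper's argument, at the cost of losing the explicit quantitative link (through the binomial expansion and the H\"older modulus of $\rho$) that the paper establishes.
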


\begin{proof}
For $ m\in(0,1) $ our weak formulation can be shown to be equivalent to
\begin{equation}
\inner{\rho_t,G_t}-\inner{\rho^{\rm ini},G_0}
=\int_0^t
\big\{
\inner{\rho_s,\partial_sG_s}
+\inner{(\rho_s)^m,\partial_{uu} G_s}
\big\}
\rmd s, \qquad\forall  t\in(0,T],
\end{equation}
where $ G\in C^{1,2}([0,T]\times\mathbb{T})$. Recall also that we already showed, in Proposition \ref{prop:power_in_sob}, that there exists a solution $ \rho\in L^2([0,T];\mathcal{H}^1(\mathbb{T})) $. Let $ \rho^{(1)},\rho^{(2)} $ be two solutions starting from the same initial data and write $ w=\rho^{(1)}-\rho^{(2)} $. Then we have the following equation
\begin{align*}
\inner{w_t,G_t}
=\int_0^t
\big\{
\inner{w_s,\partial_sG_s}
+\inner{(\rho^{(1)}_s)^m-(\rho^{(2)}_s)^m,\partial_{uu} G_s}
\big\}
\rmd s=0.
\end{align*}
We will write $ (\rho^{(1)})^m-(\rho^{(2)})^m $ as a function of $ w $. To do so, we consider the binomial expansion of these powers. Since $ m\in(0,1) $ we have
\begin{align*}
(\rho^{(1)})^m-(\rho^{(2)})^m
=\sum_{k\geq1}\abs{\binom{m}{k}}\left((1-\rho^{(2)})^k-(1-\rho^{(1)})^k\right)
.
\end{align*}
It is important to truncate now the series at some step $ \ell $ which will be taken to infinity later on. Let $ \ell\in\mathbb{N}_+ $. Then
\begin{align*}
	\sum_{k\geq\ell+1}\abs{\binom{m}{k}}\left((1-\rho^{(2)})^k-(1-\rho^{(1)})^k\right)
	\leq
	\sum_{k\geq\ell+1}\abs{\binom{m}{k}}=\mathcal{O}\left(\ell^{-m}\right).
\end{align*}
As such, from \eqref{diff_powers}
\begin{align*}
	\sum_{k=1}^\ell\abs{\binom{m}{k}}\left((1-\rho^{(2)})^k-(1-\rho^{(1)})^k\right)
	=w\sum_{k=1}^\ell\abs{\binom{m}{k}}v_k(1-\rho^{(2)},1-\rho^{(1)})=:wV^\ell
\end{align*}
where we shorten $ V_s^\ell(u)\equiv V^\ell(\rho_s^{(1)}(u),\rho_s^{(2)}(u)) $ and $ v_k(s,u)\equiv v_k(1-\rho_s^{(1)}(u),1-\rho_s^{(2)}(u)) $. 
Note that for each $ \ell $ fixed we have the crude upper bound
\begin{align}\label{V_l:upper}
V_s^\ell(u)\leq
\sum_{k=1}^\ell\abs{\binom{m}{k}}k=\mathcal{O}\left(\ell^{1-m}\right).
\end{align}
This truncation allows us to obtain
\begin{align*}
\int_0^t
\inner{(\rho_s^1)^m-(\rho_s^2)^m,\partial_{uu} G_s}
\rmd s
\lesssim
\int_0^t
\inner{w_sV_s^\ell,\partial_{uu} G_s}
\rmd s
+
\frac{1}{\ell^m}\int_0^t
\int_{\mathbb{T}}
\abs{\partial_{uu} G_s(u)}
\rmd u \rmd s.
\end{align*}
Because for each fixed $ \ell $ we have $ V^\ell\in L^p([0,t]\times\mathbb{T}) $, for any $ 1\leq p\leq \infty $, one can approximate $ V^\ell $ by a sequence of functions in $ C^\infty([0,t];L^\infty(\mathbb{T})) $, with $ t\in[0,T] $, and with respect to the $ L^p([0,t]\times\mathbb{T}) $ norm. Let $ \varphi $ be some positive mollifier and define $ \varphi_{{\eps}}={\eps}^{-1}\varphi({\eps}^{-1}\;\cdot) $ for $ \eps>0 $. Define $  $
\begin{align*}
V_\cdot^{\ell,\eps}(u)=V_{\cdot}^\ell(u)\star\varphi_{{\eps}}.
\end{align*}
Note that $ V^{\ell,\eps}\in L^p([0,T]\times\mathbb{T}) $ for any $ 1\leq p\leq\infty $ because $ V^\ell $ is uniformly bounded in both time and space. Denote by $ \hat{f} $ the Fourier transformation of a function $ f $ defined on $ [0,t] $. From Parseval-Plancherel's identity we have the isometry
\begin{align*}
\norm{V_{\cdot}^{\ell,\eps}(u)
-
V_{\cdot}^\ell(u)}_{L^2([0,t])}
&=
\norm{
\widehat{V_{\cdot}^{\ell,\eps}(u)}
-
\widehat{V_{\cdot}^\ell(u)}
}_{L^2([0,t])}
=
\left[\int_0^t
\abs{
\widehat{V_{\cdot}^\ell(u)}(\xi)
}^2
\abs{
1-\widehat{\varphi_{{\eps}}}(\xi)
}^2
\rmd \xi\right]^\frac12.
\end{align*}
Because the mollifier is normalized and positive,
\begin{align*}
\abs{
1-\widehat{\varphi_{{\eps}}}(\xi)
}
\leq
\int_{B_{\eps}(0)}
\varphi_{\eps}(v) \abs{(1-e^{-iv\xi})} \rmd v,
\end{align*}
where $ B_\eps(0) $ is the open ball in $ \mathbb{T} $ centred in zero and with radius $ \eps>0 $. Since $ e^{-x}\geq 1-x $ we can see that
\begin{align*}
\sup_{v\in B_{\eps}(0)}\abs{(1-e^{-iv\xi})}
\leq
\sup_{v\in B_{\eps}(0)}\abs{iv\xi}
\leq
{\eps}\abs{\xi}.
\end{align*}
With this we obtain the estimate
\begin{align*}
\norm{V_{\cdot}^{\ell,\eps}(u)
-
V_{\cdot}^\ell(u)}_{L^2([0,t])}
\leq
\eps
\left[\int_{0}^t
\abs{
\widehat{V_{\cdot}^\ell(u)}(\xi)
}^2
\abs{\xi}^2
\rmd \xi\right]^\frac12
\leq
t\eps
\left[\int_{0}^t
\abs{
\widehat{V_{\cdot}^\ell(u)}(\xi)
}^2
\rmd \xi\right]^\frac12
=t\eps
\norm{
V_{\cdot}^\ell(u)
}_{L^2([0,t])}
\end{align*}
and the right-hand side of the previous display is no larger than a constant times $ t^\frac{3}{2}\eps\ell^{1-m} $. 
In particular, from Cauchy-Schwarz's inequality
\begin{align*}
\int_0^t
\inner{w_sV_s^\ell,\partial_{uu} G_s}
\rmd s
&\leq
\int_0^t
\inner{w_sV_s^{\ell,\eps},\partial_{uu} G_s}\rmd s
\\
&\quad +
\int_{\mathbb{T}}\left[\int_0^t\abs{V_s^\ell(u)-V_s^{\ell,\eps}(u)}^2\rmd s\right]^\frac12\left[\int_0^t\abs{\partial_{uu}G_s(u)}^2 \rmd s\right]^\frac12\rmd u.
\end{align*}
From the previous computations and again from the Cauchy-Schwarz's inequality, the second line in last display is bounded above by $ t^{\frac32}\eps\ell^{1-m}\norm{\partial_{uu}G}_{L^2([0,t]\times\mathbb{T})} $. 
We just showed that
\begin{align*}
\inner{w_t,G_t}
\lesssim
\int_0^t
\int_{\mathbb{T}}
w_s(u)
\big\{
\partial_sG_s(u)
+V_s^{\ell,\eps}(u)\partial_{uu} G_s(u)
\big\}
\rmd u\rmd s
+
t^{\frac12}\ell^{-m}
\left(
1
+ \eps t\ell
\right)
\norm{\partial_{uu}G}_{L^2([0,t]\times\mathbb{T})}
.
\end{align*}
We want to fix $ G $ as a solution to the backwards problem
\begin{align}\label{dual:prob}
\begin{cases}
\partial_sf+\lambda\partial_{uu}f=0, & (s,u)\in[0,t)\times\mathbb{T},\\
f(t,u)=\phi(u), &u\in\mathbb{T},
\end{cases}
\end{align}
with $ \phi $ to be chosen suitably later on. This is a well-posed problem and has a solution $ f\in C^{1,2}([0,t]\times\mathbb{T}) $ given some conditions on $ \phi $ and $ \lambda $: under the new time $ \tau=t-s $ a solution to this problem is equivalently a solution to
\begin{align*}
\begin{cases}
\partial_\tau g=\lambda\partial_{uu}g, & (\tau,u)\in(0,t]\times\mathbb{T},\\
g(0,u)=\phi(u), &u\in\mathbb{T}.
\end{cases}
\end{align*}
According to \cite[Thm.~4.5, Ch.~6, Sec.~4]{sde:friedman}, for $ \lambda $ positive and bounded uniformly in $ [0,t]\times\mathbb{T} $, continuous with respect to time (uniformly in $ \mathbb{T} $) and $ \alpha-$H\"{o}lder continuous with respect to the space variable; and $ \phi $ a continuous function, there exists a solution to this Cauchy problem in $ C^{1,2}([0,t]\times\mathbb{T}) $. Note that we have already checked that $ V^{\ell,\eps} $ satisfies all the requirements for $ \lambda $ above (for $ \ell $ fixed) except the H\"{o}lder continuity condition. Noting that $ \rho^{(1)},\rho^{(2)} $ is $ \frac12-$H\"{o}lder so is $ V_{\eps} $. To see this we sum and subtract appropriate terms and use the triangle inequality to estimate
\begin{align*}
\abs{
v_k(s,x)-v_k(s,y)
}
&\leq
\abs{
\rho_s^{(1)}(y)-\rho_s^{(1)}(x)
}
\sum_{i=0}^{k-1}
v_i(1-\rho_s^{(1)}(x),1-\rho_s^{(1)}(y))
(1-\rho_s^{(2)}(x))^{k-1-i}
\\
&\quad +
\abs{
\rho_s^{(2)}(y)-\rho_s^{(2)}(x)
}
\sum_{i=0}^{k-1}
v_{k-1-i}(1-\rho_s^{(2)}(x),1-\rho_s^{(2)}(y))(1-\rho_s^{(1)}(x))^i
\lesssim
k^2\abs{x-y}^\frac12.
\end{align*}
In this way,
\begin{align*}
\abs{
(v_k(\cdot,x)-v_k(\cdot,y))\star\varphi_{\eps}(s)
}
=\int_{0}^t\varphi_{\eps}(s-r)(v_k(r,x)-v_k(r,y))\rmd r
\leq
k^2\abs{x-y}^{\frac12}\int_{0}^t\varphi_{\eps}(s-r)\rmd r.
\end{align*}
Recalling that the integral on the right-hand side equals one, we see that
\begin{align*}
\abs{
V_s^{\ell,\eps}(x)-V_s^{\ell,\eps}(y)
}
\leq
\sum_{k=1}^\ell\abs{\binom{m}{k}}
\abs{
v_k(\cdot,x)-v_k(\cdot,y)
\star\varphi_{\eps}(s)
}
\lesssim
\abs{x-y}^\frac12
\ell^{2-m}.
\end{align*}
%
In this way, fixing our test function as $ G=f $ with $ \lambda=V^{\ell,\eps} $ we see that
\begin{align*}
\inner{w_t,\phi}
\lesssim
t^{\frac12}\ell^{-m}(1+\eps t\ell)
\norm{\partial_{uu}G}_{L^2([0,t]\times\mathbb{T})}
\end{align*}
and we need to estimate the integral on the right-hand side above. 

Let us multiply both sides of \eqref{dual:prob} by $ \partial_{uu}G $ and integrate once in space and time, obtaining
\begin{align*}
0
=\int_0^t\int_{\mathbb{T}}\partial_sG\partial_{uu}G\rmd u\rmd s
+\int_0^t\int_{\mathbb{T}}V^{\ell,\eps}\abs{\partial_{uu}G}^2\rmd u\rmd s.
\end{align*}
An integration by parts on the first integral on the right-hand side above yields
\begin{align*}
-
\int_0^t\int_{\mathbb{T}}\partial_{u}(\partial_sG)\partial_{u}G\rmd u\rmd s
=&-\frac12
\int_0^t\int_{\mathbb{T}}
\partial_s\left(\partial_uG\right)^2
\rmd u\rmd s
\\
=&
-\frac12\int_{\mathbb{T}}
\left\{
\left(\partial_uG_t(u)\right)^2
-\left(\partial_uG_0(u)\right)^2
\right\}
\rmd u
.
\end{align*}
Using the terminal condition and bounding from below $ (\partial_uG_0(u))^2\geq 0 $ and $ V_\eps^\ell>m $ we conclude that
\begin{align*}
\int_0^t\int_{\mathbb{T}}\abs{\partial_{uu}G}^2\rmd u\rmd s
\leq
\frac{1}{2m}\norm{\phi'}_{L^2(\mathbb{T})}^2.
\end{align*}
With this, and fixing $ \eps=1/\ell $ we obtain
\begin{align}\label{almost}
\inner{w_t,\phi}
\lesssim
t^{\frac12}\ell^{-m}
(
1+t
)
\norm{\phi'}_{L^2(\mathbb{T})}.
\end{align}
Denoting by $ w^\pm $ the positive/negative part of $ w $, we want to fix $ \phi(\cdot)=\mathbf{1}_{\{u\in\mathbb{T}:\; w_t(u)\geq0\}}(t,\cdot) $, obtaining that $ \rho^{(1)}\leq \rho^{(2)} $ a.e., and analogously take $ \phi(\cdot)=\mathbf{1}_{\{u\in\mathbb{T}:\; w_t(u)\leq0\}}(t,\cdot) $, obtaining instead $ \rho^{(1)}\geq \rho^{(2)} $ and leading to $ \rho^{(1)}=\rho^{(2)} $ a.e. To do so we need to consider in \eqref{almost} a sequence $ (\phi_n)_n\subset C(\mathbb{T}) $ converging to $ \phi $ at least in $ L^2 $ and such that $ \norm{\phi_n'}_{L^2(\mathbb{T})}<\infty $ for all $ n>0 $. Regarding the convergence, since $ \phi\in L^2(\mathbb{T}) $ and $ C(\mathbb{T}) $ is dense in $ L^p(\mathbb{T}) $ for all $ 1\leq p <\infty $, there is a sequence of continuous functions $ (\phi_n)_n $ approximating $ \phi $ in $ L^2(\mathbb{T}) $. This sequence of continuous functions can be approximated (in $ L^2 $) by a sequence of smooth functions $ (\phi_{n,k})_k $ \emph{via} mollification. We fix one of these smooth representatives as the terminal condition on the problem \eqref{dual:prob}. Taking the limit $ \ell\to+\infty $ in \eqref{almost} and then the limits on $ n $ and $ k $, and recalling that $ t\in[0,T] $ is arbitrary concludes the proof.
\end{proof}
\section*{Acknowledgements}{ P.G.  thanks Professor  Tadahisa Funaki for posing the question of deriving the porous medium equation with a non-integer power from interacting particle systems, which motivated the study developed in this work. 
This project was partially supported by the ANR grant MICMOV (ANR-19-CE40-0012)
of the French National Research Agency (ANR), and by the European Union with the program FEDER ``Fonds europ\'een de d\'eveloppement r\'egional'' with the R\'egion Hauts-de-France  and from
Labex CEMPI (ANR-11-LABX-0007-01). It has also received funding from the European
Research Council (ERC) under the European Union’s Horizon 2020 research and innovative program
(grant agreement n. 715734). 
G.N. gratefully acknowledges the support from FCT/Portugal through the Lisbon Mathematics PhD (Lismath), grant PD/BD/150345/2019.  P.G. and G.N. thank FCT/Portugal for financial support through CAMGSD, IST-ID, projects UIDB/04459/2020 and UIDP/04459/2020. G.N. thanks FCT for the financial support provided by the Portugal/France agreement Programa PESSOA cotutelas (FCT/Campus France).
}

\end{document}